\numberwithin{equation}{section}
\newcounter{TmpEnumi}
\def\today{\number\day\space\ifcase\month\or   January\or February\or
   March\or April\or May\or June\or   July\or August\or September\or
   October\or November\or December\fi\   \number\year}
\theoremstyle{definition}
\newtheorem{thm}{Theorem}[section]
\newtheorem{lem}[thm]{Lemma}
\newtheorem{prp}[thm]{Proposition}
\newtheorem{dfn}[thm]{Definition}
\newtheorem{cor}[thm]{Corollary}
\newtheorem{rmk}[thm]{Remark}
\newtheorem{ntn}[thm]{Notation}
\newtheorem{exa}[thm]{Example}
\newcommand{\beq}{\begin{equation}}
\newcommand{\eeq}{\end{equation}}
\newcommand{\beqr}{\begin{eqnarray*}}
\newcommand{\eeqr}{\end{eqnarray*}}
\newcommand{\bal}{\begin{align*}}
\newcommand{\eal}{\end{align*}}
\newcommand{\bei}{\begin{itemize}}
\newcommand{\eei}{\end{itemize}}
\newcommand{\limi}[1]{\lim_{{#1} \to \infty}}
\newcommand{\af}{\alpha}
\newcommand{\bt}{\beta}
\newcommand{\gm}{\gamma}
\newcommand{\dt}{\delta}
\newcommand{\ep}{\varepsilon}
\newcommand{\et}{\eta}
\newcommand{\io}{\iota}
\newcommand{\te}{\theta}
\newcommand{\ld}{\lambda}
\newcommand{\kp}{\kappa}
\newcommand{\ph}{\varphi}
\newcommand{\ps}{\psi}
\newcommand{\rh}{\rho}
\newcommand{\om}{\omega}
\newcommand{\ta}{\tau}
\newcommand{\Z}{{\mathbb{Z}}}
\newcommand{\R}{{\mathbb{R}}}
\newcommand{\C}{{\mathbb{C}}}
\newcommand{\N}{{\mathbb{Z}}_{> 0}}
\newcommand{\Nz}{{\mathbb{Z}}_{\geq 0}}
\newcommand{\id}{{\mathrm{id}}}
\newcommand{\sint}{{\mathrm{int}}}
\newcommand{\dist}{{\mathrm{dist}}}
\newcommand{\spec}{{\mathrm{sp}}}
\newcommand{\supp}{{\mathrm{supp}}}
\newcommand{\sgn}{{\mathrm{sgn}}}
\newcommand{\Aut}{{\mathrm{Aut}}}
\newcommand{\QT}{{\mathrm{QT}}}
\newcommand{\Mi}{M_{\infty}}
\newcommand{\andeqn}{\,\,\,\,\,\, {\mbox{and}} \,\,\,\,\,\,}
\newcommand{\wolog}{without loss of generality}
\newcommand{\Wolog}{Without loss of generality}
\newcommand{\tfae}{the following are equivalent}
\newcommand{\ifo}{if and only if}
\newcommand{\ca}{C*-algebra}
\newcommand{\uca}{unital C*-algebra}
\newcommand{\hm}{homomorphism}
\newcommand{\fd}{finite dimensional}
\newcommand{\tst}{tracial state}
\newcommand{\hsa}{hereditary subalgebra}
\newcommand{\pj}{projection}
\newcommand{\nzp}{nonzero projection}
\newcommand{\mvnt}{Murray-von Neumann equivalent}
\newcommand{\mvnc}{Murray-von Neumann equivalence}
\newcommand{\ct}{continuous}
\newcommand{\cfn}{continuous function}
\newcommand{\chs}{compact Hausdorff space}
\newcommand{\hme}{homeomorphism}
\newcommand{\mh}{minimal homeomorphism}
\newcommand{\cp}{crossed product}
\title[Large subalgebras]{Large subalgebras}
\author{N.~Christopher Phillips}
\date{23~August 2014}
\address{Department of Mathematics, University  of Oregon,
       Eugene OR 97403-1222, USA.}
\email[]{ncp@darkwing.uoregon.edu}
\subjclass[2010]{Primary 46L05;
 Secondary 46L55.}
\thanks{This material is based upon work supported by the
  US National Science Foundation under
  Grants DMS-0701076 and DMS-1101742.}
\begin{document}

\begin{abstract}
We define and study large and stably large subalgebras
of simple unital C*-algebras.
The basic example is the orbit breaking subalgebra
of a crossed product by~${\mathbb{Z}}$,
as follows.
Let $X$ be an infinite compact metric space,
let $h \colon X \to X$ be a minimal homeomorphism,
and let $Y \subset X$ be closed.
Let $u \in C^* ({\mathbb{Z}}, X, h)$ be the standard unitary.
The $Y$-orbit breaking subalgebra
is the subalgebra of $C^* ({\mathbb{Z}}, X, h)$
generated by $C (X)$
and all elements $f u$
for $f \in C (X)$ such that $f |_Y = 0$.
If $h^n (Y) \cap Y = \varnothing$
for all $n \in {\mathbb{Z}} \setminus \{ 0 \}$,
then the $Y$-orbit breaking subalgebra
is large in $C^* ({\mathbb{Z}}, X, h)$.
Large subalgebras obtained via generalizations of
this construction have appeared in a number of places,
and we unify their theory in this paper.

We prove the following results
for an infinite dimensional simple unital \ca~$A$
and a stably large subalgebra $B \subset A$:
\begin{itemize}
\item
$B$ is simple and infinite dimensional.
\item
If $B$ is stably finite then so is~$A$,
and if $B$ is purely infinite then so is~$A$.
\item
The restriction maps ${\operatorname{T}} (A) \to {\operatorname{T}} (B)$
and ${\operatorname{QT}} (A) \to {\operatorname{QT}} (B)$
(on tracial states and quasitraces) are bijective.
\item
When $A$ is stably finite,
the inclusion of $B$ in $A$ induces an isomorphism
on the semigroups that remain after deleting
from ${\operatorname{Cu}} (B)$ and ${\operatorname{Cu}} (A)$
all the classes of nonzero projections.
\item
$B$ and $A$ have the same radius of comparison.
\end{itemize}
\end{abstract}

\maketitle

\indent
The purpose of this paper is to define what we call a
large subalgebra~$B$ in a simple \uca~$A$,
and to show how properties of~$B$
can be used to deduce properties of~$A$.
The main applications so far are to the structure of crossed
product \ca{s},
and are treated elsewhere;
see the discussion below.
They work because it is possible to choose large
subalgebras of these crossed products
which are of an accessible form,
such as a direct limit of recursive subhomogeneous algebras.
A strengthening of the condition
(centrally large subalgebras) permits further results
about the containing algebra;
this will also be treated elsewhere \cite{ArPh},~\cite{ArPh2}.

Large subalgebras (and centrally large subalgebras)
are an abstraction of the Putnam subalgebra
of the crossed product by a minimal homeomorphism.
Let $X$ be an infinite compact metric space,
and let $h \colon X \to X$ be a minimal homeomorphism.
Let $u$ be the standard unitary in the
crossed product $C^* (\Z, X, h)$.
Fix $y \in X$.
Then the Putnam subalgebra
of $C^* (\Z, X, h)$
is generated by $C (X)$ and all elements $f u$ with $f \in C (X)$
satisfying $f (y) = 0$.
This algebra was introduced by Putnam in~\cite{Pt1}
when $X$ is the Cantor set.
(Putnam actually used $u f$ rather than $f u$,
but this choice makes the
relationship with Rokhlin towers more awkward.)
In this case,
on the one hand,
the subalgebra is an AF~algebra,
while,
on the other hand,
it is closely enough related to $C^* (\Z, X, h)$
to use information about it to obtain information
about $C^* (\Z, X, h)$.

This method was used
in~\cite{LqP} and Section~4 of~\cite{Ph7}
to obtain information on the order on
$K_0 ( C^* (\Z, X, h) )$
for general \fd~$X$.
The Putnam subalgebra played a key role in~\cite{LhP},
in which it is proved that $C^* (\Z, X, h)$ has tracial rank zero
whenever this property is consistent
with its K-theory and $\dim (X) < \infty$,
and in~\cite{TW},
which gives classifiability of such crossed products
in some cases in which they don't have tracial rank zero.
The paper~\cite{TW} also required a generalization
in which one used two points $y_1$ and $y_2$
on distinct orbits of~$h$,
and in the definition used $f u$
for $f \in C (X)$ such that $f (y_1) = f (y_2) = 0$.
A more recent application appears in~\cite{EN2}.
Versions in which $f$ is required to vanish on
a larger subset are important in \cite{LM1} and~\cite{Sn}.
Further applications of such generalized Putnam algebras
will appear in~\cite{HPT}.
Particular examples of these subalgebras
have been studied in their own right in~\cite{FJLX}.
The subalgebra $A_{\te, \gm}$ considered there
(see the introduction) is large whenever
the zero set of the function $\gm$
intersects each orbit at most once.
Under similar conditions,
the algebras studied in~\cite{Sn2}
are large in the corresponding three dimensional noncommutative tori.

The abstraction to large subalgebras has four motivations.
The first is the use,
as described above, of subalgebras of $C^* (\Z, X, h)$
generated by $C (X)$ and the elements $f u$
with $f$ required to vanish on a subset with more
than one point,
but which meets each orbit of $h$ at most once.
The second is the generalization to
crossed products by automorphisms of $C (X, D)$
in~\cite{Bc}.
Let $X$ be an infinite compact metric space,
let $h \colon X \to X$ be a minimal homeomorphism,
let $D$ be a simple \uca{}
satisfying suitable additional conditions,
and let $\af \in \Aut (C (X, D))$
be an automorphism such that,
in terms of $C (X) \otimes D$,
we have $\af (f \otimes 1) = (f \circ h^{-1}) \otimes 1$
for all $f \in C (X)$.
Let $u \in C^* \big( \Z, \, C (X, D), \, \af)$
be the standard unitary in the \cp,
and fix $y \in Y$.
Then the subalgebra used is the one generated by
$C (X, D)$ and all $f u$ with $f \in C (X, D)$
satisfying $f (y) = 0$.

A third, stronger, motivation for the abstraction
is the construction of large subalgebras
in more general crossed products,
where the subalgebras don't have convenient descriptions.
Large subalgebras (without the name)
play a key role in~\cite{Ph10},
where they are used to prove that
if $\Z^d$ acts freely and minimally on the Cantor set~$X$,
then $C^* (\Z^d, X)$
has real rank zero,
stable rank one,
and order on projections determined by traces.
It is shown in~\cite{Ph11}
that if $X$ above is a finite dimensional compact metric space,
then $C^* (\Z^d, X)$ contains a large subalgebra
which is a simple direct limit,
with no dimension growth,
of recursive subhomogeneous \ca{s}.
Although this paper is still unpublished,
this was the first proof that, for such~$X$,
the crossed product has strict comparison of positive elements.
A more abstract version is needed because
there is no known easy description of the subalgebra;
rather, there is just an existence proof.

A fourth reason for the abstraction is the role played by
large subalgebras in~\cite{EN1}.
This paper considers \ca{s} obtained from irrational
rotation algebras by ``cutting'' each of the standard unitary generators
at one or more points in its spectrum,
say by adding logarithms of them or adding some
spectral projections.
The new algebras are shown to be~AF.
One of the technical tools is that
the original irrational rotation algebra
is a large subalgebra the new algebra.
In this case,
the containing algebra is not even given as a crossed product.

In this paper,
we prove the following results,
for an infinite dimensional simple unital \ca~$A$
and a stably large subalgebra $B \subset A$.
(All the large subalgebras discussed above are in fact stably large.)
\begin{enumerate}
\item\label{4814_Large_Simp}
$B$ is simple and infinite dimensional.
\item\label{4814_Large_Fin}
If $B$ is stably finite then so is~$A$,
and if $B$ is purely infinite then so is~$A$.
\item\label{4814_Large_TQT}
The restriction maps ${\operatorname{T}} (A) \to {\operatorname{T}} (B)$
and ${\operatorname{QT}} (A) \to {\operatorname{QT}} (B)$
(on tracial states and quasitraces) are bijective.
\item\label{4814_Large_Cu}
When $A$ is stably finite,
the inclusion of $B$ in $A$ induces an isomorphism
on the semigroups that remain after deleting
from ${\operatorname{Cu}} (B)$ and ${\operatorname{Cu}} (A)$
all the classes of nonzero projections.
\item\label{4814_Large_RC}
When $A$ is stably finite,
$B$ and $A$ have the same radius of comparison.
\end{enumerate}
At least heuristically,
the basic result is~(\ref{4814_Large_Cu}),
and the others follow from it.
We also show that the following basic example
is a large subalgebra.
Let $X$ be an infinite compact metric space,
let $h \colon X \to X$ be a minimal homeomorphism,
and let $Y \subset X$ be closed.
The {\emph{$Y$-orbit breaking subalgebra}} of $C^* (\Z, X, h)$
associated to $Y$
is the subalgebra generated by $C (X)$ and all $f u$
with $f \in C (X)$ and $f |_Y = 0$.
If $Y$ meets each orbit at most once,
we prove that this subalgebra is large in $C^* (\Z, X, h)$.

Stable rank one and $Z$-stability seem to require
the stronger condition of central largeness,
and will be treated in \cite{ArPh} and~\cite{ArPh2}.

We only define a large subalgebra $B \subset A$
when $A$ is simple.
If $A$ is not simple,
then also $B$ will not be simple,
and
one must be much more careful with what is means
for a positive element $g \in B$
(or a hereditary subalgebra of~$B$)
to be ``small''.
See the discussion after Definition~\ref{D_Large}.

This paper is organized as follows.
The first three sections are mainly about the Cuntz semigroup.
Section~\ref{Sec_CS}
gives some standard results on Cuntz comparison
and the Cuntz semigroup.
We have listed the results, but don't give proofs.
This section also contains some new lemmas on Cuntz comparison.
Among other things,
we need a relation between
$\langle a \rangle$,
$\langle g \rangle$,
and $\langle (1 - g) a (1 - g) \rangle$
for $a \geq 0$ and $0 \leq g \leq 1$,
as well as a version using $(a - \ep)_{+}$ etc.
In Section~\ref{Sec_CSimp},
we give some more specialized results,
related to Cuntz comparison in simple \ca{s}.
Section~\ref{Sec_PPE} is devoted to the
subsemigroup of purely positive elements
in the Cuntz semigroup of a stably finite simple \ca.
In particular,
in some ways this subsemigroup controls
the behavior of the entire Cuntz semigroup.

In Section~\ref{Sec_Dfn} we define large subalgebras,
stably large subalgebras,
and large subalgebras of crossed product type.
The examples used in applications
are mostly of crossed product type.
We will show in~\cite{ArPh}
that large subalgebras of crossed product type
are in fact centrally large.
We then give several convenient variants of the definition.
Section~\ref{Sec_FP} contains some basic properties
of large subalgebras.
They are simple and infinite dimensional.
If the containing algebras are stably finite,
then the minimal tensor product of large subalgebras is large.
In particular, if $B \subset A$ is large
and $A$ is stably finite,
then $M_n (B)$ is large in $M_n (A)$
for all~$n$ (that is, $B$ is stably large).
In Section~\ref{Sec_CuLg},
we prove out main results on stably large subalgebra,
as described above.
Section~\ref{Sec_VK} proves that the $Y$-orbit breaking subalgebra
of a minimal homeomorphism is large
when $Y$ meets each orbit at most once.

We thank George Elliott for questions which led to the realization
that our methods imply Theorem~\ref{T-2720CuSurj}
and Theorem~\ref{T-2725Inj}.
(See (\ref{4814_Large_Cu}) above.)
These statements are much more general and informative
than the original results.

We also thank Julian Buck, Mikael R{\o}rdam,
Andrew Toms,
and particularly Dawn Archey for useful comments,
and Leonel Robert for a number of references and suggestions.

\section{The Cuntz semigroup}\label{Sec_CS}

\indent
In this section, we give a brief summary of the Cuntz semigroup
and some known facts about Cuntz comparison
and the Cuntz semigroup.
We then give some apparently new results,
for example relating
\[
\langle (a - \ep)_{+} \rangle,
\,\,\,\,\,\,
\langle g \rangle,
\andeqn
\big\langle \big[ (1 - g) a (1 - g) - \ep \big]_{+} \big\rangle
\]
for $a \geq 0$ and $0 \leq g \leq 1$.
We further give proofs of results relating
Cuntz comparison to ideals and tensor products.
Finally,
we summarize known results about supremums in the Cuntz semigroup,
functionals, and quasitraces.

Let $M_{\infty} (A)$ denote the algebraic direct limit of the
system $(M_n (A))_{n = 1}^{\infty}$
using the usual embeddings $M_n (A) \to M_{n+1} (A)$,
given by
\[
a \mapsto \left( \begin{array}{cc} a & 0 \\ 0 & 0 \end{array} \right).
\]
If $a \in M_m (A)$ and $b \in M_n (A)$,
we write $a \oplus b$ for the diagonal direct sum
\[
a \oplus b
 = \left( \begin{array}{cc} a & 0 \\ 0 & b \end{array} \right).
\]
By abuse of notation,
we will also write $a \oplus b$ when $a, b \in M_{\infty} (A)$
and we do not care about the precise choice of $m$ and $n$
with $a \in M_m (A)$ and $b \in M_n (A)$.
We further choose some isomorphism $M_2 (K) \to K$,
and for $a, b \in K \otimes A$ we use the resulting isomorphim
$M_2 (K \otimes A) \to K \otimes A$ to interpret
$a \oplus b$ as an element of $K \otimes A$.
Up to unitary equivalence which is trivial on~$A$,
the result does not depend on the choice of
the isomorphism $M_2 (K) \to K$.

The main object of study in this paper is how
comparison in the Cuntz semigroup of a \ca~$A$
relates to comparison in the Cuntz semigroup of a
subalgebra~$B$ satisfying certain conditions.
We therefore include the algebra in the notation for
Cuntz comparison.

If $B$ is a \ca,
or if $B = \Mi (A)$ for a \ca~$A$,
we write $B_{+}$ for the set of positive elemnts of~$B$.

Parts (\ref{D:CzSGp:1}) and~(\ref{D:CzSGp:2})
of the following definition are originally from~\cite{Cz1}.

\begin{dfn}\label{D:CzSGp}
Let $A$ be a \ca.
\begin{enumerate}
\item\label{D:CzSGp:1}
For $a, b \in (K \otimes A)_{+}$,
we say that $a$ is {\emph{Cuntz subequivalent to~$b$
over~$A$}},
written $a \precsim_A b$,
if there is a sequence $(v_n)_{n = 1}^{\infty}$ in $K \otimes A$
such that
$\limi{n} v_n b v_n^* = a$.
\item\label{D:CzSGp:2}
We say that $a$ and $b$ are {\emph{Cuntz equivalent in~$A$}},
written $a \sim_A b$,
if $a \precsim_A b$ and $b \precsim_A a$.
This relation is an equivalence relation,
and we write $\langle a \rangle$ for the equivalence class of~$a$.
\item\label{D:CzSGp:3}
The {\emph{Cuntz semigroup}} of~$A$ is
\[
{\operatorname{Cu}} (A) = (K \otimes A)_{+} / \sim_A,
\]
together with the commutative semigroup operation
\[
\langle a \rangle + \langle b \rangle = \langle a \oplus b \rangle
\]
(the class does not depend on the choice of the isomorphism
$M_2 (K) \to K$)
and the partial order
\[
\langle a \rangle \leq \langle b \rangle \Leftrightarrow a \precsim_A b.
\]
It is taken to be an object of the category~${\mathbf{Cu}}$
given in Definition~4.1 of~\cite{APT}.
\item\label{D:CzSGp:4}
We also define the subsemigroup
\[
W (A) = M_{\infty} (A)_{+} / \sim_A,
\]
with the same operations and order.
(We will see in Remark~\ref{R-2727MnI} that the obvious
map $W (A) \to {\operatorname{Cu}} (A)$ is injective.)
We write $0$ for~$\langle 0 \rangle$.
\item\label{D:CzSGp:5}
Let $A$ and $B$ be C*-algebras,
and let $\ph \colon A \to B$ be a \hm.
We use the same letter for the induced maps
$M_n (A) \to M_n (B)$
for $n \in \N$,
$\Mi (A) \to \Mi (B)$,
and $K \otimes A \to K \otimes B$.
We define
${\operatorname{Cu}} (\ph)
 \colon {\operatorname{Cu}} (A) \to {\operatorname{Cu}} (B)$
and $W (\ph) \colon W (A) \to W (B)$
by $\langle a \rangle \mapsto \langle \ph (a) \rangle$
for $a \in (K \otimes A)_{+}$
or $M_{\infty} (A)_{+}$ as appropriate.
\end{enumerate}
\end{dfn}

It is easy to verify that,
in Definition~\ref{D:CzSGp},
the maps ${\operatorname{Cu}} (\ph)$ and $W (\ph)$ are well defined
homomorphisms of ordered semigroups which send
$0$ to~$0$.

The semigroup ${\operatorname{Cu}} (A)$ generally has better properties.
For example, certain supremums exist
(Theorem~4.19 of~\cite{APT};
see Theorem~\ref{T_2Y25_CxSup}(\ref{T_2Y25_CxSup_Ex}) below),
and, when understood as an object of the category~${\mathbf{Cu}}$,
it behaves properly
with respect to direct limits
(Theorem~4.35 of~\cite{APT}).
We will use $W (A)$ as well because,
when $A$ is unital,
the dimension function $d_{\ta}$
associated to a normalized quasitrace~$\ta$,
of Definition~\ref{D:dtau} below, is finite on $W (A)$,
but usually not on ${\operatorname{Cu}} (A)$.

We will not need the details
of the definition of the category~${\mathbf{Cu}}$.

\begin{rmk}\label{R-2727MnI}
We make the usual identifications
$A \subset M_n (A) \subset M_{\infty} (A) \subset K \otimes A$.
If $a, b \in A_{+}$ and $a \precsim_A b$,
then we claim that there
is a sequence $(v_n)_{n = 1}^{\infty}$ in~$A$
such that
$\limi{n} v_n b v_n^* = a$.
To see this,
choose a sequence $(w_n)_{n = 1}^{\infty}$ in $K \otimes A$
such that
$\limi{n} w_n b w_n^* = a$,
let
$(e_{j, k})_{j, k \in \N}$ be the standard system
of matrix units for~$K$,
and set $v_n = (e_{1, 1} \otimes 1) w_n (e_{1, 1} \otimes 1)$.

Similar reasoning
shows that if $a, b \in M_n (A)_{+}$ for some $n \in \N$,
then $(v_n)_{n = 1}^{\infty}$ can be taken to be in $M_n (A)$,
and similarly with $M_{\infty} (A)$ in place of $M_n (A)$.
(This also follows from Lemma 2.2(iii) of~\cite{KR}.)

If $a$ and $b$ are in any of
$A_{+}$, $M_n (A)_{+}$, $M_{\infty} (A)_{+}$, or $(K \otimes A)_{+}$
(not necessarily the same one for both),
we can thus write $a \precsim_A b$
(or $a \sim_A b$)
to mean that this relation holds in $K \otimes A$,
equivalently,
that this relation holds in the smallest of
$A$, $M_n (A)$, $M_{\infty} (A)$, or $K \otimes A$
which contains both $a$ and~$b$.
(This is the same convention as in Definition~2.1 of~\cite{KR}.)
\end{rmk}

\begin{dfn}\label{D:MinusEp}
Let $A$ be a \ca, let $a \in A_{+}$,
and let $\ep > 0$.
Let $f \colon [0, \infty) \to [0, \infty)$ be the function
\[
f (\lambda)
 = (\lambda - \ep)_{+}
 = \begin{cases}
     0               & \hspace{3em}  0 \leq \lambda \leq \ep  \\
     \lambda - \ep   & \hspace{3em} \ep < \lambda.
    \end{cases}
\]
Then define $(a - \ep)_{+} = f (a)$
(using \ct{} functional calculus).
\end{dfn}

The following lemma summarizes some of the known results
about Cuntz subequivalence that we need.
Most of it
is in Section~2 of~\cite{KR},
although not all of it is original there.
A warning on notation:
In~\cite{KR},
the notation $a \sim b$ means that there exists $c$ such that
$c^* c = a$ and $c c^* = b$,
while our $a \sim_A b$ is written $a \approx b$ in~\cite{KR}.
We denote by $A^{+}$ the unitization of a \ca~$A$.
(We add a new unit even if $A$ is already unital.)

\begin{lem}\label{L:CzBasic}
Let $A$ be a \ca.
\begin{enumerate}
\item\label{L:CzBasic:Her}
Let $a, b \in A_{+}$.
Suppose $a \in {\overline{b A b}}$.
Then $a \precsim_A b$.
\item\label{L:CzBasic:LCzOneWay}
Let $a \in A_{+}$ and let $f \colon [0, \infty) \to [0, \infty)$
be a \cfn{} such that $f (0) = 0$.
Then $f (a) \precsim_A a$.
\item\label{L:CzBasic:LCzFCalc}
Let $a \in A_{+}$ and let $f \colon [0, \, \| a \| ] \to [0, \infty)$
be a \cfn{} such that $f (0) = 0$ and $f (\ld) > 0$ for $\ld > 0$.
Then $f (a) \sim_A a$.
\item\label{L:CzBasic:LCzComm} 
Let $c \in A$.
Then $c^* c \sim_A c c^*$.
\item\label{L:CzBasic:3904_UE}
Let $a \in A_{+}$,
and let $u \in A^{+}$ be unitary.
Then $u a u^* \sim_A a$.
\item\label{L:CzBasic:LCzCommEp}
Let $c \in A$ and let $\af > 0$.
Then $(c^* c - \af)_{+} \sim_A (c c^* - \af)_{+}$.
\item\label{L:CzBasic:N5}
Let $v \in A$.
Then there is an isomorphism
$\ph \colon {\overline{v^* v A v^* v}}
   \to {\overline{v v^* A v v^*}}$
such that, for every positive element
$z \in {\overline{v^* v A v^* v}}$,
we have
$z \sim_A \ph (z)$.
\item\label{L:CzBasic:MinIter}
Let $a \in A_{+}$ and let $\ep_1, \ep_2 > 0$.
Then
\[
\big( ( a - \ep_1)_{+} - \ep_2 \big)_{+}
 = \big( a - ( \ep_1 + \ep_2 ) \big)_{+}.
\]
\item\label{L:CzBasic:N6}
Let $a, b \in A_{+}$ satisfy $a \precsim_A b$
and let $\dt > 0$.
Then there is $v \in A$
such that $v^* v = (a - \dt)_{+}$
and $v v^* \in {\overline{b A b}}$.
\item\label{L:CzBasic:LCzWithinEp} 
Let $a, b \in A_{+}$.
Then $\| a - b \| < \ep$ implies $(a - \ep)_{+} \precsim_A b$.
\item\label{L:CzBasic:LMinusEp} 
Let $a, b \in A_{+}$.
Then \tfae:
\begin{enumerate}
\item\label{L:CzBasic:LMinusEp:1}
$a \precsim_A b$.
\item\label{L:CzBasic:LMinusEp:2}
$(a - \ep)_{+} \precsim_A b$ for all $\ep > 0$.
\item\label{L:CzBasic:LMinusEp:3}
For every $\ep > 0$ there is $\dt > 0$ such that
$(a - \ep)_{+} \precsim_A (b - \dt)_{+}$.
\end{enumerate}
\item\label{L:CzBasic:LCzCmpSum} 
Let $a, b \in A_{+}$.
Then $a + b \precsim_A a \oplus b$.
\item\label{L:CzBasic:Orth}
Let $a, b \in A_{+}$ be orthogonal (that is, $a b = 0$).
Then $a + b \sim_A a \oplus b$.
\item\label{L:CzBasic:CmpDSum}
Let $a_1, a_2, b_1, b_2 \in A_{+}$,
and suppose that $a_1 \precsim_A a_2$ and $b_1 \precsim_A b_2$.
Then $a_1 \oplus b_1 \precsim_A a_2 \oplus b_2$.
\end{enumerate}
\end{lem}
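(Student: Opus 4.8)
The plan is to treat this lemma as a compendium: cite Section~2 of~\cite{KR} for the technical core, and derive the remaining parts from a short list of facts by routine manipulations. I would handle the algebraic parts first. Part~(\ref{L:CzBasic:MinIter}) is the identity of functions $\big( (t - \ep_1)_+ - \ep_2 \big)_+ = \big( t - (\ep_1 + \ep_2) \big)_+$ on $[0, \infty)$, verified on the three ranges $t \le \ep_1$, $\ep_1 < t \le \ep_1 + \ep_2$, $t > \ep_1 + \ep_2$, followed by continuous functional calculus. Part~(\ref{L:CzBasic:CmpDSum}) is immediate from Definition~\ref{D:CzSGp}(\ref{D:CzSGp:1}): if $a_i = \lim_n v_n^{(i)} b_i (v_n^{(i)})^*$, conjugate $b_1 \oplus b_2$ by $\diag \big( v_n^{(1)}, v_n^{(2)} \big)$. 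Part~(\ref{L:CzBasic:LCzComm}) I would prove directly: put $v_n = c (c^* c + \tfrac1n)^{-1/2} = (c c^* + \tfrac1n)^{-1/2} c \in A$ (the second equality from $c\, g(c^* c) = g(c c^*)\, c$); then $v_n (c^* c) v_n^* = (c c^*)^2 (c c^* + \tfrac1n)^{-1} \to c c^*$, so $c c^* \precsim_A c^* c$, and the reverse is the symmetric statement. From~(\ref{L:CzBasic:LCzComm}) the other algebraic parts follow: (\ref{L:CzBasic:3904_UE}) via $u a u^* = (u a^{1/2})(u a^{1/2})^* \sim_A (u a^{1/2})^* (u a^{1/2}) = a$; (\ref{L:CzBasic:LCzCommEp}) by taking $d = c\, g(c^* c)$ with $g(t) = \big( (t - \af)_+ \big)^{1/2} t^{-1/2}$ for $t > 0$ and $g(0) = 0$, for which $d^* d = (c^* c - \af)_+$ and $d d^* = (c c^* - \af)_+$; and (\ref{L:CzBasic:LCzCmpSum}), (\ref{L:CzBasic:Orth}) by $2 \times 2$ matrix computations together with~(\ref{L:CzBasic:LCzFCalc}) and~(\ref{L:CzBasic:CmpDSum}) (for~(\ref{L:CzBasic:LCzCmpSum}), conjugating $a^{1/2} \oplus b^{1/2}$ by the matrix with first row $(a^{1/4}, \, b^{1/4})$ and second row zero yields $a + b$, while $a^{1/2} \oplus b^{1/2} \sim_A a \oplus b$; for~(\ref{L:CzBasic:Orth}), when $a b = 0$ then $a^{1/2} b^{1/2} = 0$, and the element $v$ with first column $(a^{1/2}, \, b^{1/2})$ and second column zero has $v^* v = (a + b) \oplus 0$ and $v v^* = a \oplus b$).

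Next the hereditary-subalgebra parts. For~(\ref{L:CzBasic:Her}), since $\overline{b A b}$ is a hereditary C*-subalgebra containing $a \ge 0$ it contains $a^{1/2}$, and $a^{1/2} \in \overline{b A b} \subseteq \overline{b A}$ gives $a^{1/2} = \lim_n b d_n$ with $d_n \in A$, so $a = \lim_n d_n^* b^2 d_n$, whence $a \precsim_A b^2 \sim_A b$ by~(\ref{L:CzBasic:LCzFCalc}). Part~(\ref{L:CzBasic:LCzOneWay}) is the special case, since $f(0) = 0$ forces $f(a) \in \overline{a A a}$. Part~(\ref{L:CzBasic:LCzFCalc}) adds the converse: under its hypotheses, $a$ and $f(a)$ are both strictly positive elements of $C^*(a)$, hence generate the same hereditary subalgebra, so $a \in \overline{f(a) A f(a)}$ and~(\ref{L:CzBasic:Her}) applies. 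For~(\ref{L:CzBasic:N5}) I would use the polar decomposition $v = w |v|$ in $A^{**}$: conjugation by $w$ carries $\overline{v^* v A v^* v} = \overline{|v| A |v|}$ isomorphically onto $\overline{v v^* A v v^*}$, and for positive $z$ in the domain one has $z = (w z^{1/2})^* (w z^{1/2}) \sim_A (w z^{1/2})(w z^{1/2})^* = w z w^*$ by~(\ref{L:CzBasic:LCzComm}); I would cite~\cite{KR} for the precise statement.

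Finally the cutdown parts~(\ref{L:CzBasic:LCzWithinEp}), (\ref{L:CzBasic:LMinusEp}), (\ref{L:CzBasic:N6}), which are R\o rdam's lemmas and which I would take from Section~2 of~\cite{KR}. The shapes of the arguments are: for~(\ref{L:CzBasic:LCzWithinEp}), with $\ep_0 = \| a - b \| < \ep$ one has the operator inequality $b \ge a - \ep_0 \cdot 1$, and a continuous $g$ with $g = 0$ on $[0, \ep]$ and $g(t)^2 (t - \ep_0) = (t - \ep)_+$ for $t > \ep$ gives $(a - \ep)_+ = g(a)^2 (a - \ep_0 \cdot 1) \le g(a) b g(a) \precsim_A b$, using the standard fact that $0 \le x \le y$ implies $x \precsim_A y$. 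For~(\ref{L:CzBasic:LMinusEp}): (a) $\Rightarrow$ (b) is~(\ref{L:CzBasic:LCzOneWay}) and transitivity of $\precsim_A$; (b) $\Rightarrow$ (a) because for each $n$ one can choose $v$ with $\big\| v b v^* - (a - \tfrac{1}{2n})_{+} \big\| < \tfrac{1}{2n}$, hence $\| v b v^* - a \| < \tfrac1n$; (c) $\Rightarrow$ (b) is trivial; and (b) $\Rightarrow$ (c) combines~(\ref{L:CzBasic:LCzWithinEp}), (\ref{L:CzBasic:MinIter}) and the estimate $\big\| v \big( b - (b - \dt)_+ \big) v^* \big\| \le \| v \|^2 \dt$. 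For~(\ref{L:CzBasic:N6}): the sharpened cutdown lemma of~\cite{KR} gives, for $a \precsim_A b$ and $\dt > 0$, a number $\dt' > 0$ and an element $d$ with $(a - \dt)_+ = d (b - \dt')_+ d^*$; then $v = (b - \dt')_{+}^{1/2} d^*$ has $v^* v = (a - \dt)_+$ and $v v^* = (b - \dt')_{+}^{1/2} d^* d (b - \dt')_{+}^{1/2} \in \overline{(b - \dt')_+ A (b - \dt')_+} \subseteq \overline{b A b}$.

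The main obstacle, were one to insist on a self-contained account, is exactly this last group: the functional-calculus construction behind~(\ref{L:CzBasic:LCzWithinEp}), and above all the \emph{exact} factorization $(a - \dt)_+ = d (b - \dt')_+ d^*$ underlying~(\ref{L:CzBasic:N6}). Every other part is bookkeeping around~(\ref{L:CzBasic:Her}) and~(\ref{L:CzBasic:LCzComm}).
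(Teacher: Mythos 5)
Your proposal is correct and takes essentially the same route as the paper: the paper also treats this lemma as a compendium, proving it almost entirely by citation to Section~2 of~\cite{KR} (together with \cite{ERS}, \cite{PP}, and \cite{Rd0}), with only brief functional-calculus arguments for the parts on $f(a) \sim_A a$, unitary conjugation, and the cutdown factorization. The direct sketches you supply for the remaining parts are precisely the standard arguments behind those citations, and in particular your treatment of the last part --- an exact factorization $(a - \dt)_{+} = d (b - \dt')_{+} d^*$ followed by $v = [(b - \dt')_{+}]^{1/2} d^*$ --- is exactly the paper's argument via Proposition 2.4(iv) of~\cite{Rd0}.
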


\begin{proof}
Part~(\ref{L:CzBasic:Her}) is Proposition 2.7(i) of~\cite{KR}.
Part~(\ref{L:CzBasic:LCzOneWay}) is Lemma~2.2(i) of~\cite{KR}.
For part~(\ref{L:CzBasic:LCzFCalc}),
one sees easily that $a$ and $f (a)$ generate the same
hereditary subalgebra of~$A$.
The claim then follows from part~(\ref{L:CzBasic:Her}).

Part~(\ref{L:CzBasic:LCzComm})
is in the discussion after Definition~2.3 of~\cite{KR}.
For part~(\ref{L:CzBasic:3904_UE}),
set $c = u a^{1/2}$.
Then $c \in A$, $c^* c = a$, and $c c^* = u a u^*$.
Apply part~(\ref{L:CzBasic:LCzComm}).
Part~(\ref{L:CzBasic:LCzCommEp})
is Proposition 2.3(ii) of~\cite{ERS}.
(We are grateful to Julian Buck for pointing out this reference.)
Part~(\ref{L:CzBasic:N5})
is the last part of Lemma~3.8 of~\cite{PP}
(which is essentially 1.4 of~\cite{Cu0}).

Part~(\ref{L:CzBasic:MinIter}) is immediate
(and is Lemma~2.5(i) of~\cite{KR}).
For part~(\ref{L:CzBasic:N6}),
use the condition in Proposition 2.4(iv) of~\cite{Rd0}
to find $\rh > 0$ and $w \in A$
such that $w^* (b - \rh)_{+} w = (a - \dt)_{+}$.
Then take $v = [ (b - \rh)_{+} ]^{1/2} w$.
Part~(\ref{L:CzBasic:LCzWithinEp})
is Lemma~2.5(ii) of~\cite{KR}.

Part~(\ref{L:CzBasic:LMinusEp})
is contained in Proposition~2.6 of~\cite{KR}
(and in a slightly different form in
the earlier Proposition~2.4 of~\cite{Rd0}).
Part~(\ref{L:CzBasic:LCzCmpSum})
is Lemma 2.8(ii) of~\cite{KR},
Part~(\ref{L:CzBasic:Orth}) is Lemma 2.8(iii) of~\cite{KR},
and
Part~(\ref{L:CzBasic:CmpDSum}) is Lemma~2.9 of~\cite{KR}.
\end{proof}

We now collect a number of additional facts about Cuntz comparison.
Some are known, but we have not found references for them.
Others appear to be new.

\begin{lem}\label{L:CzSumEp}
Let $A$ be a \ca, let $a, b \in A$ be positive,
and let $\af, \bt \geq 0$.
Then
\[
\big( (a + b - (\af + \bt) \big)_{+}
   \precsim_A (a - \af)_{+} + (b - \bt)_{+}
   \precsim_A (a - \af)_{+} \oplus (b - \bt)_{+}.
\]
\end{lem}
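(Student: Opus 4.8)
The plan is to prove the two subequivalences separately, with the second being the easier one. For the second inequality, $(a - \af)_{+} + (b - \bt)_{+} \precsim_A (a - \af)_{+} \oplus (b - \bt)_{+}$, I would simply invoke Lemma~\ref{L:CzBasic}(\ref{L:CzBasic:LCzCmpSum}) applied to the positive elements $(a - \af)_{+}$ and $(b - \bt)_{+}$; there is nothing more to do there.

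The first inequality, $\big( a + b - (\af + \bt) \big)_{+} \precsim_A (a - \af)_{+} + (b - \bt)_{+}$, is the substantive part. The natural approach is a norm estimate combined with Lemma~\ref{L:CzBasic}(\ref{L:CzBasic:LCzWithinEp}) and the ``$(a-\ep)_+$ for all $\ep$'' characterization of subequivalence in Lemma~\ref{L:CzBasic}(\ref{L:CzBasic:LMinusEp}). First I would record the elementary scalar inequality: for $\lambda \geq 0$ one has $\lambda \leq (\lambda - \af)_{+} + \af$, and hence for positive $a$, $a \leq (a - \af)_{+} + \af \cdot 1$ in $A^{+}$; similarly $b \leq (b - \bt)_{+} + \bt \cdot 1$. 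Adding, $a + b \leq (a - \af)_{+} + (b - \bt)_{+} + (\af + \bt) \cdot 1$. Setting $c = (a - \af)_{+} + (b - \bt)_{+}$, this says $a + b - c \leq (\af + \bt)\cdot 1$. Since also $a + b - c \leq a + b$ is not automatically bounded below in a useful way, the cleaner route is: from $0 \leq a+b$ and $a + b \leq c + (\af+\bt)\cdot 1$ we get, by functional calculus applied to the commuting (scalar) perturbation, that $\big(a + b - (\af+\bt)\big)_{+} \leq $ ``the part of $a+b$ above $\af+\bt$''. More precisely, I would use the general fact that if $x, y \in A_+$ with $x \leq y + t\cdot 1$ for some $t \geq 0$, then $(x - t)_{+} \precsim_A y$: indeed $(x-t)_+ \le (y + t\cdot 1 - t)_+$ pointwise on the relevant order interval is not literally valid at the operator level, so instead one argues $\| (x-t)_+ - z \| $ is small for a suitable $z \in \overline{yAy}$, or cites this as a known consequence of Lemma~\ref{L:CzBasic}. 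Taking $x = a+b$, $t = \af + \bt$, $y = c$ then gives exactly the first subequivalence.

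The step I expect to be the main obstacle is precisely the passage from the operator inequality $a + b \leq c + (\af + \bt)\cdot 1$ to the Cuntz subequivalence $\big(a+b-(\af+\bt)\big)_+ \precsim_A c$, since functional calculus does not respect operator order in the naive pointwise way. The robust fix is to reduce to the case handled by Lemma~\ref{L:CzBasic}(\ref{L:CzBasic:LCzWithinEp}): given $\ep > 0$, choose a suitable continuous function and note that $\big(a+b-(\af+\bt+\ep)\big)_+$ lies within $\ep$ of something in $\overline{cAc}$, or alternatively invoke the standard lemma (e.g.\ Rørdam, or Lemma~2.5 of~\cite{KR} in spirit) that $x \le y + t\cdot 1$ implies $(x - t)_+ \precsim_A y$, and then finish with Lemma~\ref{L:CzBasic}(\ref{L:CzBasic:LMinusEp}) to remove the auxiliary $\ep$. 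Once that reduction is in place the rest is routine.
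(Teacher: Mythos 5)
Your argument is sound, but at the crucial step it follows a different route from the paper's. You go through the operator inequality $a + b \leq (a-\af)_{+} + (b-\bt)_{+} + (\af+\bt) \cdot 1$ and then invoke the fact that $x \leq y + t \cdot 1$ implies $(x - t)_{+} \precsim_A y$. That fact is true and standard, but it is not one of the items of Lemma~\ref{L:CzBasic} (and it is not Lemma~2.5 of~\cite{KR}, which is the norm-perturbation version), so it is the one step you would still need to prove or cite precisely. A two-line proof is the multiplication trick: from $x - t \leq y$, multiplying on both sides by $(x-t)_{+}^{1/2}$ gives $(x-t)_{+}^{2} \leq (x-t)_{+}^{1/2} y (x-t)_{+}^{1/2} \precsim_A y$, and $(x-t)_{+} \sim_A (x-t)_{+}^{2}$; this is exactly the argument the paper uses for Lemma~\ref{L:CzCompIneq} (which, applied to $x \leq y + t\cdot 1$ with $\ep = t$, yields your auxiliary lemma, modulo the harmless passage through the unitization), though that lemma appears after the present one, so in place one would simply write out the trick rather than forward-reference it. The paper itself avoids any such auxiliary fact: since $\|a - (a-\af)_{+}\| \leq \af$ and $\|b - (b-\bt)_{+}\| \leq \bt$, one has $\|(a+b) - [(a-\af)_{+} + (b-\bt)_{+}]\| < \af + \bt + \ep$, so Lemma~\ref{L:CzBasic}(\ref{L:CzBasic:MinIter}) and Lemma~\ref{L:CzBasic}(\ref{L:CzBasic:LCzWithinEp}) give $\big[\big((a+b)-(\af+\bt)\big)_{+} - \ep\big]_{+} \precsim_A (a-\af)_{+} + (b-\bt)_{+}$, and Lemma~\ref{L:CzBasic}(\ref{L:CzBasic:LMinusEp}) removes~$\ep$; this is essentially the norm-estimate variant you gestured at but did not carry out. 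Your order-inequality route, once the auxiliary lemma is in hand, is marginally more direct (no $\ep$-bookkeeping), while the paper's route stays entirely within the stated toolkit. Your treatment of the second subequivalence via Lemma~\ref{L:CzBasic}(\ref{L:CzBasic:LCzCmpSum}) coincides with the paper's.
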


Proposition 2.3(i) of~\cite{ERS}
contains a weaker version of this statement:
for every $\ep > 0$ there is $\dt > 0$
such that
$( a + b - \ep )_{+} \precsim_A (a - \dt)_{+} + (b - \dt)_{+}$.
This proposition also contains a converse:
for every $\ep > 0$ there is $\dt > 0$
such that
$(a - \ep)_{+} + (b - \ep)_{+} \precsim_A ( a + b - \dt )_{+}$.
(We are grateful to Julian Buck for pointing out this reference.)

\begin{proof}[Proof of Lemma~\ref{L:CzSumEp}]
By Lemma~\ref{L:CzBasic}(\ref{L:CzBasic:LMinusEp})
and Lemma~\ref{L:CzBasic}(\ref{L:CzBasic:LCzCmpSum}),
it suffices to prove that for every $\ep > 0$,
we have
\[
\big[ \big( (a + b) - (\af + \bt) \big)_{+} - \ep \big]_{+}
   \precsim_A (a - \af)_{+} + (b - \bt)_{+}.
\]

Let $\ep > 0$.
We have
\[
\| a - (a - \af)_{+} \| \leq \af
\andeqn
\| b - (b - \bt)_{+} \| \leq \bt,
\]
so
\[
\big\| a + b - \big[ (a - \af)_{+} + (b - \bt)_{+} \big] \big\|
 < \af + \bt + \ep.
\]
Therefore, using
Lemma~\ref{L:CzBasic}(\ref{L:CzBasic:MinIter}) at the first step
and
Lemma~\ref{L:CzBasic}(\ref{L:CzBasic:LCzWithinEp})
at the second step,
we have
\[
\big[ \big( (a + b) - (\af + \bt) \big)_{+} - \ep \big]_{+}
  = \big[ (a + b) - (\af + \bt + \ep) \big]_{+}
  \precsim_A (a - \af)_{+} + (b - \bt)_{+}.
\]
This completes the proof.
\end{proof}

The following corollary is a useful generalization of
Lemma~\ref{L:CzBasic}(\ref{L:CzBasic:LCzWithinEp})
and seems not to have been known.

\begin{cor}\label{C:MMvsM}
Let $A$ be a \ca,
and let $\ep > 0$ and $\ld \geq 0$.
Let $a, b \in A$ satisfy $\| a - b \| < \ep$.
Then $(a - \ld - \ep)_{+} \precsim_A (b - \ld)_{+}$.
\end{cor}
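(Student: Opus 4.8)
The plan is to deduce Corollary~\ref{C:MMvsM} from the following order‑theoretic fact about Cuntz comparison, which is where all the work lies: \emph{if $x, y$ are self‑adjoint elements of a \ca~$A$ with $x \le y$, then $x_{+} \precsim_A y_{+}$.} Granting this (and adjoining a unit to $A$ if necessary, which does not affect Cuntz subequivalence of positive elements of~$A$), the corollary is immediate. From $\| a - b \| < \ep$ we get $a - b \le \| a - b \| \cdot 1 < \ep \cdot 1$, hence $a - \ld - \ep \le b - \ld$ as self‑adjoint elements, and applying the fact with $x = a - \ld - \ep$ and $y = b - \ld$ yields $(a - \ld - \ep)_{+} \precsim_A (b - \ld)_{+}$. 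So the only real point is to prove the fact.

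To prove the fact: from $x \le y \le y_{+}$ and $x = x_{+} - x_{-}$ we get $x_{+} \le y_{+} + x_{-}$. The crux is to discard the term $x_{-}$, and the key is the orthogonality $x_{+} x_{-} = 0$. Fix $\dt > 0$, choose a continuous $f \colon [0, \infty) \to [0, 1]$ with $f (0) = 0$ and $f \equiv 1$ on $[\dt, \infty)$, and put $e = f (x_{+})$. Since $f (0) = 0$ and $x_{+} x_{-} = 0$, we have $e x_{-} = 0$, so compressing $x_{+} \le y_{+} + x_{-}$ by~$e$ gives $e x_{+} e \le e y_{+} e$; and since $f \equiv 1$ on $[\dt, \infty)$, evaluating on the spectrum of $x_{+}$ shows $(x_{+} - \dt)_{+} \le f (x_{+})^2 x_{+} = e x_{+} e$. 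Hence $(x_{+} - \dt)_{+} \le e y_{+} e$, so $(x_{+} - \dt)_{+} \precsim_A e y_{+} e$ by Lemma~\ref{L:CzBasic}(\ref{L:CzBasic:Her}) (together with the standard fact that $0 \le u \le v$ forces $u \in \overline{v A v}$). Finally, writing $c = y_{+}^{1/2} e$, so that $c^{*} c = e y_{+} e$ and $c c^{*} = y_{+}^{1/2} e^2 y_{+}^{1/2} \in \overline{y_{+} A y_{+}}$, Lemma~\ref{L:CzBasic}(\ref{L:CzBasic:LCzComm}) and~(\ref{L:CzBasic:Her}) give $e y_{+} e \precsim_A y_{+}$. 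Thus $(x_{+} - \dt)_{+} \precsim_A y_{+}$ for every $\dt > 0$, and Lemma~\ref{L:CzBasic}(\ref{L:CzBasic:LMinusEp}) gives $x_{+} \precsim_A y_{+}$.

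I expect the main obstacle to be conceptual: one is tempted to write $(a - \ld - \ep)_{+} = \big( (a - \ld)_{+} - \ep \big)_{+}$ (Lemma~\ref{L:CzBasic}(\ref{L:CzBasic:MinIter})) and then invoke Lemma~\ref{L:CzBasic}(\ref{L:CzBasic:LCzWithinEp}), but this requires $\| (a - \ld)_{+} - (b - \ld)_{+} \| \le \| a - b \|$, which is false, because $t \mapsto (t - \ld)_{+}$ is not operator Lipschitz with constant~$1$ (and, relatedly, $x \le y$ does \emph{not} imply $x_{+} \le y_{+}$). So the argument cannot be reduced to a norm estimate, and one genuinely has to work with Cuntz subequivalence; the single nontrivial step is the excision of $x_{-}$ in the fact above, for which exploiting $x_{+} x_{-} = 0$ via multiplication by a function of $x_{+}$ is essential. (Routing instead through Lemma~\ref{L:CzSumEp}, via $a + (a - b)_{-} = b + (a - b)_{+}$, still leaves one needing the same order‑theoretic fact to pass from $a + (a - b)_{-}$ back to~$a$, so the direct argument above is the more economical one.)
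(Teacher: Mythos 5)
Your proof is correct, but it takes a genuinely different route from the paper's. The paper deduces the corollary in two lines from Lemma~\ref{L:CzSumEp}: since $\| a - b \| < \ep$ forces $a - b + \ep \geq 0$ and $(a - b - \ep)_{+} = 0$, one writes $a + \ep = (a - b + \ep) + b$ and applies Lemma~\ref{L:CzSumEp} with $\af = 2 \ep$ and $\bt = \ld$, so that the discarded summand is exactly $(a - b - \ep)_{+} = 0$ and $(a - \ld - \ep)_{+} \precsim_A (b - \ld)_{+}$ drops out immediately. In particular your closing remark is off the mark: with this choice of decomposition (rather than your $a + (a-b)_{-} = b + (a-b)_{+}$) no order-theoretic fact is needed, and the corollary does reduce to norm estimates via Lemma~\ref{L:CzBasic}(\ref{L:CzBasic:LCzWithinEp}), which is what the proof of Lemma~\ref{L:CzSumEp} rests on; the only trap is applying the naive estimate directly to $(a - \ld)_{+} - (b - \ld)_{+}$, as you correctly observe. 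What your argument buys is a self-contained proof of the independently useful fact that $x \leq y$ implies $x_{+} \precsim_A y_{+}$ for self-adjoint $x, y$ (your excision of $x_{-}$ by compressing with $e = f(x_{+})$, using $e x_{-} = 0$ and $(x_{+} - \dt)_{+} \leq e x_{+} e$, is correct), and consequently a slightly more general statement: you need only $a, b$ self-adjoint, whereas the paper's route implicitly uses $b \geq 0$ (Lemma~\ref{L:CzSumEp} is stated for positive elements; the hypothesis ``$a, b \in A$'' must in any case be read as self-adjoint for the functional calculus to make sense). Your passage through the unitization and back is fine, being the standard fact that Cuntz subequivalence between positive elements of $A$ is unaffected by computing it in $A^{+}$.
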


\begin{proof}
The hypotheses imply $a - b + \ep \geq 0$
and $(a - b - \ep)_{+} = 0$.
Apply Lemma~\ref{L:CzSumEp}
with $a - b + \ep$ in place of~$a$,
with $b$ as given,
with $\af = 2 \ep$,
and with $\bt = \ld$,
getting
\[
(a - \ld - \ep)_{+}
  = \big[ (a - b + \ep) + b - (2 \ep + \ld) \big]_{+}
  \precsim_A (a - b - \ep)_{+} + (b - \ld)_{+}
  = (b - \ld)_{+}.
\]
This completes the proof.
\end{proof}

\begin{lem}\label{L:CzCompIneq}
Let $A$ be a \ca,
and let $a, b \in A$ satisfy $0 \leq a \leq b$.
Let $\ep > 0$.
Then $(a - \ep)_{+} \precsim_A (b - \ep)_{+}$.
\end{lem}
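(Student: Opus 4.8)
The plan is to build, for each small $\delta > 0$, a contraction $d \in A$ that conjugates $(b-\ep)_{+}$ to within $\delta + \ep$ of $a$, and then to absorb the error using the $\ep$-cut machinery of Lemma~\ref{L:CzBasic}. Working in $A^{+}$, fix $\delta > 0$ and set $d = a^{1/2} (b + \delta)^{-1/2}$; since $a^{1/2} \in A$ and $A$ is an ideal in $A^{+}$, we have $d \in A$. I would first record three facts. Conjugating $a \le b \le b + \delta$ by $(b+\delta)^{-1/2}$ gives $d^* d = (b+\delta)^{-1/2} a (b+\delta)^{-1/2} \le 1$, so $\|d\| \le 1$. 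The functional calculus identity $1 - b (b+\delta)^{-1} = \delta (b+\delta)^{-1}$ gives $a - d b d^* = a^{1/2} \big[ 1 - b (b+\delta)^{-1} \big] a^{1/2} = \delta\, d d^*$, hence $0 \le a - d b d^* \le \delta$. Finally, since $0 \le b - (b-\ep)_{+} \le \ep$, compressing by $d$ yields $0 \le d b d^* - d (b-\ep)_{+} d^* \le \ep\, d d^* \le \ep$; combining the last two estimates, $\| a - d (b-\ep)_{+} d^* \| \le \delta + \ep$.

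The rest is routine. Writing $c = d (b-\ep)_{+}^{1/2} \in A$, we get $c^* c = (b-\ep)_{+}^{1/2} (d^* d) (b-\ep)_{+}^{1/2} \le (b-\ep)_{+}$, so $c^* c$ lies in the hereditary subalgebra generated by $(b-\ep)_{+}$, and therefore $d (b-\ep)_{+} d^* = c c^* \sim_A c^* c \precsim_A (b-\ep)_{+}$ by Lemma~\ref{L:CzBasic}(\ref{L:CzBasic:LCzComm}) and~(\ref{L:CzBasic:Her}). Then for any $\mu > \delta + \ep$, Lemma~\ref{L:CzBasic}(\ref{L:CzBasic:LCzWithinEp}) applied to $a$ and $d (b-\ep)_{+} d^*$ gives $(a - \mu)_{+} \precsim_A d (b-\ep)_{+} d^* \precsim_A (b-\ep)_{+}$. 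Since $\delta > 0$ was arbitrary, this holds for every $\mu > \ep$; that is, using Lemma~\ref{L:CzBasic}(\ref{L:CzBasic:MinIter}), $\big( (a-\ep)_{+} - \eta \big)_{+} \precsim_A (b-\ep)_{+}$ for every $\eta > 0$, and the equivalence of~(\ref{L:CzBasic:LMinusEp:1}) and~(\ref{L:CzBasic:LMinusEp:2}) in Lemma~\ref{L:CzBasic}(\ref{L:CzBasic:LMinusEp}), applied to the pair $(a-\ep)_{+}$, $(b-\ep)_{+}$, gives $(a-\ep)_{+} \precsim_A (b-\ep)_{+}$.

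The one point requiring care — and the reason one cannot simply assert $(a-\ep)_{+} \le (b-\ep)_{+}$, since $\ld \mapsto (\ld - \ep)_{+}$ is not operator monotone — is keeping the two independent error terms straight: the $\delta$ from using the resolvent $(b+\delta)^{-1/2}$ in place of a (possibly nonexistent) inverse square root of $b$, and the $\ep$ from replacing $b$ by $(b-\ep)_{+}$ before the compression. Performing the $\ep$-cut on $b$ \emph{before}, not after, conjugating by $d$ is exactly what makes the final Cuntz domination land on $(b-\ep)_{+}$ rather than on $b$; the $\delta$ is then harmless, being driven to $0$ in the last step.
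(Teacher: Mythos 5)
Your proof is correct, but it follows a genuinely different route from the paper's. The proof in the paper (suggested by Leonel Robert) is a three-line sandwich argument: multiply $a - \ep \leq b - \ep \leq (b - \ep)_{+}$ on both sides by $(a - \ep)_{+}$ to get $[(a - \ep)_{+}]^3 \leq (a - \ep)_{+} (b - \ep)_{+} (a - \ep)_{+}$, and then conclude
$(a - \ep)_{+} \sim_A [(a - \ep)_{+}]^3 \leq (a - \ep)_{+} (b - \ep)_{+} (a - \ep)_{+} \precsim_A (b - \ep)_{+}$,
using only Lemma~\ref{L:CzBasic}(\ref{L:CzBasic:LCzFCalc}) and Lemma~\ref{L:CzBasic}(\ref{L:CzBasic:Her}); there is no unitization, no approximation, and no limiting argument. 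You instead adapt the classical R{\o}rdam-style proof that $0 \leq a \leq b$ implies $a \precsim_A b$: the contraction $d = a^{1/2} (b + \dt)^{-1/2}$ in $A^{+}$, the identity $a - d b d^* = \dt\, d d^*$, the compression estimate after cutting $b$ at~$\ep$, and then the passage $\dt \to 0$ via Lemma~\ref{L:CzBasic}(\ref{L:CzBasic:LCzWithinEp}), (\ref{L:CzBasic:MinIter}), and the equivalence in~(\ref{L:CzBasic:LMinusEp}). All your estimates check out: performing the $\ep$-cut on $b$ before compressing is indeed what makes the domination land on $(b - \ep)_{+}$, the bound $\| a - d (b - \ep)_{+} d^* \| \leq \dt + \ep$ is strict against any $\mu > \dt + \ep$ as required by~(\ref{L:CzBasic:LCzWithinEp}), and the step $c^* c \leq (b - \ep)_{+}$ placing $c^* c$ in the hereditary subalgebra generated by $(b - \ep)_{+}$ is the standard fact underlying~(\ref{L:CzBasic:Her}). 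What your approach buys is an explicit, element-by-element argument that makes visible exactly where the two error parameters enter; what the paper's buys is brevity, and freedom from the unitization and the two-parameter bookkeeping you rightly flag as the delicate point.
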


It is usually not true that $(a - \ep)_{+} \leq (b - \ep)_{+}$.

The following proof, which considerably simplifies our original proof,
was suggested by Leonel Robert, and is used here with his permission.

\begin{proof}[Proof of Lemma~\ref{L:CzCompIneq}]
Multiply the inequality
\[
a - \ep \leq b - \ep \leq (b - \ep)_{+}
\]
on both sides by $(a - \ep)_{+}$,
and use $(a - \ep)_{+} (a - \ep) (a - \ep)_{+} = [(a - \ep)_{+}]^3$,
to get the second step in the following computation:
\[
(a - \ep)_{+}
  \sim_A [(a - \ep)_{+}]^3
  \leq (a - \ep)_{+} (b - \ep)_{+} (a - \ep)_{+}
  \precsim_A (b - \ep)_{+}.
\]
This is the required result.
\end{proof}

\begin{lem}\label{L:C2}
Let $A$ be a \ca,
let $a, g \in A_{+}$ with $0 \leq g \leq 1$,
and let $\ep > 0$.
Then
\[
(a - \ep)_{+}
  \precsim_A \big[ (1 - g) a (1 - g) - \ep \big]_{+} \oplus g.
\]
\end{lem}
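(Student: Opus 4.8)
The plan is to split $a$ by a square-root trick so that one piece is Cuntz equivalent to $(1-g) a (1-g)$ and the other lies in the hereditary subalgebra generated by~$g$. Since $0 \le g \le 1$ we have $2g - g^2 = g (2 - g) \ge 0$, so, passing to the unitization $A^{+}$ where convenient, I would set $c = a^{1/2} (1 - g)$ and $d = a^{1/2} (2g - g^2)^{1/2}$, both of which are in~$A$. A direct computation gives $c^* c = (1 - g) a (1 - g)$, $c c^* = a^{1/2} (1 - g)^2 a^{1/2}$, and $d d^* = a^{1/2} (2g - g^2) a^{1/2}$; since $(1 - g)^2 + (2g - g^2) = 1$, this yields the key identity $a = c c^* + d d^*$.

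Next I would feed this decomposition into Lemma~\ref{L:CzSumEp} with $\af = \ep$ and $\bt = 0$, getting
\[
(a - \ep)_{+} = \big( c c^* + d d^* - \ep \big)_{+}
  \precsim_A (c c^* - \ep)_{+} + d d^*
  \precsim_A (c c^* - \ep)_{+} \oplus d d^* .
\]
I would then treat the two summands separately. For the first, Lemma~\ref{L:CzBasic}(\ref{L:CzBasic:LCzCommEp}) gives $(c c^* - \ep)_{+} \sim_A (c^* c - \ep)_{+} = \big[ (1 - g) a (1 - g) - \ep \big]_{+}$. For the second, Lemma~\ref{L:CzBasic}(\ref{L:CzBasic:LCzComm}) gives $d d^* \sim_A d^* d = (2g - g^2)^{1/2} a (2g - g^2)^{1/2}$, which lies in $\overline{(2g - g^2)^{1/2} A (2g - g^2)^{1/2}}$ and hence satisfies $d d^* \precsim_A (2g - g^2)^{1/2}$ by Lemma~\ref{L:CzBasic}(\ref{L:CzBasic:Her}); finally $(2g - g^2)^{1/2} \sim_A g$ by Lemma~\ref{L:CzBasic}(\ref{L:CzBasic:LCzFCalc}) applied to the function $t \mapsto \sqrt{2t - t^2}$, which is continuous on $[0, \| g \|] \subseteq [0, 1]$, vanishes at~$0$, and is strictly positive on $(0, \| g \|]$. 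Combining these estimates with Lemma~\ref{L:CzBasic}(\ref{L:CzBasic:CmpDSum}) gives
\[
(a - \ep)_{+}
  \precsim_A (c c^* - \ep)_{+} \oplus d d^*
  \precsim_A \big[ (1 - g) a (1 - g) - \ep \big]_{+} \oplus g ,
\]
which is the assertion.

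I expect the only real obstacle to be spotting the decomposition $a = c c^* + d d^*$ with $c^* c = (1 - g) a (1 - g)$; it is forced by the elementary identity $(1 - g)^2 + (2g - g^2) = 1$ together with the standard device of conjugating by $a^{1/2}$, but there is no other obvious candidate. Everything afterwards is routine bookkeeping with the toolbox of Lemma~\ref{L:CzBasic} and Lemma~\ref{L:CzSumEp}; in particular there is no subtlety in tracking the $\ep$, since Lemma~\ref{L:CzSumEp} is precisely the device for pushing $(\,\cdot\, - \ep)_{+}$ through a sum, and the summand coming from~$g$ needs no $\ep$ at all.
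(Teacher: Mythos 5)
Your proof is correct and follows essentially the same route as the paper: both rest on the identity $(1-g)^2 + (2g - g^2) = 1$, the decomposition $a = a^{1/2}(1-g)^2a^{1/2} + a^{1/2}(2g-g^2)a^{1/2}$, an application of Lemma~\ref{L:CzSumEp} with the $\ep$ placed entirely on the first summand, and Lemma~\ref{L:CzBasic}(\ref{L:CzBasic:LCzCommEp}) to convert that summand into $\big[(1-g)a(1-g)-\ep\big]_{+}$. The only (immaterial) divergence is in disposing of the second summand, where you use Lemma~\ref{L:CzBasic}(\ref{L:CzBasic:LCzComm}) together with (\ref{L:CzBasic:Her}) and functional calculus on $(2g-g^2)^{1/2}$, while the paper bounds $h^{1/2}ah^{1/2} \leq \|a\|\,h$ and uses $h \sim_A g$.
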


\begin{proof}
Set $h = 2 g - g^2$,
so that $(1 - g)^2 = 1 - h$.
We claim that $h \sim_A g$.
Since $0 \leq g \leq 1$,
this follows from Lemma~\ref{L:CzBasic}(\ref{L:CzBasic:LCzFCalc}),
using the \cfn{}
\[
\ld \mapsto \begin{cases}
     2 \ld - \ld^2   & \hspace{3em}  0 \leq \ld \leq 1 \\
         1           & \hspace{3em}  1 < \ld.
    \end{cases}
\]

Set $b = \big[ (1 - g) a (1 - g) - \ep \big]_{+}$.
Using Lemma~\ref{L:CzSumEp} at the second step,
Lemma~\ref{L:CzBasic}(\ref{L:CzBasic:LCzCommEp})
and Lemma~\ref{L:CzBasic}(\ref{L:CzBasic:LCzComm}) at the third step, 
and Lemma~\ref{L:CzBasic}(\ref{L:CzBasic:CmpDSum}) at the last step,
we get
\begin{align*}
( a - \ep )_{+}
& = \big[ a^{1/2} (1 - h) a^{1/2} + a^{1/2} h a^{1/2} - \ep \big]_{+}
     \\
& \precsim_A \big[ a^{1/2} (1 - h) a^{1/2} - \ep \big]_{+}
         \oplus a^{1/2} h a^{1/2}
     \\
& \sim_A \big[ (1 - g) a (1 - g) - \ep \big]_{+}
         \oplus h^{1/2} a h^{1/2}
     \\
& = b \oplus h^{1/2} a h^{1/2}
  \leq b \oplus \| a \| h
  \precsim_A b \oplus g.
\end{align*}
This completes the proof.
\end{proof}

\begin{lem}\label{L-2720KToMn}
Let $A$ be a \ca,
and let $a \in (K \otimes A)_{+}$.
Then for every $\ep > 0$
there are $n \in \N$ and $b \in (M_n \otimes A)_{+}$
such that $(a - \ep)_{+} \sim_A b$.
\end{lem}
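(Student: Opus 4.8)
The plan is to exploit that $K \otimes A$ is the closure of $\bigcup_{n} M_n \otimes A$, and to use Lemma~\ref{L:CzBasic}(\ref{L:CzBasic:N6}), which realizes a Cuntz class \emph{exactly} inside a hereditary subalgebra of a given element rather than merely up to an approximation error.

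First I would introduce the projection $p_n = \sum_{i = 1}^{n} e_{i, i} \otimes 1$ (a multiplier of $K \otimes A$) and record the routine estimate $\| p_n a p_n - a \| \to 0$ as $n \to \infty$: given $\dt > 0$, choose $m$ and $a_0 \in M_m \otimes A$ with $\| a - a_0 \| < \dt / 2$, and note $p_n a_0 p_n = a_0$ whenever $n \geq m$, so $\| p_n a p_n - a \| \leq 2 \| a - a_0 \| < \dt$. So I fix $n$ with $\| p_n a p_n - a \| < \ep / 2$ and set $c = p_n a p_n$. Then $c \in (M_n \otimes A)_{+}$ because $p_n$ is a self-adjoint idempotent, and, since $M_n \otimes A = p_n (K \otimes A) p_n$ is a corner of $K \otimes A$ and $c = p_n c p_n$, we get $\overline{c (K \otimes A) c} \subseteq M_n \otimes A$.

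Now Lemma~\ref{L:CzBasic}(\ref{L:CzBasic:LCzWithinEp}), applied with $\ep/2$ in place of~$\ep$, gives $(a - \ep/2)_{+} \precsim_A c$. I would then apply Lemma~\ref{L:CzBasic}(\ref{L:CzBasic:N6}) with $(a - \ep/2)_{+}$ in place of~$a$, with $c$ in place of~$b$, and with $\dt = \ep/2$, to obtain $v \in K \otimes A$ with $v^* v = \big( (a - \ep/2)_{+} - \ep/2 \big)_{+}$ and $v v^* \in \overline{c (K \otimes A) c}$. By Lemma~\ref{L:CzBasic}(\ref{L:CzBasic:MinIter}) we have $v^* v = (a - \ep)_{+}$, and by the previous paragraph $v v^* \in M_n \otimes A$. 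Hence $b := v v^*$ lies in $(M_n \otimes A)_{+}$ and, by Lemma~\ref{L:CzBasic}(\ref{L:CzBasic:LCzComm}), $(a - \ep)_{+} = v^* v \sim_A v v^* = b$, which is exactly the assertion.

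The only point requiring care — and the step I expect to be the real obstacle — is to arrange that the compression lands in the \emph{corner} $p_n (K \otimes A) p_n = M_n \otimes A$, so that the hereditary subalgebra $\overline{c (K \otimes A) c}$ stays inside $M_n \otimes A$; this is what converts the approximate relation $(a - \ep/2)_{+} \precsim_A c$ into an exact Cuntz equivalence for $(a - \ep)_{+}$. A more naive attempt — approximating $a$ by some $a_0 \in M_n \otimes A$ and taking $b = (a_0 - \dt)_{+}$ — only sandwiches $\langle (a - \ep)_{+} \rangle$ between two classes coming from $M_n \otimes A$ and cannot pin down the parameter, so the use of Lemma~\ref{L:CzBasic}(\ref{L:CzBasic:N6}) is essential.
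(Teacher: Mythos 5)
Your argument is correct and follows essentially the same route as the paper: approximate $a$ by a positive element $c \in (M_n \otimes A)_{+}$ (you take $c = p_n a p_n$; the paper takes any $c$ with $\| c - a \| < \ep$), and then use a form of R{\o}rdam's lemma to realize $(a - \ep)_{+}$, up to Cuntz equivalence, as a positive element of ${\overline{c (K \otimes A) c}} \subset M_n \otimes A$. The only cosmetic difference is that the paper invokes Lemma~2.2 of~\cite{KR2} directly, obtaining $d$ with $d^* c d = (a - \ep)_{+}$ from $\| c - a \| < \ep$ without halving~$\ep$, whereas you route through Lemma~\ref{L:CzBasic}(\ref{L:CzBasic:LCzWithinEp}), Lemma~\ref{L:CzBasic}(\ref{L:CzBasic:N6}), and Lemma~\ref{L:CzBasic}(\ref{L:CzBasic:MinIter}) at the cost of splitting $\ep$ into two halves.
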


We thank Leonel Robert for suggesting the statement,
which strengthens our original statement,
and the proof.
The result seems to be well known,
but we have not found a proof in the literature.

\begin{proof}[Proof of Lemma~\ref{L-2720KToMn}]
Choose $n \in \N$ and $c \in (M_n \otimes A)_{+}$
such that $\| c - a \| < \ep$.
By Lemma~2.2 of~\cite{KR2},
there is $d \in K \otimes A$ such that $d^* c d = (a - \ep)_{+}$.
Set $b = c^{1/2} d d^* c^{1/2}$.
Then $b \in (M_n \otimes A)_{+}$.
Using Lemma~\ref{L:CzBasic}(\ref{L:CzBasic:LCzComm}) at the first step,
we get
$b \sim_A d^* c d = (a - \ep)_{+}$.
\end{proof}

\begin{lem}\label{L-2726IdemC}
Let $A$ be a \ca,
let $b, c \in A_{+}$ satisfy $b c = c$,
and let $\bt \in [0, 1)$.
Then there is $\gm > 0$ such that
$c \leq \gm (b - \bt)_{+}$.
\end{lem}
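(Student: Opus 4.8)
The plan is to obtain the inequality from continuous functional calculus applied to~$b$, after choosing the calculus function with a little care. First I would dispose of the trivial case $c = 0$ (take $\gm = 1$), and assume henceforth that $c \neq 0$; then $\| c \| = \| b c \| \leq \| b \| \, \| c \|$ forces $\| b \| \geq 1$, a fact that is needed below.

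Next I would extract the algebraic content of the hypothesis $b c = c$. Iterating it gives $b^{n} c = c$ for all $n \in \N$, whence $p (b) c = p (1) c$ for every polynomial $p$ with $p (0) = 0$. Since $1 \in [0, \| b \|]$, uniform approximation by such polynomials upgrades this to $f (b) c = f (1) c$ for every \cfn{} $f \colon [0, \| b \|] \to \C$ with $f (0) = 0$; and because $b$ and $c$ are self-adjoint, taking adjoints also yields $c f (b) = \overline{f (1)} \, c$ when $f$ is real-valued.

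Now I would take
\[
f (\ld) = \min \bigl( 1, \, (1 - \bt)^{-1} (\ld - \bt)_{+} \bigr),
\qquad \ld \geq 0 .
\]
This $f$ is continuous, satisfies $f (0) = 0$ and $f (1) = 1$ (here $\bt < 1$ is used), obeys $0 \leq f \leq 1$, and --- the key point --- satisfies $f (\ld) \leq (1 - \bt)^{-1} (\ld - \bt)_{+}$ for all $\ld \geq 0$. By the previous paragraph, $f (b) c = c = c f (b)$, so, working in $A^{+}$,
\[
c = f (b) \, c \, f (b)
  \leq \| c \| \, f (b)^{2}
  \leq \| c \| \, f (b)
  \leq \frac{\| c \|}{1 - \bt} \, (b - \bt)_{+} ,
\]
using $c \leq \| c \| \cdot 1$ at the first inequality, $f (b)^{2} \leq f (b) \leq 1$ at the second, and the pointwise bound on~$f$ at the third. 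Hence $\gm = \| c \| (1 - \bt)^{-1}$ works (or $\max \bigl( 1, \, \| c \| (1 - \bt)^{-1} \bigr)$, to keep $\gm > 0$ uniformly over the case distinction).

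I do not expect a genuine obstacle. The one point to watch is the passage from $b^{n} c = c$ to $f (b) c = f (1) c$ for continuous~$f$: it requires $1$ to lie in the interval on which the polynomials approximating~$f$ are compared, i.e. $\| b \| \geq 1$, and that is precisely what the reduction to the case $c \neq 0$ supplies.
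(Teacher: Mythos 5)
Your proof is correct and follows essentially the same route as the paper: reduce to $c \neq 0$ (so $\| b \| \geq 1$), show $f (b) c = c f (b) = f (1) c$ for continuous $f$ via polynomial approximation from $b^k c = c b^k = c$, then apply this with a function supported where $\ld > \bt$ to get $c \leq (1 - \bt)^{-1} \| c \| \, (b - \bt)_{+}$. The only difference is cosmetic: the paper uses $f (\ld) = [ (\ld - \bt)_{+} ]^{1/2}$ and the identity $(1 - \bt) c = f (b) c f (b)$, while you use a capped linear function and the extra inequality $f (b)^2 \leq f (b)$, arriving at the same constant.
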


\begin{proof}
\Wolog{} $c \neq 0$,
so $\| b \| \geq 1$.
We claim that if $f \colon [0, \, \| b \|] \to [0, \infty)$
is any \cfn,
then $f (b) c = c f (b)= f (1) c$.
By continuity,
it suffices to prove the claim when $f$ is a polynomial.
This case follows from the relation
$b^k c = c b^k = c$ for all $k \in \Nz$.

Apply the claim with the function
$f ( \ld ) = [ ( \ld - \bt )_{+} ]^{1/2}$ for $\ld \in [0, \infty)$.
We get
\[
(1 - \bt) c = f (b) c f (b) \leq \| c \| (b - \bt)_{+}.
\]
The lemma is then proved by taking $\gm = (1 - \bt)^{-1} \| c \|$.
\end{proof}

\begin{lem}\label{L_4624_CuTr}
Let $A$ and $B$ be \ca{s},
and let $A \otimes B$ denote any C*~tensor product.
Let $a_1, a_2 \in (K \otimes A)_{+}$ and let $b \in (K \otimes B)_{+}$.
If $\langle a_1 \rangle \leq \langle a_2 \rangle$
in ${\operatorname{Cu}} (A)$,
then $\langle a_1 \otimes b \rangle \leq \langle a_2 \otimes b \rangle$
in ${\operatorname{Cu}} (A \otimes B)$.
\end{lem}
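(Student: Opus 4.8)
The plan is to unwind the definition of Cuntz subequivalence and transport the witnessing sequence through the tensor product, compensating for the possible nonunitality of~$B$ by a functional calculus truncation. By Lemma~\ref{L:CzBasic}(\ref{L:CzBasic:LMinusEp}) it suffices to prove that $\big( a_1 \otimes b - \ep \big)_{+} \precsim_{A \otimes B} a_2 \otimes b$ for every $\ep > 0$. Since $\big\| a_1 \otimes b - a_1 \otimes (b - \dt)_{+} \big\| \leq \| a_1 \| \, \dt$, Lemma~\ref{L:CzBasic}(\ref{L:CzBasic:LCzWithinEp}) reduces this further: it is enough to produce, for every $\dt > 0$, a subequivalence $a_1 \otimes (b - \dt)_{+} \precsim_{A \otimes B} a_2 \otimes b$, and then, given $\ep > 0$, to choose $\dt$ with $\| a_1 \| \, \dt < \ep$ (the case $a_1 = 0$ being trivial) and compose.

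To get $a_1 \otimes (b - \dt)_{+} \precsim_{A \otimes B} a_2 \otimes b$, I would start from $a_1 \precsim_A a_2$, which by Definition~\ref{D:CzSGp}(\ref{D:CzSGp:1}) gives a sequence $(v_n)_{n=1}^{\infty}$ in $K \otimes A$ with $v_n a_2 v_n^* \to a_1$, and introduce the continuous function $f \colon [0, \infty) \to [0, \infty)$ with $f(0) = 0$ given by $f(\ld) = \big( (\ld - \dt)_{+} / \ld \big)^{1/2}$ for $\ld > 0$. This $f$ is continuous (it vanishes on $[0, \dt]$) and satisfies $f(\ld)^2 \ld = (\ld - \dt)_{+}$, so by the continuous functional calculus $f(b) \, b \, f(b) = (b - \dt)_{+}$. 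Setting $x_n = v_n \otimes f(b)$, viewed as an element of $K \otimes (A \otimes B)$ via the usual identification $K \otimes K \cong K$, one computes
\[
x_n (a_2 \otimes b) x_n^*
 = \big( v_n a_2 v_n^* \big) \otimes \big( f(b) \, b \, f(b) \big)
 = \big( v_n a_2 v_n^* \big) \otimes (b - \dt)_{+}
 \longrightarrow a_1 \otimes (b - \dt)_{+},
\]
which is exactly the required subequivalence.

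I do not expect a serious obstacle: once the truncation $f(b)$ is in place the argument is essentially bookkeeping. The two points deserving care are (i) the tensor product identifications — one should record that $K \otimes (A \otimes B)$ is canonically identified with $(K \otimes A) \otimes (K \otimes B)$, which is legitimate for an arbitrary C*-tensor norm on $A \otimes B$ because $K$ is nuclear — and (ii) the nonunitality of~$B$, which is precisely why one cannot take $x_n = v_n \otimes 1_B$ and must instead insert $f(b)$ and route the argument through the $(\,\cdot\, - \ep)_{+}$ reformulations of Lemma~\ref{L:CzBasic}(\ref{L:CzBasic:LMinusEp}) and Lemma~\ref{L:CzBasic}(\ref{L:CzBasic:LCzWithinEp}). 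When $b$ is, say, a projection, $f$ can be taken to be $1$ on the spectrum and the argument collapses to the naive one.
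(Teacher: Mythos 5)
Your proof is correct and follows essentially the same route as the paper: both arguments transport a witness for $a_1 \precsim_A a_2$ across the tensor product by tensoring it with a suitable element of $C^*(b)$, the paper using an approximate identity element $y$ with $\| y b y - b \|$ small and a direct norm estimate for the definition of $\precsim$, while you use the exact truncation $f(b)$ with $f(b)\, b\, f(b) = (b - \dt)_{+}$ and then recover $a_1 \otimes b$ via Lemma~\ref{L:CzBasic}(\ref{L:CzBasic:LCzWithinEp}) and Lemma~\ref{L:CzBasic}(\ref{L:CzBasic:LMinusEp}). The tensor-factor identification you flag in (i) is exactly the identification the paper also uses implicitly when it replaces $A$ by $K \otimes A$ and $B$ by $K \otimes B$, so your treatment is on the same footing.
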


\begin{proof}
Replacing $A$ with $K \otimes A$
and $B$ with $K \otimes B$,
we see that it is enough to show that if
$a_1, a_2 \in A_{+}$ satisfy $a_1 \precsim_A a_2$,
and if $b \in B_{+}$,
then $a_1 \otimes b \precsim_{A \otimes B} a_2 \otimes b$.

Let $\ep > 0$.
We find $z \in A \otimes B$
such that
$\big\| z^* (a_2 \otimes b) z - a_1 \otimes b \big\| < \ep$.
Set
\[
\dt = \frac{\ep}{\| a_1 \| + \| b \| + 1}.
\]
Using an approximate identity for~$B$,
find $y \in B_{+}$
such that $\| y \| \leq 1$
and $\| y b y - b \| < \dt$.
By definition,
there is $x \in A$ such that $\| x^* a_2 x - a_1 \| < \dt$.
Set $z = x \otimes y$.
Then, using $\| y \| \leq 1$,
we get
\begin{align*}
\big\| z^* (a_2 \otimes b) z - a_1 \otimes b \big\|
& = \big\| x^* a_2 x \otimes y b y - a_1 \otimes b \big\|
\\
& \leq \| x^* a_2 x - a_1 \| \cdot \| y b y \|
          + \| a_1 \| \cdot \| y b y - b \|
\\
&  \leq \dt \| b \| + \| a_1 \| \dt
  < \ep.
\end{align*}
This completes the proof.
\end{proof}

The next several lemmas
will be used to relate Cuntz comparison and ideals.
See Proposition~\ref{P-2728CzIdeal}.

\begin{lem}\label{L_2627_SumDom}
Let $A$ be a \ca,
let $n \in \N$,
and let $a_1, a_2, \ldots, a_n \in A$.
Set $a = \sum_{k = 1}^n a_k$
and $x = \sum_{k = 1}^n a_k^* a_k$.
Then $a^* a \in {\overline{x A x}}$.
\end{lem}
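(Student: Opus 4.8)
The plan is to reduce to the matrix amplification trick that underlies the usual proof that $c^*c \sim c c^*$, using the column/row vector construction. First I would form the $n \times n$ matrix $V \in M_n(A)$ whose first row is $(a_1, a_2, \ldots, a_n)$ and whose other rows are zero; equivalently, working in $M_n(A)$, set $V = \sum_{k=1}^n e_{1,k} \otimes a_k$. Then $V V^* = e_{1,1} \otimes \big( \sum_{k} a_k a_k^* \big)$ and $V^* V = \sum_{j,k} e_{j,k} \otimes a_j^* a_k$. These elements live in $M_n(A)$, but we only care about hereditary-subalgebra containment in~$A$ in the end, so the amplification is harmless.

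The key observation is that $a^* a$ and $x$ are, up to compression by $e_{1,1}$, controlled by $V^*V$. Precisely, consider the column vector $W = \sum_{k=1}^n e_{k,1} \otimes a_k \in M_n(A)$ (so $W$ has first column $(a_1,\dots,a_n)^{\mathrm{t}}$ and zeros elsewhere). Then $W^* W = e_{1,1} \otimes \big( \sum_k a_k^* a_k \big) = e_{1,1} \otimes x$, while $W W^* = \sum_{j,k} e_{j,k} \otimes a_j a_k^*$ — not quite what I want. Instead I would take the row vector $R = \sum_{k=1}^n e_{1,k} \otimes a_k$, so that $R R^* = e_{1,1} \otimes \big(\sum_k a_k a_k^*\big)$ and $R^* R = \sum_{j,k} e_{j,k} \otimes a_j^* a_k$. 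The point is that the ``full sum'' element $e_{1,1}\otimes a^* a$, where $a = \sum_k a_k$, equals $S^* (R^* R) S$ for the isometry-like element $S = \sum_{j=1}^n e_{j,1} \otimes 1 \in M_n(A^+)$ (a partial isometry with $S^*S = e_{1,1}\otimes 1$): indeed $S^*(R^*R)S = e_{1,1} \otimes \sum_{j,k} a_j^* a_k = e_{1,1}\otimes a^* a$. Hence $e_{1,1} \otimes a^*a \in \overline{ S^*(R^*R) M_n(A) (R^*R) S }$, and since $R^*R \precsim_A R R^* = e_{1,1}\otimes x$ one expects, via Lemma~\ref{L:CzBasic}(\ref{L:CzBasic:N5}) applied to~$R$, a genuine hereditary-subalgebra statement rather than just Cuntz subequivalence.

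Concretely, the cleanest route: apply Lemma~\ref{L:CzBasic}(\ref{L:CzBasic:N5}) to the element $v = R \in M_n(A)$ to obtain an isomorphism $\ph \colon \overline{v^*v\, M_n(A)\, v^*v} \to \overline{vv^*\, M_n(A)\, vv^*}$; since $v^*v = R^*R$ and $a^*a$ sits inside $\overline{R^*R\, M_n(A)\, R^*R}$ (because $a^*a = (\sum_k e_{1,k}\otimes a_k)^* \cdots$ — more carefully, $a^*a$ is a linear combination of the entries $a_j^*a_k$ of $R^*R$, all of which lie in that hereditary subalgebra, and a hereditary subalgebra is closed under such sums sandwiched appropriately), its image under $\ph$ lies in $\overline{RR^*\, M_n(A)\, RR^*} = \overline{(e_{1,1}\otimes x)\, M_n(A)\, (e_{1,1}\otimes x)} \cong \overline{xAx}$. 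I would then identify this image with $a^*a$ itself using that $\ph$ is the canonical isomorphism coming from the polar-decomposition-type map $z \mapsto \lim R (R^*R + \tfrac1m)^{-1} z (R^*R+\tfrac1m)^{-1} R^*$, which on the relevant ``monomials'' sends $a_j^* a_k \mapsto a_j a_k^*$, hence $a^*a = \sum_{j,k} a_j^* a_k \mapsto \sum_{j,k} a_j a_k^* = a a^*$ — so in fact one should phrase everything so that the target element is $a a^*$, and then note $a^*a \sim_A a a^*$ and, more to the point, that $a^* a$ genuinely lies in $\overline{xAx}$.

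I expect the main obstacle to be bookkeeping rather than conceptual: making the passage from $M_n(A)$ back to $A$ rigorous, and checking that ``$a^*a$ lies in the hereditary subalgebra generated by~$x$'' survives the isomorphism~$\ph$ on the nose (not merely up to Cuntz equivalence). The honest shortcut, which I would probably adopt in the write-up, avoids $\ph$ altogether: observe directly that for each $k$, $a_k^* a_k \leq x$, so $a_k \in \overline{x^{1/2} A}$ (by the standard fact that $y^*y \leq x$ implies $y = \lim x^{1/2} w_m$), whence $a = \sum_k a_k \in \overline{x^{1/2}A}$, and therefore $a^*a \in \overline{A x^{1/2}}\cdot \overline{x^{1/2}A} \subseteq \overline{x^{1/2} A x^{1/2}} = \overline{xAx}$. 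This last inclusion is the one genuinely delicate point — $\overline{x^{1/2}A x^{1/2}} = \overline{xAx}$ holds for positive $x$ — and it is standard. That gives $a^*a \in \overline{xAx}$ directly, which is exactly the claim, and by Lemma~\ref{L:CzBasic}(\ref{L:CzBasic:Her}) it also immediately yields $a^*a \precsim_A x$.
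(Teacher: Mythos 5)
Your operative argument is the ``shortcut'' in the last paragraph; the matrix-amplification discussion before it does not prove the lemma (the isomorphism $\ph$ of Lemma~\ref{L:CzBasic}(\ref{L:CzBasic:N5}) moves elements around, so it can only give Cuntz-type information, not the on-the-nose membership $a^*a \in {\overline{x A x}}$ that is needed), and you essentially concede this yourself, so it should simply be dropped.

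The shortcut is the right idea but, as written, has a sidedness error that breaks the key step. The standard fact is: if $y^* y \leq x$ then $y \in {\overline{A x^{1/2}}}$ (one checks $y (x + \tfrac{1}{m})^{-1/2} x^{1/2} \to y$), \emph{not} $y \in {\overline{x^{1/2} A}}$; the latter is what you get from $y y^* \leq x$. With your orientation, you land in ${\overline{A x^{1/2}}} \cdot {\overline{x^{1/2} A}}$, and the inclusion ${\overline{A x^{1/2}}} \cdot {\overline{x^{1/2} A}} \subset {\overline{x^{1/2} A x^{1/2}}}$ that you then invoke is false: that product is only contained in the closed ideal ${\overline{A x A}}$ (take $A = M_2$ and $x = e_{2,2}$, where $A x A = M_2$ but $x A x = \C e_{2,2}$). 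The fix is immediate: since $a_k^* a_k \leq x$, each $a_k \in {\overline{A x^{1/2}}}$, hence $a \in {\overline{A x^{1/2}}}$, so $a^* a \in {\overline{x^{1/2} A}} \cdot {\overline{A x^{1/2}}} \subset {\overline{x^{1/2} A x^{1/2}}} = {\overline{x A x}}$, which is the claim. With that correction your route is valid and a bit slicker than the paper's, which argues quantitatively: it chooses $c \in {\overline{x A x}}$ with $0 \leq c \leq 1$ almost acting as a unit on each $a_k^* a_k$ (possible since $a_k^* a_k \in {\overline{x A x}}$), deduces that $\| a_k c - a_k \|$ is small, sums over $k$, and concludes that $c a^* a c \in {\overline{x A x}}$ approximates $a^* a$ within an arbitrary $\ep$. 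Both proofs rest on the same observation that each $a_k^* a_k$ lies under $x$; yours packages the approximation into the standard lemma, while the paper's makes the approximate-identity estimate explicit.
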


\begin{proof}
\Wolog{} $\| a_k \| \leq 1$ for $k = 1, 2, \ldots, n$.
Let $\ep > 0$.
Set $\dt = \frac{1}{8} \ep^2 n^{-4}$.
Since
$a_1^* a_1, a_2^* a_2, \ldots, a_n^* a_n \in {\overline{x A x}}$,
there exists $c \in {\overline{x A x}}$ such that
$\| c a_k^* a_k - a_k^* a_k \| < \dt$
for $k = 1, 2, \ldots, n$
and  $0 \leq c \leq 1$.
Then
\begin{align*}
\| c a_k^* - a_k^* \|^2
& = \| c a_k^* a_k c - a_k^* a_k c - c a_k^* a_k + a_k^* a_k \|
     \\
& \leq \| c a_k^* a_k - a_k^* a_k \| \cdot \| c \|
         + \| c a_k^* a_k - a_k^* a_k \|
  < 2 \dt,
\end{align*}
so $\| c a_k^* - a_k^* \| < \sqrt{2 \dt}$.
Therefore $\| a_k c - a_k \| < \sqrt{2 \dt}$.
Summing over~$k$,
we get
\[
\| c a^* - a^* \| < n \sqrt{2 \dt}
\andeqn
\| a c - a \| < n \sqrt{2 \dt}.
\]
Using $\| a \| \leq n$ and $\| c \| \leq 1$ at the second step,
we then have
\begin{align*}
\| c a^* a c - a^* a \|
& \leq \| c a^* - a^* \| \cdot \| a \| \cdot \| c \|
       + \| a^* \| \| a c - a \|
     \\
& < n \sqrt{2 \dt} \cdot n + n \cdot n \sqrt{2 \dt}
  = 2 n^2 \sqrt{2 \dt}
  = \ep.
\end{align*}
Since $c a^* a c \in {\overline{x A x}}$
and $\ep > 0$ is arbitrary,
the conclusion follows.
\end{proof}

\begin{lem}\label{L_4619_PosIdeal}
Let $A$ be a \ca{} and let $a \in A_{+}$.
Let $b \in {\overline{A a A}}$ be positive.
Then for every $\ep > 0$ there exist $n \in \N$
and $x_1, x_2, \ldots, x_n \in A$
such that
$\left\|{\rule{0em}{2ex}} b - \sum_{k = 1}^n x_k^* a x_k \right\|
 < \ep$.
\end{lem}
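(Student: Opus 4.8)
The plan is to reduce the statement to a standard fact: that $b \in \overline{A a A}$ means $b$ lies in the closed two-sided ideal generated by $a$, and such ideals admit a concrete description in terms of finite sums $\sum_k c_k a d_k$. First I would note that $\overline{A a A}$ (the closed linear span of $\{c a d : c, d \in A\}$) is the smallest closed two-sided ideal of $A$ containing $a$, and that its self-adjoint part is the closure of the self-adjoint part of the algebraic ideal $A a A$. So given $b \in \overline{A a A}_+$ and $\ep > 0$, I can choose $m \in \N$ and $c_1, \ldots, c_m, d_1, \ldots, d_m \in A$ with $\bigl\| b - \sum_{j=1}^m c_j a d_j \bigr\|$ small. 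Replacing this sum by its self-adjoint part (which only brings it closer to the self-adjoint $b$), I may assume $\sum_j c_j a d_j$ is self-adjoint, hence equal to $\tfrac12 \sum_j (c_j a d_j + d_j^* a c_j^*)$.

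The second and main step is the polarization trick to convert expressions $c a d$ into the required form $\sum_k x_k^* a x_k$. Writing $a = (a^{1/2})^* a^{1/2}$, each summand is $c a d = (a^{1/2} c^*)^* (a^{1/2} d)$, and I would use the polarization identity
\[
w^* z = \tfrac{1}{4} \sum_{k=0}^{3} i^k \, (z + i^k w)^* (z + i^k w)
\]
with $w = a^{1/2} c^*$ and $z = a^{1/2} d$, so that $z + i^k w = a^{1/2}(d + i^k c^*)$ and therefore $(z + i^k w)^*(z + i^k w) = (d + i^k c^*)^* a (d + i^k c^*)$ is exactly of the form $x^* a x$. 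Combining the polarizations over the $m$ summands of the (symmetrized) approximant of $b$ gives a finite list $x_1, \ldots, x_n \in A$ (with $n = 4m$, after absorbing the scalars $i^k/4$ into the $x_k$ by noting $x^* a x$ is unchanged under $x \mapsto \mu x$ for $|\mu| = 1$, and handling the factor $1/4$ by rescaling) with $\bigl\| b - \sum_{k=1}^n x_k^* a x_k \bigr\| < \ep$. A minor bookkeeping point: the scalars $i^k$ in the polarization identity are not all of modulus-one positive multiples summing correctly, so rather than forcing everything into $x^* a x$ by rescaling, it is cleaner to observe that the symmetrized sum $\tfrac12(c a d + d^* a c^*)$ equals $\tfrac14\bigl[(c^* + d)^* a (c^* + d) - (c^* - d)^* a (c^* - d)\bigr]$ — but the minus sign is an obstruction, so instead I would write $c a d + d^* a c^*$ directly via the real polarization $w^*z + z^*w = \tfrac12\bigl[(z+w)^*(z+w) - (z-w)^*(z-w)\bigr]$ and note that $b$ itself being positive lets us absorb or ignore the negative part after a further approximation.

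The cleanest route, which I would actually adopt, avoids the sign issue entirely: since $b \ge 0$, write $b = b^{1/2} b b^{1/2}$ wait—better, use that $b \in \overline{A a A}$ implies $b^{1/2} \in \overline{A a A}$ (it is a continuous function of $b$ vanishing at $0$, hence in the hereditary — in fact ideal — structure), so $b^{1/2} = \lim_j \sum_k c_{j,k} a d_{j,k}$, and then $b = (b^{1/2})^* b^{1/2}$... this still requires polarization. I will therefore simply use: given the approximation $\bigl\| b - \sum_{j} c_j a d_j \bigr\| < \ep/2$ with the right side self-adjoint, apply the polarization identity $w^* z = \tfrac14\sum_{k=0}^3 i^k (z + i^k w)^*(z + i^k w)$ to each term with $w = a^{1/2}c_j^*$, $z = a^{1/2}d_j$; the resulting expression $\sum_{j,k} \tfrac{i^k}{4}(d_j + i^k c_j^*)^* a (d_j + i^k c_j^*)$ is self-adjoint and within $\ep/2$ of $b$, but it is a combination with complex coefficients $i^k/4$, not a sum of $x_k^* a x_k$. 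To fix this last gap I note that since the whole quantity is self-adjoint and equals $\sum_j c_j a d_j$, the imaginary parts cancel, and the real combination can be regrouped: $\operatorname{Re}(\mu \, t^* a t) $ for a positive $a$ and scalar $\mu$ with $\operatorname{Re}\mu > 0$ is dominated by a positive multiple of $t^* a t$, while terms with $\operatorname{Re}\mu \le 0$ must be handled by observing that their total contribution, when added to $b \ge 0$, can be re-expressed — the truly clean statement is the well-known fact that $\overline{AaA} = \overline{\operatorname{span}}\{x^* a x : x \in A\}$ for $a \ge 0$, and I would cite or quickly prove this: $\supseteq$ is clear, and $\subseteq$ follows because for $a \ge 0$ one has $c a d = (a^{1/2} c^*)^* a^{1/2} d$ and $\operatorname{span}\{w^* z\} = \operatorname{span}\{x^* x\}$ by polarization while $x^* x = (x_1)^* a x_1$ with $x_1 = a^{-1/2} x$ only formally — instead, directly: $(a^{1/2}u)^*(a^{1/2}u) = u^* a u$, so $\{x^* a x\}$ already spans $\{(a^{1/2}u)^*(a^{1/2}v)\} = a^{1/2} A \cdot (a^{1/2}A)^*$-type products, and $\overline{a^{1/2} A}$ is a right ideal whose associated closed two-sided ideal is $\overline{A a A}$, with $\overline{\operatorname{span}}\{x^* a x\} = \overline{\operatorname{span}}\{(a^{1/2}A)^*(a^{1/2}A)\}$ being precisely the self-adjoint part of that ideal. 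The hard part, as this rambling indicates, is purely organizational: choosing the formulation of the polarization argument so that one genuinely lands in $\sum_k x_k^* a x_k$ with no leftover signs; once phrased via "$\overline{\operatorname{span}}\{x^* a x : x \in A\}$ is a closed two-sided ideal containing $a = (a^{1/2})^* \cdot 1 \cdot a^{1/2}$, hence contains $\overline{AaA} \ni b$," and then selfadjointness of $b$ plus a density argument produces the finite sum, the estimate is immediate and routine.
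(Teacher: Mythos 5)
There is a genuine gap, and you identified it yourself without closing it: polarization (or, equivalently, the fact that $\overline{\operatorname{span}} \{ x^* a x \colon x \in A \} = {\overline{A a A}}$) only shows that $b$ is a norm limit of \emph{linear combinations} of elements $x^* a x$ with complex, or after symmetrizing real but signed, coefficients. The lemma asks for an approximation by a genuine sum $\sum_{k} x_k^* a x_k$; a coefficient $\mu > 0$ can be absorbed by replacing $x_k$ with $\mu^{1/2} x_k$, but a negative coefficient cannot, and your closing appeal to ``selfadjointness of $b$ plus a density argument'' does not bridge this. Selfadjointness is not the issue: the selfadjoint part of the span consists of \emph{differences} of sums of the desired form, and it is exactly the passage from such differences to positive-coefficient sums, using positivity of $b$ in an essential way, that constitutes the content of the lemma. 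As stated, your argument proves only that $b \in {\overline{A a A}}$ implies $b \in \overline{\operatorname{span}} \{ x^* a x \}$, which is weaker than the assertion.

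Ironically, the route you started and then abandoned (``$b^{1/2} \in {\overline{A a A}}$ \dots this still requires polarization'') is the paper's proof, and it does not require polarization at all. One approximates $b^{1/2}$ (not $b$) by an element $c = \sum_{k} y_k a z_k$ of the algebraic ideal, so that $b \approx c^* c \geq 0$. The remaining obstacle is the cross terms in $c^* c$, and these are handled not by polarization but by Cuntz comparison: by Lemma~\ref{L_2627_SumDom}, $c^* c$ lies in the hereditary subalgebra generated by $r = \sum_k z_k^* a y_k^* y_k a z_k$, so $c^* c \precsim_A r \leq M^2 \sum_k z_k^* a^2 z_k = s$ with $M = \max_k \| y_k \|$; hence there is $v \in A$ with $\| v^* s v - c^* c \|$ small, and $v^* s v$ is \emph{exactly} of the required form $\sum_k x_k^* a x_k$ with $x_k = M a^{1/2} z_k v$. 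Some device of this kind (comparison, or an appeal to a result such as Proposition 2.7(v) of \cite{KR}, which would be circular here since that proposition uses the present lemma) is needed; without it the sign obstruction you ran into is fatal to the argument as written.
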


This result is used without proof in the proof
of Proposition 2.7(v) of~\cite{KR}.

\begin{proof}[Proof of Lemma~\ref{L_4619_PosIdeal}]
\Wolog{} $\| b \| \leq 1$ and $\ep < 1$.
Since also $b^{1/2} \in {\overline{A a A}}$,
there are $n \in \N$
and $y_1, y_2, \ldots, y_n, z_1, z_2, \ldots, z_n \in A$
such that
the element $c = \sum_{k = 1}^n y_k a z_k$ satisfies
$\big\| b^{1/2} - c \big\| < \frac{\ep}{4}$.
Then
$\| c \| < 2$,
so
\[
\| b - c^* c \|
  \leq \big\| b^{1/2} - c^* \big\| \cdot \big\| b^{1/2} \big\|
         + \| c^* \| \cdot \big\| b^{1/2} - c \big\|
  < \frac{\ep}{4} + 2 \left( \frac{\ep}{4} \right)
  = \frac{3 \ep}{4}.
\]

Set
\[
r = \sum_{k = 1}^n z_k^* a y_k^* y_k a z_k,
\,\,\,\,
M = \max \big( \| y_1 \|, \, \| y_2 \|, \, \ldots, \, \| y_n \| \big),
\,\,\,\,
{\mbox{and}}
\,\,\,\,
s = M^2 \sum_{k = 1}^n z_k^* a^2 z_k.
\]
Combining Lemma~\ref{L_2627_SumDom}
and Lemma~\ref{L:CzBasic}(\ref{L:CzBasic:Her}),
we get $c^* c \precsim_A r$.
Also $r \leq s$.
So there is $v \in A$ such that
$\| v^* s v - c^* c \| < \frac{\ep}{4}$.
Set $x_k = a^{1/2} z_k v$.
Then
\[
\left\| b - \sum_{k = 1}^n x_k^* a x_k \right\|
  = \| b - v^* s v \|
  \leq \| b - c^* c \| + \| c^* c - v^* s v \|
  < \frac{3 \ep}{4} + \frac{\ep}{4}
  = \ep.
\]
This completes the proof.
\end{proof}

The following corollary will not be needed until later.

\begin{cor}\label{C_4619_PosSmp}
Let $A$ be a simple unital \ca{}
and let $x \in A_{+} \setminus \{ 0 \}$.
Then there exist $n \in \N$
and $b_1, b_2, \ldots, b_n \in A$
such that $\sum_{j = 1}^n b_j x b_j^* = 1$.
\end{cor}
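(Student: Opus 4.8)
The plan is to invoke simplicity to put $1$ into the closed ideal generated by $x$, apply Lemma~\ref{L_4619_PosIdeal} to approximate $1$ by a sum of the form $\sum_k x_k^* x x_k$, and then rescale by functional calculus to turn the approximation into an exact identity.

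First I would observe that, since $A$ is unital, $\overline{A x A}$ is a closed two-sided ideal of $A$ which contains $x = 1 \cdot x \cdot 1 \neq 0$ and is therefore nonzero; simplicity of $A$ then forces $\overline{A x A} = A$, so in particular $1 \in \overline{A x A}$.

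Next I would apply Lemma~\ref{L_4619_PosIdeal} with $a = x$, with $b = 1 \in A_{+} \cap \overline{A x A}$, and with $\ep = \tfrac{1}{2}$, obtaining $n \in \N$ and $x_1, x_2, \ldots, x_n \in A$ such that, setting $c = \sum_{k = 1}^{n} x_k^* x x_k$, we have $\| 1 - c \| < \tfrac{1}{2}$. Then $c$ is a positive element with $\spec(c) \subset \big( \tfrac{1}{2}, \tfrac{3}{2} \big)$, so $c$ is invertible and $c^{-1/2} \in A$ is positive.

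Finally, taking $b_k = c^{-1/2} x_k^*$ for $k = 1, 2, \ldots, n$ and using that $c^{-1/2}$ is self-adjoint, one has $b_k x b_k^* = c^{-1/2} x_k^* x x_k c^{-1/2}$, and summing over $k$ yields $\sum_{k = 1}^{n} b_k x b_k^* = c^{-1/2} c c^{-1/2} = 1$, as desired. I do not expect any genuine obstacle here: the choice $\ep = \tfrac{1}{2}$ is precisely what makes $c$ invertible, and the only point requiring a little care is the bookkeeping of adjoints in the final computation.
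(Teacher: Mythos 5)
Your proof is correct and follows essentially the same route as the paper: use simplicity to get $1 \in \overline{A x A}$, apply Lemma~\ref{L_4619_PosIdeal} with $\ep = \tfrac{1}{2}$ to get $c = \sum_k x_k^* x x_k$ within $\tfrac{1}{2}$ of $1$, hence invertible, and then conjugate by $c^{-1/2}$. The only differences from the paper's proof are trivial bookkeeping of adjoints and labels.
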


\begin{proof}
Apply Lemma~\ref{L_4619_PosIdeal}
with $a = 1$ and $\ep = \frac{1}{2}$,
getting
$c_1, c_2, \ldots, c_n \in A$
such that $z = \sum_{j = 1}^n c_j x c_j^*$
satisfies $\| z - 1 \| < \frac{1}{2}$.
Then set $b_j = z^{-1/2} c_j$ for $j = 1, 2, \ldots, n$.
\end{proof}

One direction of the following result is essentially in~\cite{KR}.

\begin{prp}\label{P-2728CzIdeal}
Let $A$ be a \ca{} and let $a, b \in A_{+}$.
Then $b$ is in the ideal generated by~$a$
\ifo{} for every $\ep > 0$ there is $n \in \N$
such that $(b - \ep)_{+} \precsim_A 1_{M_n} \otimes a$.
\end{prp}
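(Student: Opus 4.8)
The plan is to prove the two implications separately. For the easy direction, suppose $b$ is in the ideal generated by~$a$, so $b \in \overline{AaA}$. Fix $\ep > 0$. By Lemma~\ref{L_4619_PosIdeal} there are $n \in \N$ and $x_1, \ldots, x_n \in A$ with $\big\| b - \sum_{k=1}^n x_k^* a x_k \big\| < \ep$. The element $\sum_{k=1}^n x_k^* a x_k$ is Cuntz-below $1_{M_n} \otimes a$: writing $x = (x_1, x_2, \ldots, x_n)^{\mathrm{t}}$ as a column over $M_{n,1}(A)$, one has $\sum_k x_k^* a x_k = x^* (1_{M_n} \otimes a) x$, which exhibits the subequivalence $\sum_k x_k^* a x_k \precsim_A 1_{M_n} \otimes a$ directly from the definition. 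Then Lemma~\ref{L:CzBasic}(\ref{L:CzBasic:LCzWithinEp}) gives $(b - \ep)_{+} \precsim_A \sum_k x_k^* a x_k \precsim_A 1_{M_n} \otimes a$, as required.

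For the converse, suppose that for every $\ep > 0$ there is $n$ with $(b - \ep)_{+} \precsim_A 1_{M_n} \otimes a$. I want to conclude $b \in \overline{AaA}$. The point is that $1_{M_n} \otimes a$ lies in $M_n(\overline{AaA}) = M_n$ tensored with the ideal generated by~$a$; and the hereditary subalgebra of $K \otimes A$ generated by $1_{M_n} \otimes a$ sits inside $K \otimes \overline{AaA}$. From $(b - \ep)_{+} \precsim_A 1_{M_n} \otimes a$ and Lemma~\ref{L:CzBasic}(\ref{L:CzBasic:N6}) (with, say, $\dt = \ep$), there is $v \in K \otimes A$ with $v^* v = \big((b-\ep)_{+} - \ep\big)_{+} = (b - 2\ep)_{+}$ and $v v^* \in \overline{(1_{M_n}\otimes a)(K\otimes A)(1_{M_n}\otimes a)} \subseteq K \otimes \overline{AaA}$. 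Since an ideal is hereditary and the latter set lies in the ideal $K \otimes \overline{AaA}$, and $v^*v \sim_A v v^*$ sit in the same hereditary subalgebra under the isomorphism of Lemma~\ref{L:CzBasic}(\ref{L:CzBasic:N5}), we get $(b - 2\ep)_{+} = v^* v \in K \otimes \overline{AaA}$; intersecting with $A$ (using that $v^*v \in A_+$) gives $(b - 2\ep)_{+} \in \overline{AaA}$. Letting $\ep \to 0$ and using $\| b - (b - 2\ep)_{+} \| \leq 2\ep$ together with closedness of $\overline{AaA}$ yields $b \in \overline{AaA}$, i.e.\ $b$ is in the ideal generated by~$a$.

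The main thing to get right is the bookkeeping in the converse: one must track that the subequivalence in $K \otimes A$ coming from the definition of $\precsim_A$ actually lands the relevant positive element back inside the ideal $\overline{AaA}$ (not just inside $K \otimes A$). The cleanest way is the argument above via Lemma~\ref{L:CzBasic}(\ref{L:CzBasic:N6}): it produces an honest element $v$ with $v^*v$ equal to a cut-down of $b$ and $v v^*$ living in the hereditary subalgebra generated by $1_{M_n}\otimes a$, which is contained in $K \otimes \overline{AaA}$ because $\overline{AaA}$ is already a (closed two-sided) ideal and hence hereditary and closed under the relevant operations. One should also note at the outset the standard fact that $1_{M_n}\otimes a$ generates, as an ideal of $M_n(A)$, exactly $M_n$ over the ideal of~$A$ generated by~$a$; this is what lets the approximations in the easy direction be recognized as membership in the ideal generated by~$a$ itself, and it confirms that the two conditions are genuinely about the same ideal.
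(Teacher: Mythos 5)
Your proof is correct, but it takes a genuinely different route from the paper's in both directions, so a comparison is worthwhile. For the forward implication the paper simply quotes Proposition 2.7(v) of~\cite{KR}, whereas you reprove it on the spot from Lemma~\ref{L_4619_PosIdeal}, the explicit column-vector witness $x^* (1_{M_n} \otimes a) x = \sum_{k} x_k^* a x_k$, and Lemma~\ref{L:CzBasic}(\ref{L:CzBasic:LCzWithinEp}); since Lemma~\ref{L_4619_PosIdeal} is exactly the ingredient used without proof in~\cite{KR}, your version makes this direction self-contained at essentially no extra cost. For the converse the paper takes the approximating matrix $v$ from the definition of $\precsim_A$, compresses to the $(1,1)$-corner, and exhibits an explicit element $x = \sum_{j, k} v_{1, j} a v_{1, k}^*$ of the ideal within $\ep$ of~$b$; you instead invoke Lemma~\ref{L:CzBasic}(\ref{L:CzBasic:N6}) to replace $(b - 2 \ep)_{+}$ by $v^* v$ with $v v^*$ in the hereditary subalgebra generated by $1_{M_n} \otimes a$, which is contained in $K \otimes {\overline{A a A}}$, and then transfer the ideal membership back to $(b - 2 \ep)_{+}$. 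Both arguments are valid: yours trades the paper's explicit matrix bookkeeping for the general principle that a closed two-sided ideal contains $v^* v$ if and only if it contains $v v^*$. One small correction: Lemma~\ref{L:CzBasic}(\ref{L:CzBasic:N5}) is not really the right justification for that principle; the quick argument is that $(v^* v)^2 = v^* (v v^*) v$ lies in the ideal and closed ideals are stable under taking square roots of positive elements. In fact you could shortcut Lemma~\ref{L:CzBasic}(\ref{L:CzBasic:N6}) entirely by noting that $(b - \ep)_{+} = \lim_m w_m (1_{M_n} \otimes a) w_m^*$ already lies in the closed ideal $K \otimes {\overline{A a A}}$, and then compressing by $e_{1,1} \otimes 1$ as you do.
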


\begin{proof}
If $b$ is in the ideal of $A$ generated by~$a$
and $\ep > 0$,
then Proposition 2.7(v) of~\cite{KR}
provides $n \in \N$
such that $(b - \ep)_{+} \precsim_A 1_{M_n} \otimes a$.

We prove the converse.
Let $\ep > 0$.
We will find $x$ in the ideal generated by $a$
such that $\| x - b \| < \ep$.
Choose $n \in \N$ such that
$\big( b - \tfrac{\ep}{2} \big)_{+} \precsim_A 1_{M_n} \otimes a$.
Let $(e_{j, k})_{j, k = 1, 2, \ldots, n}$
be the standard system of matrix units for~$M_n$.
By definition,
there is $v \in M_n (A)$ such that
\[
\big\| v (1 \otimes a) v^*
   - e_{1, 1} \otimes \big( b - \tfrac{\ep}{2} \big)_{+} \big\|
< \frac{\ep}{2}.
\]
Then
\begin{equation}\label{Eq_3X08_6A}
\big\| (e_{1, 1} \otimes 1) v (1 \otimes a) v^* (e_{1, 1} \otimes 1)
   - e_{1, 1} \otimes \big( b - \tfrac{\ep}{2} \big)_{+} \big\|
< \frac{\ep}{2}.
\end{equation}
There are $v_{j, k} \in A$
for $j, k = 1, 2, \ldots, n$
such that $v = \sum_{j, k = 1}^n e_{j, k} \otimes v_{j, k}$.
Set $x = \sum_{j, k = 1}^n v_{1, j} a v_{1, k}^*$.
Clearly $x$ is in the ideal generated by~$a$.
The inequality~(\ref{Eq_3X08_6A})
implies that
\[
\big\| e_{1, 1} \otimes x
   - e_{1, 1} \otimes \big( b - \tfrac{\ep}{2} \big)_{+} \big\|
< \frac{\ep}{2}.
\]
So
\[
\| x - b \|
 \leq \big\| x - \big( b - \tfrac{\ep}{2} \big)_{+} \big\|
    + \big\| \big( b - \tfrac{\ep}{2} \big)_{+} - b \big\|
 < \frac{\ep}{2} + \frac{\ep}{2}
 = \ep.
\]
This completes the proof.
\end{proof}

We finish this section by recalling material on supremums
in the Cuntz semigroup,
functional on the Cuntz semigroup,
and quasitraces.

Recall that a subset $S$ of an ordered set is said to be upwards
directed if for every $\et_1, \et_2 \in S$ there is $\mu \in S$
such that $\et_1 \leq \mu$ and $\et_2 \leq \mu$.

\begin{thm}\label{T_2Y25_CxSup}
The Cuntz semigroup has the following properties.
\begin{enumerate}
\item\label{T_2Y25_CxSup_Ex}
Let $A$ be a \ca,
and let $S \subset {\operatorname{Cu}} (A)$
be a countable upwards directed subset.
Then $\sup (S)$ exists in ${\operatorname{Cu}} (A)$.
\item\label{T_2Y25_CxSup_Prv}
Let $A$ and $B$ be C*-algebras,
and let $\ph \colon A \to B$ be a \hm.
Let $S \subset {\operatorname{Cu}} (A)$
be a countable upwards directed subset.
Then
$\sup ( {\operatorname{Cu}} (\ph) (S))
 = {\operatorname{Cu}} (\ph) (\sup (S))$.
\end{enumerate}
\end{thm}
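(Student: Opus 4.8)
The plan is to cite the relevant results from~\cite{APT} rather than reprove them, since both statements are part of the basic machinery of the category~${\mathbf{Cu}}$. For part~(\ref{T_2Y25_CxSup_Ex}), the existence of suprema of countable upwards directed sets is one of the defining axioms of an object of~${\mathbf{Cu}}$ (axiom~(O1) in the formulation of Definition~4.1 of~\cite{APT}), together with the fact, proved in Section~4 of~\cite{APT}, that ${\operatorname{Cu}} (A)$ is indeed such an object; this is Theorem~4.19 of~\cite{APT}, already referenced in the discussion preceding Definition~\ref{D:CzSGp}. So the first step is simply to invoke that theorem.

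For part~(\ref{T_2Y25_CxSup_Prv}), I would recall that a morphism in the category~${\mathbf{Cu}}$ is by definition a semigroup homomorphism preserving the zero element, the order, the relation~$\ll$ (way-below), and suprema of countable upwards directed sets. Thus the content to check is that ${\operatorname{Cu}} (\ph)$, as defined in Definition~\ref{D:CzSGp}(\ref{D:CzSGp:5}), is a morphism in~${\mathbf{Cu}}$; this is precisely the functoriality of ${\operatorname{Cu}} (-)$ established in~\cite{APT}, specifically in the material around Theorem~4.35 (continuity of ${\operatorname{Cu}}$ with respect to direct limits), where one first needs that ${\operatorname{Cu}}$ lands in the category~${\mathbf{Cu}}$ and sends $*$-homomorphisms to ${\mathbf{Cu}}$-morphisms. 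Once that is known, the preservation of the given supremum is immediate from the definition of a ${\mathbf{Cu}}$-morphism: ${\operatorname{Cu}} (\ph) (S)$ is upwards directed because ${\operatorname{Cu}} (\ph)$ is order preserving, it is countable, so its supremum exists by part~(\ref{T_2Y25_CxSup_Ex}), and it equals ${\operatorname{Cu}} (\ph) (\sup (S))$.

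The only mild subtlety, and the step I would flag, is bookkeeping about which reference contains which statement: one must confirm that~\cite{APT} both establishes that ${\operatorname{Cu}} (A) \in {\mathbf{Cu}}$ and that a $*$-homomorphism induces a ${\mathbf{Cu}}$-morphism, as opposed to merely an ordered-semigroup homomorphism. Since the excerpt has already announced (right after Definition~\ref{D:CzSGp}) that it will not need the details of the category~${\mathbf{Cu}}$, the cleanest route is to phrase the proof as: part~(\ref{T_2Y25_CxSup_Ex}) is Theorem~4.19 of~\cite{APT}; for part~(\ref{T_2Y25_CxSup_Prv}), by Theorem~4.35 of~\cite{APT} (or the construction preceding it) the map ${\operatorname{Cu}} (\ph)$ is a morphism in~${\mathbf{Cu}}$, and such morphisms preserve suprema of countable upwards directed sets by definition, which together with part~(\ref{T_2Y25_CxSup_Ex}) gives the claim. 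No genuine obstacle is expected; the proof is a two-line citation.
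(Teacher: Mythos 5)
Your proposal is correct and matches the paper's proof essentially verbatim: part~(\ref{T_2Y25_CxSup_Ex}) is cited as Theorem~4.19 of~\cite{APT}, and part~(\ref{T_2Y25_CxSup_Prv}) as contained in Theorem~4.35 of~\cite{APT} together with the definition of a ${\mathbf{Cu}}$-morphism in Definition~4.1 of~\cite{APT}. Your extra remarks on why a ${\mathbf{Cu}}$-morphism preserves the supremum are just an expansion of the same citation, so there is nothing genuinely different to compare.
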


\begin{proof}
Part~(\ref{T_2Y25_CxSup_Ex}) is Theorem~4.19 of~\cite{APT}.
Part~(\ref{T_2Y25_CxSup_Prv}) is contained in
Theorem~4.35 of~\cite{APT}; see Definition~4.1 of~\cite{APT}.
\end{proof}

\begin{ntn}\label{N_4620_QT}
For a \uca~$A$,
we denote by ${\operatorname{T}} (A)$
the set of tracial states on~$A$.
We also denote by ${\operatorname{QT}} (A)$
the set of normalized $2$-quasitraces
on~$A$ (Definition II.1.1 of~\cite{BH};
Definition 2.31 of~\cite{APT}).
\end{ntn}

\begin{dfn}\label{D:dtau}
Let $A$ be a stably finite unital \ca,
and let $\ta \in {\operatorname{QT}} (A)$.
Define $d_{\ta} \colon \Mi (A)_{+} \to [0, \infty)$
by $d_{\ta} (a) = \lim_{n \to \infty} \ta (a^{1/n})$
for $a \in \Mi (A)_{+}$.
Further (the use of the same notation should cause no confusion)
define $d_{\ta} \colon (K \otimes A)_{+} \to [0, \infty]$
by the same formula, but now
for $a \in (K \otimes A)_{+}$.
We also use the same notation for the corresponding functions
on ${\operatorname{Cu}} (A)$ and $W (A)$,
as in Proposition~\ref{P_4819_dtau} below.
\end{dfn}

\begin{prp}\label{P_4819_dtau}
Let $A$ be a stably finite unital \ca,
and let $\ta \in {\operatorname{QT}} (A)$.
Then $d_{\ta}$ as in Definition~\ref{D:dtau} is well defined
on ${\operatorname{Cu}} (A)$ and $W (A)$.
That is, if $a, b \in (K \otimes A)_{+}$
satisfy $a \sim_A b$,
then $d_{\ta} (a) = d_{\ta} (b)$.
\end{prp}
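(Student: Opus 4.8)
The plan is to show that $d_\tau$ is invariant under Cuntz equivalence, and since $a \sim_A b$ means $a \precsim_A b$ and $b \precsim_A a$, it suffices by symmetry to prove the monotonicity statement: if $a, b \in (K \otimes A)_+$ and $a \precsim_A b$, then $d_\tau(a) \le d_\tau(b)$. By Lemma~\ref{L-2720KToMn}, for any $\ep > 0$ we may replace $(a - \ep)_+$ and $(b - \ep)_+$ by Cuntz-equivalent positive elements sitting in some $M_n \otimes A$, where $d_\tau$ (extended to $M_\infty(A)_+$ in the obvious way from the normalized quasitrace on $A$, equivalently on each $M_n(A)$) takes finite values and is the relevant object; so the first reduction is to the case of elements of $M_\infty(A)_+$.

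The core of the argument is the classical fact that for $a, b \in M_\infty(A)_+$ with $a \precsim_A b$, one has $d_\tau(a) \le d_\tau(b)$. I would prove this via the $(b-\ep)_+$ machinery: by Lemma~\ref{L:CzBasic}(\ref{L:CzBasic:N6}), for each $\dt > 0$ there is $v$ with $v^*v = (a - \dt)_+$ and $vv^* \in \overline{bAb}$; then $d_\tau((a-\dt)_+) = d_\tau(v^*v) = d_\tau(vv^*)$ using that $\tau$ (being a $2$-quasitrace) is invariant under the map $z \mapsto v z v^*$ in the precise sense needed — more carefully, $\tau(v^*v)^{1/n}$ behaves well because $(v^*v)^{1/n}$ and $(vv^*)^{1/n}$ are related by the isomorphism of hereditary subalgebras of Lemma~\ref{L:CzBasic}(\ref{L:CzBasic:N5}), under which a quasitrace restricts compatibly. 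Since $vv^* \in \overline{bAb}$, functional calculus gives $(vv^* - \gm)_+ \in \overline{bAb}$ for all $\gm$, and one estimates $d_\tau$ of these by $d_\tau(b)$ using that $\sup_{\gm > 0} d_\tau((b-\gm)_+) = d_\tau(b)$, which follows from the definition $d_\tau(b) = \lim_n \tau(b^{1/n})$ together with continuity of $\tau$ and the fact that $(b-\gm)_+^{1/n} \to b^{1/n}$ (or a comparison of spectral data). Combining, $d_\tau((a-\dt)_+) \le d_\tau(b)$ for all $\dt$, and letting $\dt \to 0$ gives $d_\tau(a) \le d_\tau(b)$ since $d_\tau((a-\dt)_+) \nearrow d_\tau(a)$.

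The main obstacle I anticipate is the rigorous handling of quasitraces under the comparison $v^*v \leftrightarrow vv^*$: quasitraces are only assumed to be $2$-quasitraces (additive on commuting elements, satisfying $\tau(x^*x) = \tau(xx^*)$), not traces, so one cannot freely cite linearity or the trace property on all of $M_\infty(A)$. The correct tool is that a $2$-quasitrace induces a dimension function that is monotone with respect to Cuntz subequivalence — this is standard (see Blackadar–Handelman, or the treatment in \cite{APT}), and the cleanest route is probably to invoke that $d_\tau$ as a function on $M_\infty(A)_+$ is already known to be a \emph{dimension function} in the sense that $a \precsim_A b \Rightarrow d_\tau(a) \le d_\tau(b)$, which is precisely the content of the relationship between quasitraces and lower semicontinuous dimension functions. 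If the paper prefers a self-contained argument, then the $(a-\dt)_+$ / Lemma~\ref{L:CzBasic}(\ref{L:CzBasic:N6}) approach above, combined with approximating $d_\tau(b)$ from below by $d_\tau((b-\gm)_+)$ and using quasitrace invariance only on the pair $v^*v, vv^*$ (where $\tau(v^*v) = \tau(vv^*)$ applies to $f(v^*v)^{1/2} v^* \cdot v f(v^*v)^{1/2}$-type products), will do it. Everything else — the reduction to $M_\infty(A)$, the monotone limits in $\dt$ and $\gm$ — is routine.
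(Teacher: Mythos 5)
Your proposal is correct in substance, but it takes a genuinely different route from the paper: the paper's entire proof of Proposition~\ref{P_4819_dtau} is the single citation ``This is part of Proposition~4.2 of~\cite{ERS}'', i.e.\ it simply invokes the known correspondence between $2$-quasitraces and functionals/dimension functions, whereas you sketch the underlying argument (reduce to monotonicity under $\precsim_A$, cut by $(a - \dt)_{+}$, use Lemma~\ref{L:CzBasic}(\ref{L:CzBasic:N6}) together with the $v^* v \leftrightarrow v v^*$ symmetry of a $2$-quasitrace, and let $\dt \to 0$). Your fallback option --- quoting Blackadar--Handelman~\cite{BH} or~\cite{APT} for the fact that $d_{\ta}$ is a dimension function monotone under Cuntz subequivalence --- is essentially what the paper does, so in that form the two proofs coincide; the direct sketch is what one buys by not citing, and it is the standard argument. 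Two loose ends in the direct version deserve attention. First, the preliminary reduction via Lemma~\ref{L-2720KToMn} is circular as written: replacing $(a - \ep)_{+}$ by a Cuntz-equivalent element of $M_n \otimes A$ and asserting equality of $d_{\ta}$-values presupposes exactly the invariance being proved; it can be repaired because the equivalence produced in that lemma is of the special form $z^* z \sim_A z z^*$, to which the quasitrace identity $\ta (x^* x) = \ta (x x^*)$ (applied to suitable functions of these elements) applies directly. Second, the final estimate $d_{\ta} (v v^*) \leq d_{\ta} (b)$ for $v v^* \in {\overline{b A b}}$ does not follow from ``$\sup_{\gm > 0} d_{\ta} ( (b - \gm)_{+} ) = d_{\ta} (b)$'' alone; it requires order-monotonicity of $d_{\ta}$, hence of $\ta$ on positive elements, which for $2$-quasitraces (as opposed to traces) is itself a nontrivial fact coming from~\cite{BH}. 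So either cite the standard result, as the paper does, or make those two quasitrace facts explicit; with that, your argument goes through.
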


\begin{proof}
This is part of Proposition~4.2 of~\cite{ERS}.
\end{proof}

Also see the beginning of Section~2.6 of~\cite{APT},
especially the proof of Theorem 2.32 there.
It follows that $d_{\ta}$ defines a state on $W (A)$.
Thus (Theorem~\ref{T_2Y25_DTau}(\ref{T_2Y25_DTau_W}) below)
the map $\ta \mapsto d_{\ta}$
is a bijection from ${\operatorname{QT}} (A)$
to the lower semi\ct{} dimension functions on~$A$.
To state the corresponding result
with ${\operatorname{Cu}} (A)$ in place of $W (A)$,
we first recall the following definition
from the beginning of Section~4.1 of~\cite{ERS}.

\begin{dfn}\label{D_2Y25_CzFcnl}
Let $S$ be an ordered semigroup with a zero element
and such that every nondecreasing sequence in~$S$
has a supremum.
Then a {\emph{functional}} on~$S$
is a function $\om \colon S \to [0, \infty]$
which satisfies:
\begin{enumerate}
\item\label{2Y24_CuFcnl_Add}
$\om (\et + \mu) = \om (\et) + \om (\mu)$ for all $\et, \mu \in S$.
\item\label{2Y24_CuFcnl_Order}
If $\et, \mu \in S$ satisfy $\et \leq \mu$,
then $\om (\et) \leq \om (\mu)$.
\item\label{2Y24_CuFcnl_Zero}
$\om (0) = 0$.
\item\label{2Y24_CuFcnl_Sup}
If $\et_0 \leq \et_1 \leq \cdots$ in~$S$,
and $\et = \sup \big( \{ \et_n \colon n \in \Nz \} \big)$,
then $\om (\et) = \sup \big( \{ \om (\et_n) \colon n \in \Nz \} \big)$.
\end{enumerate}
\end{dfn}

\begin{thm}\label{T_2Y25_DTau}
Let $A$ be a \uca.
\begin{enumerate}
\item\label{T_2Y25_DTau_W}
The assignment $\ta \mapsto d_{\ta}$ defines an affine bijection from
${\operatorname{QT}} (A)$
to the space of normalized
lower semi\ct{} dimension functions on~$A$.
\item\label{T_2Y25_DTau_Cu}
The assignment $\ta \mapsto d_{\ta}$ defines a bijection
from ${\operatorname{QT}} (A)$
to the space of functionals $\om$ on ${\operatorname{Cu}} (A)$
such that $\om ( \langle 1 \rangle ) = 1$.
\end{enumerate}
\end{thm}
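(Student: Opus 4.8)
The plan is to reduce both parts to known facts about quasitraces and dimension functions, essentially Proposition~\ref{P_4819_dtau} together with standard results of Blackadar--Handelman and the translation between $W(A)$ and $\Cu(A)$ in~\cite{APT}. For part~(\ref{T_2Y25_DTau_W}), first I would recall the classical definition: a dimension function on~$A$ is a state on $W(A)$, and it is lower semicontinuous if it respects the appropriate suprema; it is normalized if it sends $\langle 1 \rangle$ to~$1$. Given $\ta \in \QT(A)$, Proposition~\ref{P_4819_dtau} shows $d_\ta$ is well defined on $W(A)$, and the additivity, order-preservation, and normalization are immediate from the formula $d_\ta(a) = \lim_n \ta(a^{1/n})$ and the fact that $\ta$ is a normalized quasitrace; lower semicontinuity is the content of the cited Proposition~4.2 of~\cite{ERS} (or the discussion around Theorem~2.32 of~\cite{APT}). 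Conversely, given a normalized lower semicontinuous dimension function~$f$, one recovers a quasitrace by the Blackadar--Handelman procedure: define $\ta$ on positive elements by integrating~$f$ against the spectral data, i.e.\ $\ta(a) = \int_0^{\|a\|} f\big( (a - \ld)_+ \big) \, d\ld$, and check that this extends to a normalized $2$-quasitrace. That the two assignments are mutually inverse, and that the correspondence is affine, is exactly Theorem~2.32 of~\cite{APT} (building on Blackadar--Handelman), so I would cite it rather than reprove it.

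For part~(\ref{T_2Y25_DTau_Cu}), the plan is to pass from $W(A)$ to $\Cu(A)$. Given $\ta \in \QT(A)$, extend $d_\ta$ to $(K \otimes A)_+$ by the same formula (Definition~\ref{D:dtau}); one checks it descends to a functional $\om$ on $\Cu(A)$ in the sense of Definition~\ref{D_2Y25_CzFcnl}: additivity and order-preservation are as before, $\om(0) = 0$ is clear, and property~(\ref{2Y24_CuFcnl_Sup}) — compatibility with suprema of increasing sequences — follows because $d_\ta$ is lower semicontinuous and the suprema in $\Cu(A)$ of increasing sequences are computed as in Theorem~\ref{T_2Y25_CxSup}(\ref{T_2Y25_CxSup_Ex}). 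Normalization $\om(\langle 1 \rangle) = 1$ is immediate. Conversely, a functional $\om$ on $\Cu(A)$ with $\om(\langle 1 \rangle) = 1$ restricts (via the injection $W(A) \to \Cu(A)$ of Remark~\ref{R-2727MnI}) to a normalized lower semicontinuous dimension function on~$A$, hence by part~(\ref{T_2Y25_DTau_W}) comes from a unique $\ta \in \QT(A)$; one then verifies $d_\ta = \om$ on all of $\Cu(A)$, not just on $W(A)$, using Lemma~\ref{L-2720KToMn} (every class $\langle a \rangle$ with $a \in (K \otimes A)_+$ is a supremum of classes $\langle (a - \ep)_+ \rangle$, each Cuntz equivalent to an element of some $M_n \otimes A$) together with property~(\ref{2Y24_CuFcnl_Sup}) for both $\om$ and $d_\ta$. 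Bijectivity then follows from bijectivity in part~(\ref{T_2Y25_DTau_W}).

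The main obstacle is the equivalence, for a functional on $\Cu(A)$, between the abstract supremum-compatibility axiom~(\ref{2Y24_CuFcnl_Sup}) and ordinary lower semicontinuity of the associated dimension function on~$A$ — that is, making sure the two ``continuity'' notions match up across the $W(A)$ versus $\Cu(A)$ divide, and that restricting a $\Cu$-functional to $W(A)$ really lands among the lower semicontinuous dimension functions (rather than merely the states on $W(A)$). This is precisely where Lemma~\ref{L-2720KToMn} and Theorem~\ref{T_2Y25_CxSup} do the work, reducing general positive elements of $K \otimes A$ to matrix algebras over~$A$ and ensuring suprema are preserved. Everything else is either a direct computation from the defining formula for $d_\ta$ or an appeal to Theorem~2.32 of~\cite{APT}; I expect the write-up to be short, consisting mostly of assembling these citations with the reduction argument in part~(\ref{T_2Y25_DTau_Cu}).
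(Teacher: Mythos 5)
Your plan is correct, but it takes a more hands-on route than the paper, whose proof is essentially two citations: part~(\ref{T_2Y25_DTau_W}) is quoted from Theorem~II.2.2 of~\cite{BH}, and part~(\ref{T_2Y25_DTau_Cu}) is obtained by quoting Proposition~4.2 of~\cite{ERS}, which already gives the bijection between (not necessarily normalized) \emph{lower semicontinuous} $2$-quasitraces on~$A$ and functionals on ${\operatorname{Cu}}(A)$, so that the only thing left to check is that a normalized $2$-quasitrace is automatically lower semicontinuous, which the paper gets from Corollary~II.2.5(iii) of~\cite{BH} (finite quasitraces are continuous). You instead derive~(\ref{T_2Y25_DTau_Cu}) from~(\ref{T_2Y25_DTau_W}): restrict a normalized functional $\om$ on ${\operatorname{Cu}}(A)$ to $W(A)$, check (via $\langle a\rangle=\sup_\ep\langle(a-\ep)_+\rangle$ and the supremum axiom) that the restriction is a normalized lower semicontinuous dimension function, obtain $\ta$ from~(\ref{T_2Y25_DTau_W}), and then recover $\om=d_\ta$ on all of ${\operatorname{Cu}}(A)$ using Lemma~\ref{L-2720KToMn} together with Lemma~\ref{L_3905_CuWay}(\ref{L_3905_CuWay_SupEp}); this buys independence from Proposition~4.2 of~\cite{ERS} at the cost of redoing some of its content, and it is a perfectly viable alternative. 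Two small points where your write-up should be more careful: the lower semicontinuity of the restriction of~$\om$ and the supremum axiom for $d_\ta$ rest on Lemma~\ref{L_3905_CuWay} (the approximation $\langle a\rangle=\sup_\ep\langle(a-\ep)_+\rangle$ and compact containment $\langle(a-\ep)_+\rangle\ll\langle a\rangle$) rather than on Theorem~\ref{T_2Y25_CxSup}, which only asserts existence of suprema; and the assertion that $d_\ta$ is lower semicontinuous (equivalently $d_\ta(a)=\sup_\ep d_\ta((a-\ep)_+)$ on $(K\otimes A)_+$), which you use to verify condition~(\ref{2Y24_CuFcnl_Sup}), itself needs the automatic continuity of the finite quasitrace~$\ta$ --- exactly the point the paper isolates via Corollary~II.2.5(iii) of~\cite{BH} --- so this should be flagged explicitly rather than absorbed silently into the citations for part~(\ref{T_2Y25_DTau_W}).
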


\begin{proof}
Part~(\ref{T_2Y25_DTau_W})
follows from Theorem II.2.2 of~\cite{BH},
which gives the corresponding bijection
between $2$-quasitraces and dimension functions
which are not necessarily normalized but are finite everywhere.

We prove part~(\ref{T_2Y25_DTau_Cu}).
By Proposition~4.2 of~\cite{ERS},
the assignment $\ta \mapsto d_{\ta}$ defines a bijection
from the space of not necessarily normalized
lower semi\ct{} $2$-quasitraces on~$A$
to the space of functionals on ${\operatorname{Cu}} (A)$.
Therefore it suffices to show that if $\ta$
is a $2$-quasitrace on $A$ with $\ta (1) = 1$,
then $\ta$ is lower semi\ct.
This follows from Corollary II.2.5(iii) of~\cite{BH},
according to which quasitraces which are finite everywhere,
even on a not necessarily unital \ca,
are automatically \ct.
\end{proof}

The following result is well known,
but we do not know a reference.

\begin{lem}\label{L_4815_lsc}
Let $A$ be a \uca,
and let $a \in (K \otimes A)_{+}$.
Then the function $\ta \mapsto d_{\ta} (a)$
is a lower semi\cfn{} from $\QT (A)$ to $[0, \infty]$.
\end{lem}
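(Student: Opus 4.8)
The plan is to reduce to the case $a \in (M_n \otimes A)_{+}$ for some $n$, and then exploit the fact that on $M_n(A)$ the dimension function $d_\ta$ is a supremum of finitely many continuous functions of $\ta$. First I would fix $\lambda \in [0,\infty)$ and set $S_\lambda = \{ \ta \in \QT(A) : d_\ta(a) > \lambda \}$; the goal is to show $S_\lambda$ is open. By Lemma~\ref{L-2720KToMn}, for each $\ep > 0$ there are $n \in \N$ and $b \in (M_n \otimes A)_{+}$ with $(a - \ep)_{+} \sim_A b$. Since $d_\ta$ is well defined on the Cuntz semigroup (Proposition~\ref{P_4819_dtau}) and is a functional (so in particular order-preserving and sup-preserving on the sequence $\langle (a - \tfrac1m)_{+} \rangle$, which increases to $\langle a \rangle$ — this uses Lemma~\ref{L:CzBasic}(\ref{L:CzBasic:LMinusEp})), we have $d_\ta(a) = \sup_m d_\ta\big( (a - \tfrac1m)_{+} \big)$, so $d_\ta(a) > \lambda$ if and only if $d_\ta\big( (a - \tfrac1m)_{+} \big) > \lambda$ for some $m$. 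Thus it suffices to prove that $\ta \mapsto d_\ta(b)$ is lower semicontinuous for $b \in (M_n \otimes A)_{+}$, since $d_\ta\big((a-\tfrac1m)_{+}\big) = d_\ta(b_m)$ for a suitable $b_m \in (M_n \otimes A)_{+}$.

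Next I would handle the matrix case. For $b \in (M_n \otimes A)_{+}$, the same argument shows $d_\ta(b) = \sup_{\dt > 0} d_\ta\big( (b - \dt)_{+} \big)$, so it is enough to show $\ta \mapsto d_\ta\big( (b - \dt)_{+} \big)$ is continuous, or at least lower semicontinuous, for each fixed $\dt > 0$. Here I would use that, by Definition~\ref{D:dtau}, $d_\ta(c) = \lim_{k} \ta(c^{1/k})$ is a decreasing limit (since $c^{1/k} \leq 1$ may fail, but $c^{1/k} \nearrow$ a projection so $\ta(c^{1/k})$ is nondecreasing; note $d_\ta$ extended to $(K\otimes A)_{+}$ via the given normalization of quasitraces on $M_\infty(A)$ is finite on $W(A)$). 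The cleaner route: on $(b - \dt)_{+}$, continuous functional calculus gives $(b-\dt)_{+}^{1/k} \to p$ where $p$ is the support projection of $(b - \dt/2)_{+} (b-\dt)_{+}$-ish; more simply, choose a continuous $f$ with $f(0)=0$, $f \equiv 1$ on $[\dt/2, \|b\|]$, so that $f\big((b-\dt)_{+}\big)$ is a positive element with $d_\ta\big((b-\dt)_{+}\big) = \ta\big( f((b-\dt)_{+}) \big)$ for every $\ta$ — because $f((b-\dt)_{+})$ acts as a unit on $(b-\dt)_{+}$, making $d_\ta\big((b-\dt)_{+}\big) = d_\ta\big(f((b-\dt)_{+})\big)$, and then $d_\ta(g) \leq \ta(g) \leq d_\ta(g)$ is not quite right for general $g$; instead I would use that $\ta(g^{1/k}) \searrow$? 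Let me instead just use $d_\ta\big((b-\dt)_{+}\big) = \ta\big(\chi_{(0,\infty)}((b-\dt)_{+})\big)$ approximated below: for each fixed $k$, $\ta \mapsto \ta\big((b-\dt)_{+}^{1/k}\big)$ is continuous (quasitraces are continuous on the unital algebra $M_n(A^{+})$ by Corollary~II.2.5 of~\cite{BH}, hence weak-* continuous as a function of $\ta$ since evaluation at a fixed element is), and $d_\ta\big((b-\dt)_{+}\big) = \sup_k \ta\big((b-\dt)_{+}^{1/k}\big)$ as a supremum of an increasing sequence. A supremum of continuous functions is lower semicontinuous, which is exactly what we want.

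Assembling: $\ta \mapsto d_\ta(a)$ is a supremum (over $m$, then over the matrix-level $k$) of functions of the form $\ta \mapsto \ta(c)$ with $c$ a fixed positive element of some $M_n(A^{+})$, each of which is weak-* continuous on $\QT(A)$; hence $\ta \mapsto d_\ta(a)$ is lower semicontinuous. The main obstacle I anticipate is bookkeeping the passage between $(K \otimes A)_{+}$ and $(M_n \otimes A)_{+}$ — specifically, confirming that the normalization used to define $d_\ta$ on $M_\infty(A)_{+}$ (via the chosen embeddings, so $\ta$ on $M_n(A)$ is the non-normalized trace with $d_\ta(1_{M_n}) = n$) is compatible with Lemma~\ref{L-2720KToMn}'s Cuntz equivalence, and that this equivalence genuinely lives in $W(A)$ so that the relevant dimension values are finite and the sup-over-$k$ step makes sense. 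Everything else is the standard fact that quasitraces on a unital C*-algebra are continuous (cited in the proof of Theorem~\ref{T_2Y25_DTau}) combined with "sup of continuous is lower semicontinuous."
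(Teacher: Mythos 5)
Your proof is correct and takes essentially the same route as the paper: in both arguments the heart of the matter is that for an element $c$ of some $M_n \otimes A$ one has $d_\tau(c) = \sup_k \tau\big(c^{1/k}\big)$, a supremum of continuous real-valued functions of $\tau$, hence lower semicontinuous, and the general case is then a countable supremum of such functions. The only (harmless) difference is the reduction step --- the paper compresses $a$ by $p_n \otimes 1$ and uses lower semicontinuity of $d_\tau$ in the element variable, while you pass to $(a - \tfrac{1}{m})_{+}$ via Lemma~\ref{L-2720KToMn} and the sup-preserving functional property of $d_\tau$ --- and in either version one should first normalize $\| \cdot \| \leq 1$ (as the paper does), so that the sequence $\tau\big(c^{1/k}\big)$ is genuinely nondecreasing; this tidies the wobble in your middle paragraph.
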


\begin{proof}
\Wolog{} $\| a \| \leq 1$.
If there is $n \in \N$ such that $a \in M_n \otimes A$,
then $\ta \mapsto d_{\ta} (a)$
is the supremum of the \ct{}
real valued functions $\ta \mapsto \ta (a^{1/n})$
on $\QT (A)$.
In general,
for $n \in \N$ let $p_n \in K$ be the identity of $M_n$.
The function $d_{\ta}$ is lower semi\ct{} on $(K \otimes A)_{+}$.
So $\ta \mapsto d_{\ta} (a)$
is the supremum of the lower semi\cfn{s}
$\ta \mapsto d_{\ta} \big( (p_n \otimes 1) a (p_n \otimes 1) \big)$
on $\QT (A)$.
\end{proof}

We will frequently use the following standard fact without comment.
Again, we did not find a reference.

\begin{lem}\label{L_4822_infdTauPos}
Let $A$ be a simple \uca,
and let $a \in (K \otimes A)_{+} \setminus \{ 0 \}$.
Then $\inf_{\tau \in \QT (A)} d_{\tau} (a) > 0$.
\end{lem}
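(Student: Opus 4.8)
The plan is to reduce to the case of an element of $M_n \otimes A$ and then exploit simplicity of $A$ together with compactness of $\QT(A)$. First I would reduce to the finite-matrix case: by Lemma~\ref{L-2720KToMn}, choosing any $\ep \in (0, \|a\|)$, there are $n \in \N$ and $b \in (M_n \otimes A)_+$ with $(a - \ep)_+ \sim_A b$. Since $d_\ta$ is monotone and $(a-\ep)_+ \precsim_A a$, we have $d_\ta(b) = d_\ta\big((a-\ep)_+\big) \le d_\ta(a)$ for every $\ta$, so it suffices to show $\inf_{\ta \in \QT(A)} d_\ta(b) > 0$; note $b \ne 0$ because $(a - \ep)_+ \ne 0$. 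Replacing $A$ by $M_n \otimes A$ (which is again simple unital, and for which $\QT(M_n \otimes A)$ is canonically identified with $\QT(A)$ via the normalized trace on $M_n$, preserving the dimension functions up to the factor $n$), I may as well assume $b \in A_+ \setminus \{0\}$.

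Next I would produce a uniform lower bound using simplicity. By Corollary~\ref{C_4619_PosSmp} applied to $x = b$, there are $m \in \N$ and $c_1, \dots, c_m \in A$ with $\sum_{j=1}^m c_j b c_j^* = 1$. Then $1 = \sum_j c_j b c_j^* \precsim_A \bigoplus_{j=1}^m c_j b c_j^* \sim_A \bigoplus_{j=1}^m b c_j^* c_j \precsim_A \bigoplus_{j=1}^m \|c_j\|^2 b \sim_A \bigoplus_{j=1}^m b$, using Lemma~\ref{L:CzBasic}(\ref{L:CzBasic:LCzCmpSum}), Lemma~\ref{L:CzBasic}(\ref{L:CzBasic:LCzComm}), and Lemma~\ref{L:CzBasic}(\ref{L:CzBasic:CmpDSum}) together with $c_j b c_j^* \le \|c_j\|^2\, c_j c_j^* \cdot$ — more carefully, $b c_j^* c_j \le \|c_j\|^2 b$ and Lemma~\ref{L:CzBasic}(\ref{L:CzBasic:LCzFCalc}) for scaling. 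Applying $d_\ta$ and using additivity over $\oplus$ and monotonicity gives $1 = d_\ta(1) \le m \cdot d_\ta(b)$, hence $d_\ta(b) \ge 1/m$ for every $\ta \in \QT(A)$. Therefore $\inf_{\ta} d_\ta(b) \ge 1/m > 0$, and unwinding the reductions gives $\inf_\ta d_\ta(a) \ge 1/(nm) > 0$.

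The only slightly delicate point is the bookkeeping in the Cuntz-comparison chain $1 \precsim_A \bigoplus_{j=1}^m b$; everything there is an instance of the parts of Lemma~\ref{L:CzBasic} already catalogued, so I do not expect a genuine obstacle. The one thing to be careful about is that $d_\ta$ is being applied to elements of $M_k \otimes A$ for various $k$ and takes values in $[0,\infty]$ on $(K \otimes A)_+$ but is finite on $M_\infty(A)_+$ (Definition~\ref{D:dtau}); since $b \in A_+$ all the elements appearing are in $M_\infty(A)_+$, so additivity $d_\ta(\eta \oplus \mu) = d_\ta(\eta) + d_\ta(\mu)$ and monotonicity are unproblematic, and the stable finiteness hypothesis in Definition~\ref{D:dtau} is automatic for simple unital $A$ with $\QT(A) \ne \varnothing$. (If $\QT(A) = \varnothing$ the statement is vacuous.)
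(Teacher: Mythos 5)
Your argument is correct, but it takes a genuinely different route from the paper's. The paper first reduces the infimum to a pointwise statement: since $\ta \mapsto d_{\ta}(a)$ is lower semicontinuous (Lemma~\ref{L_4815_lsc}) and $\QT(A)$ is compact, it suffices to show $d_{\ta}(a) > 0$ for each fixed~$\ta$; it then passes to $M_{\infty}(A)_{+}$ by semicontinuity of $d_{\ta}$ on $(K \otimes A)_{+}$, reduces to $a \in A_{+}$ with $\| a \| = 1$ by the same ``replace $A$ by $M_n(A)$ and renormalize'' device you use, and finishes with $d_{\ta}(a) \geq \ta(a) > 0$, quoting faithfulness of quasitraces on simple unital algebras (the comment before Theorem~2.32 of~\cite{APT}). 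You instead obtain a bound that is uniform in~$\ta$ from the outset: Lemma~\ref{L-2720KToMn} replaces $(a - \ep)_{+}$ by a Cuntz equivalent $b \in (M_n \otimes A)_{+} \setminus \{ 0 \}$, and Corollary~\ref{C_4619_PosSmp} gives $\langle 1 \rangle \leq m \langle b \rangle$, hence $d_{\ta}(b) \geq 1/m$ for every~$\ta$ simultaneously, which bypasses both the compactness/semicontinuity step (Lemma~\ref{L_4815_lsc} is not needed at all) and the external faithfulness citation, at the cost of running the Cuntz comparison chain; there you should write the middle term as $b^{1/2} c_j^* c_j b^{1/2}$ rather than $b c_j^* c_j$ (use $v = c_j b^{1/2}$ and Lemma~\ref{L:CzBasic}(\ref{L:CzBasic:LCzComm})), a harmless slip. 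What each approach buys: yours is self-contained within the paper's lemmas and produces an explicit uniform lower bound of the form $1/(n m)$, while the paper's proof is shorter given that Lemma~\ref{L_4815_lsc} has already been established for exactly this purpose and the compactness argument generalizes to situations where no uniform algebraic bound is available.
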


\begin{proof}
Since $\ta \mapsto d_{\ta} (a)$ is lower semi\ct{}
(Lemma~\ref{L_4815_lsc})
and $\QT (A)$ is compact,
it suffices to show that for $\ta \in \QT (A)$
we have $d_{\ta} (a) > 0$.

For $n \in \N$ let $p_n \in K$ be the identity of $M_n$.
The sequence
$\big( d_{\ta} \big(
  (p_n \otimes 1) a (p_n \otimes 1) \big) \big)_{n \in \N}$
is nondecreasing and,
by semicontinuity of $d_{\ta}$ on $(K \otimes A)_{+}$,
converges to $d_{\ta} (a)$.
So it suffices to consider $a \in M_{\infty} (A)_{+} \setminus \{ 0 \}$.
Replacing $A$ with $M_n (A)$ for suitable $n \in \N$,
and renormalizing~$\ta$,
we may assume $a \in A_{+} \setminus \{ 0 \}$.

By scaling,
\wolog{} $\| a \| = 1$.
Then $d_{\tau} (a) \geq \tau (a)$.
We have $\ta (a) > 0$
by the comment before Theorem 2.32 of~\cite{APT}.
\end{proof}

We recall the following definition.
It is in condition~{\textbf{(O4)}} in Definition~4.1 of~\cite{APT},
but the name there is different
(``way below'' instead of ``compactly contained in'').

\begin{dfn}\label{D_4621_CC}
Let $S$ be an ordered commutative semigroup
in which supremums of countable upwards directed sets exist.
Let $\et, \mu \in S$.
We write $\mu \ll \et$ if whenever
$\et_0 \leq \et_1 \leq \cdots$ in~$S$,
and $\et \leq \sup \big( \{ \et_n \colon n \in \Nz \} \big)$,
then there exists $n$ such that $\mu \leq \et_n$.
We say that $\mu$ is {\emph{compactly contained in}}~$\et$.
\end{dfn}

\begin{lem}\label{L_3905_CuWay}
Let $A$ be a \ca.
\begin{enumerate}
\item\label{L_3905_CuWay_SupEp}
Let $a \in (K \otimes A)_{+}$.
Let $(\ep_n)_{n \in \N}$
be any sequence in $(0, \infty)$ which decreases to zero.
Then
\[
\langle a \rangle
 = \sup \big(
    \big\{ \langle (a - \ep_n)_{+} \rangle \colon n \in \N \big\} \big).
\]
\item\label{L_3905_CuWay_EpWB}
Let $a \in (K \otimes A)_{+}$ and let $\ep > 0$.
Then $\langle (a - \ep)_{+} \rangle \ll \langle a \rangle$.
\item\label{L_3905_CuWay_PjWB}
Let $p \in (K \otimes A)_{+}$ be a \pj.
Then $\langle p \rangle \ll \langle p \rangle$.
\item\label{L_3905_CuWay_Ord}
If $\et, \mu \in {\operatorname{Cu}} (A)$
and $\et \ll \mu$,
then $\et \leq \mu$.
\item\label{L_3905_CuWay_Tr}
If $\et, \ld, \mu \in {\operatorname{Cu}} (A)$
satisfy $\et \leq \ld$ and $\ld \ll \mu$,
then $\et \ll \mu$.
\end{enumerate}
\end{lem}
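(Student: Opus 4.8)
The plan is to prove the five parts in the order listed, using the earlier machinery on $(a-\ep)_+$ and the existence of suprema (Theorem~\ref{T_2Y25_CxSup}(\ref{T_2Y25_CxSup_Ex})). For part~(\ref{L_3905_CuWay_SupEp}), first I would check that $\{ \langle (a - \ep_n)_{+} \rangle \colon n \in \N \}$ is upwards directed: since $(\ep_n)$ decreases, $m \geq n$ gives $(a - \ep_m)_{+} \geq (a - \ep_n)_{+}$ pointwise on the spectrum of $a$, hence $(a - \ep_n)_{+} \in \overline{(a - \ep_m)_{+} \, (K \otimes A) \, (a - \ep_m)_{+}}$ (they generate nested hereditary subalgebras), so $\langle (a - \ep_n)_{+}\rangle \leq \langle (a - \ep_m)_{+}\rangle$ by Lemma~\ref{L:CzBasic}(\ref{L:CzBasic:Her}). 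Thus the supremum exists. That $\langle a \rangle$ is an upper bound is Lemma~\ref{L:CzBasic}(\ref{L:CzBasic:LCzOneWay}) applied to $\ld \mapsto (\ld - \ep_n)_+$. To see it is the least upper bound, suppose $\langle (a - \ep_n)_{+} \rangle \leq \langle b \rangle$ for all $n$; then for every $\ep > 0$ we may pick $n$ with $\ep_n < \ep$, whence $(a - \ep)_{+} \precsim_A (a - \ep_n)_{+} \precsim_A b$ (the first step again by Lemma~\ref{L:CzBasic}(\ref{L:CzBasic:Her}), or directly), and Lemma~\ref{L:CzBasic}(\ref{L:CzBasic:LMinusEp}) gives $a \precsim_A b$, i.e.\ $\langle a \rangle \leq \langle b \rangle$.

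For part~(\ref{L_3905_CuWay_EpWB}), let $\et_0 \leq \et_1 \leq \cdots$ in ${\operatorname{Cu}}(A)$ with $\langle a \rangle \leq \sup_n \et_n$. Choose a representative $b_k$ with $\et_k = \langle b_k \rangle$. By part~(\ref{L_3905_CuWay_SupEp}) applied with $\ep_n = \ep/n$... actually more carefully: the proof that way-below holds reduces, via Lemma~\ref{L:CzBasic}(\ref{L:CzBasic:LMinusEp:3}), to the following. Since $\langle (a - \ep/2)_{+} \rangle \leq \langle a \rangle \leq \sup_n \et_n$, and $\langle (a - \ep)_{+} \rangle \leq \langle (a - \ep/2)_{+} \rangle$ with a strict gap in the cutoff, one uses the standard fact (essentially Lemma~\ref{L:CzBasic}(\ref{L:CzBasic:LMinusEp:3}) together with Lemma~\ref{L:CzBasic}(\ref{L:CzBasic:LCzWithinEp})): choose $\dt > 0$ so that $(a - \ep)_{+} \precsim_A (c - \dt)_{+}$ where $c$ is any positive element with $\|c - a\|$ small enough, and then approximate $a$ within the hereditary subalgebra generated by suprema. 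The cleanest route is to invoke part~(\ref{L_3905_CuWay_SupEp}) for $\langle a \rangle$ and the fact that in ${\mathbf{Cu}}$, $x \ll y$ whenever $x$ has a strictly-smaller-cutoff representative of $y$; concretely, $\langle (a-\ep)_+\rangle \ll \langle a \rangle$ because if $\langle a \rangle \leq \sup_n \et_n$ then by Lemma~2.2 of~\cite{KR2}-type arguments there is $d$ with $d^* b_N d$ close to $(a - \ep/2)_+$ for some $N$, giving $(a - \ep)_+ \precsim_A b_N$, i.e.\ $\langle (a-\ep)_+ \rangle \leq \et_N$. I expect this to be the main obstacle — making the "strict gap in cutoff absorbs a small perturbation" step fully rigorous without quoting the abstract ${\mathbf{Cu}}$ axioms — but it is standard and follows from Corollary~\ref{C:MMvsM} combined with the countable directedness.

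The remaining parts are short. Part~(\ref{L_3905_CuWay_PjWB}): if $p$ is a projection then $(p - \ep)_+ = (1-\ep)p \sim_A p$ by Lemma~\ref{L:CzBasic}(\ref{L:CzBasic:LCzFCalc}) for $\ep \in (0,1)$, so $\langle p \rangle = \langle (p-\ep)_+ \rangle \ll \langle p \rangle$ by part~(\ref{L_3905_CuWay_EpWB}). Part~(\ref{L_3905_CuWay_Ord}): apply the definition of $\ll$ with the constant sequence $\et_n = \mu$, whose supremum is $\mu$; from $\mu \leq \mu$ we get some $n$ with $\et \leq \et_n = \mu$. Part~(\ref{L_3905_CuWay_Tr}): suppose $\et \leq \ld$, $\ld \ll \mu$, and $\mu \leq \sup_n \et_n$ for a nondecreasing sequence $(\et_n)$; then $\ld \ll \mu$ yields $n$ with $\ld \leq \et_n$, and transitivity of $\leq$ gives $\et \leq \ld \leq \et_n$, so $\et \ll \mu$.
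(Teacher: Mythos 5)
Your parts (\ref{L_3905_CuWay_SupEp}), (\ref{L_3905_CuWay_PjWB}), (\ref{L_3905_CuWay_Ord}), and (\ref{L_3905_CuWay_Tr}) are correct, and in fact more self-contained than the paper's proof, which simply cites \cite{APT} (Lemma~4.36 there for the first two parts, and the remarks after Definition~4.1 together with Theorem~4.20 there for part~(\ref{L_3905_CuWay_Tr})). Your argument for (\ref{L_3905_CuWay_SupEp}) -- upper bound via Lemma~\ref{L:CzBasic}(\ref{L:CzBasic:LCzOneWay}), least upper bound via Lemma~\ref{L:CzBasic}(\ref{L:CzBasic:LMinusEp}) -- is complete, and (\ref{L_3905_CuWay_PjWB}), (\ref{L_3905_CuWay_Ord}), (\ref{L_3905_CuWay_Tr}) follow from the definition exactly as you say.

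The genuine gap is in part~(\ref{L_3905_CuWay_EpWB}), and it is exactly the step you yourself flag as ``the main obstacle.'' Given $\et_0 \leq \et_1 \leq \cdots$ with $\langle a \rangle \leq \sup_n \et_n$, you must produce $N$ with $(a - \ep)_{+} \precsim_A b_N$, where $\et_n = \langle b_n \rangle$. Writing $\sup_n \et_n = \langle b \rangle$, the hypothesis gives only $a \precsim_A b$ where $\langle b \rangle$ is an \emph{abstract} least upper bound; nothing in the order-theoretic definition of the supremum lets you approximate $a$ (or $v b v^*$) by elements Cuntz-dominated by a single $b_N$. Corollary~\ref{C:MMvsM} and Lemma~\ref{L:CzBasic}(\ref{L:CzBasic:LCzWithinEp}) do handle the final perturbation once you have some $c \precsim_A b_N$ with $\| c - a \|$ small, but producing such a $c$ requires the concrete realization of suprema of increasing sequences -- for instance, that after passing to suitable representatives one may take $b$ to be a strictly positive element of the closure of the increasing union of the hereditary subalgebras $\overline{b_n (K \otimes A) b_n}$ -- and that is precisely the content of Theorem~4.19 and Lemma~4.36 of~\cite{APT} (equivalently, the Coward--Elliott--Ivanescu construction) which the paper quotes. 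You neither prove this nor cite it, only assert it is standard, so as written part~(\ref{L_3905_CuWay_EpWB}) (and with it part~(\ref{L_3905_CuWay_PjWB}), which depends on it) is not established. The fix is either to quote \cite{APT} as the paper does, or to reproduce the construction of the supremum of an increasing sequence and then run your perturbation argument inside that concrete model.
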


\begin{proof}
Theorem 4.33 of~\cite{APT} implies that
${\operatorname{Cu}} (A)$ as defined here
(namely,
${\operatorname{Cu}} (A) = W (K \otimes A)$)
is the same as in Definition~4.5 of~\cite{APT}.
Given this, parts (\ref{L_3905_CuWay_SupEp})
and~(\ref{L_3905_CuWay_EpWB}) are Lemma~4.36 of~\cite{APT}.
Part~(\ref{L_3905_CuWay_PjWB}) is immediate from
part~(\ref{L_3905_CuWay_EpWB}).
Part~(\ref{L_3905_CuWay_Ord}) is immediate
from Definition~\ref{D_4621_CC},
and part~(\ref{L_3905_CuWay_Tr})
follows from the comments after
Definition~4.1 of~\cite{APT},
together with the fact
(Theorem~4.20 of~\cite{APT})
that ${\operatorname{Cu}} (A)$
is in fact in the category ${\mathbf{Cu}}$
of Definition~4.1 of~\cite{APT}.
\end{proof}

\section{Cuntz comparison in simple C*-algebras}\label{Sec_CSimp}

\indent
In this section,
we give results on Cuntz comparison which are special
to simple \ca{s} not of type~I,
or at least to \ca{s} not of type~I.
In some of them,
Cuntz comparison plays only a secondary role.

The main results are a strong form of the existence of many
orthogonal equivalent elements
(see Lemma~\ref{L:NT1Orth}),
a kind of weak approximate divisibility result
(Lemma~\ref{L:Smaller}),
and Lemma~\ref{L-2726Big},
which is a form of the statement that
in a finite simple \uca,
if $0 \leq g \leq 1$
and $g$ is in a ``small'' \hsa,
then $1 - g$ is ``large''.

We first give some results depending on the existence of
comparable orthogonal elements.
We record the following useful fact from~\cite{AS}.

\begin{lem}\label{L-2817Sp01}
Let $A$ be a simple \ca{} which is not of type~I.
Then there exists $a \in A_{+}$ such that
$\spec (a) = [0, 1]$.
\end{lem}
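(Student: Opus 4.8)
The plan is to reduce to the known fact that a simple \ca{} not of type~I contains a nonzero element $b$ whose spectrum is infinite, and then manufacture from $b$ a positive element with spectrum exactly $[0,1]$. First I would recall that if $A$ is not of type~I, then $A$ is not GCR, hence contains a \ca{} isomorphic to something with an element of infinite spectrum; concretely, by a standard argument (Glimm, or the structure theory of type~I algebras), there is a positive element $c \in A_+$ of norm one whose spectrum $\spec(c) \subset [0,1]$ is an infinite compact set. This is the input from \cite{AS} that the paper is citing — I would simply invoke it rather than reprove it.

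Given such a $c$, the spectrum $K = \spec(c)$ is an infinite compact subset of $[0,1]$, so it has an accumulation point $t_0$. The key step is to choose a continuous function $f \colon [0,1] \to [0,1]$ with $f(0) = 0$ such that $f(K) = [0,1]$. To do this, pick a decreasing sequence $(t_n)_{n \geq 1}$ in $K$ converging to a point $t_\infty \in K$ with $t_n \neq t_\infty$ (possible since $K$ is infinite, so it has a one-sided accumulation point, and we may harmlessly arrange $t_\infty < t_n$ or work on the appropriate side); adjust so that the $t_n$ are strictly monotone. On each interval $[t_{n+1}, t_n]$ we can define $f$ to sweep continuously from one endpoint value to another so that the union of images over all $n$ covers $[0,1]$: for instance let $f$ be a "tent" on $[t_{2k+1}, t_{2k-1}]$ reaching height $1$ at $t_{2k}$ and returning to $0$, and set $f = 0$ on $[0, t_\infty]$ and extend continuously (and boundedly into $[0,1]$) on the rest of $[0,1]$. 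Then $f$ is continuous, $f(0) = 0$, and already $f(\{t_n : n \geq 1\} \cup \{t_\infty\}) \supseteq [0,1]$, so $f(K) = [0,1]$.

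Now set $a = f(c)$, using continuous functional calculus; then $a \in A_+$ (indeed $0 \le a \le 1$) and, by the spectral mapping theorem, $\spec(a) = f(\spec(c)) = f(K) = [0,1]$, which is exactly the conclusion. The main obstacle is purely the combinatorial/analytic construction of the function $f$ with $f(K) = [0,1]$; once one has an accumulation point of $K$ this is elementary, but one must be a little careful that $K$ need not be an interval, only that it is infinite and compact — hence the passage to a convergent monotone subsequence and building $f$ piecewise between consecutive terms. Everything else (invoking \cite{AS} for the existence of $c$, and the spectral mapping theorem) is standard.
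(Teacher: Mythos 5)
There is a genuine gap, and it is at the central step. You reduce to the existence of a positive element $c$ whose spectrum $K=\spec(c)$ is merely \emph{infinite}, and then claim that for any infinite compact $K\subset[0,1]$ one can find a continuous $f$ with $f(K)=[0,1]$. This is false when $K$ is countable: a continuous image of a countable set is countable, so $f(K)$ can never be $[0,1]$ if, say, $K=\{0\}\cup\{1/n\colon n\geq 1\}$. Your explicit construction makes the error visible: the tent functions guarantee that $f$ maps the \emph{intervals} $[t_{2k+1},t_{2k-1}]$ onto $[0,1]$, but $K$ need only contain the points $t_n$, not those intervals, so the assertion $f(\{t_n\colon n\geq 1\}\cup\{t_\infty\})\supseteq[0,1]$ is impossible (the left-hand side is countable). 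Note that in a simple non-type-I algebra such as a UHF algebra there are plenty of positive elements with countable infinite spectrum, so the element $c$ you start from may genuinely be of this kind and your argument then produces nothing.

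What is actually needed is a positive element whose spectrum is \emph{uncountable} (equivalently, contains a perfect set); from a Cantor subset of the spectrum one can indeed build a continuous surjection onto $[0,1]$ and conclude by spectral mapping. But the passage from ``$A$ is not of type~I'' to ``some positive element has uncountable spectrum'' is exactly the non-scatteredness statement of~\cite{AS}, and that is the content the paper's proof is citing: it observes that a simple \ca{} not of type~I is not scattered in the sense of~\cite{AS}, and then the argument in~(4) on page~61 of~\cite{AS} produces $a\in A_{+}$ with $\spec(a)=[0,1]$. So the citation cannot be replaced by the weaker input ``infinite spectrum''; either quote the scatteredness result (every positive element of a scattered \ca{} has countable spectrum, and scattered implies type~I), or otherwise justify the existence of an element with uncountable spectrum, before running your functional-calculus step.
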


\begin{proof}
The discussion before~(1) on page~61 of~\cite{AS}
shows that $A$ is not scattered in the sense of~\cite{AS}.
The conclusion therefore follows
from the argument in~(4) on page~61 of \cite{AS}.
\end{proof}

\begin{lem}\label{L:NT1Orth}
Let $A$ be a unital \ca{} which is not of type~I.
Let $n \in \N$.
Then there exists a unitary $u \in A$ which is homotopic to~$1$,
and a nonzero positive element $a \in A$,
such that the elements
\[
a, \, u a u^{-1}, \, u^2 a u^{-2}, \, \ldots, \, u^n a u^{-n}
\]
are pairwise orthogonal.
\end{lem}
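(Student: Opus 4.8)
The idea is to produce, inside $A$, a copy of a continuous-function algebra $C([0,1])$ (or something close to it) on which we can find an element $a$ together with a one-parameter family of ``shifts'' that move its support off itself, and then engineer the unitary $u$ from a self-adjoint element that is homotopic to $0$. First I would use Lemma~\ref{L-2817Sp01} to get $c \in A_+$ with $\spec(c) = [0,1]$. The abelian C*-subalgebra $C^*(c) \cong C([0,1])$ (unitally, after adding a unit, or working inside the corner generated by $c$) gives us room: choose a nonzero positive function $f \in C_0((0,1])$ supported on a tiny subinterval $J \subset (0,1)$ of length less than $\tfrac{1}{n+2}$, and set $a = f(c)$. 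Then I need a single unitary $u \in A$, homotopic to $1$, such that conjugation by $u^k$ carries the support interval $J$ to pairwise disjoint subintervals of $[0,1]$ for $k = 0,1,\dots,n$. The natural candidate: take a self-adjoint $h = h^* \in A$ (for instance built from $c$ and possibly one more element) such that $\exp(2\pi i t h)$ at $t=1$ implements, on $C^*(c)$, a homeomorphism of $[0,1]$ (or at least of a neighborhood of $J$) that is ``translation by a fixed amount''; then $u = \exp(2\pi i h)$ is automatically homotopic to $1$ via $t \mapsto \exp(2\pi i t h)$, and $u^k$ conjugates $a$ to $f(c_k)$ where $c_k$ has the translated support.

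The subtlety is that a \emph{single} abelian subalgebra $C^*(c)$ is fixed pointwise under conjugation by anything in $C^*(c)$, so $u$ cannot live in $C^*(c)$ — we genuinely need the non-type-I hypothesis to supply extra noncommutativity. Here I would invoke the standard fact (used repeatedly in this circle of ideas, e.g. in the work on large subalgebras and in the Akemann--Anderson--Pedersen tradition) that a non-type-I C*-algebra contains, in a suitable corner or unitally, a copy of a continuous-trace or matrix-valued algebra large enough to embed $C([0,1], M_{n+1})$ — or more simply, $C([0,1]) \otimes M_{n+1}$ or even just enough of $C(S^1) \otimes M_{n+1}$ — which is exactly the structure needed. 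Inside $M_{n+1}$ there is a unitary $w$ (the cyclic shift permutation matrix, which is homotopic to $1$ in $M_{n+1}$ since $\det$ can be absorbed, or rather: any unitary in $M_{n+1}$ is homotopic to $1$ because $U(M_{n+1})$ is connected) with $w e_{11} w^{-1} = e_{22}$, etc., so that $e_{11}, w e_{11} w^{-1}, \dots, w^n e_{11} w^{-n}$ are the pairwise orthogonal rank-one projections $e_{11}, \dots, e_{(n+1)(n+1)}$. Then set $a = g \otimes e_{11}$ for any nonzero $g \in C([0,1])_+$ (or just $a = 1 \otimes e_{11}$ composed with the embedding, if a projection is acceptable — but we only need $a$ positive nonzero) and $u$ the image of $1 \otimes w$; this $u$ is homotopic to $1$ inside the subalgebra, hence in $A$, and $u^k a u^{-k} = g \otimes e_{(k+1)(k+1)}$ are pairwise orthogonal.

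So the key steps, in order, are: (1) reduce to the non-type-I case (which is the hypothesis) and extract from it an embedded copy of $C_0(Z) \otimes M_{n+1}$ for some suitable locally compact $Z$, or at least $M_{n+1}$ sitting in a hereditary subalgebra with a unit that is full enough — citing Lemma~\ref{L-2817Sp01} or the underlying structure theory; (2) inside the matrix block, use the cyclic shift unitary $w$, which lies in the connected group $U(M_{n+1})$ and is therefore homotopic to the identity of the block; (3) extend $w$ to a unitary $u$ in all of $A$ homotopic to $1$ by using $1_A - 1_{\text{block}}$ on the complement (so $u = w + (1 - p)$ where $p$ is the unit of the block, and the homotopy $t \mapsto w_t + (1-p)$ works since $w_t \to 1_{\text{block}}$); (4) take $a$ to be a nonzero positive element supported in the $(1,1)$ corner of the block and check $u^k a u^{-k}$ are pairwise orthogonal because the matrix units $e_{kk}$ are. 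The main obstacle is step~(1): making precise ``a non-type-I unital C*-algebra contains a unital-in-a-full-corner copy of $M_{n+1}$ with the complementary projection available,'' i.e. getting the matrix units inside $A$ cleanly rather than just a single element with interval spectrum — this is where I expect to either cite the Akemann--Anderson--Pedersen machinery (as with Lemma~\ref{L-2817Sp01}) or, alternatively, bypass matrices entirely and do step~(1)'/(2)' purely with the interval-spectrum element $c$ plus a clever choice of $u$ built as $\exp(2\pi i h)$ for $h$ encoding an order-$(n+1)$ rotation, though that route requires more than one noncommuting element and is messier. Everything after step~(1) is routine.
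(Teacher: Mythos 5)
There is a genuine gap, and it is exactly at the step you flagged as ``the main obstacle.'' Step~(1) of your plan asks for a copy of $M_{n+1}$ (or of $C([0,1])\otimes M_{n+1}$) sitting in a corner of $A$ with its unit being a projection $p\in A$, so that you can set $u=w+(1-p)$. No such embedding exists in general: a unital C*-algebra which is not of type~I can fail to have any projections other than $0$ and $1$ (for instance $C^*_{\mathrm{r}}(F_2)$, which is unital, simple, and not of type~I). Any embedding of $M_{n+1}$ or of $C([0,1])\otimes M_{n+1}$ with $n\geq 1$ would produce a nontrivial projection in $A$ (the image of a rank-one matrix unit cannot be $0$, and cannot be $1$ because there is a second, orthogonal, matrix unit), so the hypothesis ``not type~I'' cannot supply the block-plus-complementary-projection structure your construction needs. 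Lemma~\ref{L-2817Sp01} only gives an element with spectrum $[0,1]$; it does not give matrix units, and your alternative route via $u=\exp(2\pi i h)$ implementing a translation on $\spec(c)$ is, as you note, impossible inside the abelian algebra $C^*(c)$ and is not otherwise substantiated.

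The paper's proof shows how to get around the absence of projections, and this is where the real content lies. The model algebra is not $C([0,1])\otimes M_{n+1}$ but the unitized cone $(C M_{n+1})^{+}=\{f\in C([0,1],M_{n+1})\colon f(0)\in\C\cdot 1\}$, which has no nontrivial projections: in it one takes $a(\ld)=(\ld-\tfrac12)_{+}e_{0,0}$ and a unitary $u$ which follows a path from $1$ to the cyclic shift $s$ on $[0,\tfrac12]$ and equals $s$ where $a$ is supported --- this is essentially your matrix-shift picture, but tapered so that no projection is required and $u$ is homotopic to $1$ by construction. To embed this model unitally into $A$ one cannot use corners; instead the paper invokes Glimm's theorem (Corollary 6.7.4 of~\cite{Pd}) to get a subalgebra $B\subset A$ with a surjection $\pi\colon B\to D$ onto a UHF-type algebra $D$, maps the cone $C M_{n+1}=C_0((0,1])\otimes M_{n+1}$ into $D$ by $f\otimes x\mapsto f(b)\otimes x$ for an element $b$ with $\spec(b)=[0,1]$ (here Lemma~\ref{L-2817Sp01} is used, in $D_0$, not in $A$), and then lifts this map through $\pi$ using the fact that the cone $C M_{n+1}$ is a \emph{projective} C*-algebra; unitizing gives an injective unital homomorphism $(C M_{n+1})^{+}\to A$. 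Projectivity of the cone and the projectionlessness of its unitization are precisely the tools that replace your step~(1), and without them the argument does not go through.
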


The proof uses heavy machinery,
and there ought to be a simpler proof,
particularly when $A$ is simple.

\begin{proof}[Proof of Lemma~\ref{L:NT1Orth}]
Fix $n \in \N$.

We first prove the result for the unitized cone $(C M_{n + 1})^{+}$
in place of~$A$.
We make the identification
\[
(C M_{n + 1})^{+}
  = \big\{ f \in C ([0, 1], \, M_{n + 1}) \colon
                f (0) \in \C \cdot 1 \big\}.
\]
Let $(e_{j, k})_{0 \leq j, k \leq n}$ be the standard system
of matrix units for $M_{n + 1}$.
(The indexing starts at~$0$.)
Define $a \in (C M_{n + 1})^{+}$ by
$a (\ld) = \big( \ld - \tfrac{1}{2} \big)_{+} e_{0, 0}$
for $\ld \in [0, 1]$.
Let $s$ be the cyclic shift unitary
\[
s = e_{0, n} + \sum_{j = 1}^n e_{j, \, j - 1}.
\]
Choose a \ct{} path $\ld \mapsto w (\ld)$
in the unitary group of $M_{n + 1}$
such that $w (0) = 1$ and $w (1) = s$.
Define a unitary $u \in (C M_{n + 1})^{+}$ by
\[
u (\ld) = \begin{cases}
     w (2 \ld)   & \hspace{3em}  0 \leq \ld \leq \tfrac{1}{2}  \\
         s       & \hspace{3em}  \tfrac{1}{2} \leq \ld \leq 1.
    \end{cases}
\]
Then $u$ and $a$ satisfy the conclusion of the lemma.

To prove the lemma for a general \ca~$A$,
we prove the existence of an injective unital \hm{}
from $(C M_{n + 1})^{+}$ to~$A$.
Let
\[
D_0 = \bigotimes_{m = 1}^{\infty} M_{n + 1}
\andeqn
D = D_0 \otimes M_{n + 1}.
\]
(Of course $D \cong D_0$.)
Corollary 6.7.4 of~\cite{Pd}
provides a subalgebra $B \subset A$
and a surjective \hm{} $\pi \colon B \to D$.
Replacing $B$ by $B + \C \cdot 1$
and extending $\pi$ in the obvious way,
if necessary,
we may assume that $B$ contains the identity of~$A$.
Let $\io \colon B \to A$ be the inclusion.

Choose (Lemma~\ref{L-2817Sp01})
some $b \in (D_0)_{+}$ such that $\spec (b) = [0, 1]$.
There is a \hm{} $\ph_0$ from
$C M_{n + 1} = C_0 ( (0, 1]) \otimes M_{n + 1}$ to $D$
such that $\ph_0 (f \otimes x) = f (b) \otimes x$
for all $f \in C_0 ( (0, 1])$
and all $x \in M_{n + 1}$.
Let $\ph \colon (C M_{n + 1})^{+} \to D$
be the unitization of~$\ph_0$.
Then $\ph$ is injective.

Since $C M_{n + 1}$ is a projective \ca{}
(see Theorem 10.2.1 of~\cite{Lr}),
there exists a \hm{}  $\ps_0 \colon C M_{n + 1} \to B$
such that $\pi \circ \ps_0 = \ph_0$.
Let $\ps \colon (C M_{n + 1})^{+} \to B$
be the unitization of~$\ps_0$.
Since $\ph$ is injective, so is~$\ps$.
Then $\io \circ \ps \colon (C M_{n + 1})^{+} \to A$
is an injective unital \hm, as required.
\end{proof} 

\begin{lem}\label{L:NT1OrthNonUnital}
Let $A$ be a nonunital \ca{} which is not of type~I.
Let $n \in \N$.
Then there exists a unitary $u \in A^+$ which is homotopic to~$1$,
and a nonzero positive element $a \in A$,
such that the elements
\[
a, \, u a u^{-1}, \, u^2 a u^{-2}, \, \ldots, \, u^n a u^{-n}
\]
are pairwise orthogonal and in $A$.
\end{lem}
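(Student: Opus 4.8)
The plan is to reduce to the unital case already established in Lemma~\ref{L:NT1Orth} by passing to the unitization $A^{+}$. The first step is to note that $A^{+}$ is again not of type~I: type~I \ca{s} are closed under passage to closed ideals, and $A$ is a closed ideal of $A^{+}$, so if $A^{+}$ were of type~I then $A$ would be too, contrary to hypothesis. Hence Lemma~\ref{L:NT1Orth} applies to the unital \ca~$A^{+}$, yielding a unitary $u \in A^{+}$ homotopic to~$1$ and a nonzero positive element $a \in A^{+}$ such that $a, \, u a u^{-1}, \, u^2 a u^{-2}, \, \ldots, \, u^n a u^{-n}$ are pairwise orthogonal.

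The second step is to check that $a$ (and hence each $u^k a u^{-k}$) actually lies in~$A$. Let $q \colon A^{+} \to A^{+} / A \cong \C$ be the quotient map. Since $q$ has commutative range and $q(u)$ is invertible, $q(u a u^{-1}) = q(a)$, and $q(a) \geq 0$ because $a$ is positive. As $n \geq 1$, the product $a \cdot (u a u^{-1})$ is defined and equals~$0$, so applying $q$ gives $q(a)^2 = q(a)\, q(u a u^{-1}) = 0$, whence $q(a) = 0$, i.e.\ $a \in A$. It is nonzero in $A$ since it is nonzero in $A^{+}$, and since $A$ is an ideal in $A^{+}$ we get $u^k a u^{-k} \in A$ for $k = 1, \ldots, n$ as well. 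This gives exactly the conclusion of Lemma~\ref{L:NT1OrthNonUnital}.

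I do not expect any substantial obstacle: the argument is essentially just the unitization trick, and the two points that need a sentence of justification are the permanence of ``not of type~I'' under unitization and the observation that orthogonality of a conjugate pair forces the scalar part of $a$ to vanish. (An alternative, less clean, route would be to revisit the proof of Lemma~\ref{L:NT1Orth} and observe that the element $a$ constructed there already sits inside the non-unitized cone $C M_{n+1}$, hence in the image of a homomorphism into $A$ rather than into $A^{+}$; but the unitization argument above is shorter and avoids reworking that construction.)
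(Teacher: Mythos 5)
Your proof is correct and is essentially the paper's own argument: unitize, apply Lemma~\ref{L:NT1Orth} to $A^{+}$, and use the character $A^{+} \to \C$ together with the orthogonality $a \cdot (u a u^{-1}) = 0$ and commutativity of $\C$ to force the scalar part of $a$ to vanish, after which $u^{k} a u^{-k} \in A$ because $A$ is an ideal in $A^{+}$. Your extra sentence verifying that $A^{+}$ is not of type~I (via passage of type~I to ideals) is a point the paper leaves implicit, but otherwise the two proofs coincide.
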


\begin{proof}
Apply Lemma \ref{L:NT1Orth} to $A^{+}$,
obtaining $a \in A_{+} \setminus \{ 0 \}$
and a unitary $u \in A$
such that
$a, \, u a u^{-1}, \, u^2 a u^{-2}, \, \ldots, \, u^n a u^{-n}$
are pairwise orthogonal.
Let $\pi$ be the standard unital homomorphism $A^+ \rightarrow \C$.
Then,
using commutativity of~$\C$ at the first step,
\[
\pi (a)^2
 = \pi (a)  \left( \pi (u) \pi (a) \pi (u^*) \right)
 = \pi (a) \cdot \pi ( u a u^* )
 = 0.
\]
Therefore $\pi (a) = 0$.
So $a \in A$. 
Since $A$ is an ideal of $A^{+}$,
we get
\[
u a u^{-1}, \, u^2 a u^{-2}, \, \ldots, \, u^n a u^{-n} \in A
\]
as well.
\end{proof}

\begin{lem}\label{L:DivInSmp}
Let $A$ be a simple \ca{} which is not of type~I.
Let $a \in A_{+} \setminus \{ 0 \}$,
and let $l \in \N$.
Then there exist $b_1, b_2, \ldots, b_l \in A_{+} \setminus \{ 0 \}$
such that $b_1 \sim_A b_2 \sim_A \cdots \sim_A b_l$,
such that $b_j b_k = 0$ for $j \neq k$,
and such that $b_1 + b_2 + \cdots + b_l \in {\overline{a A a}}$.
\end{lem}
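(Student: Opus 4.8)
The plan is to first reduce to the case where $a$ is a positive element of spectrum $[0,1]$ living inside the hereditary subalgebra $\overline{aAa}$, and then exhibit the $b_j$ as translates of a single fixed positive function of $a$ by a unitary in (the unitization of) $\overline{aAa}$. To start, note that $\overline{aAa}$ is itself a simple \ca{} which is not of type~I (a hereditary subalgebra of a simple algebra is simple, and a hereditary subalgebra of an algebra not of type~I is again not of type~I). So replacing $A$ by $\overline{aAa}$, it suffices to produce $b_1, \dots, b_l \in A_{+} \setminus \{0\}$ that are pairwise orthogonal, pairwise Cuntz equivalent, and automatically lie in $\overline{aAa} = A$.

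The key step is then Lemma~\ref{L:NT1OrthNonUnital} (or Lemma~\ref{L:NT1Orth} if $\overline{aAa}$ happens to be unital, but in general it is nonunital, so the nonunital version is what I would cite). Applying it with $n = l - 1$ gives a nonzero positive element $c \in \overline{aAa}$ and a unitary $u \in (\overline{aAa})^{+}$ homotopic to~$1$ such that the elements
\[
c, \ u c u^{-1}, \ u^2 c u^{-2}, \ \ldots, \ u^{l-1} c u^{-(l-1)}
\]
are pairwise orthogonal and all lie in $\overline{aAa}$. Set $b_{j} = u^{j-1} c u^{-(j-1)}$ for $j = 1, 2, \ldots, l$. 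Orthogonality $b_j b_k = 0$ for $j \neq k$ is immediate, and $b_1 + \cdots + b_l \in \overline{aAa}$ since each summand is. Since $u^{j-1}$ is a unitary in $(\overline{aAa})^{+}$, Lemma~\ref{L:CzBasic}(\ref{L:CzBasic:3904_UE}) gives $b_j = u^{j-1} c u^{-(j-1)} \sim_A c$ for each $j$, hence $b_1 \sim_A b_2 \sim_A \cdots \sim_A b_l$. Each $b_j$ is nonzero because it is unitarily equivalent to the nonzero element~$c$.

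Undoing the reduction: the $b_j$ produced above live in $\overline{aAa}$, and Cuntz equivalence computed in $\overline{aAa}$ implies Cuntz equivalence in $A$ (a sequence $v_n \in \overline{aAa} \subset A$ witnesses $b_j \precsim_A b_k$ directly), so all the required properties transfer back to the original $A$. The only mild subtlety — and the one place I would be slightly careful — is confirming that $\overline{aAa}$ is not of type~I when $A$ is simple and not of type~I: this holds because a nonzero hereditary subalgebra of a simple \ca{} is Morita equivalent (indeed, in the simple case, its ideal generated is everything) to $A$, and type~I is a Morita-invariant property, so if $\overline{aAa}$ were type~I then so would be $A$. I do not expect any real obstacle here; the argument is a straightforward assembly of Lemma~\ref{L:NT1OrthNonUnital} and the unitary-invariance of Cuntz equivalence.
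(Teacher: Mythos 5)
Your proposal is correct and follows essentially the same route as the paper's proof: the paper likewise replaces $A$ by ${\overline{a A a}}$, applies Lemma~\ref{L:NT1OrthNonUnital} to get a nonzero positive element and a unitary in the unitization whose conjugates are pairwise orthogonal, and concludes Cuntz equivalence from Lemma~\ref{L:CzBasic}(\ref{L:CzBasic:3904_UE}). Your extra care about why ${\overline{a A a}}$ is simple and not of type~I (and about the unital versus nonunital case of the orthogonality lemma) just fills in details the paper leaves implicit.
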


\begin{proof}
Replacing $A$ by ${\overline{a A a}}$,
it suffices to prove the result
without the conclusion
$b_1 + b_2 + \cdots + b_l \in {\overline{a A a}}$.
Use
Lemma~\ref{L:NT1OrthNonUnital}
to find $b \in A_{+} \setminus \{ 0 \}$
and a unitary $u \in A^{+}$
such that
\[
b_1 = b,
\,\,\,\,\,\,
b_2 = u b u^{-1},
\,\,\,\,\,\,
\ldots,
\,\,\,\,\,\,
b_l = u^{l - 1} b u^{- (l - 1)}
\]
are pairwise orthogonal.
Lemma \ref{L:CzBasic}(\ref{L:CzBasic:3904_UE})
implies that $b_1 \sim_A b_2 \sim_A \cdots \sim_A b_l$.
\end{proof}

\begin{cor}\label{C-2718CuDiv}
Let $A$ be a simple unital infinite dimensional \ca.
Then for every $\ep > 0$ there is $a \in A_{+} \setminus \{ 0 \}$
such that for all $\ta \in {\operatorname{QT}} (A)$
we have $d_{\ta} (a) < \ep$.
\end{cor}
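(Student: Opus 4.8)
The plan is to reduce everything to the divisibility result Lemma~\ref{L:DivInSmp}, which is already available. First I would note that an infinite dimensional simple unital C*-algebra is never of type~I: a simple unital type~I C*-algebra is isomorphic to a matrix algebra $M_n$, hence finite dimensional. So $A$ is simple, unital, and not of type~I, and Lemma~\ref{L:DivInSmp} (together with the lemmas it rests on) applies.

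Given $\ep > 0$, choose $l \in \N$ with $l > \ep^{-1}$. Since $A$ is unital and nonzero, $1 \in A_{+} \setminus \{ 0 \}$, so Lemma~\ref{L:DivInSmp}, applied with this positive element and this~$l$, produces $b_1, b_2, \ldots, b_l \in A_{+} \setminus \{ 0 \}$ which are pairwise orthogonal and satisfy $b_1 \sim_A b_2 \sim_A \cdots \sim_A b_l$. Set $a = b_1$ and $b = b_1 + b_2 + \cdots + b_l$. By repeated use of Lemma~\ref{L:CzBasic}(\ref{L:CzBasic:Orth}) we have $b \sim_A b_1 \oplus b_2 \oplus \cdots \oplus b_l$, and since $b \in A = \overline{1 \cdot A \cdot 1}$, Lemma~\ref{L:CzBasic}(\ref{L:CzBasic:Her}) gives $b \precsim_A 1$. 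Now fix $\ta \in \QT (A)$. Because the $b_j$ are mutually Cuntz equivalent, $d_{\ta}$ takes a common value on them; because $d_{\ta}$ is additive on orthogonal direct sums, monotone under $\precsim_A$, and normalized, we obtain
\[
l \cdot d_{\ta} (a) = \sum_{j = 1}^{l} d_{\ta} (b_j) = d_{\ta} (b_1 \oplus \cdots \oplus b_l) = d_{\ta} (b) \le d_{\ta} (1) = 1 ,
\]
hence $d_{\ta} (a) \le 1/l < \ep$. Since $\ta \in \QT (A)$ was arbitrary, $a$ is the desired element.

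Essentially no obstacle remains once Lemma~\ref{L:DivInSmp} is in hand; the only points requiring a little care are the reduction from ``infinite dimensional'' to ``not of type~I'' and the legitimacy of the properties of $d_{\ta}$ used above. For the latter, a normalized quasitrace on a simple unital C*-algebra is faithful, so if $\QT (A) \ne \varnothing$ then $A$ is automatically stably finite and Proposition~\ref{P_4819_dtau} guarantees that $d_{\ta}$ is well defined on Cuntz classes, while if $\QT (A) = \varnothing$ the statement is vacuous; additivity, monotonicity and normalization of $d_{\ta}$ are standard (see Theorem~\ref{T_2Y25_DTau}), and alternatively $d_{\ta}(b) \le 1$ follows directly from $b^{1/n} \le \| b \|^{1/n} 1$.
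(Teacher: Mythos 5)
Your proposal is correct and follows essentially the same route as the paper: apply Lemma~\ref{L:DivInSmp} with $l > \ep^{-1}$ to get $l$ pairwise orthogonal, mutually Cuntz equivalent nonzero positive elements, then use additivity and Cuntz invariance of $d_{\ta}$ together with $d_{\ta} \leq 1$ to conclude $d_{\ta}(b_1) \leq 1/l < \ep$. Your extra remarks (infinite dimensional simple unital implies not type~I, and the well-definedness of $d_{\ta}$ when $\QT(A) \neq \varnothing$) are correct points that the paper leaves implicit, not a change of method.
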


\begin{proof}
Choose $n \in \N$ such that $\frac{1}{n} < \ep$.
Use Lemma~\ref{L:DivInSmp} to choose
$b_1, b_2, \ldots, b_n \in A_{+} \setminus \{ 0 \}$
such that $b_1 \sim_A b_2 \sim_A \cdots \sim_A b_n$,
and such that $b_j b_k = 0$ for $j \neq k$.
Then for every $\ta \in {\operatorname{QT}} (A)$
we have
\[
\sum_{k = 1}^n d_{\ta} (b_k)
 = d_{\ta} \left( \sum_{k = 1}^n b_k \right)
 \leq 1
\andeqn
d_{\ta} (b_1)
 = d_{\ta} (b_2) = \cdots = d_{\ta} (b_n).
\]
So, with $a=b_1$, we have $d_{\ta} (a) \leq \frac{1}{n} < \ep$.
\end{proof}

\begin{lem}\label{L-2718CuSub}
Let $A$ be a simple \ca,
and let $B \subset A$ be a nonzero \hsa.
Let $n \in \N$,
and let $a_1, a_2, \ldots, a_n \in A_{+} \setminus \{ 0 \}$.
Then there exists $b \in B_{+} \setminus \{ 0 \}$
such that $b \precsim_A a_j$ for $j = 1, 2, \ldots, n$.
\end{lem}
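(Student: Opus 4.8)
The plan is to reduce to the case $n = 1$ by a short recursion, and to settle that case by a primeness‑type argument combined with the elementary Cuntz facts recorded in Lemma~\ref{L:CzBasic}.

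The heart of the matter is the following sub‑claim: if $A$ is simple and $c, a \in A_{+} \setminus \{ 0 \}$, then there is $b \in (\overline{c A c})_{+} \setminus \{ 0 \}$ with $b \precsim_A a$. To prove it, I would first observe that since $A$ is simple and $a^{1/2} \neq 0$, the closed two-sided ideal of $A$ generated by $a^{1/2}$ is all of $A$; hence if $c^{1/2} A a^{1/2}$ were $\{ 0 \}$, then $c^{1/2}$ would annihilate this ideal, that is, all of $A$, forcing $c^{1/2} = 0$, a contradiction. So there is $x \in A$ with $z := c^{1/2} x a^{1/2} \neq 0$. Now $z z^* = c^{1/2} (x a x^*) c^{1/2}$ is a nonzero positive element of $\overline{c A c}$, and $z^* z = a^{1/2} (x^* c x) a^{1/2} \in \overline{a A a}$, so $z^* z \precsim_A a$ by Lemma~\ref{L:CzBasic}(\ref{L:CzBasic:Her}). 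Together with Lemma~\ref{L:CzBasic}(\ref{L:CzBasic:LCzComm}) this gives $b := z z^* \sim_A z^* z \precsim_A a$, proving the sub-claim.

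To finish, I would choose $c_0 \in B_{+} \setminus \{ 0 \}$ and note that, since $B$ is hereditary, $\overline{c_0 A c_0} \subseteq B$. I then recursively pick nonzero positive elements $b_1, b_2, \ldots, b_n$ with $b_1 \in \overline{c_0 A c_0}$, with $b_{j + 1} \in \overline{b_j A b_j}$, and with $b_j \precsim_A a_j$, applying the sub-claim at the first step with $(c, a) = (c_0, a_1)$ and at step $j + 1$ with $(c, a) = (b_j, a_{j + 1})$. By Lemma~\ref{L:CzBasic}(\ref{L:CzBasic:Her}) we have $b_{j + 1} \precsim_A b_j$, so by transitivity $b_n \precsim_A b_j \precsim_A a_j$ for every $j$; moreover $b_n$ lies in $\overline{b_{n - 1} A b_{n - 1}} \subseteq \cdots \subseteq \overline{c_0 A c_0} \subseteq B$ and is nonzero. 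Hence $b := b_n$ has all the required properties.

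The only step needing genuine thought is the sub-claim, and within it the one point to get right is that simplicity forces $c^{1/2} A a^{1/2} \neq \{ 0 \}$ for nonzero $c, a$ (i.e.\ that a simple \ca{} is prime); everything after that is bookkeeping with hereditary subalgebras and transitivity of $\precsim_A$. I note that, in contrast to Proposition~\ref{P-2728CzIdeal}, this argument never amplifies the $a_j$ into matrices, so it uses nothing beyond simplicity of $A$ --- in particular it remains valid when $A$ is of type~I.
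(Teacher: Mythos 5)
Your proposal is correct and takes essentially the same route as the paper: an induction (your nested chain of hereditary subalgebras is just a repackaging of the paper's induction hypothesis), with the key step in both cases being that simplicity yields a nonzero element $z = c^{1/2} x a^{1/2}$, after which $z z^* \sim_A z^* z$, heredity, and transitivity of $\precsim_A$ finish the argument. The only cosmetic difference is that you invoke Lemma~\ref{L:CzBasic}(\ref{L:CzBasic:Her}) where the paper normalizes norms and uses $z^* z \leq a_{n+1}$ directly.
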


\begin{proof}
We prove this by induction on~$n$,
for convenience requiring in addition that $\| b \| \leq 1$.
For $n = 0$,
the Cuntz subequivalence condition is vacuous,
so we can take $b$ to be any nonzero positive element of~$B$
such that $\| b \| \leq 1$.

Suppose now the result is known for some~$n$,
and let $a_1, a_2, \ldots, a_{n + 1} \in A_{+} \setminus \{ 0 \}$.
\Wolog{} $\| a_j \| \leq 1$ for $j = 1, 2, \ldots, n + 1$.
The induction hypothesis provides $b_0 \in B_{+} \setminus \{ 0 \}$
such that $b_0 \precsim_A a_j$ for $j = 1, 2, \ldots, n$.
Since $A$ is simple, there is $x \in A$
such that the element $z = b_0^{1/2} x a_{n + 1}^{1/2}$ is nonzero.
We may require $\| x \| \leq 1$.
Set $b = z^* z \neq 0$.
Then $b \leq b_0$,
so $b \in B$
and $b \precsim_A b_0 \precsim_A a_j$ for $j = 1, 2, \ldots, n$.
Also $z z^* \leq a_{n + 1}$,
so, using Lemma~\ref{L:CzBasic}(\ref{L:CzBasic:LCzComm})
at the first step,
$b \sim_A z z^* \precsim_A a_{n + 1}$.
This completes the proof.
\end{proof}

\begin{lem}\label{L:Smaller}
Let $A$ be a simple infinite dimensional \ca{} which is not of type~I.
Let $b \in A_{+} \setminus \{ 0 \}$, let $\ep > 0$, and let $n \in \N$.
Then there are $c \in A_{+}$ and $y \in A_{+} \setminus \{ 0 \}$
such that, in $W (A)$,
we have
\[
n \langle (b - \ep)_{+} \rangle \leq (n + 1) \langle c \rangle
\andeqn
\langle c \rangle + \langle y \rangle \leq \langle b \rangle.
\]
\end{lem}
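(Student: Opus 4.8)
The plan is to use the weak approximate divisibility tools just developed. First I would apply Lemma~\ref{L:NT1OrthNonUnital} (or Lemma~\ref{L:DivInSmp}) to the hereditary subalgebra ${\overline{(b - \ep)_{+} A (b - \ep)_{+}}}$ to produce, for a suitable large $m$, pairwise orthogonal nonzero positive elements $d_1, d_2, \ldots, d_m \in {\overline{(b - \ep)_{+} A (b - \ep)_{+}}}$ that are all Cuntz equivalent over~$A$. The point of choosing $m$ large is that $\sum_{j=1}^m d_j \precsim_A (b - \ep)_{+}$ and each $d_j$ is ``small'' relative to $(b - \ep)_{+}$; I would set $y = d_m$ (or a small cutdown of it), which already guarantees $y \neq 0$ and $y \perp (d_1 + \cdots + d_{m-1})$.

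Next I would build $c$ so that $(b - \ep)_{+}$ is covered by $(n+1)$ copies of $c$ up to a factor of~$n$, while leaving enough room for $y$. The idea is to split $(b - \ep)_{+}$ by functional calculus, or rather to use the orthogonal pieces: if I take $m = l(n+1)$ for a large $l$ and group the $d_j$ into $n+1$ blocks of $l$ orthogonal equivalent elements each, then each block $c := d_1 \oplus \cdots \oplus d_l$ satisfies $l \langle d_1 \rangle = \langle c \rangle$ in $W(A)$, and $(n+1)\langle c \rangle = \langle d_1 \oplus \cdots \oplus d_{l(n+1)} \rangle$. I then need $n \langle (b-\ep)_{+}\rangle \leq (n+1)\langle c\rangle$; since $(b - \ep)_{+}$ is itself Cuntz dominated by a large multiple of any of its nonzero hereditary subalgebra elements in a simple \ca{} (this is where simplicity and Proposition~\ref{P-2728CzIdeal} or Corollary~\ref{C_4619_PosSmp} enter — every nonzero positive element generates the whole algebra as an ideal), I can arrange $\langle (b-\ep)_{+}\rangle \leq k \langle d_1 \rangle$ for some $k$, and then choosing $l$ large enough relative to $k$ and $n$ makes $n k \leq (n+1) l$, giving the first inequality. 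For the second, $\langle c \rangle + \langle y \rangle = \langle d_1 \oplus \cdots \oplus d_l \oplus d_m \rangle \precsim_A (b - \ep)_{+} \precsim_A b$ by Lemma~\ref{L:CzBasic}(\ref{L:CzBasic:Orth}) and Lemma~\ref{L:CzBasic}(\ref{L:CzBasic:Her}), as long as $l + 1 \leq m$, which holds for $l$ large since we took $m$ growing with~$l$.

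The main obstacle I expect is coordinating the two inequalities simultaneously: the first wants $c$ large (many orthogonal pieces inside $(b-\ep)_{+}$), the second wants $c$ small enough to still leave room for a nonzero $y$ orthogonal to it, all inside $b$ rather than $(b - \ep)_{+}$. The resolution is that in a simple infinite dimensional \ca{} that is not of type~I one can fit an arbitrarily large number of orthogonal equivalent elements inside any nonzero hereditary subalgebra (Lemma~\ref{L:DivInSmp}), so both demands are met by taking the block size $l$ large and the total count $m = l(n+1) + 1$. The transition from $(b-\ep)_{+}$ to $b$ is free, since $(b - \ep)_{+} \precsim_A b$ by Lemma~\ref{L:CzBasic}(\ref{L:CzBasic:LCzOneWay}). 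A secondary technical point is justifying $\langle(b-\ep)_{+}\rangle \leq k\langle d_1\rangle$ for a \emph{finite} $k$ in $W(A)$: here I would cut down once more, replacing $(b - \ep)_{+}$ by $(b - \ep)_{+}$ itself (which is already ``$(\text{something} - \text{positive})_{+}$''), apply Proposition~\ref{P-2728CzIdeal} to get $(b - \ep - \eta)_{+} \precsim_A 1_{M_k} \otimes d_1$ for some $k$ and some small $\eta$, and then note that replacing $\ep$ by $\ep + \eta$ at the very start (shrinking $\ep$ is harmless since we only need \emph{some} $\ep$-cutdown, or rather we are given $\ep$ and can first pass to $(b - \ep)_{+} = \big((b - \ep/2)_{+} - \ep/2\big)_{+}$ and work with $a' = (b-\ep/2)_{+}$) absorbs the loss. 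I would organize the bookkeeping so that $d_1, \ldots, d_m$ live inside ${\overline{a' A a'}}$ with $a' = (b - \ep/2)_{+}$, giving $\sum d_j \precsim_A a' \precsim_A b$ and $\langle (b-\ep)_{+} \rangle = \big\langle (a' - \ep/2)_{+}\big\rangle \leq k \langle d_1 \rangle$, after which the counting argument closes everything.
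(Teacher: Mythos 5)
There is a genuine gap in your argument for the first inequality, and it sits exactly where the paper's proof has to work hardest. Your parameters are chosen circularly: the covering number $k$ with $\langle (b-\ep)_{+}\rangle \leq k\langle d_1\rangle$ (which you get from simplicity via Proposition~\ref{P-2728CzIdeal}) is only available \emph{after} $d_1$ has been fixed, but $d_1$ is one of the $m$ orthogonal pieces, i.e.\ it depends on the block size $l$ that you want to choose \emph{in terms of} $k$. Enlarging $l$ means re-running Lemma~\ref{L:DivInSmp} with a larger $m$, which produces new (smaller) elements $d_j$ and hence a new, possibly much larger, $k$; nothing bounds $k$ in terms of $m$. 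What your counting actually requires is $nk\le (n+1)l$ with $l\le m-1$, i.e.\ a family of $m$ pairwise orthogonal, pairwise Cuntz equivalent elements of $\overline{(b-\ep/2)_{+}A(b-\ep/2)_{+}}$ whose covering number for $(b-\ep)_{+}$ is at most roughly $\big(1+\tfrac{1}{n}\big)m$. That packing-versus-covering efficiency is an almost-divisibility property of $W(A)$; it is essentially the content of the lemma being proved, and it is not supplied by any of the tools you cite (Lemma~\ref{L:DivInSmp}, Corollary~\ref{C_4619_PosSmp}, Proposition~\ref{P-2728CzIdeal} give existence of pieces and finiteness of $k$, but no quantitative link between $m$ and $k$), nor does it hold for a general simple infinite dimensional C*-algebra not of type~I, which has no comparison or divisibility hypotheses in force. (A minor separate point: the lemma wants $c\in A_{+}$, so you should take $c=d_1+\cdots+d_l$ rather than the direct sum; by orthogonality this is harmless.)

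The paper's proof splits into cases precisely to avoid this issue. If $\spec(b)\cap(0,\ep)\neq\varnothing$, the statement is nearly trivial: take $c=(b-\ep)_{+}$ and $y=f(b)$ with $f$ supported in $(0,\ep)$ and $f(b)\neq 0$; then $n\langle c\rangle\le(n+1)\langle c\rangle$ and $c+y\precsim_A b$. The hard case is $\spec(b)\cap(0,\ep)=\varnothing$, where $(b-\ep)_{+}$ is equivalent to a projection and one reduces to $b=1$, $\ep=\tfrac12$. Note that the first inequality then forces $d_\ta(c)\ge\tfrac{n}{n+1}$ for every quasitrace, so $c$ must be nearly full, and your strategy would need $c$ to be an orthogonal sum of small equivalent pieces achieving exactly the unavailable efficiency above. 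The paper instead takes $c=1-x$ to be the \emph{complement} of a small positive element $x=g_2(a)$, with $y=g_3(a)$ sitting under $x$, where $a,uau^{-1},\ldots,u^nau^{-n}$ are the pairwise orthogonal conjugates produced by Lemma~\ref{L:NT1Orth}; the inequality $n\langle 1\rangle\le(n+1)\langle c\rangle$ is then proved by a unitary-rotation argument in $M_{n+1}$ over the commutative C*-algebra generated by the conjugates, which moves the $n+1$ ``defects'' $u^kxu^{-k}$ into a single matrix corner. To repair your outline you would need this complement trick (or a substitute for it) in the spectral-gap case; the orthogonal-pieces counting alone does not close.
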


\begin{proof}
We divide the proof into two cases.
First assume that $\spec (b) \cap (0, \ep) \neq \varnothing$.
Then there is a \cfn{} $f \colon [0, \infty) \to [0, \infty)$
which is zero on $\{ 0 \} \cup [\ep, \infty)$
and such that $f (b) \neq 0$.
We take $c = (b - \ep)_{+}$ and $y = f (b)$.

Now suppose that $\spec (b) \cap (0, \ep) = \varnothing$.
Define a \cfn{} $f \colon [0, \infty) \to [0, \infty)$ by
\[
f (\ld) = \begin{cases}
     \ep^{-1} \ld   & \hspace{3em}  0 \leq \ld \leq \ep  \\
         1          & \hspace{3em}  \ep \leq \ld \leq 1.
    \end{cases}
\]
Then $f (b)$ is a projection
and Lemma \ref{L:CzBasic}(\ref{L:CzBasic:LCzFCalc})
implies that $f (b) \sim_A b$.
Also
\[
(b - \ep)_{+}
 \leq b
 \sim f (b)
 \sim \big( f (b) - \tfrac{1}{2} \big)_{+}.
\]
Replacing $b$ by $f (b)$ and $A$ by $f (b) A f (b)$,
we may therefore assume that $A$ is unital,
that $b = 1$, and that $\ep = \tfrac{1}{2}$.
Thus $(b - \ep)_{+} \sim 1$.

Lemma~\ref{L:NT1Orth} provides $a \in A_{+}$ and a unitary $u \in A$
such that the elements
\[
a, \, u a u^{-1}, \, u^2 a u^{-2}, \, \ldots, \, u^n a u^{-n}
\]
are pairwise orthogonal.
\Wolog{} $\| a \| = 1$.
Define \cfn{s} $g_1, g_2, g_3 \colon [0, \infty) \to [0, 1]$ by
\[
g_1 (\ld) = \begin{cases}
     3 \ld   & \hspace{3em}  0 \leq \ld \leq \tfrac{1}{3}  \\
       1     & \hspace{3em}  \tfrac{1}{3} \leq \ld,
    \end{cases}
\]
\[
g_2 (\ld) = \begin{cases}
     0           & \hspace{3em}  0 \leq \ld \leq \tfrac{1}{3}  \\
     3 \ld - 1   & \hspace{3em}  \tfrac{1}{3} \leq \ld \leq \tfrac{2}{3}
                     \rule{0em}{2.5ex}  \\
     1           & \hspace{3em}  \tfrac{2}{3} \leq \ld,
                     \rule{0em}{2.5ex}
    \end{cases}
\]
and
\[
g_3 (\ld) = \begin{cases}
     0           & \hspace{3em}  0 \leq \ld \leq \tfrac{2}{3}  \\
     3 \ld - 2   & \hspace{3em}  \tfrac{2}{3} \leq \ld \leq 1  \\
     1           & \hspace{3em}  1 \leq \ld.
    \end{cases}
\]
Then $g_1 g_2 = g_2$ and $g_2 g_3 = g_3$.
Define $x = g_2 (a)$, $c = 1 - x$, and $y = g_3 (a)$.
Then $x y = y$ so $c y = 0$.
It follows
from Lemma \ref{L:CzBasic}(\ref{L:CzBasic:Orth}) that
$\langle c \rangle + \langle y \rangle \leq \langle 1 \rangle$.

It remains to prove that
$n \langle 1 \rangle \leq (n + 1) \langle c \rangle$.
Let $C$ be the unital subalgebra of $A$ generated by the elements
$a, \, u a u^{-1}, \, u^2 a u^{-2}, \, \ldots, \, u^n a u^{-n}$.
Then there is a compact metric space~$X$ and an isomorphism
$\ph \colon C  \to C(X)$.
For $k = 0, 1, \ldots, n$,
let $Z_k \subset X$ be the support of $\ph \big( u^k x u^{-k} \big)$.
The elements
\[
\ph ( g_1 (a)), \, \ph \big( u g_1 (a) u^{-1} \big), \,
  \ph \big( u^2 g_1 (a) u^{-2} \big), \,
   \ldots, \, \ph \big( u^n g_1 (a) u^{-n} \big)
 \in C(X)
\]
are pairwise orthogonal,
and
from $g_1 g_2 = g_2$ we get
\[
\ph \big( u^k g_1 (a) u^{-k} \big) \ph \big( u^k x u^{-k} \big)
   = \ph \big( u^k x u^{-k} \big)
\]
for $k = 0, 1, \ldots, n$.
Therefore the sets $Z_0, Z_1, \ldots, Z_n$ are disjoint.
Set $Z = \bigcup_{k = 0}^n Z_k$.

Let $(e_{j, k})_{0 \leq j, k \leq n}$ be the standard system
of matrix units for $M_{n + 1}$.
(The indexing starts at~$0$.)
For $k = 0, 1, \ldots, n$,
choose a unitary $w_k \in M_{n + 1}$ such that
$w_k e_{k, k} w_k^* = e_{0, 0}$.
The function from $Z$ to $M_{n + 1}$ which takes the
constant value $w_k$ on $Z_k$
is in the identity component of the unitary group of $C (Z, M_{n + 1})$,
so there is a unitary $w \in C (X, M_{n + 1})$ whose restriction
to each $Z_k$ is~$w_k$.
Identifying $C (X, M_{n + 1})$ with $M_{n + 1} \otimes C (X)$,
we find that there is $h \in C (X)$ such that
\[
w \left( \sum_{k = 0}^n e_{k, k} \otimes
             \ph \big( u^k x u^{-k} \big) \right) w^*
  = e_{0, 0} \otimes h.
\]
Since $c = 1 - x$, it follows that
\[
w \left( \sum_{k = 0}^n e_{k, k}
          \otimes \ph \big( u^k c u^{-k} \big) \right) w^*
  = 1 - e_{0, 0} \otimes h
  \geq \sum_{k = 1}^n e_{k, k} \otimes 1.
\]
Applying $\id_{M_{n + 1}} \otimes \ph^{-1}$,
and setting $v = \big( \id_{M_{n + 1}} \otimes \ph^{-1} \big) (w)$,
we get
\[
v \left( \sum_{k = 0}^n e_{k, k} \otimes u^k c u^{-k} \right) v^*
  \geq \sum_{k = 1}^n e_{k, k} \otimes 1.
\]
This implies that 
$n \langle 1 \rangle \leq (n + 1) \langle c \rangle$,
as desired.
\end{proof}

Our next goal is Lemma~\ref{L-2726Big},
which is a version for Cuntz comparison of
Lemma~1.15 of~\cite{PhT1}.

\begin{lem}\label{L-2726Norm1}
Let $A$ be a \ca,
let $x \in A_{+}$ satisfy $\| x \| = 1$,
and let $\ep > 0$.
Then there are positive elements $a, b \in {\overline{x A x}}$
with $\| a \| = \| b \| = 1$,
such that $a b = b$,
and such that whenever $c \in {\overline{b A b}}$
satisfies $\| c \| \leq 1$,
then
$\| x c - c \| < \ep$.
\end{lem}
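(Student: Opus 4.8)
The plan is to take $a$ and $b$ to be functions of $x$ under \ct{} functional calculus, supported near the top $1$ of $\spec (x)$, where $x$ is close to a unit.

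Since $x \geq 0$ and $\| x \| = 1$, we have $\spec (x) \subseteq [0, 1]$ and $1 \in \spec (x)$. First I would fix $\dt$ with $0 < \dt < \min (1, \ep)$ and choose \ct{} functions $f, g \colon [0, \infty) \to [0, 1]$ with $f (0) = g (0) = 0$, with $f (1) = g (1) = 1$, with $f = 0$ on $[0, \, 1 - \dt]$, and with $f g = g$; the last condition is arranged by taking $g$ supported in a subinterval $[1 - \dt', 1]$ on which $f$ is identically~$1$, for suitable $\dt' \in (0, \dt)$ (for instance with $f$ and $g$ piecewise linear). Set $a = f (x)$ and $b = g (x)$.

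Then $a, b \in {\overline{x A x}}$ because $f (0) = g (0) = 0$; $\| a \| = \| b \| = 1$ because $1 \in \spec (x)$, $f (1) = g (1) = 1$, and $0 \leq f, g \leq 1$; and $a b = f (x) g (x) = (f g) (x) = g (x) = b$. Since $a$ and $b$ are self-adjoint, also $b a = b$, so $a (b y b) = b y b$ for all $y \in A$, and hence, by density and continuity of multiplication, $a c = c$ for every $c \in {\overline{b A b}}$. Now suppose $c \in {\overline{b A b}}$ satisfies $\| c \| \leq 1$. Then
\[
\| x c - c \| = \| x a c - a c \| = \| (x a - a) c \|
   \leq \| x f (x) - f (x) \| \cdot \| c \|
   \leq \| x f (x) - f (x) \|.
\]
Since $x f (x) - f (x) = h (x)$ for $h (\ld) = (\ld - 1) f (\ld)$, and $h$ vanishes outside $[1 - \dt, 1]$ while $|h (\ld)| \leq \dt$ there, we conclude $\| x c - c \| \leq \dt < \ep$, which is the desired estimate.

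The argument is essentially bookkeeping with functional calculus. The only points that need a moment's care are the standard fact that $f (x) \in {\overline{x A x}}$ when $f (0) = 0$, the identity $a c = c$ on all of ${\overline{b A b}}$ (coming from $a b = b a = b$ together with density), and the elementary norm estimate $\| x f (x) - f (x) \| \leq \dt$. I do not expect a genuine obstacle here.
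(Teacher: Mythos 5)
Your proof is correct and follows essentially the same route as the paper's: both take $a$ and $b$ as continuous functional calculus of $x$ with $a b = b$, deduce $a c = c$ for $c \in {\overline{b A b}}$, and finish with a sup-norm estimate on the spectrum. The only cosmetic difference is that the paper chooses $a = f_0(x)$ with $\| x - a \| < \ep$ and estimates $\| (x - a) c \|$, while you support $a$ near $1$ and estimate $\| (x a - a) c \| \leq \sup_{\ld} |(\ld - 1) f(\ld)| \leq \dt < \ep$.
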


\begin{proof}
Define \cfn{s} $f_0, f_1 \colon [0, 1] \to [0, 1]$
by
\[
f_0 (\ld)
 = \begin{cases}
   \left( 1 - \tfrac{\ep}{2} \right)^{-1} \ld
                      & \hspace{3em} 0 \leq \ld \leq 1 - \tfrac{\ep}{2}
        \\
   1                  & \hspace{3em} 1 - \tfrac{\ep}{2} \leq \ld
\end{cases}
\]
and
\[
f_1 (\ld)
 = \begin{cases}
   0          & \hspace{3em} 0 \leq \ld \leq 1 - \tfrac{\ep}{2}
        \\
   \tfrac{2}{\ep} \left[ \ld - \left(1 - \tfrac{\ep}{2} \right) \right]
              & \hspace{3em} 1 - \tfrac{\ep}{2} \leq \ld \leq 1.
\end{cases}
\]
Set $a = f_0 (x)$ and $b = f_1 (x)$.
Then $\| x - a \| < \ep$ and $a b = b$. 
Furthermore, $\| b \| = 1$ because $1 \in \spec (x)$.

Let $c \in {\overline{b A b}}$
satisfy $\| c \| \leq 1$.
Then $a c = c$.
Therefore $\| x c - c \| < \ep$.
\end{proof}

\begin{lem}\label{L-2726Big}
Let $A$ be a finite simple infinite dimensional unital \ca.
Let $x \in A_{+}$ satisfy $\| x \| = 1$.
Then for every $\ep > 0$
there is $y \in \big( {\overline{x A x}} \big)_{+} \setminus \{ 0 \}$
such that whenever $g \in A_{+}$ satisfies $0 \leq g \leq 1$
and $g \precsim_A y$,
then $\| (1 - g) x (1 - g) \| > 1 - \ep$.
\end{lem}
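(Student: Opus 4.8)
The plan is to reduce the statement, via Lemma~\ref{L:C2}, to producing one nonzero positive element $y$ of a suitable hereditary subalgebra which is \emph{not} Cuntz dominated by a fixed ``top piece'' of~$x$; finiteness will enter only at that last step. We may assume $\ep < 1$, since a $y$ that works for a smaller value of~$\ep$ also works for any larger one. Put $\eta = 1 - \tfrac{\ep}{2} \in \big( \tfrac12, 1 \big)$ and $e = (x - \eta)_{+}$. As $\| x \| = 1$, we have $1 \in \spec (x)$ and $\eta < 1$, so $e \neq 0$; moreover $e \in {\overline{x A x}}$, hence ${\overline{e A e}} \subset {\overline{x A x}}$. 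The goal becomes: find $y \in \big( {\overline{e A e}} \big)_{+} \setminus \{ 0 \}$ with $e \not\precsim_A y$.

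Suppose such a $y$ has been found. Let $g \in A_{+}$ satisfy $0 \leq g \leq 1$ and $g \precsim_A y$, and assume toward a contradiction that $\| (1 - g) x (1 - g) \| \leq 1 - \ep$. Since $1 - \ep < \eta$ we get $0 \leq (1 - g) x (1 - g) \leq (1 - \ep) \cdot 1 < \eta \cdot 1$, so $\big[ (1 - g) x (1 - g) - \eta \big]_{+} = 0$. Applying Lemma~\ref{L:C2} with $a = x$ and with $\eta$ in place of its ``$\ep$'' then gives
\[
e = (x - \eta)_{+}
 \precsim_A \big[ (1 - g) x (1 - g) - \eta \big]_{+} \oplus g
 = 0 \oplus g
 \sim_A g
 \precsim_A y ,
\]
contradicting $e \not\precsim_A y$. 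Hence $\| (1 - g) x (1 - g) \| > 1 - \ep$, which is the assertion.

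It remains to construct~$y$. Since $A$ is simple, unital, and infinite dimensional, it is not of type~I (a simple unital type~I \ca{} is finite dimensional). Apply Lemma~\ref{L:DivInSmp} to the nonzero positive element~$e$ with $l = 2$, obtaining $b_1, b_2 \in A_{+} \setminus \{ 0 \}$ with $b_1 \sim_A b_2$, with $b_1 b_2 = 0$, and with $b_1 + b_2 \in {\overline{e A e}}$. Set $y = b_1$. Then $0 \leq b_1 \leq b_1 + b_2 \in {\overline{e A e}}$ and ${\overline{e A e}}$ is hereditary, so $y \in \big( {\overline{e A e}} \big)_{+} \setminus \{ 0 \} \subset \big( {\overline{x A x}} \big)_{+} \setminus \{ 0 \}$. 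Now suppose $e \precsim_A b_1$. Using Lemma~\ref{L:CzBasic}(\ref{L:CzBasic:Orth}), Lemma~\ref{L:CzBasic}(\ref{L:CzBasic:Her}), and Lemma~\ref{L:CzBasic}(\ref{L:CzBasic:CmpDSum}) together with $b_1 \sim_A b_2$, we obtain
\[
b_1 \oplus b_1
 \sim_A b_1 \oplus b_2
 \sim_A b_1 + b_2
 \precsim_A e
 \precsim_A b_1 ,
\]
and iterating the relation $b_1 \oplus b_1 \precsim_A b_1$ gives $1_{M_n} \otimes b_1 \precsim_A b_1$ for every $n \in \N$. On the other hand $b_1 \neq 0$ is full in the simple \uca~$A$, so (via Proposition~\ref{P-2728CzIdeal}, or Corollary~\ref{C_4619_PosSmp}) we have $\langle 1 \rangle \leq \langle 1_{M_n} \otimes b_1 \rangle$ for some $n \in \N$; combining with the iterated relation, $\langle 1 \rangle \leq \langle b_1 \rangle$, and then $2 \langle 1 \rangle \leq \langle 1_{M_2} \otimes b_1 \rangle \leq \langle b_1 \rangle \leq \langle 1 \rangle$, i.e.\ $1_A \oplus 1_A \precsim_A 1_A$. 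A standard argument (e.g.\ via Lemma~\ref{L:CzBasic}(\ref{L:CzBasic:N6})) shows this makes $1_A$ an infinite projection, contradicting finiteness of~$A$. Therefore $e \not\precsim_A y$, as needed.

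The crux is the third paragraph: the whole lemma comes down to choosing $y$ with $e \not\precsim_A y$, and this is exactly where the hypotheses bite --- simplicity (used, through non-type-I-ness, to split off orthogonal Cuntz equivalent summands) and finiteness (used to exclude $b_1 \oplus b_1 \precsim_A b_1$); when $A$ is purely infinite no such $y$ exists. By contrast, once Lemma~\ref{L:C2} is available the reduction in the second paragraph is routine; the only small point is to take $\eta$ strictly between $1 - \ep$ and~$1$, so that the ``$(1 - g) x (1 - g)$'' term vanishes while $(x - \eta)_{+}$ stays nonzero.
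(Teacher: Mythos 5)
Your proof is correct, and it takes a genuinely different route from the paper's. The paper works at the top of the spectrum of~$x$: Lemma~\ref{L-2726Norm1} produces $a, b \in {\overline{x A x}}$ with $a b = b$ and with every norm one element of ${\overline{b A b}}$ almost fixed by $x^{1/2}$; after splitting ${\overline{b A b}}$ into orthogonal Cuntz equivalent pieces and building $b_j, c_j, d_j$ by functional calculus, it takes $y = d_1$, shows that $\| (1 - g) x (1 - g) \| \leq 1 - \ep$ forces $\| (1 - g)(b_1 + b_2)(1 - g) \| \leq 1 - \tfrac{\ep}{3}$, deduces $c_1 + c_2 \precsim_A d_1$ from Lemmas \ref{L-2726IdemC} and~\ref{L:C2}, hence $1 \precsim_A (1 - c_1 - c_2) + d_1$, and contradicts finiteness by producing a right invertible element of $A$ that annihilates~$d_2$. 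You instead apply Lemma~\ref{L:C2} once, directly to $x$ at the level $\eta = 1 - \tfrac{\ep}{2}$, which reduces the lemma to finding a nonzero $y$ in the hereditary subalgebra generated by $e = (x - \eta)_{+}$ with $e \not\precsim_A y$, and you obtain such a $y$ as one half of the splitting from Lemma~\ref{L:DivInSmp}: $e \precsim_A b_1$ would give $2 \langle b_1 \rangle \leq \langle b_1 \rangle$, and fullness of $b_1$ (Corollary~\ref{C_4619_PosSmp}) upgrades this to $2 \langle 1 \rangle \leq \langle 1 \rangle$. The step you leave as ``standard'' is worth spelling out, because the lemma assumes only that $A$ is finite, not stably finite: from $1 \oplus 1 \precsim_A 1 \oplus 0$ in $M_2 (A)$, Lemma~\ref{L:CzBasic}(\ref{L:CzBasic:N6}) (with $\dt = \tfrac{1}{2}$, after rescaling) gives an isometry $w \in M_2 (A)$ whose range projection lies under $1 \oplus 0$; the bottom row of $w$ then vanishes, and its first row consists of two isometries in $A$ with orthogonal ranges, so $1_A$ is infinite in $A$ itself, as needed, and your contradiction does go through under the stated hypothesis. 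Your route is shorter, avoids Lemmas \ref{L-2726Norm1} and~\ref{L-2726IdemC} and the attendant norm estimates, and isolates a reusable fact (in a finite simple unital \ca{} no nonzero positive element is Cuntz dominated by one half of an orthogonal equivalent splitting inside its own hereditary subalgebra); what the paper's argument buys is a contradiction with finiteness by a bare-hands right inverse computation, never passing through proper infiniteness of the unit.
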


\begin{proof}
Choose positive elements $a, b \in {\overline{x^{1/2} A x^{1/2}}}$
as in Lemma~\ref{L-2726Norm1},
with $x^{1/2}$ in place of~$x$
and $\frac{\ep}{3}$ in place of~$\ep$.
Then $a, b \in {\overline{x A x}}$
since ${\overline{x^{1/2} A x^{1/2}}} = {\overline{x A x}}$.
Since $b \neq 0$,
Lemma~\ref{L:DivInSmp}
provides nonzero positive orthogonal elements
$z_1, z_2 \in {\overline{b A b}}$
(with $z_1 \sim_A z_2$).
We may require $\| z_1 \| = \| z_2 \| = 1$.

Define \cfn{s} $f_0, f_1, f_2 \colon [0, \infty) \to [0, 1]$
by
\[
f_0 (\ld)
 = \begin{cases}
   3 \ld   & \hspace{3em} 0 \leq \ld \leq \tfrac{1}{3}
       \\
   1       & \hspace{3em} \tfrac{1}{3} \leq \ld,
\end{cases}
\]
\[
f_1 (\ld)
 = \begin{cases}
   0                        & \hspace{3em} 0 \leq \ld \leq \tfrac{1}{3}
        \\
   3 \big( \ld - \tfrac{1}{3} \big)
                            & \hspace{3em}
                               \tfrac{1}{3} \leq \ld \leq \tfrac{2}{3}
       \\
   1                        & \hspace{3em} \tfrac{2}{3} \leq \ld,
\end{cases}
\]
and
\[
f_2 (\ld)
 = \begin{cases}
   0                                 & \hspace{3em}
                                        0 \leq \ld \leq \tfrac{2}{3}
        \\
   3 \big( \ld - \tfrac{2}{3} \big)  & \hspace{3em}
                                        \tfrac{2}{3} \leq \ld \leq 1
       \\
   1                                 & \hspace{3em} 1 \leq \ld.
\end{cases}
\]
For $j = 1, 2$ define
\[
b_j = f_0 (z_j),
\,\,\,\,\,\,
c_j = f_1 (z_j),
\andeqn
d_j = f_2 (z_j).
\]
Then
\[
0 \leq d_j \leq c_j \leq b_j \leq 1,
\,\,\,\,\,\,
a b_j = b_j,
\,\,\,\,\,\,
b_j c_j = c_j,
\,\,\,\,\,\,
c_j d_j = d_j,
\andeqn
d_j \neq 0.
\]
Also $b_1 b_2 = 0$.
Define $y = d_1$.
Then $y \in \big( {\overline{x A x}} \big)_{+}$.

Let $g \in A_{+}$ satisfy $0 \leq g \leq 1$ and $g \precsim_A y$.
We want to show that
\[
\| (1 - g) x (1 - g) \| > 1 - \ep,
\]
so suppose that $\| (1 - g) x (1 - g) \| \leq 1 - \ep$.
The choice of $a$ and~$b$,
and the relations $(b_1 + b_2)^{1/2} \in {\overline{b A b}}$
and $\big\| (b_1 + b_2)^{1/2} \big\| = 1$,
imply that
\[
\big\| x^{1/2} (b_1 + b_2)^{1/2} - (b_1 + b_2)^{1/2} \big\|
  < \frac{\ep}{3}.
\]
Using this relation and its adjoint at the second step, we get
\begin{align*}
\big\| (1 - g) (b_1 + b_2) (1 - g) \big\|
& = \big\| (b_1 + b_2)^{1/2} (1 - g)^2 (b_1 + b_2)^{1/2} \big\|
  \\
& < \big\| (b_1 + b_2)^{1/2} x^{1/2} (1 - g)^2
       x^{1/2} (b_1 + b_2)^{1/2} \big\|
    + \frac{2 \ep}{3}
  \\
& \leq \big\| x^{1/2} (1 - g)^2 x^{1/2} \big\| + \frac{2 \ep}{3}
  \\
& = \| (1 - g) x (1 - g) \| + \frac{2 \ep}{3}
  \leq 1 - \frac{\ep}{3}.
\end{align*}
In the following calculation, take $\bt = 1 - \frac{\ep}{3}$,
use $(b_1 + b_2) (c_1 + c_2) = c_1 + c_2$
and Lemma~\ref{L-2726IdemC} at the first step,
use Lemma~\ref{L:C2} at the second step,
use the estimate above at the third step,
and use $g \precsim_A y = d_1$ at the fourth step:
\begin{equation}\label{Eq_4819_Star}
c_1 + c_2
  \precsim_A [(b_1 + b_2) - \bt]_{+}
  \precsim_A \big[ (1 - g) (b_1 + b_2) (1 - g) - \bt \big]_{+}
       \oplus g
  = 0 \oplus g
  \precsim_A d_1.
\end{equation}
Set $r = (1 - c_1 - c_2) + d_1$.
Use Lemma \ref{L:CzBasic}(\ref{L:CzBasic:LCzCmpSum})
at the first step,
(\ref{Eq_4819_Star})~at the second step,
and Lemma \ref{L:CzBasic}(\ref{L:CzBasic:Orth})
and $d_1 (1 - c_1 - c_2) = 0$ at the third step,
to get
\[
1 \precsim_A (1 - c_1 - c_2) \oplus (c_1 + c_2)
  \precsim_A (1 - c_1 - c_2) \oplus d_1
  \sim_A (1 - c_1 - c_2) + d_1
  = r.
\]
Thus,
there is $v \in A$ such that $\| v r v^* - 1 \| < \tfrac{1}{2}$.
It follows that $v r^{1/2}$ has a right inverse.
But $v r^{1/2} d_2 = 0$, so $v r^{1/2}$ is not invertible.
We have contradicted finiteness of~$A$,
and thus proved the lemma.
\end{proof}

\section{The semigroup of purely positive elements}\label{Sec_PPE}

\indent
In this section,
$A$ is a stably finite simple unital \ca{}
not of type~I.
We consider the subsemigroup ${\operatorname{Cu}}_{+} (A) \cup \{ 0 \}$
of ${\operatorname{Cu}} (A)$
consisting of $\langle 0 \rangle$
and those elements of ${\operatorname{Cu}} (A)$
which are not the class of a \pj.
The main result of this section
is that ${\operatorname{Cu}}_{+} (A) \cup \{ 0 \}$
is a subsemigroup which
has the same functionals as ${\operatorname{Cu}} (A)$.

For a stably finite simple \ca~$A$,
the subsemigroup ${\operatorname{Cu}}_{+} (A) \cup \{ 0 \}$
is equal to the subsemigroup of purely noncompact elements
of ${\operatorname{Cu}}_{+} (A)$,
as defined before Proposition~6.4 of~\cite{ERS}.
See Proposition 6.4(iv) of~\cite{ERS}.
Unfortunately,
most of the results about it in~\cite{ERS}
have hypotheses that are too strong for our purposes.

We have found the following definition in the literature
only in connection with $W (A)$ rather than ${\operatorname{Cu}} (A)$.
(It appears before Corollary~2.24 of~\cite{APT}.
The subset is called $W (A)_{+}$ there.
The paper~\cite{ERS}
gives no notation for the subsemigroup of purely noncompact elements.)

\begin{dfn}\label{D_2Y24_Pure}
Let $A$ be a \ca.
Let ${\operatorname{Cu}}_{+} (A)$ denote
the set of elements $\et \in {\operatorname{Cu}} (A)$
which are not the classes of projections.
Similarly,
let $W_{+} (A)$ denote
the set of elements $\et \in W (A)$
which are not the classes of projections.
Further call an element $a \in (K \otimes A)_{+}$
{\emph{purely positive}}
if $\langle a \rangle \in {\operatorname{Cu}}_{+} (A)$.
\end{dfn}

The next result does for ${\operatorname{Cu}} (A)$
what Proposition~2.8 of~\cite{PT} does for $W (A)$.
Recall from Remark~\ref{R-2727MnI}
that $W (A) \subset {\operatorname{Cu}} (A)$.

\begin{lem}\label{L_2Y24_WhenP}
Let $A$ be a stably finite simple unital \ca.
Let $a \in (K \otimes A)_{+}$.
Then $a$ is purely positive \ifo{} $0$ is not an isolated point in
$\spec (a)$.
Moreover, if $a \in (K \otimes A)_{+}$
and $\langle a \rangle \not\in W (A)$,
then $a$ is purely positive.
\end{lem}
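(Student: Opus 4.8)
The plan is to prove the contrapositive of the first assertion in each direction, and to deduce the second assertion quickly. First suppose $0$ is an isolated point of $\spec(a)$. Then there is $\dt > 0$ such that $\spec(a) \cap (0, \dt) = \varnothing$, so the function $f$ which is $0$ at $0$, equals $\dt^{-1}\ld$ on $[0,\dt]$, and equals $1$ on $[\dt, \|a\|]$ is continuous with $f(0) = 0$ and $f(\ld) > 0$ for $\ld > 0$; moreover $f(a)$ is a projection (a spectral projection of $a$). By Lemma~\ref{L:CzBasic}(\ref{L:CzBasic:LCzFCalc}), $f(a) \sim_A a$, so $\langle a \rangle = \langle f(a) \rangle$ is the class of a projection, i.e. $a$ is not purely positive.

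For the converse, suppose $0$ is not isolated in $\spec(a)$, and suppose toward a contradiction that $\langle a \rangle = \langle p \rangle$ for some projection $p \in (K \otimes A)_{+}$. By Lemma~\ref{L-2720KToMn}, after replacing $a$ by $(a - \ep)_{+}$ for small $\ep$ we could work inside some $M_n \otimes A$; but here I want to keep $a$ itself, so instead I would argue as follows. Since $a \precsim_A p$ and $p \precsim_A a$, standard Cuntz theory (Lemma~\ref{L:CzBasic}(\ref{L:CzBasic:N6}) applied in both directions, or Proposition~2.4 of~\cite{Rd0}) gives, for each $\dt > 0$, an element whose class equals $\langle p \rangle$ and which is dominated by $\langle (a - \dt)_{+} \rangle$; since $\langle p \rangle \ll \langle p \rangle = \langle a \rangle = \sup_n \langle (a - 1/n)_{+}\rangle$ by Lemma~\ref{L_3905_CuWay}(\ref{L_3905_CuWay_PjWB}) and~(\ref{L_3905_CuWay_SupEp}), there is $n$ with $\langle p \rangle \leq \langle (a - 1/n)_{+} \rangle \leq \langle a \rangle = \langle p \rangle$. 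Hence $\langle (a - 1/n)_{+} \rangle = \langle a \rangle$. But $0$ being a non-isolated point of $\spec(a)$ means $\spec(a) \cap (0, 1/n) \neq \varnothing$, so there is a nonzero positive $g \in \overline{a(K\otimes A)a}$ supported in that gap, whence $\langle (a-1/n)_{+}\rangle + \langle g\rangle \le \langle a\rangle$ with $g$ orthogonal to $(a-1/n)_+$ (Lemma~\ref{L:CzBasic}(\ref{L:CzBasic:Orth})). Combined with $\langle (a-1/n)_+\rangle = \langle a\rangle = \langle p\rangle$, this forces $\langle p \rangle + \langle g \rangle \leq \langle p \rangle$ with $\langle g\rangle \ne 0$; since $A$ is stably finite and simple this contradicts finiteness of the projection $p$ (the class of a projection cannot strictly absorb a nonzero class in a stably finite algebra — one gets a properly infinite subprojection, or invokes that $d_\tau$ for a quasitrace, which exists and is faithful on $W(K\otimes A)$ by Lemma~\ref{L_4822_infdTauPos}, would satisfy $d_\tau(p) + d_\tau(g) \le d_\tau(p) < \infty$ with $d_\tau(g) > 0$). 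The cleanest route is the quasitrace one, so I would phrase the contradiction via $d_\tau$.

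Finally, the ``moreover'' clause: if $\langle a \rangle \notin W(A)$, then in particular $\langle a \rangle$ is not the class of any projection in $M_\infty(A)_+$; and by Lemma~\ref{L-2720KToMn}, if $a$ were the class of a projection at all, that projection's class would lie in $W(A)$ (a projection in $K \otimes A$ is Cuntz equivalent, indeed Murray–von Neumann equivalent after truncation, to one in some $M_n \otimes A$). So $a$ cannot be the class of a projection, i.e.\ $a$ is purely positive.

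The main obstacle I anticipate is the converse direction: making rigorous the step that $\langle a \rangle = \langle (a-1/n)_+\rangle$ for some $n$ when $\langle a\rangle$ is a compact element, and then extracting a genuine contradiction with stable finiteness. Both ingredients are available in the excerpt — compact containment of projection classes and existence of supremums along $(a-1/n)_+$ from Lemma~\ref{L_3905_CuWay}, and a faithful lower semicontinuous dimension function from a quasitrace via Lemma~\ref{L_4822_infdTauPos} together with Corollary~\ref{C-2718CuDiv}'s underlying machinery — but assembling them into the strict inequality $d_\tau(p) + d_\tau(g) \le d_\tau(p)$ with $0 < d_\tau(g) < \infty$ and $d_\tau(p) < \infty$ (using stable finiteness so $d_\tau(p)$ is indeed finite) is the part requiring care.
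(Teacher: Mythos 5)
Your argument is correct, and its skeleton is essentially the paper's: the forward direction via functional calculus and Lemma~\ref{L:CzBasic}(\ref{L:CzBasic:LCzFCalc}); for the converse, the key step $p \precsim_A (a - \dt)_{+}$ for some $\dt > 0$ (you obtain it from $\langle p \rangle \ll \langle p \rangle$ and Lemma~\ref{L_3905_CuWay}, the paper from Lemma~\ref{L:CzBasic}(\ref{L:CzBasic:LMinusEp}) together with $(p - \tfrac{1}{2})_{+} = \tfrac{1}{2} p$ --- the two are interchangeable), then a nonzero $g = f(a)$ with $f$ supported in $(0, \dt)$ and orthogonal to $(a - \dt)_{+}$, giving $\langle p \rangle + \langle g \rangle \leq \langle p \rangle$; and the ``moreover'' clause exactly as in the paper. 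The only real divergence is the final contradiction. The paper quotes Lemma~3.1 of~\cite{KR}: $p \oplus f(a) \precsim_A p$ with $f(a) \neq 0$ makes $p$ an infinite projection, contradicting stable finiteness. You prefer to evaluate a dimension function, getting $d_{\ta}(p) + d_{\ta}(g) \leq d_{\ta}(p) < \infty$ with $d_{\ta}(g) > 0$. That finish is fine as far as it goes, but it silently requires $\QT(A) \neq \varnothing$: the results quoted in this paper (Theorem~\ref{T_2Y25_DTau}, Lemma~\ref{L_4822_infdTauPos}) describe quasitraces when they exist, and existence of a quasitrace on a stably finite unital C*-algebra is the nontrivial Blackadar--Handelman theorem~\cite{BH}, which is not part of the toolkit assembled here. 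Your parenthetical alternative --- that $\langle p \rangle + \langle g \rangle \leq \langle p \rangle$ yields an infinite (properly infinite) projection --- is precisely the paper's route and avoids this dependency, so I would make that the main argument and keep the quasitrace computation as a remark rather than the ``cleanest route.''
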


\begin{proof}
We always have $0 \in \spec (a)$.

If $0$ is isolated in $\spec (a)$,
then functional calculus
and Lemma \ref{L:CzBasic}(\ref{L:CzBasic:LCzFCalc})
show that $a$ is equivalent to a \pj.
Hence $a$ is not purely positive.

Now suppose that $0$ is not isolated in $\spec(a)$, but that 
nevertheless $a$ is not purely positive.
Thus $a$ is equivalent to a projection $p \in K \otimes A$.
Since $p \precsim_A a$,
Lemma~\ref{L:CzBasic}(\ref{L:CzBasic:LMinusEp})
provides $\dt > 0$ such that
$\big( p - \tfrac{1}{2} \big)_{+} \precsim_A (a - \dt)_{+}$.
Since $\big( p - \tfrac{1}{2} \big)_{+} = \tfrac{1}{2} p$,
it follows that $p \precsim_A (a - \dt)_{+}$.
Choose a \cfn{} $f \colon [0, \infty) \to [0, \infty)$
such that:
\begin{enumerate}
\item\label{E_2Y25_f_Supp}
$f (\ld) = 0$ for all $\ld \in [\dt, \infty)$.
\item\label{E_2Y25_f_leT}
$f (\ld) \leq \ld$ for all $\ld \in [0, \infty)$.
\item\label{E_2Y25_f_NZ}
There is $\ld \in \spec (a)$ such that $f (\ld) \neq 0$.
\end{enumerate}
Then $f (a) \neq 0$.
Using $p \precsim_A (a - \dt)_{+}$
at the first step,
and $f (a) (a - \dt)_{+} = 0$
and Lemma~\ref{L:CzBasic}(\ref{L:CzBasic:Orth})
at the second step,
\[
p \oplus f (a)
 \precsim_A (a - \dt)_{+} \oplus f (a)
 \sim_A (a - \dt)_{+} + f (a)
 \leq a
 \precsim_A p.
\]
So $p$ is an infinite projection by Lemma~3.1 of~\cite{KR},
a contradiction.

The second statement follows from the fact that every
\pj{} in $K \otimes A$
is \mvnt,
hence Cuntz equivalent, to a \pj{} in $M_{\infty} (A)$.
\end{proof}

The next result does for ${\operatorname{Cu}} (A)$
what one of the two cases of Corollary~2.9(i) of~\cite{PT}
does for $W (A)$.
(Corollary~2.9(i) of~\cite{PT} is also Corollary 2.24(i) of~\cite{APT},
but the proof given in~\cite{APT} appears to omit the
case of stably finite simple C*-algebras.)
Part of it follows from parts (i) and~(iv)
of Proposition~6.4 of~\cite{ERS}.

\begin{cor}\label{C_2Y24_PureAbs}
Let $A$ be a stably finite simple unital \ca.
Then ${\operatorname{Cu}}_{+} (A)$
is a subsemigroup of ${\operatorname{Cu}} (A)$
which is absorbing in the sense that
if $\et \in {\operatorname{Cu}}_{+} (A)$
and $\mu \in {\operatorname{Cu}} (A)$,
then $\et + \mu \in {\operatorname{Cu}}_{+} (A)$.
Moreover,
${\operatorname{Cu}}_{+} (A) \cup \{ 0 \}$
is a subsemigroup of ${\operatorname{Cu}} (A)$.
\end{cor}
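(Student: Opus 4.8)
The plan is to deduce everything from the spectral characterization of purely positive elements in Lemma~\ref{L_2Y24_WhenP}: for $a \in (K \otimes A)_{+}$, the element $a$ is purely positive if and only if $0$ is not an isolated point of $\spec (a)$. The only other ingredient is the elementary fact that $\spec (a \oplus b) = \spec (a) \cup \spec (b)$ for $a, b \in (K \otimes A)_{+}$, which makes non-isolation of $0$ behave well under~$\oplus$.

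First I would treat the absorption statement; this will simultaneously give that ${\operatorname{Cu}}_{+} (A)$ is closed under addition. Let $\et \in {\operatorname{Cu}}_{+} (A)$ and $\mu \in {\operatorname{Cu}} (A)$. Choose a purely positive $a \in (K \otimes A)_{+}$ with $\langle a \rangle = \et$, and any $b \in (K \otimes A)_{+}$ with $\langle b \rangle = \mu$, so that $\et + \mu = \langle a \oplus b \rangle$. Since $f (a \oplus b) = f (a) \oplus f (b)$ for every \cfn{} $f$ on $[0, \infty)$ with $f (0) = 0$, the element $f (a \oplus b)$ vanishes exactly when $f$ vanishes on $\spec (a) \cup \spec (b)$; together with the observation that $0 \in \spec (a)$ (an element of the nonunital algebra $K \otimes A$ cannot be invertible in its unitization), this yields $\spec (a \oplus b) = \spec (a) \cup \spec (b)$. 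By Lemma~\ref{L_2Y24_WhenP}, $0$ is not isolated in $\spec (a)$; picking $\ld_n \in \spec (a) \setminus \{ 0 \}$ with $\ld_n \to 0$ shows that $0$ is not isolated in $\spec (a) \cup \spec (b) = \spec (a \oplus b)$. Applying Lemma~\ref{L_2Y24_WhenP} once more, $a \oplus b$ is purely positive, that is, $\et + \mu \in {\operatorname{Cu}}_{+} (A)$. Specializing to $\mu \in {\operatorname{Cu}}_{+} (A)$ shows that ${\operatorname{Cu}}_{+} (A)$ is closed under addition, hence is a subsemigroup.

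Finally, for ${\operatorname{Cu}}_{+} (A) \cup \{ 0 \}$, the only sums not already handled involve $0$, and $\et + 0 = \et \in {\operatorname{Cu}}_{+} (A)$ while $0 + 0 = 0$, so this set is a subsemigroup as well. There is no real obstacle in this argument: the substance is entirely contained in Lemma~\ref{L_2Y24_WhenP}, and the only point needing a little care is the identification $\spec (a \oplus b) = \spec (a) \cup \spec (b)$, in particular the fact that $0$ always belongs to the spectrum of a positive element of $K \otimes A$.
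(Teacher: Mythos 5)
Your proof is correct, and it is essentially the paper's argument: the paper simply cites Corollary~2.9(i) of~\cite{PT}, whose proof is the same spectral one you give, namely that $\spec (a \oplus b) \supset \spec (a)$ and the characterization of purely positive elements via non-isolation of $0$ in the spectrum (Lemma~\ref{L_2Y24_WhenP}, the ${\operatorname{Cu}}$-analogue of Proposition~2.8 of~\cite{PT}). Your handling of the minor points ($0 \in \spec (a)$ for positive $a$ in the nonunital algebra $K \otimes A$, and the reduction of the last statement to the absorption property) is also fine.
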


\begin{proof}
The proof of the first statement
is the same as that of Corollary~2.9(i) of~\cite{PT}.
The second statement is immediate from the first.
\end{proof}

\begin{lem}\label{L_3905_UpToEp}
Let $A$ be a stably finite simple unital \ca{}
which is not of type~I.
Let $\om$ be a functional on ${\operatorname{Cu}} (A)$
(Definition~\ref{D_2Y25_CzFcnl}).
Then for every $\et \in {\operatorname{Cu}} (A) \setminus \{ 0 \}$
and every $\af \in (0, \, \om (\et) )$,
there is $\mu \in {\operatorname{Cu}}_{+} (A)$
such that $\mu \ll \et$
(Definition~\ref{D_4621_CC})
and $\om (\mu) > \af$.
\end{lem}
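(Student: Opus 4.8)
The plan is to write $\eta = \langle a \rangle$ with $a \in (K \otimes A)_{+} \setminus \{ 0 \}$, to chop $a$ down to a class lying strictly inside $\eta$ (in the sense of $\ll$) while keeping the value of $\om$ above a threshold $\af' \in (\af, \, \om (\eta))$, then to apply the weak divisibility result Lemma~\ref{L:Smaller} to split off a piece $c$ whose $\om$-value is forced to stay close to that threshold, and finally to adjoin a small purely positive summand so that the total class becomes purely positive without spoiling the other two requirements.

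Concretely: fix $\af' \in (\af, \, \om (\eta))$ (finite; take $\af' = \af + 1$ if $\om (\eta) = \infty$). Since $\langle a \rangle = \sup_{m} \langle (a - \tfrac{1}{m})_{+} \rangle$ (Lemma~\ref{L_3905_CuWay}(\ref{L_3905_CuWay_SupEp})) and $\om$ respects supremums of increasing sequences (Definition~\ref{D_2Y25_CzFcnl}(\ref{2Y24_CuFcnl_Sup})), there is $\dt > 0$ with $\om (\langle (a - \dt)_{+} \rangle) > \af'$; set $b = (a - \tfrac{\dt}{2})_{+}$, so that $\langle b \rangle \ll \eta$ (Lemma~\ref{L_3905_CuWay}(\ref{L_3905_CuWay_EpWB})) and $(b - \tfrac{\dt}{2})_{+} = (a - \dt)_{+}$ (Lemma~\ref{L:CzBasic}(\ref{L:CzBasic:MinIter})). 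Now apply Lemma~\ref{L:Smaller} to the C*-algebra $K \otimes A$ (simple, infinite dimensional, and not of type~I, all inherited from $A$), with this $b$, with $\tfrac{\dt}{2}$ in place of $\ep$, and with $n \in \N$; using ${\operatorname{Cu}} (A) = W (K \otimes A)$ this gives $c \in (K \otimes A)_{+}$ and $y \in (K \otimes A)_{+} \setminus \{ 0 \}$ with $n \langle (a - \dt)_{+} \rangle \leq (n + 1) \langle c \rangle$ and $\langle c \rangle + \langle y \rangle \leq \langle b \rangle$ in ${\operatorname{Cu}} (A)$. Applying the additive, order preserving functional $\om$ to the first inequality yields $\om (\langle c \rangle) \geq \tfrac{n}{n + 1} \om (\langle (a - \dt)_{+} \rangle) > \tfrac{n}{n + 1} \af'$, which is $> \af$ once $n (\af' - \af) > \af$; fix such an $n$ (the case $\om (\langle (a - \dt)_{+} \rangle) = \infty$ being immediate). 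From the second inequality $\langle c \rangle \leq \langle c \rangle + \langle y \rangle \leq \langle b \rangle \ll \eta$, so $\langle c \rangle \ll \eta$ by Lemma~\ref{L_3905_CuWay}(\ref{L_3905_CuWay_Tr}).

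To finish, note that $\overline{y (K \otimes A) y}$ is a nonzero hereditary subalgebra of the simple algebra $K \otimes A$, hence simple, and it is Morita equivalent to $K \otimes A$ and therefore not of type~I; so Lemma~\ref{L-2817Sp01} provides $y' \in ( \overline{y (K \otimes A) y} )_{+}$ with $\spec (y') = [0, 1]$, whence $0$ is not isolated in $\spec (y')$, so $y'$ is purely positive (Lemma~\ref{L_2Y24_WhenP}) and $y' \precsim_A y$ (Lemma~\ref{L:CzBasic}(\ref{L:CzBasic:Her})). I would then take $\mu = \langle c \rangle + \langle y' \rangle$: it lies in ${\operatorname{Cu}}_{+} (A)$ by the absorbing property (Corollary~\ref{C_2Y24_PureAbs}); it satisfies $\mu \leq \langle c \rangle + \langle y \rangle \leq \langle b \rangle \ll \eta$, hence $\mu \ll \eta$ by Lemma~\ref{L_3905_CuWay}(\ref{L_3905_CuWay_Tr}); and $\om (\mu) = \om (\langle c \rangle) + \om (\langle y' \rangle) \geq \om (\langle c \rangle) > \af$. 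The step I expect to be the real obstacle is the one that produces an element whose $\om$-value is bounded \emph{below}: the routine chopping and hereditary-subalgebra constructions only produce Cuntz-small elements, which move $\om$ the wrong way, and it is precisely the inequality $n \langle (b - \ep)_{+} \rangle \leq (n + 1) \langle c \rangle$ of Lemma~\ref{L:Smaller} that turns a lower bound for $\om$ around; the rest (passing through $b = (a - \tfrac{\dt}{2})_{+}$ to secure $\mu \leq \langle b \rangle \ll \eta$, and absorbing a small purely positive summand) is bookkeeping.
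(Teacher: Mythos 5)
Your proof is correct and follows essentially the same route as the paper's: cut $a$ down by functional calculus so that the class is compactly contained in $\et$ with $\om$-value above a threshold, apply Lemma~\ref{L:Smaller} to convert that into a lower bound for $\om(\langle c \rangle)$, and adjoin a purely positive element of ${\overline{y (K \otimes A) y}}$ (via Lemma~\ref{L-2817Sp01}, Lemma~\ref{L_2Y24_WhenP}, and Corollary~\ref{C_2Y24_PureAbs}) to land in ${\operatorname{Cu}}_{+} (A)$ without disturbing the estimates. The only difference is a mild streamlining: by fixing an intermediate threshold $\af' > \af$ and choosing $n$ with $n (\af' - \af) > \af$, you avoid the paper's case split on whether $(a - 2 \dt)_{+}$ is the class of a projection and its appeal to Theorem~\ref{T_2Y25_DTau}(\ref{T_2Y25_DTau_Cu}) for finiteness of $\om$ on that class.
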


\begin{proof}
Choose $a \in (K \otimes A)_{+}$
such that $\et = \langle a \rangle$.
Using Lemma \ref{L_3905_CuWay}(\ref{L_3905_CuWay_SupEp})
and Definition \ref{D_2Y25_CzFcnl}(\ref{2Y24_CuFcnl_Sup}),
we can find $\dt > 0$ such that
$\om ( \langle (a - 2 \dt)_{+} \rangle ) > \af$.
We have $\langle (a - 2 \dt)_{+} \rangle \ll  \et$
by Lemma \ref{L_3905_CuWay}(\ref{L_3905_CuWay_EpWB}).
If $(a - 2 \dt)_{+}$ is purely positive,  the proof can be completed by
taking $\mu = \langle (a-2 \delta)_+\rangle $.

Otherwise, there is a \pj{} $p \in K \otimes A$
such that $\langle (a - 2 \dt)_{+} \rangle = \langle p \rangle$.
It follows from Theorem \ref{T_2Y25_DTau}(\ref{T_2Y25_DTau_Cu})
that there is a not necessarily normalized
$2$-quasitrace $\ta$ on~$A$
such that $\om = d_{\ta}$.
So $\om ( \langle p \rangle ) < \infty$.
Set $\ep = \om ( \langle p \rangle ) - \af$.
Then $\ep > 0$ by the choice of $\dt$.
Choose $n \in \N$ so large that $n \ep > \af$.
Apply Lemma~\ref{L:Smaller}
with this choice of~$n$,
with $\dt$ in place of~$\ep$,
and with $(a - \dt)_{+}$ in place of~$b$.
Since $( (a - \dt)_{+} - \dt)_{+} = (a - 2 \dt)_{+}$,
we find $c \in (K \otimes A)_{+}$
and $y \in (K \otimes A)_{+} \setminus \{ 0 \}$
such that
\[
n \langle (a - 2 \dt)_{+} \rangle \leq (n + 1) \langle c \rangle
\andeqn
\langle c \rangle + \langle y \rangle
 \leq \langle (a - \dt)_{+} \rangle.
\]
Applying $\om$ to the first inequality,
using the choice of~$n$,
and rearranging, we get $\om ( \langle c \rangle ) > \af$.
Use Lemma~\ref{L-2817Sp01}
to choose a positive element $y_0 \in {\overline{y (K \otimes A) y}}$
such that $\spec (y_0) = [0, 1]$.
Then $y_0 \precsim_A y$ by Lemma \ref{L:CzBasic}(\ref{L:CzBasic:Her})
and $\langle y_0 \rangle \in {\operatorname{Cu}}_{+} (A)$
by Lemma~\ref{L_2Y24_WhenP}.
Set $\mu = \langle c \rangle + \langle y_0 \rangle$,
which is in ${\operatorname{Cu}}_{+} (A)$
by Corollary~\ref{C_2Y24_PureAbs}.
Then,
using Lemma~\ref{L_3905_CuWay}(\ref{L_3905_CuWay_EpWB})
at the last step in the second calculation,
\[
\om ( \mu) \geq \om (\langle c \rangle) > \af
\andeqn
\mu \leq \langle c \rangle + \langle y \rangle
    \leq \langle (a - \dt)_{+} \rangle
    \ll \et.
\]
So $\mu \ll \et$ by Lemma \ref{L_3905_CuWay}(\ref{L_3905_CuWay_Tr}).
This completes the proof.
\end{proof}

The next lemma follows from
parts (i) and~(iv) of Proposition~6.4 of~\cite{ERS},
but we give the easy direct proof here.

\begin{lem}[\cite{ERS}]\label{L_3904_PureSup}
Let $A$ be a stably finite simple unital \ca{}
which is not of type~I.
If $\et_0 \leq \et_1 \leq \cdots$
in ${\operatorname{Cu}}_{+} (A) \cup \{ 0 \}$,
then $\sup \big( \{ \et_n \colon n \in \Nz \} \big)$,
evaluated in ${\operatorname{Cu}} (A)$,
is in ${\operatorname{Cu}}_{+} (A) \cup \{ 0 \}$.
\end{lem}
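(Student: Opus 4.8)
The plan is a short proof by contradiction, using the fact that the class of a projection is compact (way below itself) in $\Cu(A)$. The key point is that membership in $\Cu_{+}(A) \cup \{ 0 \}$ is governed directly by the definition ``not the class of a projection'' (Definition~\ref{D_2Y24_Pure}), so I never need to invoke the purely noncompact machinery of~\cite{ERS}.

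First I would observe that, since $\eta_0 \leq \eta_1 \leq \cdots$ is a countable upwards directed subset of $\Cu(A)$, the supremum $\eta = \sup \big( \{ \eta_n \colon n \in \Nz \} \big)$ exists in $\Cu(A)$ by Theorem~\ref{T_2Y25_CxSup}(\ref{T_2Y25_CxSup_Ex}). Assume for contradiction that $\eta \notin \Cu_{+}(A) \cup \{ 0 \}$; by Definition~\ref{D_2Y24_Pure} this means $\eta = \langle p \rangle$ for some projection $p \in K \otimes A$, and $\eta \neq 0$ forces $p \neq 0$. Next I would apply Lemma~\ref{L_3905_CuWay}(\ref{L_3905_CuWay_PjWB}) to get $\eta = \langle p \rangle \ll \langle p \rangle = \eta$. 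Feeding the nondecreasing sequence $(\eta_n)_{n \in \Nz}$ and its supremum $\eta$ into the definition of $\ll$ (Definition~\ref{D_4621_CC}), together with the trivial inequality $\eta \leq \sup \big( \{ \eta_n \colon n \in \Nz \} \big)$, yields some $n$ with $\eta \leq \eta_n$; since also $\eta_n \leq \eta$, this gives $\eta_n = \eta = \langle p \rangle$. But then $\eta_n$ is the class of a nonzero projection, so $\eta_n \notin \Cu_{+}(A) \cup \{ 0 \}$, contradicting the hypothesis on the sequence. Hence $\eta \in \Cu_{+}(A) \cup \{ 0 \}$, which is the assertion.

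I do not expect any genuine obstacle: the whole argument rests only on the existence of the supremum and on compactness of projection classes, both already recorded above, and the dichotomy ``zero / not a projection class'' is exactly what the definition of $\Cu_{+}(A) \cup \{ 0 \}$ provides. It is worth noting that the standing hypotheses of this section---stable finiteness, simplicity, and not being of type~I---are not actually used in this particular proof; they are in force only because of the section's context, and the identification of $\Cu_{+}(A) \cup \{ 0 \}$ with the purely noncompact elements (valid in the stably finite simple case) is what connects the statement to Proposition~6.4 of~\cite{ERS}.
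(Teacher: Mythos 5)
Your argument is correct and is essentially the paper's own proof: assume the supremum is the class of a (nonzero) projection, use Lemma~\ref{L_3905_CuWay}(\ref{L_3905_CuWay_PjWB}) that projection classes are compactly contained in themselves, and conclude from Definition~\ref{D_4621_CC} that some $\et_n$ equals that class, contradicting $\et_n \in {\operatorname{Cu}}_{+}(A) \cup \{0\}$. Your closing remark is also consistent with the paper, which only uses the extra hypotheses to relate the statement to~\cite{ERS}, not in the direct proof.
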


\begin{proof}
Let $\et = \sup \big( \{ \et_n \colon n \in \Nz \} \big)$,
evaluated in ${\operatorname{Cu}} (A)$.
Suppose $\et \not\in {\operatorname{Cu}}_{+} (A) \cup \{ 0 \}$.
Then,
by definition,
$\et$ is the class of a projection $p \in K \otimes A$.
Combining
Lemma~\ref{L_3905_CuWay}(\ref{L_3905_CuWay_PjWB})
and Definition~\ref{D_4621_CC},
we find $n \in \N$ such that $\et_n \geq \et$.
Therefore $\et_n = \et$.
So $\et_n = \langle p \rangle$,
contradicting $\et_n \in {\operatorname{Cu}}_{+} (A) \cup \{ 0 \}$.
\end{proof}

\begin{lem}\label{L_3905_InterpPj}
Let $A$ be a stably finite simple unital \ca{}
which is not of type~I.
Let $p \in K \otimes A$ be a \nzp,
let $n \in \N$,
and let $\xi \in {\operatorname{Cu}} (A) \setminus \{ 0 \}$.
Then there exist $\mu, \kp \in W_{+} (A)$
such that $\mu \leq \langle p \rangle \leq \mu + \kp$
and $n \kp \leq \xi$.
\end{lem}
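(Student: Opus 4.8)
The plan is to realize $\kp$ as the class of a ``small'' positive element of full spectrum, sitting in an arbitrarily small \hsa, and $\mu$ as a ``squared complement'' of it inside a corner. Replacing $p$ by a \mvnt{} \pj{} in $M_{\infty} (A)$ (which, being Cuntz equivalent, does not change $\langle p \rangle$), and then replacing $A$ by $M_m (A)$ for a suitable $m \in \N$ (which changes neither ${\operatorname{Cu}} (A)$ nor $W (A)$, and preserves all hypotheses on~$A$), I may assume $p \in A$. Set $B = \overline{p A p}$; this is unital with unit~$p$, simple, stably finite, and --- being Morita equivalent to~$A$ --- not of type~I. I will produce $d_0 \in B_{+}$ with $\spec (d_0) = [0, 1]$ and $n \langle d_0 \rangle \leq \xi$, and then take
\[
\kp = \langle d_0 \rangle
\andeqn
\mu = \big\langle (p - d_0)^2 \big\rangle ,
\]
the square being computed in the unital algebra~$B$. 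The key observation is that, with $q (t) = 1 - t + t^2$,
\[
(p - d_0)^2 + d_0 = p - d_0 + d_0^2 = q (d_0) ,
\]
so its spectrum in~$B$ is $q \big( [0, 1] \big) \subset \big[ \tfrac{3}{4}, \, 1 \big]$; hence $(p - d_0)^2 + d_0$ is invertible in~$B$ and therefore Cuntz equivalent over~$A$ to $p = 1_B$. Using this and Lemma~\ref{L:CzBasic}(\ref{L:CzBasic:LCzCmpSum}),
\[
\langle p \rangle
 = \big\langle (p - d_0)^2 + d_0 \big\rangle
 \leq \big\langle (p - d_0)^2 \big\rangle + \langle d_0 \rangle
 = \mu + \kp .
\]

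To produce~$d_0$, choose $z \in (K \otimes A)_{+} \setminus \{ 0 \}$ with $\langle z \rangle = \xi$. Since $B$ is a nonzero \hsa{} of the simple algebra $K \otimes A$, Lemma~\ref{L-2718CuSub} gives $d_1 \in B_{+} \setminus \{ 0 \}$ with $d_1 \precsim_A z$. Apply Lemma~\ref{L:DivInSmp} in~$B$, with $l = n$ and with $d_1$ in place of~$a$, to obtain pairwise orthogonal, mutually Cuntz equivalent elements $w_1, w_2, \ldots, w_n \in B_{+} \setminus \{ 0 \}$ with $w_1 + w_2 + \cdots + w_n \in \overline{d_1 B d_1}$. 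Then, by Lemma~\ref{L:CzBasic}(\ref{L:CzBasic:Orth}) and Lemma~\ref{L:CzBasic}(\ref{L:CzBasic:Her}),
\[
n \langle w_1 \rangle
 = \langle w_1 + w_2 + \cdots + w_n \rangle
 \leq \langle d_1 \rangle
 \leq \xi .
\]
Now $\overline{w_1 B w_1}$ is again simple and not of type~I, so Lemma~\ref{L-2817Sp01} provides $d_0 \in \big( \overline{w_1 B w_1} \big)_{+}$ with $\spec (d_0) = [0, 1]$; and $d_0 \precsim_A w_1$ by Lemma~\ref{L:CzBasic}(\ref{L:CzBasic:Her}), so $n \langle d_0 \rangle \leq n \langle w_1 \rangle \leq \xi$.

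It then remains to check the four conditions. First, $d_0 \in B \subset A \subset M_{\infty} (A)$, and likewise $(p - d_0)^2 \in M_{\infty} (A)$, so $\mu, \kp \in W (A)$. From $\spec (d_0) = [0, 1]$ one gets $\spec (p - d_0) = [0, 1]$ and then $\spec \big( (p - d_0)^2 \big) = [0, 1]$; in each of these spectra $0$ is not an isolated point, so Lemma~\ref{L_2Y24_WhenP} shows that $d_0$ and $(p - d_0)^2$ are purely positive, that is, $\mu, \kp \in W_{+} (A)$. Since $(p - d_0)^2 \in B = \overline{p A p}$, Lemma~\ref{L:CzBasic}(\ref{L:CzBasic:Her}) gives $\mu \leq \langle p \rangle$. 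The inequality $\langle p \rangle \leq \mu + \kp$ was established in the first paragraph, and $n \kp = n \langle d_0 \rangle \leq \xi$ in the second. The main difficulty is conceptual rather than computational: one wants to ``remove an arbitrarily small piece'' from the compact class $\langle p \rangle$ and have the remainder be non-compact (purely positive), and it is not evident a priori that this is possible without some form of strict comparison; the device that makes it work is that an element $d_0$ of full spectrum lying in an arbitrarily small \hsa{} of the corner~$B$ effects exactly such a splitting, via $(p - d_0)^2 \oplus d_0$. Everything else is routine: carrying the hypotheses ``simple'' and ``not of type~I'' through the passages to hereditary subalgebras, and the elementary spectral computations needed to invoke Lemma~\ref{L_2Y24_WhenP}.
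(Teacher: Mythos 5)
Your proof is correct and takes essentially the same route as the paper: pull a small element with spectrum $[0,1]$ into the corner $\overline{p A p}$ with $n \langle b \rangle \leq \xi$ (via Lemma~\ref{L-2718CuSub}, Lemma~\ref{L:DivInSmp}, and Lemma~\ref{L-2817Sp01}), take $\kp$ to be its class and $\mu$ the class of the complementary piece of~$p$, and use Lemma~\ref{L_2Y24_WhenP} to get pure positivity. Your detour through $(p - d_0)^2$ and the invertibility of $p - d_0 + d_0^2$ in the corner is only a cosmetic variant of the paper's direct decomposition $p = (p - b) + b$ with $\mu = \langle p - b \rangle$, since $(p - d_0)^2 \sim_A p - d_0$.
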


\begin{proof}
\Wolog{} there is $n \in \N$ such that
$p \in M_n (A)$.
Using Lemma~\ref{L:DivInSmp}
and Lemma \ref{L:CzBasic}(\ref{L:CzBasic:Her}),
we can find $\xi_0 \in {\operatorname{Cu}} (A) \setminus \{ 0 \}$
such that $n \xi_0 \leq \xi$.
Use Lemma~\ref{L-2718CuSub} with $K \otimes A$ in place of~$A$
to find $b_0 \in (p (M_n \otimes A) p)_{+} \setminus \{ 0 \}$
such that $\langle b_0 \rangle \leq \xi_0$.
Lemma~\ref{L-2817Sp01} provides a positive element
$b \in {\overline{b_0 (M_n \otimes A) b_0}}$
such that $\spec (b) = [0, 1]$.
Then $\langle b \rangle \leq \xi_0$
by Lemma \ref{L:CzBasic}(\ref{L:CzBasic:Her}).
Set $\mu = \langle p - b \rangle \leq \langle p \rangle$
and set $\kp = \langle b \rangle$.
Then $\mu + \kp \geq \langle p \rangle$
by Lemma \ref{L:CzBasic}(\ref{L:CzBasic:LCzCmpSum}).
Clearly $\mu, \kp \in W (A)$,
and $\mu, \kp \in {\operatorname{Cu}}_{+} (A)$
by Lemma~\ref{L_2Y24_WhenP}.
So $\mu, \kp \in W_{+} (A)$.
Finally,
$n \kp \leq n \xi_0 \leq \xi$.
\end{proof}

\begin{lem}\label{L_3904_ExistInCuP}
Let $A$ be a stably finite simple unital \ca{}
which is not of type~I.
Let $\et \in {\operatorname{Cu}}_{+} (A)$.
Then there is a sequence $(\et_n)_{n \in \N}$
in ${\operatorname{Cu}}_{+} (A)$
such that
\[
\et_1 \ll \et_2 \ll \cdots
\andeqn
\et = \sup \big( \{ \et_n \colon n \in \Nz \} \big).
\]
\end{lem}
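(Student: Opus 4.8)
The plan is to start from the $\ep$-cut approximation to $\eta$ and then repair it in the single case where it fails.  Write $\eta=\langle a\rangle$ with $a\in(K\otimes A)_{+}$ and set $\eta_n=\langle(a-\tfrac1n)_{+}\rangle$.  By Lemma~\ref{L:CzBasic}(\ref{L:CzBasic:MinIter}) we have $(a-\tfrac1n)_{+}=\big((a-\tfrac1{n+1})_{+}-\dt\big)_{+}$ with $\dt=\tfrac1n-\tfrac1{n+1}>0$, so Lemma~\ref{L_3905_CuWay}(\ref{L_3905_CuWay_EpWB}) gives $\eta_n\ll\eta_{n+1}$, while Lemma~\ref{L_3905_CuWay}(\ref{L_3905_CuWay_SupEp}) gives $\sup_n\eta_n=\eta$.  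If infinitely many $\eta_n$ already lie in ${\operatorname{Cu}}_{+}(A)$, I would simply pass to that cofinal subsequence: a cofinal subsequence of an increasing sequence has the same supremum, and $\ll$ is preserved, since $\xi\ll\ld$ and $\ld\le\mu$ imply $\xi\ll\mu$ directly from Definition~\ref{D_4621_CC}.  So that subsequence is already of the required form.

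The remaining case is that only finitely many $\eta_n$ are purely positive, so cofinally many of them are classes of projections; this is the main obstacle, and it genuinely occurs (if $\spec(a)=\{0\}\cup\{\tfrac1k:k\ge1\}$ then $\langle a\rangle$ is purely positive by Lemma~\ref{L_2Y24_WhenP}, yet every $(a-\ep)_{+}$ is equivalent to a projection), so some perturbation is unavoidable.  Discarding initial terms and then extracting a subsequence, I would reduce to the situation in which $\eta_n=\langle p_n\rangle$ for projections $p_n\in K\otimes A$ with $\langle p_n\rangle\le\langle p_{n+1}\rangle$ and $\langle p_n\rangle\ne\langle p_{n+1}\rangle$, still with $\sup_n\langle p_n\rangle=\eta$.  (An increasing sequence that is not eventually constant has a strictly increasing subsequence, and $\eta\in{\operatorname{Cu}}_{+}(A)$ rules out the sequence being eventually constant, since the supremum would then be the class of a projection.)

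The point is that a strict inequality of projection classes leaves room for a nonzero complement.  Since $p_n\precsim_A p_{n+1}$, applying Lemma~\ref{L:CzBasic}(\ref{L:CzBasic:N6}) in $K\otimes A$ with $\dt=\tfrac12$ and rescaling (note $(p_n-\tfrac12)_{+}=\tfrac12 p_n$) produces a partial isometry with source projection $p_n$ and range projection $q\le p_{n+1}$; then $r_n=p_{n+1}-q$ is a projection, which is nonzero because $\langle p_{n+1}\rangle\ne\langle p_n\rangle$, and Lemma~\ref{L:CzBasic}(\ref{L:CzBasic:Orth}) gives $\langle p_n\rangle+\langle r_n\rangle=\langle q\rangle+\langle r_n\rangle=\langle p_{n+1}\rangle$.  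Applying Lemma~\ref{L-2817Sp01} to the simple \hsa{} $\overline{r_n(K\otimes A)r_n}$ (which is not of type~I), I would choose $y_n$ in it with $\spec(y_n)=[0,1]$; then $\langle y_n\rangle\le\langle r_n\rangle$ by Lemma~\ref{L:CzBasic}(\ref{L:CzBasic:Her}) and $\langle y_n\rangle\in{\operatorname{Cu}}_{+}(A)$ by Lemma~\ref{L_2Y24_WhenP}.  Finally I would set $\mu_n=\langle p_n\rangle+\langle y_n\rangle$, which lies in ${\operatorname{Cu}}_{+}(A)$ by the absorption property in Corollary~\ref{C_2Y24_PureAbs}.

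It remains to verify the required properties of $(\mu_n)_{n\in\N}$.  From $\langle p_n\rangle\le\mu_n\le\langle p_n\rangle+\langle r_n\rangle=\langle p_{n+1}\rangle$, together with $\langle p_{n+1}\rangle\ll\langle p_{n+1}\rangle$ (Lemma~\ref{L_3905_CuWay}(\ref{L_3905_CuWay_PjWB})) and Lemma~\ref{L_3905_CuWay}(\ref{L_3905_CuWay_Tr}), one gets $\mu_n\ll\langle p_{n+1}\rangle\le\mu_{n+1}$, hence $\mu_1\ll\mu_2\ll\cdots$; and the same chain of inequalities squeezes $\sup_n\mu_n$ between $\sup_n\langle p_n\rangle=\eta$ and $\sup_n\langle p_{n+1}\rangle=\eta$, so $\sup_n\mu_n=\eta$.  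The substantive content of the argument is thus the case reduction and the complement construction; the rest is bookkeeping with the cited facts about $\ll$ and about ${\operatorname{Cu}}_{+}(A)$.
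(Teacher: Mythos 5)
Your proof is correct, but in the essential case it takes a different route from the paper. The paper does not split into cases at all: since $\eta\in{\operatorname{Cu}}_{+}(A)$, Lemma~\ref{L_2Y24_WhenP} says $0$ is not isolated in $\spec(a)$, so one can choose $\ep_1>\ep_2>\cdots\to 0$ with $\spec(a)\cap(\ep_{n+1},\ep_n)\neq\varnothing$, pick $f_n$ supported in $(\ep_{n+1},\ep_n)$ with $f_n(a)\neq 0$, take $y_n\in\overline{f_n(a)(K\otimes A)f_n(a)}$ with full spectrum $[0,1]$ via Lemma~\ref{L-2817Sp01}, and set $\et_n=\langle (a-\ep_n)_{+}+y_n\rangle$; the interleaving $(a-\ep_n)_{+}+y_n\precsim_A(a-\ep_{n+1})_{+}\leq(a-\ep_{n+1})_{+}+y_{n+1}$ gives the chain and the supremum, and pure positivity comes from Lemma~\ref{L_2Y24_WhenP} plus Corollary~\ref{C_2Y24_PureAbs}, exactly as in your last step. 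You instead keep the naive cuts $(a-\tfrac1n)_{+}$ and repair them only when they are projection classes, manufacturing the purely positive ``crumb'' not from a spectral gap of $a$ but from the complement projection $r_n=p_{n+1}-q$ supplied by Lemma~\ref{L:CzBasic}(\ref{L:CzBasic:N6}) and the strictness $\langle p_n\rangle\neq\langle p_{n+1}\rangle$ (which you correctly extract from $\eta$ not being a projection class); your $\ll$ relations then come for free from $\langle p_{n+1}\rangle\ll\langle p_{n+1}\rangle$ rather than from the $(b-\dt)_{+}$ mechanism. Your route is closer in spirit to Lemma~\ref{L_3905_InterpPj}, and it buys a cleaner verification of $\ll$ and no need to interlace the $\ep_n$ with spectral values, at the cost of the case division and the projection bookkeeping; the paper's construction is uniform and shorter. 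Both arguments lean on the same three ingredients (Lemma~\ref{L-2817Sp01}, Lemma~\ref{L_2Y24_WhenP}, Corollary~\ref{C_2Y24_PureAbs}), and your implicit use of Lemma~\ref{L-2817Sp01} inside the hereditary subalgebra $\overline{r_n(K\otimes A)r_n}$ (simple, not of type~I since $A$ is not) is the same silent step the paper itself makes, so it is not a gap.
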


The point of the lemma is that $\et_n$ is purely positive for all~$n$.

\begin{proof}[Proof of Lemma~\ref{L_3904_ExistInCuP}]
Choose $a \in (K \otimes A)_{+}$
such that $\et = \langle a \rangle$.
Lemma~\ref{L_2Y24_WhenP} implies that $0$ is not isolated
in $\spec (a)$.
Therefore there is a sequence $\ep_1 > \ep_2 > \cdots$
such that $\limi{n} \ep_n = 0$
and $\spec (a) \cap (\ep_{n + 1}, \ep_{n}) \neq \varnothing$
for all $n \in \N$.
In particular,
there is a \ct{} function $f_n \colon [0, \infty) \to [0, \infty)$
with support in $(\ep_{n + 1}, \ep_{n})$
such that $f_n (a) \neq 0$.
Use Lemma~\ref{L-2817Sp01}
to choose a positive element
$y_n \in {\overline{f_n (a) (K \otimes A) f_n (a)}}$
such that $\spec (y_n) = [0, 1]$.
Then, using Lemma \ref{L:CzBasic}(\ref{L:CzBasic:Her}),
we have
\[
(a - \ep_1)_{+}
 \leq (a - \ep_1)_{+} + y_1
 \precsim_A (a - \ep_2)_{+}
 \leq (a - \ep_2)_{+} + y_2
 \precsim_A (a - \ep_3)_{+}
 \leq \cdots.
\]
It follows from Lemma \ref{L_3905_CuWay}(\ref{L_3905_CuWay_SupEp})
that
\[
\langle a \rangle
 = \sup \big(
    \big\{ \langle (a - \ep_n)_{+} + y_n \rangle
       \colon n \in \N \big\} \big).
\]
We have
$\langle (a - \ep_n)_{+} + y_n \rangle \in {\operatorname{Cu}}_{+} (A)$
for all $n \in \N$,
by combining Lemma~\ref{L_2Y24_WhenP}
and Corollary~\ref{C_2Y24_PureAbs}.
\end{proof}

\begin{lem}\label{L_3904_FclBj}
Let $A$ be a stably finite simple unital \ca{}
which is not of type~I.
Then restriction defines a bijection from the functionals
$\om$ on ${\operatorname{Cu}} (A)$
(as in Definition~\ref{D_2Y25_CzFcnl})
such that $\om ( \langle 1 \rangle ) = 1$
to the functionals $\om$ on ${\operatorname{Cu}}_{+} (A) \cup \{ 0 \}$
such that
\[
\sup \big( \big\{ \om (\et) \colon
 {\mbox{$\et \in {\operatorname{Cu}}_{+} (A) \cup \{ 0 \}$
     and $\et \leq \langle 1 \rangle$ in
     ${\operatorname{Cu}} (A)$}} \big\} \big)
 = 1.
\]
\end{lem}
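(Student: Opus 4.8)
The plan is to show that restriction of a functional on $\Cu(A)$ to $\Cu_{+}(A) \cup \{0\}$ is well-defined, injective, and surjective onto the functionals described, with the normalization matching. That restriction is well-defined (i.e.\ a restricted functional still satisfies Definition~\ref{D_2Y25_CzFcnl}) requires that suprema of nondecreasing sequences computed in $\Cu_{+}(A) \cup \{0\}$ agree with those computed in $\Cu(A)$; this is exactly Lemma~\ref{L_3904_PureSup}. The normalization condition transforms as claimed because $\langle 1 \rangle$ need not lie in $\Cu_{+}(A) \cup \{0\}$, so one cannot simply evaluate $\om$ at $\langle 1 \rangle$; instead one must recover $\om(\langle 1 \rangle)$ from values on the purely positive elements below it. That recovery is provided by Lemma~\ref{L_3905_UpToEp} (with $\et = \langle 1 \rangle$): for every $\af < \om(\langle 1 \rangle)$ there is $\mu \in \Cu_{+}(A)$ with $\mu \ll \langle 1 \rangle$ (hence $\mu \leq \langle 1 \rangle$) and $\om(\mu) > \af$, so the supremum over purely positive $\et \leq \langle 1 \rangle$ equals $\om(\langle 1 \rangle)$. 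This gives both that the restriction lands in the claimed target set, and half of the normalization bookkeeping.

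First I would prove injectivity: if $\om_1, \om_2$ are functionals on $\Cu(A)$ agreeing on $\Cu_{+}(A) \cup \{0\}$, I must show they agree on all of $\Cu(A)$. The only elements of $\Cu(A)$ possibly missing from $\Cu_{+}(A) \cup \{0\}$ are the nonzero classes of projections $\langle p \rangle$. For such a $p$, I would use Lemma~\ref{L_3905_InterpPj}: given any $\xi \in \Cu(A) \setminus \{0\}$ and $n \in \N$, there are $\mu, \kp \in W_{+}(A) \subset \Cu_{+}(A)$ with $\mu \leq \langle p \rangle \leq \mu + \kp$ and $n\kp \leq \xi$. Then $\om_i(\mu) \leq \om_i(\langle p \rangle) \leq \om_i(\mu) + \om_i(\kp)$, so $|\om_1(\langle p \rangle) - \om_2(\langle p \rangle)| \leq \om_1(\kp) + \om_2(\kp) \leq \tfrac{1}{n}(\om_1(\xi) + \om_2(\xi))$ provided $\om_i(\xi) < \infty$; choosing $\xi$ with finite positive $\om_i$-values (e.g.\ a suitable purely positive element, using Theorem~\ref{T_2Y25_DTau}(\ref{T_2Y25_DTau_Cu}) to write $\om_i = d_{\ta_i}$ and then picking $\xi$ small via Lemma~\ref{L:DivInSmp}) and letting $n \to \infty$ forces equality. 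One small point to handle: if $\om_i(\langle 1 \rangle)$ itself is the quantity of interest, injectivity on projections already covers it since $\langle 1 \rangle$ is a projection class; and the finiteness of $\om_i$ on some nonzero element is automatic because $\om_i(\langle 1 \rangle) = 1 < \infty$ for normalized functionals, so one can take $\xi \leq \langle 1 \rangle$.

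Next I would prove surjectivity: given a functional $\om_0$ on $\Cu_{+}(A) \cup \{0\}$ with the stated supremum condition, extend it to $\Cu(A)$. Define $\om(\langle p \rangle) = \sup\{\om_0(\mu) : \mu \in \Cu_{+}(A) \cup \{0\},\ \mu \leq \langle p \rangle\}$ for projection classes, and $\om = \om_0$ on $\Cu_{+}(A) \cup \{0\}$; I must check this is consistent (for purely positive $\et$, the sup over $\mu \leq \et$ of $\om_0(\mu)$ equals $\om_0(\et)$, which holds since $\et$ itself is among the $\mu$'s, using monotonicity), that it is additive, order-preserving, zero at $0$, and sup-preserving, and that $\om(\langle 1 \rangle) = 1$ (immediate from the normalization hypothesis). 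Additivity is the delicate case: for $\et, \mu \in \Cu(A)$ I would approximate each from below by purely positive elements — using Lemma~\ref{L_3904_ExistInCuP} when the element is purely positive and Lemma~\ref{L_3905_InterpPj} when it is a projection class — add, and pass to the limit, invoking Lemma~\ref{L_3904_PureSup} and Theorem~\ref{T_2Y25_CxSup}(\ref{T_2Y25_CxSup_Ex}) to control suprema, together with Lemma~\ref{L:CzBasic}(\ref{L:CzBasic:LCzCmpSum}) and (\ref{L:CzBasic:Orth}) to relate $\mu_1 \oplus \mu_2$ to $\mu_1 + \mu_2$ at the level of classes. \textbf{The main obstacle} I expect is verifying the sup-preservation and additivity axioms for the extension $\om$ on projection classes, since these require carefully interleaving the purely positive approximants of $\langle p \rangle$ (which do not converge up to $\langle p \rangle$ in $\Cu_{+}(A)$ alone, only in $\Cu(A)$) with the monotone sequences defining the suprema, and checking that the "error" terms $\kp$ from Lemma~\ref{L_3905_InterpPj} can be made $\om$-small uniformly; the finiteness of $\om$ on projection classes (from $\om = d_\ta$, Theorem~\ref{T_2Y25_DTau}(\ref{T_2Y25_DTau_Cu})) is what makes this work, but one must be careful that this identification is available before additivity is established, so I would instead argue directly with the inequalities $\mu \leq \langle p \rangle \leq \mu + \kp$ and $n\kp \leq \xi$ to bound things.
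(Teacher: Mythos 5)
Your skeleton is the paper's: restriction is well defined because of Lemma~\ref{L_3904_PureSup}, the two normalizations are matched via Lemma~\ref{L_3905_UpToEp}, and surjectivity is attempted by extending $\om_0$ through $\om (\et) = \sup \{ \om_0 (\mu) \colon \mu \in {\operatorname{Cu}}_{+} (A) \cup \{ 0 \}, \ \mu \leq \et \}$, which is exactly the paper's formula. Your injectivity argument is a mild but genuine variant: the paper deduces it from Lemma~\ref{L_3905_UpToEp}, whereas you squeeze a projection class between purely positive elements using Lemma~\ref{L_3905_InterpPj} with $\xi = \langle 1 \rangle$ and let $n \to \infty$; that works (finiteness of $\om_j (\langle p \rangle)$ is automatic since $\langle p \rangle \leq l \langle 1 \rangle$ for some $l$) and is arguably more direct.

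The gap is in the surjectivity half, precisely at the point you label the main obstacle and then leave unresolved. To prove additivity and sup-preservation of the extension you must, for a nonzero projection class $\langle p \rangle$, produce $\mu, \kp \in {\operatorname{Cu}}_{+} (A)$ with $\mu \leq \langle p \rangle \leq \mu + \kp$ and $\om_0 (\kp)$ small. You propose to "argue directly with $n \kp \leq \xi$," but for that inequality to bound $\om_0 (\kp)$ you need two things you do not supply: the element $\xi$ must lie in ${\operatorname{Cu}}_{+} (A)$ (so that $\om_0 (\xi)$ is even defined --- the choice $\xi = \langle 1 \rangle$, which sufficed in your injectivity step, is not available here), and you must prove $\om_0 (\xi) < \infty$ using only the normalization hypothesis on $\om_0$, since, as you correctly observe, the identification $\om_0 = d_{\ta}$ cannot be invoked before $\om$ is known to be a functional. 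This is the one real idea in the paper's verification: it chooses $b \in A_{+}$ with $\spec (b) = [0, 1]$ (Lemma~\ref{L-2817Sp01}), takes $\xi = \langle 1 \oplus b \rangle \in {\operatorname{Cu}}_{+} (A)$, and bounds $\om_0 (\langle 1 \oplus b \rangle) \leq \om_0 (\langle 1 - b \rangle) + 2 \om_0 (\langle b \rangle) \leq 3$ because $\langle 1 - b \rangle$ and $\langle b \rangle$ are purely positive and lie below $\langle 1 \rangle$; then $n \kp \leq \xi$ gives $\om_0 (\kp) \leq \om_0 (\xi) / n$. (Taking $\xi = \langle b \rangle \leq \langle 1 \rangle$ would serve equally well.) Relatedly, your sketch omits the observation needed for sup-preservation when $\sup_n \om (\et_n) = \infty$: every projection class has finite $\om$-value (again because $\langle p \rangle \leq l \langle 1 \rangle$), so such an $\et$ is forced to be purely positive and Lemma~\ref{L_3904_ExistInCuP} applies. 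Without these ingredients the verification that your extension is a functional does not close, so the surjectivity direction is incomplete as written.
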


\begin{proof}
Corollary~\ref{C_2Y24_PureAbs}
and Lemma~\ref{L_3904_PureSup}
show that ${\operatorname{Cu}}_{+} (A) \cup \{ 0 \}$
is the kind of object
on which functionals are defined.
It is clear from Definition~\ref{D_2Y25_CzFcnl}
and Lemma~\ref{L_3904_PureSup}
that if $\om$ is a functional on ${\operatorname{Cu}} (A)$,
then $\om |_{{\operatorname{Cu}}_{+} (A) \cup \{ 0 \} }$
is a functional on ${\operatorname{Cu}}_{+} (A) \cup \{ 0 \}$.
To show that the restriction map in the statement of the lemma
makes sense,
it remains
only to show that the normalization conditions agree;
this follows from Lemma~\ref{L_3905_UpToEp}.

For any $\et \in {\operatorname{Cu}} (A)$,
define
\[
H (\et)
 = \big\{ \ld \in {\operatorname{Cu}}_{+} (A) \cup \{ 0 \} \colon
 {\mbox{$\ld \leq \et$ in
     ${\operatorname{Cu}} (A)$}} \big\}.
\]

We prove surjectivity of restriction.
Let
$\om_0 \colon {\operatorname{Cu}}_{+} (A) \cup \{ 0 \} \to [0, \infty]$
be a functional on ${\operatorname{Cu}}_{+} (A) \cup \{ 0 \}$
such that $\sup_{\ld \in H ( \langle 1 \rangle )} \om_0 ( \ld ) = 1$.
Define a function $\om \colon {\operatorname{Cu}} (A) \to [0, \infty]$
by
\begin{equation}\label{Eq_4629_NS}
\om (\et)
 = \sup \big( \big\{ \om_0 (\ld) \colon
  \ld \in H ( \et ) \big\} \big)
\end{equation}
for $\et \in {\operatorname{Cu}} (A)$.

To see that
$\om |_{ {\operatorname{Cu}}_{+} (A) \cup \{ 0 \} } = \om_0$,
let $\et \in {\operatorname{Cu}}_{+} (A) \cup \{ 0 \}$.
Then $\et$ is the largest element of $H (\et)$,
so $\om (\et) = \om_0 (\et)$
because $\om_0$ is order preserving.

We have $\om (\langle 1 \rangle ) = 1$ by definition.

We need to prove
that $\om$ is a functional on ${\operatorname{Cu}} (A)$.
We split the proof into a number of claims,
the first two of which are preparatory.

We claim that
for every $\ld \in {\operatorname{Cu}}_{+} (A)$
such that $\om_0 (\ld) < \infty$
and every $\ep > 0$,
there is $\mu \in {\operatorname{Cu}}_{+} (A)$
such that $\mu \ll \ld$ and $\om_0 (\et) - \om_0 (\mu) < \ep$.
Lemma~\ref{L_3904_ExistInCuP} provides
a sequence $(\et_n)_{n \in \N}$
in ${\operatorname{Cu}}_{+} (A)$
such that
\[
\et_1 \ll \et_2 \ll \cdots
\andeqn
\lambda = \sup \big( \{ \et_n \colon n \in \Nz \} \big).
\]
{}From Definition~\ref{D_2Y25_CzFcnl}(\ref{2Y24_CuFcnl_Sup}),
we conclude that there is $n$ such that
$\om_0 ( \et_n ) > \om_0 (\ld) - \ep$.
The claim is proved.

We claim that for every
$\ld \in {\operatorname{Cu}} (A) \setminus \{ 0 \}$
such that $\om (\ld) < \infty$
and every $\ep > 0$,
there are $\mu_1, \mu_2, \rh \in {\operatorname{Cu}}_{+} (A)$
such that
\begin{equation}\label{Eq_4624_Star}
\mu_1 \ll \mu_2 \leq \ld \leq \rh
\andeqn
\om_0 (\rh) - \om_0 (\mu_1) < \ep.
\end{equation}
%
If $\ld \in {\operatorname{Cu}}_{+} (A)$,
we take $\mu_2 = \rh = \ld$
and use the previous claim to find~$\mu_1$.
Otherwise, use Lemma~\ref{L-2817Sp01} to choose $b \in A_{+}$
such that $\spec (b) = [0, 1]$.
Then $\langle 1 \oplus b \rangle \in {\operatorname{Cu}}_{+} (A)$
by Lemma~\ref{L_2Y24_WhenP}
and Corollary~\ref{C_2Y24_PureAbs},
and $\langle 1 - b \rangle \in {\operatorname{Cu}}_{+} (A)$
by Lemma~\ref{L_2Y24_WhenP}.
So, using
Lemma~\ref{L:CzBasic}(\ref{L:CzBasic:LCzCmpSum})
at the first step,
\[
\om_0 ( \langle 1 \oplus b \rangle )
  \leq \om_0 ( \langle 1 - b \rangle )
        + \om_0 ( \langle b \rangle )
        + \om_0 ( \langle b \rangle )
  \leq 3 \sup_{\ld \in H ( \langle 1 \rangle )} \om_0 ( \ld )
  < \infty.
\]
Choose $n \in \N$
such that $n \ep > 2 \om_0 ( \langle 1 \oplus b \rangle )$.
By definition,
$\ld$ is the class of a \pj{} in $K \otimes A$.
So we can use Lemma~\ref{L_3905_InterpPj}
to find $\mu_2, \kp \in {\operatorname{Cu}}_{+} (A)$
such that $\mu_2 \leq \ld \leq \mu_2 + \kp$
and $n \kp \leq \langle 1 \oplus b \rangle$.
Set $\rh = \mu_2 + \kp$.
Then
\[
\om_0 (\rh) - \om_0 (\mu_2)
 = \om_0 (\kp)
 \leq \frac{\om_0 (\langle 1 \oplus b) \rangle}{n}
 < \frac{\ep}{2}.
\]
Use the previous claim to find
$\mu_1 \in {\operatorname{Cu}}_{+} (A)$ such that $\mu_1 \ll \mu_2$
and such that $\om_0 (\mu_2) - \om_0 (\mu_1) < \frac{\ep}{2}$.
Then
$\mu_1 \ll \mu_2 \leq \ld \leq \rh$
and $\om_0 (\rh) - \om_0 (\mu_1) < \ep$.
The claim is proved.

We now claim that if $\mu, \et \in {\operatorname{Cu}} (A)$
satisfy $\mu \leq \et$,
then $\om (\mu) \leq \om (\et)$.
This claim is immediate from~(\ref{Eq_4629_NS})
and the observation that $H (\mu) \subset H (\et)$.

We next claim that $\om$ is additive.
So let $\mu, \et \in {\operatorname{Cu}} (A)$.
If $\om (\mu) = \infty$ or $\om (\et) = \infty$,
then $\om (\mu + \et) = \infty$
follows from $\mu, \et \leq \mu + \et$.
So assume $\om (\mu)$ and $\om (\et)$ are both finite.
Since $\ld \in H (\mu)$ and $\rh \in H (\et)$
imply $\ld + \rh \in H (\mu + \et)$,
it is obvious that $\om (\mu + \et) \geq \om (\mu) + \om (\et)$.
To prove the reverse inequality,
let $\ep > 0$.
By a simplified version of the claim giving~(\ref{Eq_4624_Star}) above
(also valid when $\ld$ there is zero),
there are
$\ld_1, \ld_2, \rh_1, \rh_2
 \in {\operatorname{Cu}}_{+} (A) \cup \{ 0 \}$
such that
\[
\ld_1 \leq \mu \leq \ld_2,
\,\,\,\,\,\,
\rh_1 \leq \et \leq \rh_2,
\,\,\,\,\,\,
\om_0 (\ld_2) - \om_0 (\ld_1) < \frac{\ep}{2},
\andeqn
\om_0 (\rh_2) - \om_0 (\rh_1) < \frac{\ep}{2}.
\]
Then,
using the fact that $\om$ is order preserving at the first step
and $\ld_2 + \rh_2 \in {\operatorname{Cu}}_{+} (A) \cup \{ 0 \}$
at the second step,
we get
\[
\om (\mu + \et)
 \leq \om (\ld_2 + \rh_2)
 = \om_0 (\ld_2 + \rh_2)
 < \om_0 (\ld_1) + \frac{\ep}{2} + \om_0 (\rh_1) + \frac{\ep}{2}
 \leq \om (\mu) + \om (\et) + \ep.
\]
Since $\ep > 0$ is arbitrary,
additivity follows.

It remains to prove that
if $\et_0 \leq \et_1 \leq \cdots$ in ${\operatorname{Cu}} (A)$,
and $\et = \sup \big( \{ \et_n \colon n \in \Nz \} \big)$,
then $\om (\et) = \sup \big( \{ \om (\et_n) \colon n \in \Nz \} \big)$.
Since
$\bigcup_{n = 0}^{\infty} H (\et_n) \subset H (\et)$,
we clearly get
$\om (\et) \geq \sup \big( \{ \om (\et_n) \colon n \in \Nz \} \big)$.

We prove the reverse inequality.
It is trivial when $\et = 0$.
Next assume that $\et \neq 0$ and $\om (\et) < \infty$.
Let $\ep > 0$.
The claim giving~(\ref{Eq_4624_Star}) above provides
$\mu_1, \mu_2, \rh \in {\operatorname{Cu}}_{+} (A)$
such that
\[
\mu_1 \ll \mu_2 \leq \eta \leq \rh
\andeqn
\om_0 (\rh) - \om_0 (\mu_1) < \ep.
\]
By Definition~\ref{D_4621_CC},
there is $n$ such that $\et_n \geq \mu_1$.
Then
$\om (\et_n)
 \geq \om (\mu_1)
 \geq \om (\rh) - \ep
 \geq \om (\et) - \ep$.
Since $\ep > 0$ is arbitrary,
we get
$\om (\et) \leq \sup \big( \{ \om (\et_n) \colon n \in \Nz \} \big)$.

Now suppose that $\om (\et) = \infty$.
We begin by showing that $\et \in {\operatorname{Cu}}_{+} (A)$.
Let $p \in K \otimes A$ be any \pj.
Then there are $l \in \Z_{>0}$ and a \pj{} $q \in M_l \otimes A$
such that $p \sim q$.
It follows that
\[
\om ( \langle p \rangle )
 = \om (\langle q \rangle )
 \leq l \om ( \langle 1 \rangle )
 = l
 < \infty.
\]
So $\langle p \rangle \neq \et$.
We thus have $\et \in {\operatorname{Cu}}_{+} (A)$.
So $\om_0 (\et) = \infty$.
Lemma~\ref{L_3904_ExistInCuP} provides
a sequence $(\rh_n)_{n \in \N}$
in ${\operatorname{Cu}}_{+} (A)$
such that $\rh_1 \ll \rh_2 \ll \cdots$
and $\eta = \sup \big( \{ \rh_n \colon n \in \Nz \} \big)$.
Let $M \in [0, \infty)$.
Since $\om_0 (\et) = \sup_{n \in \N} \om_0 (\rh_n)$,
there is $m \in \N$
such that $\om_0 (\rh_m) > M$.
By Definition~\ref{D_4621_CC},
there is $n$ such that $\et_n \geq \rh_m$.
Then $\om (\et_n) \geq \om (\rh_m) > M$.
Since $M$ is arbitrary, we get
$\sup \big( \{ \om (\et_n) \colon n \in \Nz \} \big) = \infty$.
This completes the proof that $\om$ is a functional,
hence of surjectivity of the restriction map.

To complete the proof of the lemma,
we show that the restriction map is injective.
Let $\om_1$ and $\om_2$ be functionals on ${\operatorname{Cu}} (A)$
such that
$\om_1 |_{{\operatorname{Cu}}_{+} (A) \cup \{ 0 \} }
 = \om_2 |_{{\operatorname{Cu}}_{+} (A) \cup \{ 0 \} }$.
Clearly $\om_1 (0) =\om_2 (0) = 0$.
Now let $\et \in {\operatorname{Cu}} (A) \setminus \{ 0 \}$.
For $j = 1, 2$,
use Lemma~\ref{L_3905_UpToEp} to get
\[
\om_j (\et)
 = \sup \big( \big\{ \om_j (\mu) \colon
        {\mbox{$\mu \in {\operatorname{Cu}}_{+} (A)$
          and $\mu \ll \et$}} \big\} \big).
\]
Therefore $\om_1 (\et) = \om_2 (\et)$.
\end{proof}

\section{The definition of a large subalgebra}\label{Sec_Dfn}

\indent
In this section,
we give the definition of a large subalgebra
and some convenient variants of the definition,
both formally stronger and formally weaker.
We also define an important special case:
large subalgebras of crossed product type.
The main point of this definition
is to provide a convenient way to show that
a subalgebra is large
(in fact, centrally large---see~\cite{ArPh}).

Some basic facts about large subalgebras
are in Section~\ref{Sec_FP},
the main theorems
are in Section~\ref{Sec_CuLg},
and a class of examples is in Section~\ref{Sec_VK}.

By convention,
if we say that $B$ is a unital subalgebra of a \ca~$A$,
we mean that $B$ contains the identity of~$A$.

\begin{dfn}\label{D_Large}
Let $A$ be an infinite dimensional simple unital \ca.
A unital subalgebra $B \subset A$ is said to be
{\emph{large}} in~$A$ if
for every $m \in \N$,
$a_1, a_2, \ldots, a_m \in A$,
$\ep > 0$, $x \in A_{+}$ with $\| x \| = 1$,
and $y \in B_{+} \setminus \{ 0 \}$,
there are $c_1, c_2, \ldots, c_m \in A$ and $g \in B$
such that:
\begin{enumerate}
\item\label{D_Large:Cut1}
$0 \leq g \leq 1$.
\item\label{D_Large:Cut2}
For $j = 1, 2, \ldots, m$ we have
$\| c_j - a_j \| < \ep$.
\item\label{D_Large:Cut3}
For $j = 1, 2, \ldots, m$ we have
$(1 - g) c_j \in B$.
\item\label{D_Large:Cut4}
$g \precsim_B y$ and $g \precsim_A x$.
\item\label{D_Large:Cut5}
$\| (1 - g) x (1 - g) \| > 1 - \ep$.
\end{enumerate}
\end{dfn}

We emphasize that the Cuntz subequivalence
involving $y$ in~(\ref{D_Large:Cut4})
is relative to~$B$,
not~$A$.

The condition~(\ref{D_Large:Cut5})
or $g \precsim_A x$ is needed to avoid trivialities.
Otherwise, even if we require that $B$ be simple
and that the restriction map
${\operatorname{QT}} (A) \to {\operatorname{QT}} (B)$ be surjective,
and that $A$ be stably finite,
we can take $A$ to be any UHF algebra
and take $B = \C$.
The choice $g = 1$ will always work.

In condition~(\ref{D_Large:Cut3}),
we can require $c_j (1 - g) \in B$ instead
for some or all of the elements by taking adjoints.
In our original definition,
we required both $(1 - g) c_j \in B$
and $c_j (1 - g) \in B$.
The version with only one side is
needed in~\cite{EN1},
and none of the original proofs required both sides.
We therefore use the one sided version.

The definition is meaningful even if $A$ is not simple,
and
a number of the results we prove do not actually require
simplicity of~$A$.
Without simplicity,
though, Definition~\ref{D_Large} is too restrictive.
For example,
if $A$ is simple and $B \subset A$ is a proper subalgebra
which is large in~$A$,
then $B \oplus B$ ought to be large in $A \oplus A$.
However,
the condition in the definition will not be satisfied
if, for example,
$x \in A \oplus 0$ or $y \in B \oplus 0$.

\begin{lem}\label{L_4627_Dense}
In Definition~\ref{D_Large},
it suffices to let $S \subset A$ be a subset whose linear span
is dense in~$A$,
and verify the hypotheses only when
$a_1, a_2, \ldots, a_m \in S$.
\end{lem}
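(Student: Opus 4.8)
The plan is to show that the apparently weaker hypothesis (verifying the largeness condition only for tuples drawn from the spanning set $S$) implies the full condition (for arbitrary tuples from $A$), by an elementary approximation argument. First I would fix arbitrary $m \in \N$, elements $a_1, \ldots, a_m \in A$, a tolerance $\ep > 0$, an element $x \in A_{+}$ with $\| x \| = 1$, and $y \in B_{+} \setminus \{ 0 \}$, as in Definition~\ref{D_Large}. Since the linear span of $S$ is dense in~$A$, for each $j$ I can choose $a_j' \in \spn(S)$ with $\| a_j' - a_j \| < \ep/2$; writing $a_j'$ as a finite linear combination of elements of~$S$, I obtain a single finite list $s_1, \ldots, s_p \in S$ such that each $a_j'$ (and hence, up to $\ep/2$, each $a_j$) lies in the span of $s_1, \ldots, s_p$.

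Next I would apply the hypothesis to the tuple $s_1, \ldots, s_p$, with the \emph{same} $x$ and $y$, but with a smaller tolerance $\ep' > 0$ to be determined. This produces $d_1, \ldots, d_p \in A$ and $g \in B$ satisfying (\ref{D_Large:Cut1}), $\| d_k - s_k \| < \ep'$ for all $k$, $(1 - g) d_k \in B$ for all $k$, $g \precsim_B y$, $g \precsim_A x$, and $\| (1 - g) x (1 - g) \| > 1 - \ep'$. I then define $c_j$ by taking the \emph{same} linear combination of the $d_k$ that expresses $a_j'$ in terms of the $s_k$. Conditions (\ref{D_Large:Cut1}), (\ref{D_Large:Cut4}) and (\ref{D_Large:Cut5}) are inherited verbatim from the choice of $g$ (taking $\ep' \le \ep$ handles~(\ref{D_Large:Cut5})), and condition~(\ref{D_Large:Cut3}) holds because $(1 - g) c_j$ is the corresponding linear combination of the elements $(1 - g) d_k \in B$, and $B$ is a linear subspace. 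For condition~(\ref{D_Large:Cut2}) I estimate
\[
\| c_j - a_j \|
 \le \| c_j - a_j' \| + \| a_j' - a_j \|
 < \| c_j - a_j' \| + \tfrac{\ep}{2},
\]
and $\| c_j - a_j' \|$ is bounded by $\ep'$ times the sum of the absolute values of the coefficients appearing in the expression for $a_j'$; choosing $\ep'$ smaller than $\ep/2$ divided by the maximum over $j$ of that coefficient sum makes $\| c_j - a_j \| < \ep$ for all $j$.

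This is essentially a routine "linearity plus density" argument, so there is no serious obstacle; the only point requiring a little care is bookkeeping—making sure a single finite list $s_1, \ldots, s_p$ works simultaneously for all $j$, and that the constant controlling how $\ep'$ must be chosen depends only on the fixed data $a_1, \ldots, a_m$ and the (finitely many, now fixed) coefficients, not on the output of the hypothesis. One should also note that the same reasoning shows it suffices to take the $a_j$ from a dense subset of $A$ (not necessarily a spanning set of a subspace), and that if one prefers the variant of condition~(\ref{D_Large:Cut3}) with $c_j(1-g) \in B$, the identical argument applies after taking adjoints, since $s \mapsto s^*$ preserves $\spn(S^*)$ and $S^*$ spans a dense subspace whenever $S$ does.
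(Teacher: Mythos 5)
Your argument is correct, and it is exactly the routine ``approximate by finite linear combinations from $S$ and use linearity of conditions (\ref{D_Large:Cut2}) and~(\ref{D_Large:Cut3})'' reasoning the paper has in mind: its own proof consists of the single sentence ``The proof is immediate.'' Your bookkeeping (fixing the coefficients before choosing $\ep'$, and absorbing the coefficient sums into $\ep'$) is the right way to make that immediacy explicit.
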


\begin{proof}
The proof is immediate.
\end{proof}

\begin{rmk}\label{R_4628_OtherDense}
The same reduction applies to various conditions for
a subalgebra to be large given below,
such as Proposition~\ref{P_4624_Approx},
Proposition~\ref{P:FinLarge},
and other similar results.
It also applies to conditions for a subalgebra to be large of crossed
product type,
such as the definition
(Definition~\ref{D-2717CPType} below)
and Proposition~\ref{P-2729AltCPT}.
\end{rmk}

Unlike other approximation properties
(such as tracial rank),
it seems not to be possible to take $S$
in Lemma~\ref{L_4627_Dense}
to be a generating subset,
or even a selfadjoint generating subset.

The weaker form of the definition in the following proposition,
in which we merely require that $(1 - g) a_j$ be close to~$B$
instead of the existence of the elements~$c_j$,
was suggested by Zhuang Niu.
We prove that it is equivalent.

\begin{prp}\label{P_4624_Approx}
Let $A$ be an infinite dimensional simple unital \ca,
and let $B \subset A$ be a unital subalgebra.
Suppose that every finite set $F \subset A$,
$\ep > 0$, $x \in A_{+}$ with $\| x \| = 1$,
and $y \in B_{+} \setminus \{ 0 \}$,
there is $g \in B$
such that:
\begin{enumerate}
\item\label{P_4624_Approx_Cut1}
$0 \leq g \leq 1$.
\item\label{P_4624_Approx_Cut2}
$\dist ( (1 - g) a, \, B) < \ep$ for all $a \in F$.
\item\label{P_4624_Approx_Cut4}
$g \precsim_B y$ and $g \precsim_A x$.
\item\label{P_4624_Approx_Cut5}
$\| (1 - g) x (1 - g) \| > 1 - \ep$.
\end{enumerate}
Then $B$ is large in~$A$.
\end{prp}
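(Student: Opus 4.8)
The plan is to verify Definition~\ref{D_Large} directly. Fix $m \in \N$, elements $a_1, a_2, \ldots, a_m \in A$, a number $\ep > 0$, an element $x \in A_{+}$ with $\| x \| = 1$, and $y \in B_{+} \setminus \{ 0 \}$; we must produce $c_1, c_2, \ldots, c_m \in A$ and $g \in B$ satisfying (\ref{D_Large:Cut1})--(\ref{D_Large:Cut5}) there. First I would apply the hypothesis of the proposition to the finite set $F = \{ a_1, a_2, \ldots, a_m \}$, to a small auxiliary number $\ep_1 > 0$ (to be fixed at the end), to the given~$x$, and to an element $y_1 \in B_{+} \setminus \{ 0 \}$ chosen strictly below~$y$ in the Cuntz order with room to spare --- for instance $y_1 = (y - \dt)_{+}$ for small $\dt > 0$, so that $y_1 \precsim_B y$ and, using simplicity of~$B$ with Lemma~\ref{L:DivInSmp}, there is even a nonzero positive $y' \in B$ with $\langle y_1 \rangle + \langle y' \rangle \leq \langle y \rangle$. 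This produces $g_0 \in B$ with $0 \leq g_0 \leq 1$, with $g_0 \precsim_B y_1$ and $g_0 \precsim_A x$, with $\| (1 - g_0) x (1 - g_0) \| > 1 - \ep_1$, and with $b_1, b_2, \ldots, b_m \in B$ such that $\| (1 - g_0) a_j - b_j \| < \ep_1$ for every~$j$.

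Conditions (\ref{D_Large:Cut1}), (\ref{D_Large:Cut4}), and~(\ref{D_Large:Cut5}) are now essentially in hand, and they are robust: for any continuous $\ps \colon [0, 1] \to [0, 1]$ with $\ps (0) = 0$ we have $\ps (g_0) \precsim_B g_0 \precsim_B y_1 \precsim_B y$ and $\ps (g_0) \precsim_A x$ by Lemma~\ref{L:CzBasic}(\ref{L:CzBasic:LCzOneWay}), and if $\| \ps (g_0) - g_0 \|$ is small then $\| (1 - \ps (g_0)) x (1 - \ps (g_0)) \|$ still exceeds $1 - \ep$; so I may adjust $g_0$ freely by such a functional calculus without losing these conditions. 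The substantive requirement is~(\ref{D_Large:Cut3}): one needs $c_j$ with $\| c_j - a_j \| < \ep$ and $(1 - g) c_j \in B$ \emph{exactly}, whereas the hypothesis only supplies $\dist \big( (1 - g_0) a_j, \, B \big) < \ep_1$.

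I expect this passage from approximate to exact membership to be the main obstacle. The error $e_j = (1 - g_0) a_j - b_j$, of norm $< \ep_1$, is not eliminated by the obvious substitutions for~$c_j$ (such as $c_j = g_0 a_j + b_j$), which only replace it by another error of comparable size; and shrinking~$g_0$ so that $1 - g$ becomes invertible --- which would let one solve for $c_j$ directly --- reintroduces a part of~$a_j$ far from~$B$ and is no cure. My plan is to absorb the error through an iteration. Apply the hypothesis repeatedly to build cuts $g_0, g_1, g_2, \ldots \in B$: at stage~$n \geq 1$ one applies it to a finite set recording the residual of the previous stage (whose norm is already of order $2^{-n} \ep_1$), with error bound $2^{-n} \ep_1$, with an appropriate compression of~$x$ by the earlier cuts, and with a ``small'' $y_n \in B_{+} \setminus \{ 0 \}$ chosen so that $\langle g_0 \rangle, \langle g_1 \rangle, \ldots$ telescope inside $\langle y \rangle$ (again via Lemma~\ref{L:DivInSmp}, splitting $\langle y \rangle$ into countably many nonzero orthogonal summands, and capping each $g_n$ for $n \geq 1$ in norm, which is harmless since the stage-$n$ residual is small). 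Assemble a single $g \in B$ with $1 - g = \lim_n h_n$, where $h_0 = 1 - g_0$ and $h_{n + 1} = h_n^{1/2} (1 - g_{n + 1}) h_n^{1/2}$; the sequence $(h_n)$ is decreasing and Cauchy because $\| h_{n + 1} - h_n \| \leq \| g_{n + 1} \|$ is summable, condition~(\ref{D_Large:Cut5}) survives in the limit because each successive compression only shrinks a part already known to be $\ep_1$-small, and condition~(\ref{D_Large:Cut4}) survives from the telescoping choice of the~$y_n$. Finally obtain each~$c_j$ as a norm limit $c_j = \lim_n c_j^{(n)}$ of approximate solutions, corrected at each stage by an element of norm $< 2^{-n} \ep_1$, so that $(1 - g) c_j$ is a norm limit of elements of~$B$ and therefore lies in~$B$, while $\| c_j - a_j \| \leq \ep_1 \sum_{n} 2^{-n} < \ep$.

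The crux of the argument is the per-stage bookkeeping: arranging the correction at stage~$n$ so that the distance of the relevant compression to~$B$ genuinely contracts rather than stalling at order~$\ep_1$, while simultaneously keeping the infinite product of the $(1 - g_n)^{1/2}$ nondegenerate (so that~(\ref{D_Large:Cut5}) is not destroyed) and keeping the Cuntz estimate on $\langle g \rangle$ under control (so that~(\ref{D_Large:Cut4}) holds). Once the iteration is organized, conditions (\ref{D_Large:Cut1}), (\ref{D_Large:Cut2}), (\ref{D_Large:Cut4}), and~(\ref{D_Large:Cut5}) follow by collecting the estimates and choosing $\ep_1$ (and the auxiliary parameters) small at the end, and $0 \leq g \leq 1$ with $g \in B$ by construction.
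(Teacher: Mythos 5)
There is a genuine gap, and it is exactly at the point you yourself flag as ``the crux.'' Your iteration needs two things at once: the tail cuts $g_n$ must be small in norm (otherwise the increments $\| h_n - h_{n+1} \| = \big\| h_n^{1/2} g_{n+1} h_n^{1/2} \big\|$ are not summable, the decreasing sequence $(h_n)$ need only converge strongly, and the limit need not lie in the norm-closed algebra $B$ at all), while the stage-$n$ application of the hypothesis only helps if you use the full cut $g_{n+1}$, whose norm is essentially~$1$. Your proposed remedy --- ``capping each $g_n$ in norm'' --- is not harmless: writing $(1 - t g_{n+1}) e_j^{(n)} = (1 - g_{n+1}) e_j^{(n)} + (1 - t) g_{n+1} e_j^{(n)}$, the second term has norm comparable to $\| e_j^{(n)} \|$ and is in general no closer to $B$ than $e_j^{(n)}$ itself, so the distance to $B$ stalls at order $2^{-n} \ep_1$ instead of contracting below it; the scheme therefore does not converge to exact membership. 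A further unaddressed problem is that the symmetrized products interleave the factors $(1 - g_n)^{1/2}$ with the elements $a_j$, and the hypothesis gives information only about $(1 - g_n) \cdot (\text{the elements actually fed in})$, not about $(1 - g_n)^{1/2} a_j$; so even the claim that $h_n c_j^{(n)}$ stays close to $B$ is not established. As written, conditions (\ref{D_Large:Cut3}) and the norm convergence needed for $g \in B$ are both unproved.

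The missing idea is that no iteration is needed: one application of the hypothesis plus functional calculus already gives exact membership. With $g_0$ and $b_j \in B$ satisfying $\| (1 - g_0) a_j - b_j \| < \tfrac{\ep}{3}$, choose continuous $f_0, f_1, f_2 \colon [0, 1] \to [0, 1]$ with $f_0 f_1 = f_1$, $f_1 f_2 = f_2$, $\sup_{\ld} | f_1 (\ld) - \ld | \leq \tfrac{\ep}{3}$, and $\sup_{\ld} | f_0 (\ld) \ld - \ld | \leq \tfrac{\ep}{3}$ (so $f_2$ is supported near $\ld = 1$). Put $r_0 = 1 - g_0$, $g = 1 - f_2 (r_0)$, and
\[
c_j = \big( 1 - f_1 (r_0) \big) a_j + f_0 (r_0) b_j .
\]
Since $f_2 (r_0) \big( 1 - f_1 (r_0) \big) = 0$ exactly, one gets $(1 - g) c_j = f_2 (r_0) f_0 (r_0) b_j \in B$ exactly, while $\| c_j - a_j \| < \ep$ because $f_1 (r_0) \approx r_0$ and $f_0 (r_0) b_j \approx f_0 (r_0) r_0 a_j \approx r_0 a_j$. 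Moreover $g$ is a continuous function of $g_0$ vanishing at $0$, so $g \precsim_B g_0 \precsim_B y$ and $g \precsim_A x$ by Lemma~\ref{L:CzBasic}(\ref{L:CzBasic:LCzOneWay}), and $(1 - g)^2 = f_2 (r_0)^2 \geq r_0^2$ gives $\| (1 - g) x (1 - g) \| \geq \| (1 - g_0) x (1 - g_0) \| > 1 - \ep$. This one-step ``enlarge $g$ so that $1 - g$ lives where $g_0$ is small, then patch $a_j$ with $b_j$ by a partition of unity in $g_0$'' trick is what replaces your infinite construction, and it avoids all of the convergence and bookkeeping issues above.
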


\begin{proof}
Define \cfn{s}
$f_0, f_1, f_2 \colon [0, 1] \to [0, 1]$
by
\[
f_0 (\ld)
 = \begin{cases}
   3 \ep^{-1} \ld & \hspace{3em} 0 \leq \ld \leq \frac{\ep}{3}
       \\
   1              & \hspace{3em} \frac{\ep}{3} \leq \ld \leq 1,
\end{cases}
\]
\[
f_1 (\ld)
 = \begin{cases}
   0        & \hspace{3em} 0 \leq \ld \leq \frac{\ep}{3}
        \\
   \left( 1 - \frac{2 \ep}{3} \right)^{-1}
            \left( \ld - \frac{\ep}{3} \right)
            & \hspace{3em} \frac{\ep}{3} \leq \ld \leq 1 - \frac{\ep}{3}
       \\
   1        & \hspace{3em} 1 - \frac{\ep}{3} \leq \ld \leq 1,
\end{cases}
\]
and
\[
f_1 (\ld)
 = \begin{cases}
   0                        & \hspace{3em}
                    0 \leq \ld \leq 1 - \frac{\ep}{3}
       \\
   3 \ep^{-1} (\ld - 1) + 1 & \hspace{3em}
                    1 - \frac{\ep}{3} \leq \ld \leq 1.
\end{cases}
\]
Then $f_0 f_1 = f_1$,
$f_1 f_2 = f_2$,
\begin{equation}\label{Eq_4624_OneSt}
\sup_{\ld \in [0, 1]} | f_1 (\ld) - \ld | = \frac{\ep}{3},
\end{equation}
and
\begin{equation}\label{Eq_4624_TwoSt}
\sup_{\ld \in [0, 1]} | f_0 (\ld) \ld - \ld | = \frac{\ep}{3}.
\end{equation}

We verify Definition~\ref{D_Large}.
Let $m \in \Nz$,
let $a_1, a_2, \ldots, a_m \in A$,
let $\ep > 0$,
let $x \in A_{+}$ satisfy $\| x \| = 1$,
and let $y \in B_{+} \setminus \{ 0 \}$.
\Wolog{} $\ep < 1$ and $\| a_j \| \leq 1$ for $j = 1, 2, \ldots, m$.
Apply the hypothesis with $F = \{ a_1, a_2, \ldots, a_m \}$
and with $\frac{\ep}{3}$ in place of~$\ep$,
getting $g_0 \in B$.
Define $r_0 = 1 - g_0$.
Set $g = 1 - f_2 (r_0)$.

For $j = 1, 2, \ldots, m$,
we thus have
$\dist ( r_0 a_j, \, B) < \frac{\ep}{3}$.
Choose $b_j \in B$
such that $\| r_0 a_j - b_j \| < \frac{\ep}{3}$.
Define $c_j = (1 - f_1 (r_0) ) a_j + f_0 (r_0) b_j \in A$.

Definition~\ref{D_Large}(\ref{D_Large:Cut1}) ($0 \leq g \leq 1$)
is immediate.
Definition~\ref{D_Large}(\ref{D_Large:Cut4})
follows because $g_0 \precsim_B y$ and $g_0 \precsim_A x$,
and because the computation
$g = 1 - f_0 (1 - g_0) = f_2 (g_0)$,
combined with
Lemma~\ref{L:CzBasic}(\ref{L:CzBasic:LCzOneWay}),
shows that $g \precsim_B g_0$.

We estimate $\| c_j - a_j \|$.
Using~(\ref{Eq_4624_OneSt}) and $\| a_j \| \leq 1$,
we get
\[
\| f_1 (r_0) a_j - r_0 a_j \|
  \leq \| a_j \| \cdot \| f_1 (r_0) - r_0 \|
  \leq \frac{\ep}{3}.
\]
Using~(\ref{Eq_4624_TwoSt}) at the second step,
we get
\[
\| f_0 (r_0) b_j - r_0 a_j \|
  \leq \| f_0 (r_0) \| \cdot \| b_j - r_0 a_j \|
     + \| f_0 (r_0) r_0 - r_0 \| \cdot \| a_j \|
  < \frac{\ep}{3} + \frac{\ep}{3}
  = \frac{2 \ep}{3}.
\]
Combining these two estimates for the third step,
we get
\begin{align*}
\| c_j - a_j \|
& = \| f_0 (r_0) b_j - f_1 (r_0) a_j \|
\\
& \leq \| f_0 (r_0) b_j - r_0 a_j \| + \| f_1 (r_0) a_j - r_0 a_j \|
  < \frac{\ep}{3} + \frac{2 \ep}{3}
  = \ep.
\end{align*}
This is Definition~\ref{D_Large}(\ref{D_Large:Cut2}).

Since $f_2 (r_0) (1 - f_1 (r_0) ) = 0$
and $f_2 (r_0) f_0 (r_0) \in B$,
we get
\[
(1 - g) c_j
 = f_2 (r_0) \big[ (1 - f_1 (r_0) ) a_j + f_0 (r_0) b_j \big]
 = f_2 (r_0) f_0 (r_0) b_j
 \in B.
\]
This is Definition~\ref{D_Large}(\ref{D_Large:Cut3}).

Finally,
we verify Definition~\ref{D_Large}(\ref{D_Large:Cut5}).
We have $(1 - g)^2 = f_0 (r_0)^2 \geq r_0^2 = (1 - g_0)^2$,
so
\begin{align*}
\| (1 - g) x (1 - g) \|
& = \big\| x^{1/2} (1 - g)^2 x^{1/2} \big\|
\\
& \geq \big\| x^{1/2} (1 - g_0)^2 x^{1/2} \big\|
  = \| (1 - g_0) x (1 - g_0) \|
  > 1 - \frac{\ep}{3}
  = \ep.
\end{align*}
This completes the proof.
\end{proof}

When $A$ is finite,
we do not need condition~(\ref{D_Large:Cut5})
of Definition~\ref{D_Large}.

\begin{prp}\label{P:FinLarge}
Let $A$ be a finite infinite dimensional simple unital \ca,
and let $B \subset A$ be a unital subalgebra.
Suppose that for $m \in \N$,
$a_1, a_2, \ldots, a_m \in A$,
$\ep > 0$, $x \in A_{+} \setminus \{ 0 \}$,
and $y \in B_{+} \setminus \{ 0 \}$,
there are $c_1, c_2, \ldots, c_m \in A$ and $g \in B$
such that:
\begin{enumerate}
\item\label{D:FinLarge:Cut1}
$0 \leq g \leq 1$.
\item\label{D:FinLarge:Cut2}
For $j = 1, 2, \ldots, m$ we have
$\| c_j - a_j \| < \ep$.
\item\label{D:FinLarge:Cut3}
For $j = 1, 2, \ldots, m$ we have
$(1 - g) c_j \in B$.
\item\label{D:FinLarge:Cut4}
$g \precsim_B y$ and $g \precsim_A x$.
\end{enumerate}
Then $B$ is large in~$A$.
\end{prp}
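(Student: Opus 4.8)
The plan is to reduce condition~(\ref{D_Large:Cut5}) of Definition~\ref{D_Large} to a Cuntz subordination statement and then invoke the hypothesis of the proposition with a suitably chosen positive element in place of~$x$. The key observation is that the hypothesis permits an \emph{arbitrary} $x \in A_{+} \setminus \{ 0 \}$, not merely one of norm~$1$, and that Lemma~\ref{L-2726Big}, which is exactly where finiteness of~$A$ enters, converts the norm estimate~(\ref{D_Large:Cut5}) into the Cuntz condition $g \precsim_A y'$ for an appropriate~$y'$.

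Concretely, I would proceed as follows. Let $m$, $a_1, a_2, \ldots, a_m$, $\ep$, $x$, and $y$ be given, with $\| x \| = 1$; \wolog{} $\ep < 1$. Apply Lemma~\ref{L-2726Big} to $A$, $x$, and~$\ep$ to obtain $y' \in \big( {\overline{x A x}} \big)_{+} \setminus \{ 0 \}$ with the property that whenever $g \in A_{+}$ satisfies $0 \leq g \leq 1$ and $g \precsim_A y'$, then $\| (1 - g) x (1 - g) \| > 1 - \ep$. Now apply the hypothesis of the proposition with the same $m$, $a_1, \ldots, a_m$, $\ep$, and~$y$, but with $y'$ in place of~$x$ (legitimate since $y' \in A_{+} \setminus \{ 0 \}$), obtaining $c_1, c_2, \ldots, c_m \in A$ and $g \in B$ satisfying conditions (\ref{D:FinLarge:Cut1})--(\ref{D:FinLarge:Cut4}), in particular $0 \leq g \leq 1$, $\| c_j - a_j \| < \ep$ for all~$j$, $(1 - g) c_j \in B$ for all~$j$, $g \precsim_B y$, and $g \precsim_A y'$.

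It then remains to verify that these $c_j$ and~$g$ fulfill all of Definition~\ref{D_Large}. Conditions (\ref{D_Large:Cut1}), (\ref{D_Large:Cut2}), and~(\ref{D_Large:Cut3}) are immediate. For~(\ref{D_Large:Cut4}): the relation $g \precsim_B y$ holds directly, and since $y' \in {\overline{x A x}}$ we have $y' \precsim_A x$ by Lemma~\ref{L:CzBasic}(\ref{L:CzBasic:Her}), so $g \precsim_A y' \precsim_A x$. Finally, condition~(\ref{D_Large:Cut5}) follows from the defining property of~$y'$, since $0 \leq g \leq 1$ and $g \precsim_A y'$. As for difficulty: there is essentially no obstacle once Lemma~\ref{L-2726Big} is available—the argument is a substitution trick, and all the real work (and the sole use of finiteness of~$A$) has already been carried out in that lemma. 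The one point requiring care is that the hypothesis of the proposition is stated for general $x \in A_{+} \setminus \{ 0 \}$ rather than for $\| x \| = 1$, which is precisely what makes it possible to feed $y'$ back into it.
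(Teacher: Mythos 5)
Your proposal is correct and follows essentially the same route as the paper's proof: apply Lemma~\ref{L-2726Big} to produce an auxiliary element (your $y'$, the paper's $x_0$) in $\big( {\overline{x A x}} \big)_{+} \setminus \{ 0 \}$, feed it into the hypothesis in place of~$x$, and observe that conditions (\ref{D_Large:Cut1})--(\ref{D_Large:Cut4}) come for free while (\ref{D_Large:Cut5}) follows from the defining property of that element. Your explicit use of Lemma~\ref{L:CzBasic}(\ref{L:CzBasic:Her}) to get $g \precsim_A y' \precsim_A x$ only spells out a step the paper leaves implicit.
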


\begin{rmk}\label{R_4628_OtherFin}
The proof of Proposition~\ref{P:FinLarge}
also shows that when $A$ is finite,
we can omit (\ref{P_4624_Approx_Cut5})
in Proposition~\ref{P_4624_Approx},
(\ref{D-2717CPType-Cut5a}) in Definition~\ref{D-2717CPType}
(see Proposition~\ref{P-2729AltCPT}),
and similar conditions in other results.
\end{rmk}

\begin{proof}[Proof of Proposition~\ref{P:FinLarge}]
Let $a_1, a_2, \ldots, a_m \in A$,
let $\ep > 0$,
let $x \in A_{+} \setminus \{ 0 \}$, 
and let $y \in B_{+} \setminus \{ 0 \}$.
\Wolog{} $\| x \| = 1$.

Apply Lemma~\ref{L-2726Big},
obtaining $x_0 \in \big( {\overline{x A x}} \big)_{+} \setminus \{ 0 \}$
such that whenever $g \in A_{+}$ satisfies $0 \leq g \leq 1$
and $g \precsim_A x_0$,
then $\| (1 - g) x (1 - g) \| > 1 - \ep$.
Apply the hypothesis
with $x_0$ in place of $x$
and everything else as given,
getting $c_1, c_2, \ldots, c_m \in A$ and $g \in B$.
We need only prove that $\| (1 - g) x (1 - g) \| > 1 - \ep$.
But this is immediate from the choice of~$x_0$.
\end{proof}

The following slight strengthening of the definition is
often convenient.

\begin{lem}\label{L:LargeDecNorm}
Let $A$ be an infinite dimensional simple unital \ca,
and let $B \subset A$ be a large subalgebra.
In Definition~\ref{D_Large},
the elements $c_1, c_2, \dots, c_m$ may be chosen so that
$\| c_j \| \leq \| a_j \|$ for $j = 1, 2, \ldots, m$.
\end{lem}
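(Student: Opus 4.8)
The plan is to start from the elements provided by Definition~\ref{D_Large} and simply rescale each $c_j$ by a scalar in $[0,1]$ so as to clip its norm down to $\|a_j\|$, absorbing the small resulting error by applying the definition with a slightly smaller tolerance. Concretely, given $m$, $a_1,\dots,a_m$, $\ep$, $x$ with $\|x\|=1$, and $y$ as in Definition~\ref{D_Large}, I would first invoke the defining property of the large subalgebra~$B$ with $\tfrac{\ep}{2}$ in place of~$\ep$ (and everything else unchanged), obtaining $c_1,\dots,c_m\in A$ and $g\in B$ satisfying (\ref{D_Large:Cut1})--(\ref{D_Large:Cut5}) with $\tfrac{\ep}{2}$. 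Then, for each $j$, set $\ld_j=1$ if $\|c_j\|\le\|a_j\|$ and $\ld_j=\|a_j\|/\|c_j\|\in[0,1)$ otherwise, and put $c_j'=\ld_j c_j$. Keeping the same~$g$, I claim $c_1',\dots,c_m'$ together with $g$ witness Definition~\ref{D_Large} for the original~$\ep$, now with the extra bound $\|c_j'\|\le\|a_j\|$.

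The verification is routine. Conditions (\ref{D_Large:Cut1}), (\ref{D_Large:Cut4}), and (\ref{D_Large:Cut5}) involve only $g$, $x$, and~$y$, hence are unchanged (and (\ref{D_Large:Cut5}) even improves, since $1-\tfrac{\ep}{2}>1-\ep$). By construction $\|c_j'\|=\min(\|c_j\|,\|a_j\|)\le\|a_j\|$, which is the desired strengthening. For (\ref{D_Large:Cut3}) one writes $(1-g)c_j'=\ld_j\,(1-g)c_j$; since $B$ is a (complex-linear) subalgebra and $(1-g)c_j\in B$, also $(1-g)c_j'\in B$. For (\ref{D_Large:Cut2}): if $\|c_j\|\le\|a_j\|$ then $c_j'=c_j$ and $\|c_j'-a_j\|<\tfrac{\ep}{2}<\ep$; otherwise $\|c_j'-c_j\|=(1-\ld_j)\|c_j\|=\|c_j\|-\|a_j\|\le\|c_j-a_j\|$ by the reverse triangle inequality, so $\|c_j'-a_j\|\le 2\|c_j-a_j\|<\ep$.

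There is essentially no serious obstacle. The two points worth flagging are: the factor-$2$ loss in (\ref{D_Large:Cut2}), which is precisely why one runs the definition with $\tfrac{\ep}{2}$ from the start; and the fact that the norm clipping must be performed by a \emph{scalar} multiple of~$c_j$ rather than by cutting $c_j$ down through continuous functional calculus (e.g.\ replacing $c_j$ by $c_j\,h(c_j^*c_j)$), since the latter would in general destroy the crucial membership $(1-g)c_j'\in B$ of condition~(\ref{D_Large:Cut3}).
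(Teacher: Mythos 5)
Your proof is correct and is essentially the same as the paper's: both apply Definition~\ref{D_Large} with $\tfrac{\ep}{2}$ in place of~$\ep$ and then shrink the resulting elements by scalars in $[0,1]$, observing that scalar multiplication preserves $(1-g)c_j \in B$ and leaves the conditions involving only $g$, $x$, $y$ untouched. Your per-element factor $\min\bigl(1, \|a_j\|/\|c_j\|\bigr)$ is in fact a slightly more careful choice than the paper's single factor $\bigl(1+\tfrac{\ep}{2}\bigr)^{-1}$ (used after normalizing $\|a_j\| \leq 1$), which as literally written gives $\|c_j\| \leq \|a_j\|$ only when $\|a_j\| = 1$.
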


\begin{proof}
Let $m \in \Nz$,
let $a_1, a_2, \ldots, a_m \in A$,
let $\ep > 0$,
let $x \in A_{+}$ satisfy $\| x \| = 1$,
and let $y \in B_{+} \setminus \{ 0 \}$.
Without loss of generality we may assume that $\| a_j \| \leq 1$
for $j = 1, 2, \ldots, m$.
Apply Definition~\ref{D_Large}
with $\frac{\ep}{2}$ in place of~$\ep$
and all other elements as given.
Call the resulting elements $g$ and $b_1, b_2, \ldots, b_m$.
Then for $j = 1, 2, \ldots, m$
we have
\[
\| b_j \|
 \leq \| a_j \| + \frac{\ep}{2}
 \leq 1 + \frac{\ep}{2}.
\]
Define $c_j = \big( 1 + \frac{\ep}{2} \big)^{-1} b_j$.
Then $\| c_j \| \leq \| a_j \|$,
and
\[
\| c_j - b_j \|
  = \left[ 1 - \big( 1 + \tfrac{\ep}{2} \big)^{-1} \right]
      \| b_j \|
  \leq \left[ 1 - \big( 1 + \tfrac{\ep}{2} \big)^{-1} \right]
      \big( 1 + \tfrac{\ep}{2} \big)
  = \frac{\ep}{2}.
\]
So $\| c_j - a_j \| < \ep$.
The conditions (\ref{D_Large:Cut1}), (\ref{D_Large:Cut3}),
(\ref{D_Large:Cut4}), and~(\ref{D_Large:Cut5})
of Definition~\ref{D_Large} are immediate.
\end{proof}

If we cut down on both sides instead of
on one side,
and the elements $a_j$ are positive, 
then we may take the elements $c_j$ to be positive.

\begin{lem}\label{L:LargeStaysPositive}
Let $A$ be an infinite dimensional simple unital \ca,
and let $B \subset A$ be a large subalgebra.
Let $m, n \in \Nz$,
let $a_1, a_2, \ldots, a_m \in A$,
let $b_1, b_2, \ldots, b_n \in A_{+}$,
let $\ep > 0$,
let $x \in A_{+}$ satisfy $\| x \| = 1$,
and let $y \in B_{+} \setminus \{ 0 \}$.
Then there are
$c_1, c_2, \ldots, c_m \in A$,
$d_1, d_2, \ldots, d_n \in A_{+}$,
and $g \in B$
such that:
\begin{enumerate}
\item\label{L:LargeStaysPositive:Cut1}
$0 \leq g \leq 1$.
\item\label{L:LargeStaysPositive:Cut2a}
For $j = 1, 2, \ldots, m$ we have
$\| c_j - a_j \| < \ep$,
and for $j = 1, 2, \ldots, n$ we have
$\| d_j - b_j \| < \ep$.
\item\label{L:LargeStaysPositive:Cut2b}
For $j = 1, 2, \ldots, m$ we have
$\| c_j \| \leq \| a_j \|$,
and for $j = 1, 2, \ldots, n$ we have
$\| d_j \| \leq \| b_j \|$.
\item\label{L:LargeStaysPositive:Cut3}
For $j = 1, 2, \ldots, m$ we have
$(1 - g) c_j \in B$,
and for $j = 1, 2, \ldots, n$ we have
$(1 - g) d_j (1 - g) \in B$.
\item\label{L:LargeStaysPositive:Cut4}
$g \precsim_B y$ and $g \precsim_A x$.
\item\label{L:LargeStaysPositive:Cut5}
$\| (1 - g) x (1 - g) \| > 1 - \ep$.
\end{enumerate}
\end{lem}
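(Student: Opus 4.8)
The plan is to deduce this from the norm-decreasing form of largeness in Lemma~\ref{L:LargeDecNorm}, applied to the square roots $b_j^{1/2}$ rather than to the $b_j$ themselves. The key observation is that if $e \in A$ satisfies $(1 - g) e \in B$, then
\[
(1 - g) (e e^*) (1 - g)
 = \big[ (1 - g) e \big] \big[ (1 - g) e \big]^*
 \in B
\]
because $B$ is closed under multiplication and adjoints; so the positive element $d = e e^*$ automatically satisfies the two-sided condition $(1 - g) d (1 - g) \in B$, while $\| d \| = \| e \|^2$.

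First I would choose $\dt > 0$ with $\dt \leq \ep$ and $2 \dt \| b_j \|^{1/2} < \ep$ for $j = 1, 2, \ldots, n$, and then apply Lemma~\ref{L:LargeDecNorm} to the finite family
\[
a_1, a_2, \ldots, a_m, \ b_1^{1/2}, b_2^{1/2}, \ldots, b_n^{1/2}
\]
of elements of~$A$, with $\dt$ in place of~$\ep$ and with the given $x$ and~$y$. This yields $g \in B$, elements $c_1, c_2, \ldots, c_m \in A$ (associated to the $a_j$) and $e_1, e_2, \ldots, e_n \in A$ (associated to the $b_j^{1/2}$) such that: $0 \leq g \leq 1$; $\| c_j - a_j \| < \dt$ and $\| e_j - b_j^{1/2} \| < \dt$; $\| c_j \| \leq \| a_j \|$ and $\| e_j \| \leq \| b_j^{1/2} \| = \| b_j \|^{1/2}$; $(1 - g) c_j \in B$ and $(1 - g) e_j \in B$; $g \precsim_B y$ and $g \precsim_A x$; and $\| (1 - g) x (1 - g) \| > 1 - \dt \geq 1 - \ep$.

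Now set $d_j = e_j e_j^* \in A_{+}$ for $j = 1, 2, \ldots, n$. Conditions~(\ref{L:LargeStaysPositive:Cut1}), (\ref{L:LargeStaysPositive:Cut4}), and~(\ref{L:LargeStaysPositive:Cut5}), together with the parts of~(\ref{L:LargeStaysPositive:Cut2a}), (\ref{L:LargeStaysPositive:Cut2b}), and~(\ref{L:LargeStaysPositive:Cut3}) involving the $c_j$, are immediate from the previous paragraph. For the $d_j$: we have $\| d_j \| = \| e_j \|^2 \leq \| b_j \|$, giving~(\ref{L:LargeStaysPositive:Cut2b}); using $b_j = b_j^{1/2} b_j^{1/2}$ and $\| e_j^* - b_j^{1/2} \| = \| e_j - b_j^{1/2} \| < \dt$ we get
\[
\| d_j - b_j \|
  \leq \| e_j - b_j^{1/2} \| \cdot \| e_j^* \|
       + \| b_j^{1/2} \| \cdot \| e_j^* - b_j^{1/2} \|
  < 2 \dt \| b_j \|^{1/2}
  < \ep,
\]
giving the $d_j$ part of~(\ref{L:LargeStaysPositive:Cut2a}); and $(1 - g) d_j (1 - g) = \big[ (1 - g) e_j \big] \big[ (1 - g) e_j \big]^* \in B$ by the identity in the first paragraph, giving the $d_j$ part of~(\ref{L:LargeStaysPositive:Cut3}).

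No serious difficulty is expected. The two points that need care are that the bound $\| d_j \| \leq \| b_j \|$ in~(\ref{L:LargeStaysPositive:Cut2b}) forces the use of the norm-decreasing strengthening (Lemma~\ref{L:LargeDecNorm}) in place of bare Definition~\ref{D_Large}, and that the passage to $b_j^{1/2}$, followed by forming $e_j e_j^*$, is precisely what makes the two-sided cut $(1 - g) d_j (1 - g)$ land in~$B$ without invoking any one-sidedness convention for condition~(\ref{D_Large:Cut3}) of Definition~\ref{D_Large}.
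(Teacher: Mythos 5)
Your proposal is correct and follows essentially the same route as the paper's proof: apply the norm-decreasing version of largeness (Lemma~\ref{L:LargeDecNorm}) to the family $a_1, \ldots, a_m, b_1^{1/2}, \ldots, b_n^{1/2}$ and then set $d_j = e_j e_j^*$, so that $(1-g) d_j (1-g) = [(1-g)e_j][(1-g)e_j]^* \in B$. The only difference is bookkeeping (the paper normalizes $\| b_j \| \leq 1$ and uses $\ep/2$, while you choose $\dt$ in terms of $\| b_j \|^{1/2}$), which is immaterial.
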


\begin{proof}
By scaling,
and changing $\ep$ as appropriate,
we may assume $\| b_j \| \leq 1$
for $j = 1, 2, \ldots, n$.
Apply Lemma~\ref{L:LargeDecNorm}
with $x$ and $y$ as given,
$\frac{\ep}{2}$ in place of~$\ep$,
with $m + n$ in place of~$m$,
and with
\[
a_1, \, a_2, \, \ldots, \, a_m,
 \, b_1^{1/2}, \, b_2^{1/2}, \, \ldots, \, b_n^{1/2}
\]
in place of $a_1, a_2, \ldots, a_m$,
getting
\[
c_1, c_2, \ldots, c_m, r_1, r_2, \ldots, r_n \in A
\andeqn
g \in B.
\]
We immediately get
all parts of the conclusion of the lemma which don't involve
$b_j$ and~$d_j$
(with $\frac{\ep}{2}$ in place of~$\ep$).
For $j = 1, 2, \ldots, n$ set $d_j = r_j r_j^*$.
Then
\[
(1 - g) d_j (1 - g) = [(1 - g) r_j] [ (1 - g) r_j]^* \in B,
\]
\[
\| d_j \| \leq \| r_j \|^2 \leq \big\| b_j^{1/2} \big\|^2 = \| b_j \|,
\]
and
\[
\| d_j - b_j \|
  \leq \big\| r_j - b_j^{1/2} \big\| \cdot \| r_j^* \|
       + \big\| b_j^{1/2} \big\| \cdot \big\| r_j^* - b_j^{1/2} \big\|
  < \frac{\ep}{2} + \frac{\ep}{2}
  = \ep.
\]
This completes the proof.
\end{proof}

One of the motivating examples
for the concept of large subalgebras 
is crossed products.
Therefore, large subalgebras of crossed
product type are explored in~\cite{ArPh}.
We will exhibit 
examples of such subalgebras in Theorem~\ref{T-2819AYLg}.

\begin{dfn}\label{D-2717CPType}
Let $A$ be
an infinite dimensional simple separable unital \ca.
A subalgebra $B \subset A$ is said to be a
{\emph{large subalgebra of crossed product type}}
if there exist a subalgebra
$C \subset B$ and a subset $G$ of the unitary group of~$A$ such that:
\begin{enumerate}
\item\label{D-2717CPType-Sb}
\begin{enumerate}
\item\label{D-2717CPType-Sb1}
$C$ contains the identity of~$A$.
\item\label{D-2717CPType-Sb2}
$C$ and $G$ generate $A$ as a \ca.
\item\label{D-2717CPType-Sb3}
$u C u^* \subset C$ and $u^* C u \subset C$ for all $u \in G$.
\end{enumerate}
\item\label{D-2717CPType-Cut}
For every $m \in \N$,
$a_1, a_2, \ldots, a_m \in A$,
$\ep > 0$, $x \in A_{+}$ with $\| x \| = 1$,
and $y \in B_{+} \setminus \{ 0 \}$,
there are $c_1, c_2, \ldots, c_m \in A$ and $g \in C$
such that:
\begin{enumerate}
\item\label{D-2717CPType-Cut1}
$0 \leq g \leq 1$.
\item\label{D-2717CPType-Cut2}
For $j = 1, 2, \ldots, m$ we have
$\| c_j - a_j \| < \ep$.
\item\label{D-2717CPType-Cut3}
For $j = 1, 2, \ldots, m$ we have
$(1 - g) c_j \in B$.
\item\label{D-2717CPType-Cut4}
$g \precsim_B y$ and $g \precsim_A x$.
\item\label{D-2717CPType-Cut5a}
$\| (1 - g) x (1 - g) \| > 1 - \ep$.
\end{enumerate}
\end{enumerate}
\end{dfn}

The conditions in~(\ref{D-2717CPType-Cut})
are the same as the conditions in Definition~\ref{D_Large};
the difference is that
we require that $g \in C$,
not merely that $g \in B$.
In particular,
the following result is immediate.

\begin{prp}\label{P_4725_CPT}
Let $A$ be an infinite dimensional simple unital \ca.
Let $B \subset A$ be a large subalgebra of crossed product type.
Then $B$ is large in $A$
in the sense of Definition~\ref{D_Large}.
\end{prp}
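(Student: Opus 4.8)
The plan is simply to unwind the two definitions and observe that being large of crossed product type is a \emph{stronger} requirement than being large, so that the conclusion is immediate (as already flagged in the sentence preceding the statement). First I would record that a large subalgebra $B \subset A$ of crossed product type is in particular a unital subalgebra of~$A$ in the sense used in Definition~\ref{D_Large}: if $C \subset B$ and $G$ witness the crossed product type condition, then by Definition~\ref{D-2717CPType}(\ref{D-2717CPType-Sb1}) the subalgebra $C$ contains the identity of~$A$, and since $C \subset B$ we get $1_A \in B$.

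Next, to verify Definition~\ref{D_Large} for~$B$, I would fix $m \in \N$, elements $a_1, a_2, \ldots, a_m \in A$, a number $\ep > 0$, an element $x \in A_{+}$ with $\| x \| = 1$, and $y \in B_{+} \setminus \{ 0 \}$, and apply Definition~\ref{D-2717CPType}(\ref{D-2717CPType-Cut}) to exactly this data. This yields $c_1, c_2, \ldots, c_m \in A$ and an element $g \in C$ satisfying conditions (\ref{D-2717CPType-Cut1})--(\ref{D-2717CPType-Cut5a}) of that definition. Since $C \subset B$ we have $g \in B$, and it remains only to note that conditions (\ref{D-2717CPType-Cut1}), (\ref{D-2717CPType-Cut2}), (\ref{D-2717CPType-Cut3}), (\ref{D-2717CPType-Cut4}), (\ref{D-2717CPType-Cut5a}) of Definition~\ref{D-2717CPType} are verbatim conditions (\ref{D_Large:Cut1}), (\ref{D_Large:Cut2}), (\ref{D_Large:Cut3}), (\ref{D_Large:Cut4}), (\ref{D_Large:Cut5}) of Definition~\ref{D_Large}; in particular the Cuntz comparisons $g \precsim_B y$ and $g \precsim_A x$, and the memberships $(1 - g) c_j \in B$, appear identically in both definitions. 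Hence $c_1, \ldots, c_m$ and $g$ witness the property required of~$B$ in Definition~\ref{D_Large}, and $B$ is large in~$A$.

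I do not expect any real obstacle here: the whole content is the observation that Definition~\ref{D-2717CPType}(\ref{D-2717CPType-Cut}) asks for the cutting element~$g$ to lie in the possibly much smaller subalgebra~$C$, while still supplying precisely the list of conclusions that Definition~\ref{D_Large} demands. The only points worth a sentence of care are the check that $B$ is unital (immediate from Definition~\ref{D-2717CPType}(\ref{D-2717CPType-Sb1})) and the remark that Definition~\ref{D-2717CPType} carries the extra standing hypothesis that $A$ be separable, which is harmless for Proposition~\ref{P_4725_CPT} since it only restricts the class of pairs $(A,B)$ to which the statement applies.
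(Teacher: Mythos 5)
Your proposal is correct and matches the paper exactly: the paper treats this as immediate, noting just before the statement that the conditions in Definition~\ref{D-2717CPType}(\ref{D-2717CPType-Cut}) coincide with those of Definition~\ref{D_Large}, the only difference being the stronger requirement $g \in C \subset B$. Your unwinding of the two definitions (including the remarks on unitality and separability) is precisely this observation, spelled out.
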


The following is what we will actually check
when we prove (Theorem~\ref{T-2819AYLg})
that suitable orbit breaking subalgebras
are large of crossed product type.
There is an analogous statement for ordinary large subalgebras,
with essentially the same proof,
which we omit.

\begin{prp}\label{P-2729AltCPT}
Let $A$ be an infinite dimensional simple unital \ca,
and let $B \subset A$ be a unital subalgebra.
Let $C \subset B$ be a subalgebra,
let $G$ be a subset $G$ of the unitary group of~$A$,
and assume that the following conditions are satisfied:
\begin{enumerate}
\item\label{P-2729AltCPT-Fin}
$A$ is finite.
\item\label{P-2729AltCPT-Sb}
\begin{enumerate}
\item\label{P-2729AltCPT-Sb1}
$C$ contains the identity of~$A$.
\item\label{P-2729AltCPT-Sb2}
$C$ and $G$ generate $A$ as a \ca.
\item\label{P-2729AltCPT-Sb3}
$u C u^* \subset C$ and $u^* C u \subset C$ for all $u \in G$.
\item\label{P-2729AltCPT-Sb4}
For every $x \in A_{+} \setminus \{ 0 \}$
and $y \in B_{+} \setminus \{ 0 \}$,
there exists $z \in B_{+} \setminus \{ 0 \}$
such that $z \precsim_A x$ and $z \precsim_B y$.
\end{enumerate}
\item\label{P-2729AltCPT-AltCut}
For every $m \in \N$,
$a_1, a_2, \ldots, a_m \in A$,
$\ep > 0$,
and $y \in B_{+} \setminus \{ 0 \}$,
there are $c_1, c_2, \ldots, c_m \in A$ and $g \in C$
such that:
\begin{enumerate}
\item\label{P-2729AltCPT-AltCut1}
$0 \leq g \leq 1$.
\item\label{P-2729AltCPT-AltCut2}
For $j = 1, 2, \ldots, m$ we have
$\| c_j - a_j \| < \ep$.
\item\label{P-2729AltCPT-AltCut3}
For $j = 1, 2, \ldots, m$ we have
$(1 - g) c_j \in B$.
\item\label{P-2729AltCPT-AltCut4}
$g \precsim_B y$.
\end{enumerate}
\end{enumerate}
Then $B$ is a large subalgebra of~$A$ of crossed product type
in the sense of Definition~\ref{D-2717CPType}.
\end{prp}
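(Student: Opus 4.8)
The plan is to verify the two clauses of Definition~\ref{D-2717CPType} for the given $C$ and $G$. Clause~(\ref{D-2717CPType-Sb}) of that definition is word-for-word hypothesis~(\ref{P-2729AltCPT-Sb})(\ref{P-2729AltCPT-Sb1})--(\ref{P-2729AltCPT-Sb3}) of the proposition, so there is nothing to do there. The whole content is to promote the ``bare'' cutting condition~(\ref{P-2729AltCPT-AltCut}), which controls $g$ only through $g \precsim_B y$, to the stronger condition~(\ref{D-2717CPType-Cut}), which in addition demands $g \precsim_A x$ and $\| (1 - g) x (1 - g) \| > 1 - \ep$.

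So fix $m \in \N$, $a_1, a_2, \ldots, a_m \in A$, $\ep > 0$, $x \in A_{+}$ with $\| x \| = 1$, and $y \in B_{+} \setminus \{ 0 \}$. To produce the norm estimate, I would first invoke Lemma~\ref{L-2726Big}: by hypothesis~(\ref{P-2729AltCPT-Fin}) the algebra $A$ is finite, and it is simple, infinite dimensional, and unital, so that lemma provides $x_0 \in \big( {\overline{x A x}} \big)_{+} \setminus \{ 0 \}$ with the property that any $g \in A_{+}$ with $0 \leq g \leq 1$ and $g \precsim_A x_0$ already satisfies $\| (1 - g) x (1 - g) \| > 1 - \ep$. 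Since $x_0$ lies in the hereditary subalgebra generated by~$x$, Lemma~\ref{L:CzBasic}(\ref{L:CzBasic:Her}) also gives $x_0 \precsim_A x$.

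Next I would feed $x_0$ and $y$ into hypothesis~(\ref{P-2729AltCPT-Sb})(\ref{P-2729AltCPT-Sb4}) to obtain a single element $z \in B_{+} \setminus \{ 0 \}$ with $z \precsim_A x_0$ and $z \precsim_B y$, and then apply hypothesis~(\ref{P-2729AltCPT-AltCut}) to $m$, $a_1, \ldots, a_m$, $\ep$, with $z$ in place of~$y$. This yields $c_1, \ldots, c_m \in A$ and $g \in C$ with $0 \leq g \leq 1$, $\| c_j - a_j \| < \ep$, $(1 - g) c_j \in B$, and $g \precsim_B z$. Conditions (\ref{D-2717CPType-Cut1})--(\ref{D-2717CPType-Cut3}) of Definition~\ref{D-2717CPType} are then immediate. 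For~(\ref{D-2717CPType-Cut4}), chaining $g \precsim_B z \precsim_B y$ gives $g \precsim_B y$, and since a sequence in $B$ witnessing $g \precsim_B z$ also lives in $A$ we get $g \precsim_A z \precsim_A x_0 \precsim_A x$, hence $g \precsim_A x$. For~(\ref{D-2717CPType-Cut5a}), we have $g \precsim_A x_0$ and $0 \leq g \leq 1$, so the defining property of $x_0$ gives $\| (1 - g) x (1 - g) \| > 1 - \ep$.

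I do not expect a real obstacle here: the proposition is essentially a bookkeeping device that reroutes the hypotheses through Lemma~\ref{L-2726Big} and hypothesis~(\ref{P-2729AltCPT-Sb4}). The only point requiring a little care is keeping track of which Cuntz subequivalences are taken relative to~$B$ and which relative to~$A$, and chaining them by transitivity so that the single intermediate element $z$ simultaneously does the work of $x$ and of~$y$.
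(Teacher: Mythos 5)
Your proof is correct and follows essentially the same route as the paper's: apply Lemma~\ref{L-2726Big} to produce $x_0$, use hypothesis~(\ref{P-2729AltCPT-Sb})(\ref{P-2729AltCPT-Sb4}) to get $z \in B_{+} \setminus \{ 0 \}$, apply hypothesis~(\ref{P-2729AltCPT-AltCut}) with $z$ in place of~$y$, and chain the Cuntz subequivalences over $B$ and over~$A$ at the end. The only (harmless) difference is that the paper interposes an extra element $x_1$ via Lemma~\ref{L-2718CuSub} to dominate both $x_0$ and~$x$, whereas you exploit the fact that Lemma~\ref{L-2726Big} already places $x_0$ in $\big( {\overline{x A x}} \big)_{+}$, so $x_0 \precsim_A x$ by Lemma~\ref{L:CzBasic}(\ref{L:CzBasic:Her}) — a legitimate small shortcut.
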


\begin{proof}
Let $m \in \N$,
let $a_1, a_2, \ldots, a_m \in A$,
let $\ep > 0$,
let $x \in A_{+}$ satisfy $\| x \| = 1$,
and let $y \in B_{+} \setminus \{ 0 \}$.
Use Lemma~\ref{L-2726Big}
to choose $x_0 \in A_{+} \setminus \{ 0 \}$
such that whenever $g \in A_{+}$ satisfies $0 \leq g \leq 1$
and $g \precsim_A x_0$,
then $\| (1 - g) x (1 - g) \| > 1 - \ep$.
Use Lemma~\ref{L-2718CuSub} to choose $x_1 \in A_{+} \setminus \{ 0 \}$
such that $x_1 \precsim_A x_0$ and $x_1 \precsim_A x$.
By condition~(\ref{P-2729AltCPT-Sb4}) of the hypothesis,
there is $z \in B_{+} \setminus \{ 0 \}$
such that $z \precsim_A x_1$ and $z \precsim_B y$.
Apply condition~(\ref{P-2729AltCPT-AltCut}) of the hypothesis
with $m, a_1, a_2, \ldots, a_m, \ep$
as given and with $z$ in place of~$y$.
The resulting element $g$ satisfies
$g \precsim_B z \precsim_B y$
and $g \precsim_A z \precsim_A x$.
Also, $g \precsim_A z \precsim_A x_0$,
so $\| (1 - g) x (1 - g) \| > 1 - \ep$.
This shows that the definition of a large subalgebra
of crossed product type is satisfied.
\end{proof}

\section{First properties of large subalgebras}\label{Sec_FP}

\indent
In this section,
we give some basic properties of
large subalgebras.
We prove (Proposition~\ref{P_4624_TLarge})
that if the minimal tensor product of the containing
algebras is finite,
then the tensor product of large subalgebras is large.
This result is needed in~\cite{EN2}.
In particular, if $A$ is stably finite
and $B$ is large in~$A$,
then $M_n (B)$ is large in $M_n (A)$.
Without finiteness,
we had technical problems with
condition~(\ref{D_Large:Cut5})
of Definition~\ref{D_Large}.
(In the finite case,
we have seen that this condition is not needed.)
Therefore we define stably large subalgebras.

For the proof of Proposition~\ref{P_4624_TLarge},
we will need to know
that large subalgebras
are simple (Proposition~\ref{P_2627_NoSmp})
and infinite dimensional (Proposition~\ref{P-2729LgInfD}),
and we will also need several lemmas.

\begin{dfn}\label{D_4619_StLg}
Let $A$ be an infinite dimensional simple unital \ca.
A unital subalgebra $B \subset A$ is said to be
{\emph{stably large}} in~$A$ if
$M_n (B)$ is large in $M_n (A)$ for all $n \in \Nz$.
\end{dfn}

One can also define stably large subalgebras of crossed product type.
This refinement seems not to be needed.

As indicated above,
at the end of this section
we prove that a large subalgebra
of a stably finite algebra is stably large.
We do not know whether stable finiteness is needed.

\begin{prp}\label{P_2627_NoSmp}
Let $A$ be an infinite dimensional simple unital \ca,
and let $B \subset A$ be a large subalgebra.
Then $B$ is simple.
\end{prp}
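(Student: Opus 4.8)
The plan is to argue by contradiction, using the simplicity of $A$ to manufacture, inside any nonzero closed two-sided ideal of $B$, an element very close to~$1$, which is impossible in a proper ideal. So suppose $B$ is not simple and let $I \subset B$ be a nonzero proper closed two-sided ideal. I would fix $y \in I_{+}$ with $\| y \| = 1$. Since $A$ is simple, the closed ideal $\overline{A y A}$ of $A$ equals $A$, so $1 \in \overline{A y A}$; hence Corollary~\ref{C_4619_PosSmp} provides $n \in \N$ and $b_1, b_2, \ldots, b_n \in A$ with $\sum_{j = 1}^n b_j y b_j^* = 1$. Set $M = \max_{1 \le j \le n} \| b_j \|$, and then choose $\ep > 0$ so small that $2 n M \ep < 1$; the key point is that $n$ and $M$ depend only on $y$, not on~$\ep$.

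Next I would apply the definition of largeness, in the strengthened form of Lemma~\ref{L:LargeDecNorm}, with $m = n$, with $a_j = b_j$, with this~$\ep$, with $x = 1$, and with $y$ as above. This yields $g \in B$ with $0 \le g \le 1$ and $g \precsim_B y$, together with $c_1, c_2, \ldots, c_n \in A$ satisfying $\| c_j - b_j \| < \ep$, $\| c_j \| \le \| b_j \|$, and $(1 - g) c_j \in B$. Since $g \precsim_B y$ is realized (Remark~\ref{R-2727MnI}) by a sequence in~$B$, the element $g$ lies in the closed ideal of $B$ generated by~$y$, so $g \in I$. Put $z = \sum_{j = 1}^n (1 - g) c_j \, y \, c_j^* (1 - g)$; as each $(1 - g) c_j$ lies in $B$ and $y \in I$, we have $z \in I$.

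Finally I would compare $z$ with~$1$. From $\sum_j b_j y b_j^* = 1$ we get $\sum_j (1 - g) b_j y b_j^* (1 - g) = (1 - g)^2$, while $\| c_j y c_j^* - b_j y b_j^* \| \le 2 \| b_j \| \ep$ together with $\| 1 - g \| \le 1$ gives $\| z - (1 - g)^2 \| \le 2 n M \ep < 1$. Since $g \in I$, the element $i = z + 2 g - g^2$ lies in $I$ and satisfies $1 - i = (1 - g)^2 - z$, so $\dist(1, I) \le \| 1 - i \| < 1$. This contradicts the elementary fact that the unit of the nonzero unital C*-algebra $B / I$ has norm~$1$, i.e.\ $\dist(1, I) = 1$. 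The only genuinely subtle point is the order of quantifiers --- one must extract $n$ and $M$ from $y$ before choosing~$\ep$, so that the largeness hypothesis can then be invoked with a sufficiently small~$\ep$ --- while the remaining estimates are routine.
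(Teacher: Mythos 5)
Your proof is correct and follows essentially the same route as the paper: both use Corollary~\ref{C_4619_PosSmp} to write $1=\sum_j b_j y b_j^*$, apply largeness to replace the $b_j$ by elements $c_j$ with $(1-g)c_j\in B$, and exploit $g\precsim_B y$ to control the remainder $1-(1-g)^2=2g-g^2$. The only difference is in the finish: the paper converts $2g-g^2\sim_B g\precsim_B b$ into an explicit element $v b v^*$ and exhibits an invertible element of the ideal generated by $b$, whereas you observe directly that $g$ lies in the closed ideal $I$ (since $I$ is closed and contains $y$) and contradict $\dist(1,I)=1$; both finishing moves are valid.
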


\begin{proof}
Let $b \in B_{+} \setminus \{ 0 \}$.
We show that there are $n \in \N$
and $r_1, r_2, \ldots, r_n \in B$
such that $\sum_{k = 1}^n r_k b r_k^*$ is invertible.

Since $A$ is simple,
Corollary~\ref{C_4619_PosSmp}
provides $m \in \N$
and $x_1, x_2, \ldots, x_m \in A$
such that $\sum_{k = 1}^m x_k b x_k^* = 1$.
Set
\[
M = \max \big( 1, \, \| x_1 \|, \, \| x_2 \|, \, \ldots, \, \| x_m \|,
       \, \| b \| \big)
\andeqn
\dt = \min \left( 1, \, \frac{1}{3 m M (2 M + 1)} \right).
\]
By definition,
there are $y_1, y_2, \ldots, y_m \in A$
and $g \in B_{+}$
such that
$0 \leq g \leq 1$,
such that $\| y_j - x_j \| < \dt$
and $(1 - g) y_j \in B$
for $j = 1, 2, \ldots, m$,
and such that $g \precsim_B b$.

Set $z = \sum_{k = 1}^m y_j b y_j^*$.
We claim that $\| z - 1 \| < \frac{1}{3}$.
For $j = 1, 2, \ldots, m$,
we have
$\| y_j \| < \| x_j \| + \dt \leq M + 1$,
so
\begin{align*}
\| y_j b y_j^* - x_j b x_j^* \|
& \leq \| y_j - x_j \| \cdot \| b \| \cdot \| y_j^* \|
         + \| x_j \| \cdot \| b \| \cdot \| y_j^* - x_j^* \|
     \\
& < \dt M (M + 1) + M^2 \dt
  = M (2 M + 1) \dt.
\end{align*}
Therefore
\[
\| z - 1 \|
  = \left\| \sum_{k = 1}^m y_j b y_j^*
        - \sum_{k = 1}^m x_j b x_j^* \right\|
  \leq \sum_{k = 1}^m \| y_j b y_j^* - x_j b x_j^* \|
  < m M (2 M + 1) \dt
  \leq \frac{1}{3},
\]
as claimed.

Set $h = 2 g - g^2$.
Lemma \ref{L:CzBasic}(\ref{L:CzBasic:LCzFCalc}),
applied to the function $\ld \mapsto 2 \ld - \ld^2$,
implies that $h \sim_B g$.
Therefore $h \precsim_B b$.
So there is $v \in B$ such that $\| v b v^* - h \| < \frac{1}{3}$.
Now
take $n = m + 1$,
take $r_j = (1 - g) y_j$ for $j = 1, 2, \ldots, m$,
and take $r_{m + 1} = v$.
Then $r_1, r_2, \ldots, r_n \in B$.
We have
\[
\big\| (1 - g) z (1 - g) - (1 - g)^2 \big\|
  \leq \| 1 - g \| \cdot \| z - 1 \| \cdot \| 1 - g \|
  < \frac{1}{3}.
\]
So, using $(1 - g)^2 + h = 1$ at the second step,
we get
\begin{align*}
\left\| 1 - \sum_{k = 1}^n r_k b r_k^* \right\|
& = \big\| 1 - (1 - g) z (1 - g) - v b v^* \big\|
\\
& \leq \big\| (1 - g)^2 - (1 - g) z (1 - g) \big\|
         + \| h - v b v^* \|
  < \frac{2}{3}.
\end{align*}
Therefore $\sum_{k = 1}^n r_k b r_k^*$ is invertible,
as desired.
\end{proof}

\begin{lem}\label{L-2720L1}
Let $A$ be an infinite dimensional simple unital \ca,
and let $B \subset A$ be a large subalgebra.
Let $r \in B_{+} \setminus \{ 0 \}$,
let $a \in {\overline{r A r}}$ be positive and satisfy $\| a \| = 1$,
and let $\ep > 0$.
Then there is a positive element $b \in {\overline{r B r}}$ such that:
\begin{enumerate}
\item\label{L-2720L1-N1}
$\| b \| = 1$.
\item\label{L-2720L1-Sub}
$b \precsim_A a$.
\item\label{L-2720L1-Cl}
$\| a b - b \| < \ep$.
\end{enumerate}
\end{lem}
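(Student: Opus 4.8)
The plan is to build $b$ in four moves: use functional calculus on $a$ to get a norm-one positive element of $A$ concentrated near the top of $\spec(a)$, push an approximation of it into $B$ by largeness, conjugate the result by a power of $r$ so that it falls inside the hereditary subalgebra $\overline{rBr}$, and finally cut down slightly and renormalize. Throughout, \Wolog{} $\| r \| \le 1$, and we fix a small $\ep_1 > 0$ (at the end $\ep_1 = \tfrac{1}{100}\ep$ will do). Since $\| a \| = 1$ we have $1 \in \spec(a)$; choose a \cfn{} $h \colon [0,1] \to [0,1]$ with $h = 0$ on $[0, 1 - \ep_1]$ and $h(1) = 1$, and set $e = h(a)$. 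Then $\| e \| = 1$, $e \in \overline{aAa} \subseteq \overline{rAr}$ (using $a \in \overline{rAr}$ and $h(0) = 0$), $e \precsim_A a$ by Lemma~\ref{L:CzBasic}(\ref{L:CzBasic:LCzOneWay}), and $\| (1-a) e^{1/2} \| \le \ep_1$ since $(1-a)e^{1/2}$ is the function $\ld \mapsto (1-\ld)\sqrt{h(\ld)}$ of $a$, which is supported where $1 - \ep_1 < \ld \le 1$. Now apply Definition~\ref{D_Large} with $m = 1$, $a_1 = e^{1/2}$, $\ep_1$ in place of $\ep$, $x = e$, and $y = r$, obtaining $c_1 \in A$ and $g \in B$ with $0 \le g \le 1$, $\| c_1 - e^{1/2} \| < \ep_1$, $(1-g)c_1 \in B$, and $\| (1-g)e(1-g) \| > 1 - \ep_1$; the Cuntz subequivalences in Definition~\ref{D_Large}(\ref{D_Large:Cut4}) will not be needed.

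Put $b_0 = [(1-g)c_1]^{*}[(1-g)c_1] \in B_{+}$. From $\| c_1 - e^{1/2} \| < \ep_1$, $b_0$ is within $O(\ep_1)$ of $e^{1/2}(1-g)^2 e^{1/2}$, whose norm equals $\| (1-g)e(1-g) \| > 1 - \ep_1$; hence $\| b_0 \| > 1 - O(\ep_1)$, and since $\big\| (1-a)e^{1/2}(1-g)^2 e^{1/2} \big\| \le \| (1-a)e^{1/2} \| \le \ep_1$ we also get $\| (1-a)b_0 \| \le O(\ep_1)$. Moreover, writing $b_0 = x^{*}x$ with $x = (1-g)c_1$ and combining Lemma~\ref{L:CzBasic}(\ref{L:CzBasic:LCzCommEp}), Corollary~\ref{C:MMvsM} (applied to $xx^{*}$ and $(1-g)e(1-g)$, which differ by $O(\ep_1)$), the relation $(1-g)e(1-g) \sim_A e^{1/2}(1-g)^2 e^{1/2} \le e$ (Lemma~\ref{L:CzBasic}(\ref{L:CzBasic:LCzComm})), and $e \precsim_A a$, one obtains $(b_0 - O(\ep_1))_{+} \precsim_A a$. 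Thus $b_0$ has all the desired features except that it need not lie in $\overline{rBr}$.

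To remedy this, observe that $e^{1/2}(1-g)^2 e^{1/2} \in \overline{rAr}$ (because $e^{1/2} \in \overline{rAr}$ and $\overline{rAr}$ is hereditary), so $b_0$ is within $O(\ep_1)$ of $\overline{rAr}$. Since $(r^{1/n})_{n \geq 1}$ is an approximate identity for $\overline{rAr}$, choose $N$ so large that $r^{1/N}$ displaces $e^{1/2}(1-g)^2 e^{1/2}$ by less than $\ep_1$, and set $s = r^{1/N} \in B_{+}$. Then $\| s b_0 s - b_0 \| \le O(\ep_1)$, while $0 \le s b_0 s \le \| b_0 \|\, r^{2/N} \in \overline{rBr}$, so $s b_0 s \in \overline{rBr}$. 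Finally set $b' = (s b_0 s - \dt)_{+}$ with $\dt$ a suitable multiple of $\ep_1$ (at least the $O(\ep_1)$ produced above), and $b = \| b' \|^{-1} b'$. Then $b \in \overline{rBr}$, $\| b \| = 1$, and $\| ab - b \| = \| (1-a)b \| < \ep$ once the implicit constants are tracked; and $b \sim_A b'$ with $b' \sim_A \big( b_0^{1/2} r^{2/N} b_0^{1/2} - \dt \big)_{+} \precsim_A (b_0 - \dt)_{+} \precsim_A a$, using Lemma~\ref{L:CzBasic}(\ref{L:CzBasic:LCzCommEp}), Lemma~\ref{L:CzCompIneq} (as $0 \le b_0^{1/2} r^{2/N} b_0^{1/2} \le b_0$), and the previous paragraph.

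The main obstacle is the tension between the requirements $b \in \overline{rBr}$ and $\| b \| = 1$: the element $b_0$ delivered by largeness lies in $B$ but not in the hereditary subalgebra $\overline{rBr}$, and the obvious way to push it in — conjugating by $s = r^{1/N}$ — risks collapsing its norm. This is kept under control precisely because $a$, hence $e$, hence the $B$-perturbed element $e^{1/2}(1-g)^2 e^{1/2}$, lies in $\overline{rAr}$, so $r^{1/N}$ can be chosen to nearly fix it; everything else is bookkeeping of constants.
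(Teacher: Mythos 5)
Your argument is correct and is essentially the paper's own proof: isolate the top of $\spec(a)$ by functional calculus to get a norm-one element $e$ with $\|(1-a)e^{1/2}\|$ small, apply largeness to $e^{1/2}$ with $x=e$ so that condition~(\ref{D_Large:Cut5}) preserves the norm, and sandwich $c_1^*(1-g)^2c_1$ by $r^{1/N}$ (an approximate unit for ${\overline{rAr}}$ lying in $B$) to land in ${\overline{rBr}}$, exactly as the paper forms $b_0 = e\,c^*(1-g)^2c\,e$ with $e = r^{1/n}$. The only differences are cosmetic bookkeeping: how $N$ is chosen, normalizing via $(\,\cdot\, - \dt)_{+}$ and scaling instead of a ramp function, and routing the Cuntz estimate through $b_0^{1/2} r^{2/N} b_0^{1/2} \leq b_0$ rather than $a b_0 a$.
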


\begin{proof}
\Wolog{} $\| r \| = 1$.
Set
$\dt = \min \left( 1, \, \frac{\ep}{25} \right) > 0$.
Define \cfn{s} $f_0, f_1 \colon [0, \infty) \to [0, 1]$
by
\[
f_0 (\ld)
 = \begin{cases}
   (1 - \dt)^{-1} \ld & \hspace{3em} 0 \leq \ld \leq 1 - \dt
        \\
   1                  & \hspace{3em} 1 - \dt \leq \ld
\end{cases}
\]
and
\[
f_1 (\ld)
 = \begin{cases}
   0                           & \hspace{3em} 0 \leq \ld \leq 1 - \dt
        \\
   \dt^{-1} [ \ld - (1 - \dt)] & \hspace{3em} 1 - \dt \leq \ld \leq 1
       \\
   1                           & \hspace{3em} 1 \leq \ld.
\end{cases}
\]
Define $a_0 = f_0 (a)$ and $a_1 = f_1 (a)$.
Then
\begin{equation}\label{Eq:2723a0a}
a_0 a_1^{1/2} = a_1^{1/2},
\,\,\,\,\,\,
\| a_0 - a \| < \dt,
\,\,\,\,\,\,
\| a_0 \| \leq 1,
\andeqn
\big\| a_1^{1/2} \big\| \leq 1.
\end{equation}

Since $B$ is large in~$A$,
by Lemma~\ref{L:LargeDecNorm} there is $g \in B$ and $c \in A$ such that
\[
0 \leq g \leq 1,
\,\,\,\,\,\,
\big\| c - a_1^{1/2} \big\| < \dt,
\,\,\,\,\,\,
\| c \| \leq 1,
\]
\[
(1 - g) c \in B,
\andeqn
\| (1 - g) a_1 (1 - g) \| > 1 - \dt.
\]

Since $(r^{1/n})_{n \in \N}$
is an approximate identity for ${\overline{r A r}}$,
there is $n$ such that the element $e = r^{1/n}$
satisfies $\big\| a_1^{1/2} e - a_1^{1/2} \big\| < \dt$.
Also $\| e \| \leq 1$.
Moreover,
\begin{equation}\label{Eq:2728SqRtEst}
\big\| a_1^{1/2} - c e \big\|
  \leq \big\| a_1^{1/2} - a_1^{1/2} e \big\|
      + \big\| a_1^{1/2} - c \big\| \cdot \| e \|
  < \dt + \dt
  = 2 \dt.
\end{equation}

Set $b_0 = e c^* (1 - g)^2 c e$.
Because $(1 - g) c \in B$,
it follows that $b_0$ is a positive element of ${\overline{r B r}}$.

Using the first and third parts of~(\ref{Eq:2723a0a})
at the first step,
we get
\[
\| a_0 e c^* - e c^* \|
  \leq 2 \big\| e c^* - a_1^{1/2} \big\|
  < 4 \dt.
\]
Using  $\| c \| \leq 1$
and the second part of~(\ref{Eq:2723a0a}) at the second step,
it then follows that
\[
\| a e c^* - e c^* \|
  \leq \| a - a_0 \| \cdot \| e \| \cdot \| c^* \|
      + \| a_0 e c^* - e c^* \|
  < \dt + 4 \dt
  = 5 \dt.
\]
Therefore
\begin{equation}\label{Eq_4814_ND}
\| a b_0 - b_0 \|
 \leq \| a e c^* - e c^* \|
        \cdot \| (1 - g)^2 \| \cdot \| c \| \cdot \| e \|
 < 5 \dt.
\end{equation}
So
\begin{equation}\label{Eq:2723-12dt}
\| a b_0 a - b_0 \| < 10 \dt.
\end{equation}

We have
\[
\| b_0 \|
 = \| e c^* (1 - g)^2 c e \|
 = \| (1 - g) c e^2 c^* (1 - g) \|
 \geq \| (1 - g) a_1 (1 - g) \| - \| c e^2 c^* - a_1 \|
\]
and, using~(\ref{Eq:2728SqRtEst}),
\[
\| c e^2 c^* - a_1 \|
  \leq \big\| c e - a_1^{1/2} \big\| \cdot \| e \| \cdot \| c^* \|
      + \big\| a_1^{1/2} \big\| \cdot \big\| e c^* - a_1^{1/2} \big\|
  < 2 \dt + 2 \dt
  = 4 \dt.
\]
So, using the choice of~$g$,
\begin{equation}\label{Eq:2723bLB}
\| b_0 \|
 > 1 - \dt - 4 \dt
 = 1 - 5 \dt.
\end{equation}

Define a \cfn{} $f \colon [0, \infty) \to [0, 1]$
by
\[
f (\ld)
 = \begin{cases}
   0                                 & \hspace{3em}
                                        0 \leq \ld \leq 10 \dt
        \\
   (1 - 20 \dt)^{-1} ( \ld - 10 \dt) & \hspace{3em}
                                        10 \dt \leq \ld \leq 1 - 10 \dt
       \\
   1                                 & \hspace{3em} 1 - 10 \dt \leq \ld.
\end{cases}
\]
Set $b = f (b_0)$.
We have $\| b \| = 1$ by~(\ref{Eq:2723bLB}),
which is part~(\ref{L-2720L1-N1}) of the conclusion.
Also,
using $\| b_0 \| \leq \| c \|^2 \leq1$,
we get
$\| b - b_0 \| \leq 10 \dt$.
Therefore, using~(\ref{Eq_4814_ND}) at the second step,
\[
\| a b - b \|
  \leq \| a b_0 - b_0 \| + 2 \| b - b_0 \|
  < 5 \dt + 2 (10 \dt)
  = \ep.
\]
This is part~(\ref{L-2720L1-Cl}) of the conclusion.
Finally,
using Lemma~\ref{L:CzBasic}(\ref{L:CzBasic:LCzFCalc})
at the first step,
using~(\ref{Eq:2723-12dt})
and Lemma~\ref{L:CzBasic}(\ref{L:CzBasic:LCzWithinEp})
at the second step,
and using Lemma \ref{L:CzBasic}(\ref{L:CzBasic:Her})
at the third step,
we have
\[
b \sim_A (b_0 - 10 \dt)_{+}
  \precsim_A a b_0 a
  \precsim_A a.
\]
This is part~(\ref{L-2720L1-Sub}) of the conclusion.
\end{proof}

We record for convenient reference the following
semiprojectivity result.

\begin{prp}\label{P_4619_ConeSj}
Let $n \in \N$.
Then for every $\dt > 0$ there is $\rh > 0$
such that whenever $D$ is a \ca{}
and $b_1, b_2, \ldots, b_n \in D$
satisfy $0 \leq b_j \leq 1$ and $\| b_j b_k \| < \rh$
for distinct $j, k \in \{ 1, 2, \ldots, n \}$,
then there exist $y_1, y_2, \ldots, y_n \in D$
such that
%
\[
0 \leq y_j \leq 1,
\,\,\,\,\,\,
y_j y_k = 0,
\andeqn
\| y_j - b_j \| < \dt
\]
%
for $j, k = 1, 2, \ldots, n$ with $j \neq k$.
\end{prp}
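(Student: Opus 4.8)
The plan is to prove the statement by induction on~$n$, using at the inductive step that the conclusion holds for \emph{every} \ca, applied to a suitable \hsa.

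For $n = 1$ any $\rh > 0$ works: both the hypothesis and the orthogonality conclusion are vacuous, so we may take $y_1 = b_1$. For the inductive step, suppose the result holds for $n - 1$, and let $\dt > 0$ be given. First use the inductive hypothesis to choose $\rh_0 > 0$ such that any $n - 1$ positive contractions in any \ca{} that are pairwise $\rh_0$-almost orthogonal (in the sense of the statement) can be perturbed by less than $\tfrac{\dt}{2}$ to pairwise orthogonal positive contractions. Then choose $\rh \in (0, 1)$ so small that, writing $\sm = \sqrt{\rh}$, we have $2 \sm < \dt$, $3 \rh^{1/4} < \tfrac{\dt}{2}$, and $2 \rh^{1/4} < \rh_0$.

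Now let $D$ be a \ca{} and let $b_1, b_2, \ldots, b_n \in D$ satisfy $0 \leq b_j \leq 1$ and $\| b_j b_k \| < \rh$ for distinct $j, k$. Choose a \cfn{} $e \colon [0, 1] \to [0, 1]$ with $e = 1$ on $[0, \sm]$ and $e = 0$ on $[2 \sm, 1]$, and set
\[
y_1 = (b_1 - 2 \sm)_{+}
\andeqn
\hat{b}_j = e (b_1)^{1/2} b_j e (b_1)^{1/2}
\quad \text{for } j = 2, 3, \ldots, n.
\]
These are positive contractions, and since $(\ld - 2 \sm)_{+} e (\ld) = 0$ for all $\ld \in [0, 1]$ we get $y_1 \hat{b}_j = 0$ for $j = 2, \ldots, n$; also $\| y_1 - b_1 \| = 2 \sm$. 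Using the pointwise estimates $1 - e (\ld) \leq \ld / \sm$ and $1 - e (\ld)^{1/2} \leq \ld / \sm$ on $[0, 1]$, together with $\| b_j b_1 b_j \| \leq \| b_1 b_j \| < \rh$ and $(1 - e(b_1))^2 \le 1 - e(b_1)$, one checks by routine computation that $\| b_j (1 - e(b_1)) \|^2 \le \sm^{-1}\| b_j b_1 b_j\| < \sm$, and hence that $\| b_j - \hat{b}_j \| < 3 \rh^{1/4} < \tfrac{\dt}{2}$ and $\| \hat{b}_j \hat{b}_k \| \le \| b_j e(b_1) b_k \| < \rh + \rh^{1/4} < 2 \rh^{1/4} < \rh_0$ for distinct $j, k \in \{ 2, \ldots, n \}$.

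Finally let $D_1 = \{ d \in D : y_1 d = d y_1 = 0 \}$, which is a \hsa{} of~$D$ containing $\hat{b}_2, \ldots, \hat{b}_n$, and every element of which is orthogonal to~$y_1$. Apply the inductive hypothesis inside $D_1$ to the pairwise $\rh_0$-almost orthogonal positive contractions $\hat{b}_2, \ldots, \hat{b}_n$, obtaining pairwise orthogonal positive contractions $y_2, \ldots, y_n \in D_1$ with $\| y_j - \hat{b}_j \| < \tfrac{\dt}{2}$; then $y_1 y_j = y_j y_1 = 0$ automatically, so $y_1, \ldots, y_n$ are pairwise orthogonal positive contractions with $\| y_j - b_j \| < \dt$ for all~$j$. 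The one genuinely delicate point is exactly this last step: the orthogonal elements produced for $b_2, \ldots, b_n$ must also be orthogonal to the modified first element, and passing to the \hsa{} $D_1$ is the device that forces this without extra work. (Conceptually this proposition just says that the defining relations of $n$ pairwise orthogonal positive contractions are stable, since the universal \ca{} for these relations is $\bigoplus_{j=1}^{n} C_0((0,1])$, a finite direct sum of cones and hence projective; compare Theorem 10.2.1 of~\cite{Lr}. We prefer to give the self-contained argument above.)
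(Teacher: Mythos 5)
Your proof is correct, but it takes a genuinely different route from the paper. The paper disposes of this proposition in three lines by quoting Loring: $\bigoplus_{k = 1}^n C_0 ((0, 1])$ is projective (Theorem 10.1.11 and the proof of Proposition 10.1.10 of~\cite{Lr}), hence semiprojective, and then Theorem 14.1.4 of~\cite{Lr} converts semiprojectivity into exactly the stability-of-relations statement being claimed; this is the observation you relegate to your closing parenthesis. Your main argument instead is a self-contained induction on~$n$: cut $b_1$ down to $y_1 = (b_1 - 2 \sm)_{+}$, compress the remaining elements to $\hat{b}_j = e (b_1)^{1/2} b_j e (b_1)^{1/2}$, which land in the annihilator $D_1 = \{ d \colon y_1 d = d y_1 = 0 \}$ (a \hsa, though all you need is that it is a C*-subalgebra), and apply the inductive hypothesis there so that orthogonality to $y_1$ comes for free. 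I checked the estimates you label routine: from $1 - e (\ld) \leq \ld / \sm$ and $1 - e (\ld)^{1/2} \leq \ld / \sm$ one gets $\| b_j (1 - e (b_1)) \|^2 \leq \sm^{-1} \| b_j b_1 b_j \| < \sm$ and likewise with $e (b_1)^{1/2}$, which yields $\| b_j - \hat{b}_j \| < 2 \rh^{1/4}$ and $\| \hat{b}_j \hat{b}_k \| < \rh + \rh^{1/4} < 2 \rh^{1/4} < \rh_0$, so your numerical choices are consistent (indeed slightly generous). The crucial structural point — that the inductive statement is uniform over all C*-algebras, so it may be applied inside $D_1$ and the resulting $y_2, \ldots, y_n$ are automatically orthogonal to $y_1$ — is handled correctly, and the degenerate cases ($n = 1$, or $D_1 = \{0\}$) cause no trouble. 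What each approach buys: the paper's is shorter and places the fact in the general semiprojectivity framework it already uses elsewhere; yours is elementary, avoids any black box, and gives an explicit (if rapidly deteriorating, roughly iterated-fourth-root) dependence of $\rh$ on $\dt$ and~$n$.
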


\begin{proof}
Theorem 10.1.11 of~\cite{Lr}
and the proof of Proposition 10.1.10 of~\cite{Lr}
show that $\bigoplus_{k = 1}^n C ((0, 1])$ is projective.
Therefore this algebra is semiprojective.
Apply Theorem 14.1.4 of~\cite{Lr}.
\end{proof}

\begin{prp}\label{P-2729LgInfD}
Let $A$ be an infinite dimensional simple unital \ca,
and let $B \subset A$ be a large subalgebra.
Then $B$ is infinite dimensional.
\end{prp}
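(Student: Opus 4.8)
The plan is to prove directly that for every $n \in \N$ the algebra $B$ contains $n$ pairwise orthogonal nonzero positive elements; since a finite dimensional \ca{} $\bigoplus_i M_{k_i}$ contains at most $\sum_i k_i$ such elements, this forces $B$ to be infinite dimensional. I would first observe that $A$, being infinite dimensional, simple and unital, is not of type~I (a simple unital type~I \ca{} is some $M_k$), so that Lemma~\ref{L:DivInSmp}, Lemma~\ref{L-2817Sp01}, and Lemma~\ref{L-2720L1} all apply to the pair $B \subset A$.

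Fix $n \in \N$. First I would apply Lemma~\ref{L:DivInSmp} with $a = 1$ and $l = n$ to obtain $a_1, a_2, \ldots, a_n \in A_+ \setminus \{0\}$ that are pairwise orthogonal, and rescale so that $\| a_j \| = 1$ for each~$j$. Let $\rh > 0$ be the constant furnished by Proposition~\ref{P_4619_ConeSj} for this $n$ and for $\dt = \tfrac{1}{2}$, and fix $\ep > 0$ with $\ep < 1$ and $2 \ep < \rh$. For each $j$ I would then apply Lemma~\ref{L-2720L1} with $r = 1 \in B_+ \setminus \{0\}$ (so that $\overline{r A r} = A$ contains $a_j$), with $a = a_j$, and with this~$\ep$, obtaining a positive element $b_j \in B$ with $\| b_j \| = 1$ (hence $0 \le b_j \le 1$) and $\| a_j b_j - b_j \| < \ep$.

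The one computation to carry out is the approximate orthogonality of $b_1, \ldots, b_n$. For $i \ne j$, write $b_i b_j = (b_i - b_i a_i) b_j + b_i (a_i b_j)$. Since $a_i a_j = 0$ we have $a_i b_j = a_i (b_j - a_j b_j)$, so $\| a_i b_j \| < \ep$; and taking adjoints in $\| a_i b_i - b_i \| < \ep$ gives $\| b_i - b_i a_i \| < \ep$. Using $\| a_i \| = \| b_i \| = \| b_j \| = 1$ we then get $\| b_i b_j \| < 2 \ep < \rh$. Now Proposition~\ref{P_4619_ConeSj}, applied with $D = B$, produces $y_1, y_2, \ldots, y_n \in B$ with $0 \le y_j \le 1$, $y_j y_k = 0$ for $j \ne k$, and $\| y_j - b_j \| < \tfrac{1}{2}$; since $\| b_j \| = 1$ this forces $\| y_j \| > \tfrac{1}{2}$, so each $y_j$ is nonzero. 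As $n$ was arbitrary, $B$ is infinite dimensional. There is no real obstacle here; the only points needing a word are the observation that $A$ is not of type~I and the bookkeeping of constants so that $2\ep$ stays below the semiprojectivity tolerance~$\rh$. Note that neither the conclusion $b_j \precsim_A a_j$ of Lemma~\ref{L-2720L1} nor the simplicity of~$B$ (Proposition~\ref{P_2627_NoSmp}) is needed for this argument.
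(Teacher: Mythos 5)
Your proposal is correct and follows essentially the same route as the paper: orthogonal nonzero positive elements in $A$ from Lemma~\ref{L:DivInSmp}, elements of $B$ nearly absorbed by them via Lemma~\ref{L-2720L1}, an estimate showing these are approximately orthogonal, and then Proposition~\ref{P_4619_ConeSj} to perturb to exactly orthogonal nonzero elements of $B$. The only differences are cosmetic (your tolerance $2\ep < \rh$ versus the paper's $\rh/2$, and your counting of orthogonal positive elements versus the paper's appeal to linear independence), so there is nothing to add.
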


\begin{proof}
Let $n \in \N$;
we prove that $\dim (B) \geq n$.

Since $A$ is simple and in\fd,
Lemma~\ref{L:DivInSmp} provides
$a_1, a_2, \ldots, a_n \in A_{+} \setminus \{ 0 \}$
such that $a_j a_k = 0$ for distinct $j, k \in \{ 1, 2, \ldots, n \}$.
Choose $\rh > 0$ as in Proposition~\ref{P_4619_ConeSj}
with $\dt = \frac{1}{2}$.
Use Lemma~\ref{L-2720L1}
to choose $b_1, b_2, \ldots, b_l \in B_{+}$
such that for $j = 1, 2, \ldots, n$,
we have
\[
\| b_j \| = 1
\andeqn
\| a_j b_j - b_j \| < \frac{\rh}{2}.
\]
Then for distinct $j, k \in \{ 1, 2, \ldots, n \}$,
we have
\[
\| b_j b_k \|
 = \| b_j b_k - b_j a_j a_k b_k \|
 \leq \| b_j - b_j a_j \| \cdot \| b_k \|
          + \| b_j \| \cdot \| a_j \| \cdot \| b_k - a_k b_k \|
 < \frac{\rh}{2} + \frac{\rh}{2}
 = \rh.
\]
By the choice of~$\rh$ using Proposition~\ref{P_4619_ConeSj},
there are orthogonal positive
elements $y_1, y_2, \ldots, y_n \in B$
of norm at most~$1$
such that $\| y_j - b_j \| < \frac{1}{2}$
for $j = 1, 2, \ldots, n$.
Then $\| y_j \| > \| b_j \| - \| y_j - b_j \| > \frac{1}{2}$,
so $y_j \neq 0$.
Thus $y_1, y_2, \ldots, y_n$ are nonzero orthogonal elements,
hence linearly independent.
\end{proof}

\begin{prp}\label{P_4624_TLarge}
Let $A_1$ and $A_2$ be infinite dimensional simple unital \ca{s},
and let $B_1 \subset A_1$ and $B_2 \subset A_2$ be
large subalgebras.
Assume that $A_1 \otimes_{\min} A_2$
is finite.
Then $B_1 \otimes_{\min} B_2$
is a large subalgebra of $A_1 \otimes_{\min} A_2$.
\end{prp}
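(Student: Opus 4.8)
The overall strategy is to verify the hypotheses of Proposition~\ref{P:FinLarge} for $A := A_1 \otimes_{\min} A_2$ and $B := B_1 \otimes_{\min} B_2$. Here $A$ is finite by hypothesis, simple (the minimal tensor product of simple C*-algebras is simple), and infinite dimensional (it contains $A_1$); and $B$ is a unital C*-subalgebra of $A$ which is simple by Proposition~\ref{P_2627_NoSmp} (the $B_i$ being simple) and infinite dimensional by Proposition~\ref{P-2729LgInfD}. By Lemma~\ref{L_4627_Dense}, applied to the linearly dense set of elementary tensors, it suffices to treat $a_j = p_j \otimes q_j$ with $\| p_j \|, \| q_j \| \le 1$. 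Given also $\ep > 0$, $x \in A_{+} \setminus \{ 0 \}$ and $y \in B_{+} \setminus \{ 0 \}$, the plan is to run the largeness of $B_1$ in $A_1$ (with data $p_1, \ldots, p_m$, tolerance $\ep / 3$, and elements $x^{(1)} \in (A_1)_{+}$, $y^{(1)} \in (B_1)_{+}$ to be specified) to get $g_1 \in (B_1)_{+}$ and $c_1^{(1)}, \ldots, c_m^{(1)} \in A_1$, and symmetrically $g_2 \in (B_2)_{+}$, $c_j^{(2)} \in A_2$ from the largeness of $B_2$ in $A_2$, and then to put $c_j = c_j^{(1)} \otimes c_j^{(2)}$ and
\[
g \;=\; 1 - (1 - g_1) \otimes (1 - g_2) \;=\; g_1 \otimes 1 + (1 - g_1) \otimes g_2 \;\in\; B .
\]
Conditions (\ref{D:FinLarge:Cut1}) and~(\ref{D:FinLarge:Cut2}) of Proposition~\ref{P:FinLarge} are then immediate, and~(\ref{D:FinLarge:Cut3}) holds because $(1 - g) c_j = (1 - g_1) c_j^{(1)} \otimes (1 - g_2) c_j^{(2)}$ is an elementary tensor in $B_1 \otimes_{\min} B_2$.

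Everything then reduces to condition~(\ref{D:FinLarge:Cut4}): $g \precsim_A x$ and $g \precsim_B y$. The key auxiliary fact is that for any C*-algebras $C_1, C_2$ and any $w \in (C_1 \otimes_{\min} C_2)_{+} \setminus \{ 0 \}$ there exist nonzero positive $w^{(1)} \in C_1$ and $w^{(2)} \in C_2$ with $w^{(1)} \otimes w^{(2)} \precsim_{C_1 \otimes_{\min} C_2} w$. To prove this, one uses that the slice maps of a minimal tensor product separate points to find states $\psi_1, \psi_2$ with $(\psi_1 \otimes \psi_2)(w) > 0$, and then maximizes the affine continuous functions $\phi_1 \mapsto (\phi_1 \otimes \psi_2)(w)$ and $\phi_2 \mapsto (\phi_1 \otimes \phi_2)(w)$ over the compact state spaces at extreme points, obtaining pure states $\phi_i$ on $C_i$ with $(\phi_1 \otimes \phi_2)(w) = \ld > 0$. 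Excising $\phi_1$ in $C_1$ and $\phi_2$ in $C_2$ with respect to the finitely many tensor factors occurring in an approximation of $w$ by a finite sum of elementary tensors yields positive $e_i \in C_i$ with $\| e_i \| = 1$ and $\big\| (e_1 \otimes e_2) w (e_1 \otimes e_2) - \ld \, e_1^2 \otimes e_2^2 \big\|$ as small as we like; since $(e_1 \otimes e_2) w (e_1 \otimes e_2) \precsim w$, Corollary~\ref{C:MMvsM} together with a computation in the commutative C*-algebra generated by $e_1 \otimes 1$ and $1 \otimes e_2$ (comparing the functions $(\ld_1 - a)_{+}(\ld_2 - a)_{+}$ and $(\ld_1 \ld_2 - s)_{+}$ for $0 < s < a^2 < 1$) gives $0 \ne (e_1 - a)_{+} \otimes (e_2 - a)_{+} \precsim w$.

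For condition~(\ref{D:FinLarge:Cut4}) we combine this with divisibility in simple C*-algebras. Apply Lemma~\ref{L:DivInSmp} with $l = 2$ to split $x$: there are orthogonal $x_a \sim_A x_b$ in $A_{+} \setminus \{ 0 \}$ with $x_a + x_b \in {\overline{x A x}}$; applying the auxiliary fact to each gives nonzero elementary tensors $u^{(1)} \otimes u^{(2)} \precsim_A x_a$ and $v^{(1)} \otimes v^{(2)} \precsim_A x_b$, and then, by Lemma~\ref{L:CzBasic}(\ref{L:CzBasic:Her}) and~(\ref{L:CzBasic:Orth}),
\[
\langle u^{(1)} \otimes u^{(2)} \rangle + \langle v^{(1)} \otimes v^{(2)} \rangle
\;\le\; \langle x_a \rangle + \langle x_b \rangle
\;=\; \langle x_a + x_b \rangle
\;\le\; \langle x \rangle .
\]
Since $u^{(2)}$ is a nonzero positive element of the simple unital algebra $A_2$, Corollary~\ref{C_4619_PosSmp} provides $m_u \in \N$ with $\langle 1 \rangle \le m_u \langle u^{(2)} \rangle$; similarly there is $m_v$ with $\langle 1 \rangle \le m_v \langle v^{(1)} \rangle$. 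Now choose $x^{(1)} \in (A_1)_{+}$ with $\| x^{(1)} \| = 1$ and $m_u \langle x^{(1)} \rangle \le \langle u^{(1)} \rangle$, and $x^{(2)} \in (A_2)_{+}$ with $\| x^{(2)} \| = 1$ and $m_v \langle x^{(2)} \rangle \le \langle v^{(2)} \rangle$ (possible by Lemma~\ref{L:DivInSmp} in $A_1$ and in $A_2$); these are the elements fed into the two largeness hypotheses. Using that $(\langle a \rangle, \langle b \rangle) \mapsto \langle a \otimes b \rangle$ is additive and order preserving in each variable (Lemma~\ref{L_4624_CuTr}), that $g_1 \precsim_{A_1} x^{(1)}$ and $g_2 \precsim_{A_2} x^{(2)}$, and that $g \le g_1 \otimes 1 + 1 \otimes g_2$ (so $\langle g \rangle \le \langle g_1 \otimes 1 \rangle + \langle 1 \otimes g_2 \rangle$ by Lemma~\ref{L:CzBasic}(\ref{L:CzBasic:Her}),(\ref{L:CzBasic:LCzCmpSum})), we get
\[
\langle g \rangle
\;\le\; \langle x^{(1)} \otimes 1 \rangle + \langle 1 \otimes x^{(2)} \rangle
\;\le\; m_u \langle x^{(1)} \otimes u^{(2)} \rangle + m_v \langle v^{(1)} \otimes x^{(2)} \rangle
\;\le\; \langle u^{(1)} \otimes u^{(2)} \rangle + \langle v^{(1)} \otimes v^{(2)} \rangle
\;\le\; \langle x \rangle ,
\]
that is, $g \precsim_A x$. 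The identical argument inside the simple algebra $B = B_1 \otimes_{\min} B_2$ — split $y$, apply the auxiliary fact there, and take the resulting elements of $B_1$ and $B_2$ as the $y$-parameters fed into the largeness of $B_1$ in $A_1$ and of $B_2$ in $A_2$ — gives $g \precsim_B y$; the same $g_1, g_2$ serve both sides, since each largeness hypothesis is invoked once, with one $A_i$-target and one $B_i$-target. Proposition~\ref{P:FinLarge} then finishes the proof.

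The hard part is precisely arranging $g \precsim_A x$. The crude bound $\langle g \rangle \le \langle g_1 \otimes 1 \rangle + \langle 1 \otimes g_2 \rangle$ involves the class $\langle g_1 \otimes 1 \rangle$, which is ``full in the second tensor factor''; in the absence of comparison in $A$ this cannot be forced below $\langle x \rangle$ for a general small $x$, so the construction cannot simply make $g_1, g_2$ small relative to abstractly chosen targets. The remedy above is to first replace $x$ by two orthogonal \emph{elementary-tensor} pieces below it (excision of product pure states, plus Lemma~\ref{L:DivInSmp}) and then exploit that $1_{A_1}$ and $1_{A_2}$ are dominated by multiples of the factors of those pieces, compensating by shrinking the elements $x^{(1)}, x^{(2)}$ handed to the two largeness hypotheses. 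Finiteness of $A_1 \otimes_{\min} A_2$ is used only to reduce to Proposition~\ref{P:FinLarge}, i.e.\ to dispense with condition~(\ref{D_Large:Cut5}) of Definition~\ref{D_Large}; everything else is routine.
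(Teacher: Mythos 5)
Your proof is correct and follows essentially the same route as the paper: the reduction to Proposition~\ref{P:FinLarge} via finiteness and the dense span of elementary tensors, the choices $c_j = c_j^{(1)} \otimes c_j^{(2)}$ and $g = 1 - (1 - g_1) \otimes (1 - g_2)$, and your comparison argument (splitting $x$ and $y$ into two orthogonal pieces, placing elementary tensors below them, and trading fullness of one tensor leg against smallness of the other via Corollary~\ref{C_4619_PosSmp}, Lemma~\ref{L:DivInSmp}, and Lemma~\ref{L_4624_CuTr}) is precisely the content and proof of the paper's Lemma~\ref{L_4624_TComp}. The only deviation is local: where the paper cites Kirchberg's Slice Lemma to get an elementary tensor Cuntz-below a nonzero positive element of a minimal tensor product, you re-derive that fact by excising product pure states, and that argument is sound.
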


To keep the the notation simple,
we isolate the following part as a lemma.

\begin{lem}\label{L_4624_TComp}
Let $A$ and $B$ be infinite dimensional simple unital \ca{s},
and let $x \in (A \otimes_{\min} B)_{+} \setminus \{ 0 \}$.
Then there exist
$a \in A_{+} \setminus \{ 0 \}$ and $b \in B_{+} \setminus \{ 0 \}$
such that,
whenever $g \in A_{+}$ and $h \in B_{+}$
satisfy $g \precsim_A a$ and $h \precsim_B b$,
then
\[
g \otimes 1 + 1 \otimes h \precsim_{A \otimes_{\min} B} x.
\]
\end{lem}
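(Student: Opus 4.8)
The plan is to isolate one nonroutine input and then do everything else by the Cuntz-comparison bookkeeping of Section~\ref{Sec_CS}. Write $D = A \otimes_{\min} B$. The input is the standard fact that every nonzero positive element of a minimal tensor product Cuntz-dominates an elementary tensor of nonzero positive elements: if $z \in D_{+} \setminus \{ 0 \}$, there are $a_0 \in A_{+} \setminus \{ 0 \}$ and $b_0 \in B_{+} \setminus \{ 0 \}$ with $a_0 \otimes b_0 \precsim_D z$. (Sketch: choose $w$ in the algebraic tensor product $A \odot B$ with $\| w - z^{1/2} \|$ small, so $y = w^* w \in A \odot B$ has $\| y - z \| < \ep$ for a small $\ep > 0$; by Lemma~\ref{L:CzBasic}(\ref{L:CzBasic:LCzWithinEp}) the nonzero element $(y - \ep)_{+}$ satisfies $(y - \ep)_{+} \precsim_D z$, and since $y$ has only finitely many tensor legs, slicing by a pure state on the subalgebra of $B$ they generate, together with a Kadison transitivity argument, produces the required elementary tensor below $(y - \ep)_{+}$.) I will apply this with $z = x$ to get $a_0 \otimes b_0 \precsim_D x$.

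Next I would split $a_0 \otimes b_0$ into two orthogonal pieces and widen each to a full leg. Since $A$ is simple, unital and infinite dimensional, it is not of type~I, so Lemma~\ref{L:DivInSmp} gives orthogonal nonzero $\af_1, \af_2 \in {\overline{a_0 A a_0}}$, and similarly orthogonal nonzero $\bt_1, \bt_2 \in {\overline{b_0 B b_0}}$. Set $p_1 = \af_1 \otimes \bt_1$ and $p_2 = \af_2 \otimes \bt_2$. Then $p_1 p_2 = 0$, both are nonzero, and $p_1 + p_2 \in {\overline{a_0 A a_0}} \otimes_{\min} {\overline{b_0 B b_0}} = {\overline{(a_0 \otimes b_0) D (a_0 \otimes b_0)}}$, so by Lemma~\ref{L:CzBasic}(\ref{L:CzBasic:Orth}) and Lemma~\ref{L:CzBasic}(\ref{L:CzBasic:Her}) we have $p_1 \oplus p_2 \sim_D p_1 + p_2 \precsim_D a_0 \otimes b_0 \precsim_D x$. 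Because $\bt_1 \neq 0$ and $B$ is simple and unital, Corollary~\ref{C_4619_PosSmp} gives $n \in \N$ with $\langle 1_B \rangle \leq n \langle \bt_1 \rangle$ in ${\operatorname{Cu}} (B)$; applying Lemma~\ref{L:DivInSmp} to $\af_1$ with $l = n$ yields mutually orthogonal equivalent nonzero $\gm_1, \ldots, \gm_n \in A_{+}$ with $\sum_{k} \gm_k \in {\overline{\af_1 A \af_1}}$, hence (Lemma~\ref{L:CzBasic}(\ref{L:CzBasic:Orth}) and Lemma~\ref{L:CzBasic}(\ref{L:CzBasic:Her})) $n \langle \gm_1 \rangle \leq \langle \af_1 \rangle$ in ${\operatorname{Cu}} (A)$. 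Put $a = \gm_1$. Two applications of Lemma~\ref{L_4624_CuTr} then give
\[
\langle a \otimes 1_B \rangle
 \leq n \langle a \otimes \bt_1 \rangle
 \leq \langle \af_1 \otimes \bt_1 \rangle
 = \langle p_1 \rangle ,
\]
so $a \otimes 1_B \precsim_D p_1$; symmetrically, using $\af_2 \neq 0$, $A$ simple unital, and Lemma~\ref{L:DivInSmp} applied to $\bt_2$, one obtains $b \in B_{+} \setminus \{ 0 \}$ with $1_A \otimes b \precsim_D p_2$.

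This choice of $a$ and $b$ works: if $g \in A_{+}$ with $g \precsim_A a$ and $h \in B_{+}$ with $h \precsim_B b$, then Lemma~\ref{L_4624_CuTr} gives $g \otimes 1 \precsim_D a \otimes 1_B \precsim_D p_1$ and $1 \otimes h \precsim_D 1_A \otimes b \precsim_D p_2$, so by Lemma~\ref{L:CzBasic}(\ref{L:CzBasic:LCzCmpSum}), Lemma~\ref{L:CzBasic}(\ref{L:CzBasic:CmpDSum}), and the estimate above for $p_1 \oplus p_2$,
\[
g \otimes 1 + 1 \otimes h
 \precsim_D (g \otimes 1) \oplus (1 \otimes h)
 \precsim_D p_1 \oplus p_2
 \precsim_D x .
\]
The main obstacle is the elementary-tensor-domination fact quoted at the start; for minimal tensor products of non-exact algebras it needs the slice/transitivity argument indicated above, though it is well known. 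Everything after that uses only the Cuntz lemmas of Section~\ref{Sec_CS}, the divisibility statement Lemma~\ref{L:DivInSmp}, and the fullness statement Corollary~\ref{C_4619_PosSmp}; in particular no finiteness, quasitrace, or comparison hypothesis on $A$, $B$, or $D$ enters.
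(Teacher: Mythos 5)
Your argument is correct, and it rests on exactly the same inputs as the paper's proof: the ``standard fact'' you invoke at the outset is precisely Kirchberg's Slice Lemma (Lemma~4.1.9 of~\cite{Rrd}), which is what the paper cites, and everything after it is the same bookkeeping with Lemma~\ref{L:DivInSmp}, Corollary~\ref{C_4619_PosSmp}, Lemma~\ref{L_4624_CuTr}, and the basic Cuntz lemmas. The only real difference is the order of the two moves: the paper first splits $x$ into orthogonal nonzero positive elements $x_1, x_2$ of ${\overline{x (A \otimes_{\min} B) x}}$ using Lemma~\ref{L:DivInSmp} in the (simple) tensor product and then applies the Slice Lemma to each piece, whereas you apply the Slice Lemma once to get $a_0 \otimes b_0 \precsim_{A \otimes_{\min} B} x$ and then split inside each factor, producing the orthogonal elementary tensors $\alpha_1 \otimes \beta_1$ and $\alpha_2 \otimes \beta_2$ in the hereditary subalgebra generated by $a_0 \otimes b_0$. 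Your variant needs only one application of the Slice Lemma and never invokes simplicity of $A \otimes_{\min} B$ itself (only of the factors), at the cost of the (easy) observation that ${\overline{a_0 A a_0}} \otimes_{\min} {\overline{b_0 B b_0}}$ lies in ${\overline{(a_0 \otimes b_0) (A \otimes_{\min} B) (a_0 \otimes b_0)}}$; the paper's variant avoids that step but uses simplicity of the tensor product (standard for minimal tensor products of simple algebras) and two slicings. One correction of presentation: replace your sketch of the slice fact by the citation. As written, ``slicing by a pure state plus Kadison transitivity'' is not a complete argument --- the known proof excises a pure state of one factor --- and since the statement you need is exactly the cited lemma, no sketch is required.
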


\begin{proof}
Since $A \otimes_{\min} B$ is in\fd, simple, and unital,
Lemma~\ref{L:DivInSmp}
provides orthogonal nonzero positive elements
$x_1, x_2 \in {\overline{x (A \otimes_{\min} B) x}}$.
Use Kirchberg's Slice Lemma (Lemma 4.1.9 of~\cite{Rrd})
to find $y_1, y_2 \in A_{+} \setminus \{ 0 \}$
and $z_1, z_2 \in B_{+} \setminus \{ 0 \}$
such that $y_1 \otimes z_1 \precsim_{A \otimes_{\min} B} x_1$
and $y_2 \otimes z_2 \precsim_{A \otimes_{\min} B} x_2$.
By Corollary~\ref{C_4619_PosSmp},
there are $m, n \in \N$,
$c_1, c_2, \ldots, c_m \in A$,
and $d_1, d_2, \ldots, d_n \in B$
such that $\sum_{k = 1}^m c_k^* y_2 c_k = 1$
and $\sum_{k = 1}^n d_k^* z_1 d_k = 1$.
By Lemma~\ref{L:CzBasic}(\ref{L:CzBasic:LCzCmpSum}),
we get $\langle 1_A \rangle \leq m \langle y_2 \rangle$
and $\langle 1_B \rangle \leq n \langle z_1 \rangle$.
With the help of
Lemma~\ref{L:DivInSmp},
find 
$a \in A_{+} \setminus \{ 0 \}$ and $b \in B_{+} \setminus \{ 0 \}$
such that $n \langle a \rangle \leq \langle y_1 \rangle$
and $m \langle b \rangle \leq \langle z_2 \rangle$.

Now assume that
$g \precsim_A a$ and $h \precsim_B b$.
Repeated application of Lemma~\ref{L_4624_CuTr} gives
\[
\langle g \otimes 1_B \rangle
 \leq \langle a \otimes 1_B \rangle
 \leq n \langle a \otimes z_1 \rangle
 \leq \langle y_1 \otimes z_1 \rangle
 \leq \langle x_1 \rangle
\]
and similarly
$\langle 1_A \otimes h \rangle \leq \langle x_2 \rangle$.
Therefore,
using Lemma~\ref{L:CzBasic}(\ref{L:CzBasic:LCzCmpSum})
at the first step,
Lemma~\ref{L:CzBasic}(\ref{L:CzBasic:Orth})
at the second step,
and Lemma~\ref{L:CzBasic}(\ref{L:CzBasic:Her})
at the third step,
we get
\[
g \otimes 1_B + 1_A \otimes h
 \precsim_{A \otimes_{\min} B} (g \otimes 1_B) \oplus (1_A \otimes h)
 \precsim_{A \otimes_{\min} B} x_1 + x_2
 \precsim_{A \otimes_{\min} B} x.
\]
This completes the proof.
\end{proof}

\begin{proof}[Proof of Proposition~\ref{P_4624_TLarge}]
The span of the elementary tensors is dense.
So, by Proposition~\ref{P:FinLarge}
and Remark~\ref{R_4628_OtherDense},
it suffices to do the following.
Let $m \in \Nz$,
let $a_{1, 1}, a_{1, 2}, \ldots, a_{1, m} \in A_1$
and $a_{2, 1}, a_{2, 2}, \ldots, a_{2, m} \in A_2$
all have norm at most~$1$,
let $\ep > 0$,
let $x \in (A_1 \otimes_{\min} A_2)_{+} \setminus \{ 0 \}$,
and let $y \in (B_1 \otimes_{\min} B_2)_{+} \setminus \{ 0 \}$.
Then we find
$c_1, c_2, \ldots, c_m \in A_1 \otimes_{\min} A_2$
and $g \in B_1 \otimes_{\min} B_2$
such that:
\begin{enumerate}
\item\label{P_4624_TLarge_Cut1}
$0 \leq g \leq 1$.
\item\label{P_4624_TLarge_Cut2}
For $j = 1, 2, \ldots, m$ we have
$\| c_j - a_{1, j} \otimes a_{2, j} \| < \ep$.
\item\label{P_4624_TLarge_Cut3}
For $j = 1, 2, \ldots, m$ we have
$(1 - g) c_j \in B$.
\item\label{P_4624_TLarge_Cut4}
$g \precsim_B y$ and $g \precsim_A x$.
\setcounter{TmpEnumi}{\value{enumi}}
\end{enumerate}

It follows from Proposition~\ref{P_2627_NoSmp}
that $B_1$ and $B_2$ are simple
and from Proposition~\ref{P-2729LgInfD}
that $B_1$ and $B_2$ are infinite dimensional.
Applying Lemma~\ref{L_4624_TComp},
we find $x_1 \in (A_1)_{+} \setminus \{ 0 \}$,
$x_2 \in (A_2)_{+} \setminus \{ 0 \}$,
$y_1 \in (B_1)_{+} \setminus \{ 0 \}$,
and $y_2 \in (B_2)_{+} \setminus \{ 0 \}$
such that
whenever $g_1 \in (A_1)_{+}$ and $g_2 \in (A_2)_{+}$
satisfy $g_1 \precsim_{A_1} x_1$ and $g_2 \precsim_{A_2} x_2$,
then
\[
g_1 \otimes 1 + 1 \otimes g_2 \precsim_{A_1 \otimes_{\min} A_2} x
\]
and
whenever $g_1 \in (B_1)_{+}$ and $g_2 \in (B_2)_{+}$
satisfy $g_1 \precsim_{B_1} y_1$ and $g_2 \precsim_{B_2} y_2$,
then
\[
g_1 \otimes 1 + 1 \otimes g_2 \precsim_{B_1 \otimes_{\min} B_2} y.
\]
For $l = 1, 2$,
apply Lemma~\ref{L:LargeDecNorm} to $A_l$, $B_l$,
$a_{l, 1}, a_{l, 2}, \ldots, a_{l, m}$,
$\frac{\ep}{2}$,
$x_l$, and $y_l$,
getting $c_{l, 1}, c_{l, 2}, \ldots, c_{l, m} \in A_l$
and $g_l \in  B_l$
such that:
\begin{enumerate}
\setcounter{enumi}{\value{TmpEnumi}}
\item\label{Pf_UseCut1}
$0 \leq g_l \leq 1$.
\item\label{Pf_UseCut2}
For $j = 1, 2, \ldots, m$ we have
$\| c_{l, j} - a_{l, j} \| < \frac{\ep}{2}$,
$(1 - g) c_{l, j} \in B_l$,
and $\| c_{l, j} \| \leq 1$.
\item\label{Pf_UseCut4}
$g_l \precsim_{B_l} y_l$ and $g_l \precsim_{A_l} x_l$.
\end{enumerate}
Define $c_j = c_{1, j} \otimes c_{2, j}$
for $j = 1, 2, \ldots, m$
and define $g = 1 - (1 - g_1) \otimes (1 - g_2)$.
Conditions~(\ref{P_4624_TLarge_Cut1})
and~(\ref{P_4624_TLarge_Cut3}) are clear.
Recalling that $\| c_{l, j} \| \leq 1$ and $\| c_{l, j} \| \leq 1$
for $l = 1, 2$ and $j = 1, 2, \ldots, m$,
we get condition~(\ref{D:FinLarge:Cut2}) from the norm
estimate
\[
\| c_{1, j} \otimes c_{2, j} - a_{1, j} \otimes a_{2, j} \|
  \leq \| c_{1, j} - a_{1, j} \| \cdot \| c_{2, j} \|
        + \| a_{1, j} \| \cdot \| c_{2, j} -  a_{2, j} \|
  < \frac{\ep}{2} + \frac{\ep}{2}
  = \ep.
\]
Finally,
we observe that
\[
g = g_1 \otimes 1 + 1 \otimes g_2 - g_1 \otimes g_2
  \leq g_1 \otimes 1 + 1 \otimes g_2.
\]
Condition~(\ref{P_4624_TLarge_Cut4})
now follows from the choices of $x_1$, $x_2$, $y_1$, and~$y_2$.
\end{proof}

\begin{cor}\label{C_4619_StFinStLg}
Let $A$ be a stably finite infinite dimensional simple unital \ca,
and let $B \subset A$ be a large subalgebra.
Then $B$ is stably large in~$A$.
\end{cor}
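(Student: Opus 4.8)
The plan is to fix $n \in \N$ and verify directly that $M_n (B)$ is large in $M_n (A)$; since $A$ is stably finite, $M_n (A)$ is a finite, infinite dimensional, simple, unital \ca{} and $M_n (B)$ is a unital subalgebra, so by Proposition~\ref{P:FinLarge} (together with Lemma~\ref{L_4627_Dense} and Remark~\ref{R_4628_OtherDense}, applied to the dense linear span of the elementary matrices $e_{k, l} \otimes a$) it suffices to produce, for given matrix entries $a_i^{(k, l)} \in A$, $\ep > 0$, $x \in M_n (A)_{+} \setminus \{ 0 \}$, and $y \in M_n (B)_{+} \setminus \{ 0 \}$, a positive contraction $g \in M_n (B)$ and elements $c_i^{(k, l)} \in A$ verifying conditions (\ref{D:FinLarge:Cut1})--(\ref{D:FinLarge:Cut4}) of Proposition~\ref{P:FinLarge}. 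The idea is to take $g = g_0 \otimes 1_n$ (the diagonal amplification) for a single $g_0 \in B$ coming from largeness of $B$ in~$A$ applied to the finite family $\{ a_i^{(k, l)} \}$, with tolerance $\ep / n^2$.

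The delicate point is arranging $g_0 \otimes 1_n \precsim_{M_n (A)} x$ and $g_0 \otimes 1_n \precsim_{M_n (B)} y$, and for this I first manufacture suitable $x_0 \in A_{+} \setminus \{ 0 \}$ and $y_0 \in B_{+} \setminus \{ 0 \}$. Applying Lemma~\ref{L-2718CuSub} with the hereditary subalgebra $e_{1, 1} M_n (A) e_{1, 1} \cong A$ gives $b_0 \in A_{+} \setminus \{ 0 \}$ with $e_{1, 1} \otimes b_0 \precsim_{M_n (A)} x$; then, since $A$ is simple and (being infinite dimensional unital) not of type~I, Lemma~\ref{L:DivInSmp} with $l = n$ produces pairwise orthogonal, mutually $A$-Cuntz-equivalent $b_1, \ldots, b_n \in A_{+} \setminus \{ 0 \}$ with $b_1 + \cdots + b_n \in {\overline{b_0 A b_0}}$. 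Taking $x_0 = b_1$ and $\| x_0 \| = 1$ \wolog, and pushing forward along $a \mapsto e_{1, 1} \otimes a$, Lemma~\ref{L:CzBasic}(\ref{L:CzBasic:3904_UE}) and Lemma~\ref{L:CzBasic}(\ref{L:CzBasic:Orth}) give $\langle x_0 \otimes 1_n \rangle = n \langle e_{1, 1} \otimes x_0 \rangle \leq \langle e_{1, 1} \otimes b_0 \rangle \leq \langle x \rangle$, i.e.\ $x_0 \otimes 1_n \precsim_{M_n (A)} x$. Because $B$ is simple (Proposition~\ref{P_2627_NoSmp}) and infinite dimensional (Proposition~\ref{P-2729LgInfD}), hence not of type~I, the identical argument carried out inside $M_n (B)$ yields $y_0 \in B_{+} \setminus \{ 0 \}$ with $y_0 \otimes 1_n \precsim_{M_n (B)} y$. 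Now apply Definition~\ref{D_Large} (largeness of $B$ in $A$) to $\{ a_i^{(k, l)} \}$, with tolerance $\ep / n^2$, with $x_0$, and with $y_0$, obtaining $g_0 \in B$ with $0 \leq g_0 \leq 1$, $g_0 \precsim_B y_0$, $g_0 \precsim_A x_0$, and $c_i^{(k, l)} \in A$ with $\| c_i^{(k, l)} - a_i^{(k, l)} \| < \ep / n^2$ and $(1 - g_0) c_i^{(k, l)} \in B$; set $g = g_0 \otimes 1_n$ and $c_i = \sum_{k, l} e_{k, l} \otimes c_i^{(k, l)}$. Then $0 \leq g \leq 1$; $\| c_i - a_i \| \leq \sum_{k, l} \| c_i^{(k, l)} - a_i^{(k, l)} \| < \ep$; $(1 - g) c_i = \sum_{k, l} e_{k, l} \otimes (1 - g_0) c_i^{(k, l)} \in M_n (B)$; and, pushing $\langle g_0 \rangle \leq \langle y_0 \rangle$ forward along $b \mapsto e_{1, 1} \otimes b$ and multiplying by~$n$, $\langle g \rangle = n \langle e_{1, 1} \otimes g_0 \rangle \leq n \langle e_{1, 1} \otimes y_0 \rangle = \langle y_0 \otimes 1_n \rangle \leq \langle y \rangle$ in $\operatorname{Cu} (M_n (B))$, so $g \precsim_{M_n (B)} y$, and likewise $g \precsim_{M_n (A)} x_0 \otimes 1_n \precsim_{M_n (A)} x$. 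This verifies Proposition~\ref{P:FinLarge} for $M_n (B) \subset M_n (A)$; as $n$ is arbitrary, $B$ is stably large.

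The main obstacle is exactly the Cuntz-comparison bookkeeping of the second paragraph: one must move between $A$ and its corner $e_{1, 1} M_n (A) e_{1, 1}$, and between an element and its diagonal amplification $(\,\cdot\,) \otimes 1_n$, and it is essential to invoke the \emph{divisibility} in Lemma~\ref{L:DivInSmp} (not merely Lemma~\ref{L-2718CuSub}), so that $x_0 \otimes 1_n$, and not just $e_{1, 1} \otimes x_0$, is dominated by~$x$ in $M_n (A)$ --- and, correspondingly, for the $y$-side one needs $B$ itself to be simple and not of type~I, which is where Propositions~\ref{P_2627_NoSmp} and~\ref{P-2729LgInfD} are used.
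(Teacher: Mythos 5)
Your proof is correct, but it takes a genuinely different route from the paper. The paper obtains this corollary in one line from Proposition~\ref{P_4624_TLarge} (minimal tensor products of large subalgebras are large when the ambient tensor product is finite), taking $A_1 = B_1 = M_n$, $A_2 = A$, and $B_2 = B$; there the Cuntz comparison with $x$ and $y$ is handled inside the proof of that proposition by Lemma~\ref{L_4624_TComp}, which rests on Kirchberg's Slice Lemma. You instead verify largeness of $M_n (B)$ in $M_n (A)$ directly via Proposition~\ref{P:FinLarge} (using finiteness of $M_n (A)$ to drop condition~(\ref{D_Large:Cut5}) of Definition~\ref{D_Large}), amplify a single $g_0$ to $1_{M_n} \otimes g_0$, and replace the slice-lemma step by the corner-plus-divisibility argument: Lemma~\ref{L-2718CuSub} applied to the hereditary subalgebra $e_{1, 1} M_n (A) e_{1, 1} \cong A$, followed by Lemma~\ref{L:DivInSmp} to produce $n$ orthogonal mutually equivalent pieces, so that $x_0 \otimes 1_n \precsim_{M_n (A)} x$, and likewise inside $M_n (B)$, where Propositions~\ref{P_2627_NoSmp} and~\ref{P-2729LgInfD} are exactly what is needed for Lemma~\ref{L:DivInSmp} to apply to~$B$ --- this bookkeeping is correct. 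What each route buys: the paper's argument is a one-line specialization of a more general statement which is needed elsewhere (for instance in~\cite{EN2}), though formally $M_n$ is finite dimensional, so that application tacitly uses that the proof of Proposition~\ref{P_4624_TLarge} still goes through with a trivial finite dimensional factor (where one may take $g_1 = 0$, at which point it essentially collapses to your construction); your argument is self-contained for matrix amplifications, avoids the Slice Lemma and that mismatch, but yields only the matrix case. Two cosmetic remarks: the reduction via Lemma~\ref{L_4627_Dense} and Remark~\ref{R_4628_OtherDense} is superfluous, since every element of $M_n (A)$ is already a finite sum $\sum_{k, l} e_{k, l} \otimes a^{(k, l)}$ and your entrywise tolerance $\ep / n^2$ directly gives $\| c_i - a_i \| < \ep$; and the inequality $n \langle e_{1, 1} \otimes x_0 \rangle \leq \langle e_{1, 1} \otimes b_0 \rangle$ also uses Lemma~\ref{L:CzBasic}(\ref{L:CzBasic:Her}) for $b_1 + b_2 + \cdots + b_n \in {\overline{b_0 A b_0}}$, which is implicit in your appeal to Lemma~\ref{L:DivInSmp} and worth citing explicitly.
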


\begin{proof}
In Proposition~\ref{P_4624_TLarge}
take $A_1 = B_1 = M_n$,
$A_2 = A$, and $B_2 = B$.
\end{proof}

\section{The Cuntz semigroup of a large subalgebra}\label{Sec_CuLg}

\indent
In this section,
we prove our main results.
If $B \subset A$ is stably large
(sometimes merely large suffices),
then $A$ and $B$ have the same traces and the same quasitraces.
Moreover,
$A$ is finite or purely infinite \ifo{}
$B$ has the same property.
If also $A$ is stably finite,
then $A$ and $B$ have the same
purely positive part of the Cuntz semigroup
(but not necessarily the same $K_0$-group)
and they have the same radius of comparison.

We consider traces first.

\begin{lem}\label{L_4622_ExtTr}
Let $A$ be an infinite dimensional simple unital \ca,
and let $B \subset A$ be a large subalgebra.
Let $\ta \in {\operatorname{T}} (B)$.
Then there exists a unique state $\om$ on~$A$
such that $\om |_B = \ta$.
\end{lem}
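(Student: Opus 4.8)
The plan is to prove existence and uniqueness separately; existence is soft, and uniqueness is where largeness enters.

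\emph{Existence.} I would extend $\ta$ by the Hahn--Banach theorem to a bounded linear functional $\om$ on~$A$ with $\| \om \| = \| \ta \| = 1$. Since $B$ contains the identity of~$A$, we have $\om (1) = \ta (1) = 1 = \| \om \|$, and a bounded linear functional on a unital \ca{} that attains its norm at the identity is positive; hence $\om$ is a state with $\om |_B = \ta$. No large-subalgebra machinery is needed here.

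\emph{Uniqueness.} Let $\om_1, \om_2$ be states on~$A$ with $\om_1 |_B = \om_2 |_B = \ta$, fix $a \in A$ with (after rescaling) $\| a \| \le 1$, and fix $\ep > 0$. Note that $B$ is simple and infinite dimensional by Proposition~\ref{P_2627_NoSmp} and Proposition~\ref{P-2729LgInfD}, and stably finite because the given tracial state~$\ta$ is faithful ($B$ being simple), so $\ta \in \QT (B)$ and $d_{\ta}$ is defined. By Corollary~\ref{C-2718CuDiv}, choose $y \in B_{+} \setminus \{ 0 \}$ with $d_{\ta} (y) < \ep$. Apply Definition~\ref{D_Large} to $B \subset A$ with $m = 1$, $a_1 = a$, this~$\ep$, $x = 1_A$, and this~$y$, obtaining $c \in A$ and $g \in B$ with $0 \le g \le 1$, $\| c - a \| < \ep$, $(1 - g) c \in B$, and $g \precsim_B y$. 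Since $0 \le g \le 1$ we get $g \le g^{1/n}$, hence $\ta (g) \le d_{\ta} (g) \le d_{\ta} (y) < \ep$ by monotonicity of the dimension function. Now split, for $i = 1, 2$,
\[
\om_i (a) = \om_i \big( (1 - g) c \big) + \om_i \big( (1 - g)(a - c) \big) + \om_i (g a),
\]
where the first term equals $\ta \big( (1 - g) c \big)$ and so is independent of~$i$, the second has modulus $< \ep$, and Cauchy--Schwarz for the state $\om_i$ applied to $g a = g^{1/2} (g^{1/2} a)$ gives $| \om_i (g a) |^2 \le \om_i (g) \, \om_i (a^* g a) \le \ta (g) \, \| a \|^2 < \ep$. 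Hence $| \om_1 (a) - \om_2 (a) | < 2 \ep + 2 \ep^{1/2}$, and letting $\ep \to 0$ yields $\om_1 (a) = \om_2 (a)$; as $a$ is arbitrary, $\om_1 = \om_2$.

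The one step carrying real content is the choice of~$y$: we need a nonzero positive element of~$B$ small enough that $g \precsim_B y$ forces $\ta (g)$ small, which is exactly Corollary~\ref{C-2718CuDiv} (and is why simplicity and infinite dimensionality of~$B$ are used). I expect no other obstacle; note that condition~(\ref{D_Large:Cut5}) of Definition~\ref{D_Large} and the relation $g \precsim_A x$ play no role here---only the control of~$g$ inside~$B$ from~(\ref{D_Large:Cut4}) and the one-sided cutdown $(1-g)c \in B$ from~(\ref{D_Large:Cut3}).
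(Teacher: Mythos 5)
Your proof is correct and follows essentially the same route as the paper: Hahn--Banach for existence, and for uniqueness the combination of Corollary~\ref{C-2718CuDiv} (a $y \in B_{+} \setminus \{0\}$ with $d_{\ta}(y)$ small), the largeness cut-down, the identity $\om_i|_B = \ta$ on the cut-down piece, and Cauchy--Schwarz on the error terms. The only (harmless) differences are that you invoke Definition~\ref{D_Large} directly with its one-sided condition $(1-g)c \in B$, where the paper uses the two-sided cut-down of Lemma~\ref{L:LargeStaysPositive} and a slightly different splitting, and that you bound $\ta(g) \leq d_{\ta}(g)$ via $g \leq g^{1/n}$ instead of the paper's $g^2 \sim_B g$ argument.
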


\begin{proof}
Existence of $\om$ follows from the Hahn-Banach Theorem.

For uniqueness,
let $\om_1$ and $\om_2$ be states on~$A$ such that
$\om_1 |_B = \om_2 |_B = \ta$,
let $a \in A_{+}$,
and let $\ep > 0$.
We prove that $| \om_1 (a) - \om_2 (a) | < \ep$.
\Wolog{} $\| a \| \leq 1$.

It follows from Proposition~\ref{P_2627_NoSmp}
that $B$ is simple
and from Proposition~\ref{P-2729LgInfD}
that $B$ is infinite dimensional.
So Corollary~\ref{C-2718CuDiv}
provides $y \in B_{+} \setminus \{ 0 \}$
such that $d_{\ta} (y) < \frac{\ep^2}{64}$
(for the particular choice of $\ta$ we are using).
Use Lemma~\ref{L:LargeStaysPositive}
to find $c \in A_{+}$ and $g \in B_{+}$
such that
\[
\| c \| \leq 1,
\,\,\,\,\,\,
\| g \| \leq 1,
\,\,\,\,\,\,
\| c - a \| < \frac{\ep}{4},
\,\,\,\,\,\,
(1 - g) c (1 - g) \in B,
\andeqn
g \precsim_B y.
\]

For $j = 1, 2$,
the Cauchy-Schwarz inequality
gives
\begin{equation}\label{Eq_4624_CauSch}
| \om_j (r s) |
 \leq \om_j (r r^*)^{1/2} \om_j (s^* s)^{1/2}
\end{equation}
for all $r, s \in A$.
Also,
by Lemma~\ref{L:CzBasic}(\ref{L:CzBasic:LCzFCalc})
we have $g^2 \sim_B g \precsim_B y$.
Since $\| g^2 \| \leq 1$ and $\om_j |_{B} = \ta$
is a \tst,
it follows that
$\om_j (g^2) \leq d_{\ta} (y) < \frac{\ep^2}{64}$.
Using $\| c \| \leq 1$,
we then get
\[
| \om_j (g c) |
 \leq \om_j (g^2)^{1/2} \om_j (c^2)^{1/2}
 < \frac{\ep}{8}
\]
and
\[
| \om_j ((1 - g) c g) |
  \leq \om_j \big( (1 - g) c^2 (1 - g) \big)^{1/2} \om_j (g^2)^{1/2}
 < \frac{\ep}{8}.
\]
So
\begin{align*}
\big| \om_j (c) - \ta ( (1 - g) c (1 - g) ) \big|
& = \big| \om_j (c) - \om_j ( (1 - g) c (1 - g) ) \big|
\\
& \leq | \om_j (g c) | + | \om_j ((1 - g) c g) |
  < \frac{\ep}{4}.
\end{align*}
Also $| \om_j (c) - \om_j (a) | < \frac{\ep}{4}$.
So
\[
\big| \om_j (a) - \ta ( (1 - g) c (1 - g) ) \big| < \frac{\ep}{2}.
\]
Thus $| \om_1 (a) - \om_2 (a) | < \ep$.
\end{proof}

\begin{thm}\label{L_4622_SameT}
Let $A$ be an infinite dimensional simple unital \ca,
and let $B \subset A$ be a large subalgebra.
Then the restriction map
${\operatorname{T}} (A) \to {\operatorname{T}} (B)$
is bijective.
\end{thm}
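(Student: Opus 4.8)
The plan is to base everything on Lemma~\ref{L_4622_ExtTr}. Injectivity is immediate: if $\ta_1, \ta_2 \in {\operatorname{T}}(A)$ restrict to the same trace $\ta$ on~$B$, then $\ta_1$ and $\ta_2$ are both states on~$A$ extending~$\ta$, so the uniqueness clause of Lemma~\ref{L_4622_ExtTr} forces $\ta_1 = \ta_2$. For surjectivity, given $\ta \in {\operatorname{T}}(B)$, let $\om$ be the unique state on~$A$ with $\om|_B = \ta$ produced by Lemma~\ref{L_4622_ExtTr}. Since $\om(1) = \ta(1) = 1$, the only thing left to prove is that $\om$ is tracial, and then $\om \in {\operatorname{T}}(A)$ is the desired preimage.

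To prove $\om$ tracial, I would fix $a_1, a_2 \in A$ with $\| a_1 \|, \| a_2 \| \le 1$ and $\ep > 0$, and show $| \om(a_1 a_2) - \om(a_2 a_1) | < \ep$. By Proposition~\ref{P_2627_NoSmp} and Proposition~\ref{P-2729LgInfD}, $B$ is simple, unital and infinite dimensional, so Corollary~\ref{C-2718CuDiv} provides $y \in B_{+} \setminus \{ 0 \}$ with $d_{\ta}(y)$ as small as we wish. Apply Definition~\ref{D_Large}, together with Lemma~\ref{L:LargeDecNorm} to keep norms under control, with $m = 2$, the elements $a_1, a_2$, a small tolerance~$\dt$, some $x \in A_{+}$ of norm~$1$, and this~$y$: this yields $g \in B$ with $0 \le g \le 1$ and $g \precsim_B y$, and $c_1, c_2 \in A$ with $\| c_j \| \le 1$, $\| c_j - a_j \| < \dt$, and $\bt_j := (1 - g) c_j \in B$. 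As in the proof of Lemma~\ref{L_4622_ExtTr}, note $\om(g) = \ta(g) \le d_{\ta}(g) \le d_{\ta}(y)$ (using $g \le g^{1/n}$ and $g \precsim_B y$), so $\om(g)$ is small.

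The computation then runs as follows. Write $c_j = \bt_j + g c_j$, expand
\[
c_1 c_2 = \bt_1 \bt_2 + \bt_1 g c_2 + g c_1 \bt_2 + g c_1 g c_2,
\]
and apply $\om$. Since $\bt_1, \bt_2 \in B$ we have $\om(\bt_1 \bt_2) = \ta(\bt_1 \bt_2)$, and likewise $\om(\bt_2 \bt_1) = \ta(\bt_2 \bt_1)$, which coincide because $\ta$ is a trace on~$B$. For the three remaining terms I would use the Cauchy--Schwarz inequality for the state~$\om$. For a term beginning with~$g$, such as $\om(g z)$ with $\| z \| \le 1$, Cauchy--Schwarz gives $| \om(g z) | \le \om(g)^{1/2} \om(z^* g z)^{1/2} \le \om(g)^{1/2}$, which is small. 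For the middle term, Cauchy--Schwarz gives $| \om(\bt_1 g c_2) | \le \om(\bt_1 g \bt_1^*)^{1/2} \om(c_2^* g c_2)^{1/2}$; here $\om(c_2^* g c_2) \le 1$, while $\bt_1 g \bt_1^* \in B$, so $\om(\bt_1 g \bt_1^*) = \ta(\bt_1 g \bt_1^*) = \ta(\bt_1^* \bt_1 g) \le \ta(g)$ is small because $\ta$ is already a trace on~$B$, and hence $| \om(\bt_1 g c_2) |$ is small. Thus $\om(c_1 c_2) = \ta(\bt_1 \bt_2) + O\big(\om(g)^{1/2}\big)$ and $\om(c_2 c_1) = \ta(\bt_2 \bt_1) + O\big(\om(g)^{1/2}\big)$; combining this with $\| a_1 a_2 - c_1 c_2 \|, \| a_2 a_1 - c_2 c_1 \| < 2\dt$ and choosing $\dt$ and $d_{\ta}(y)$ small enough yields $| \om(a_1 a_2) - \om(a_2 a_1) | < \ep$. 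Since $\ep$ is arbitrary, $\om$ is tracial.

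The one point that needs care is the middle cross term $\om(\bt_1 g c_2)$: the Cauchy--Schwarz split must be arranged so that one of the two resulting factors, namely $\om(\bt_1 g \bt_1^*)$, involves only elements of~$B$, where it is controlled by the already-known trace property of~$\ta$; the other factor need only be bounded by~$1$, and the product $\om(g)^{1/2}\cdot 1$ is still small. This is precisely why cutting both $c_1$ and $c_2$ by $1 - g$ with the \emph{same} $g$, coming from a single application of the largeness condition, makes the estimate go through.
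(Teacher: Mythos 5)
Your proposal is correct and follows essentially the same route as the paper: injectivity and surjectivity both reduced to the unique state extension of Lemma~\ref{L_4622_ExtTr}, a single $g$ from largeness with $d_{\ta}$-small class, Cauchy--Schwarz for the cross terms, and traciality of $\ta$ on~$B$ to flip the $(1-g)c_1 g$-type term --- which is exactly the paper's handling of its middle term $\om((1-g)c_1 g c_2)$. The only cosmetic difference is that you split $c_2 = (1-g)c_2 + g c_2$ as well, producing one extra trivially bounded term where the paper estimates $\om(g c_1 c_2)$ directly.
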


\begin{proof}
Let $\ta \in {\operatorname{T}} (B)$.
We show that there is a unique $\om \in {\operatorname{T}} (A)$
such that $\om |_B = \ta$.
Lemma~\ref{L_4622_ExtTr}
shows that there is a  unique state $\om$ on~$A$
such that $\om |_B = \ta$,
and it suffices to show that $\om$ is a trace.
Thus let $a_1, a_2 \in A$ satisfy $\| a_1 \| \leq 1$
and $\| a_2 \| \leq 1$,
and let $\ep > 0$.
We show that $| \om (a_1 a_2) - \om (a_2 a_1) | < \ep$.

It follows from Proposition~\ref{P_2627_NoSmp}
that $B$ is simple
and from Proposition~\ref{P-2729LgInfD}
that $B$ is infinite dimensional.
So Corollary~\ref{C-2718CuDiv}
provides $y \in B_{+} \setminus \{ 0 \}$
such that $d_{\ta} (y) < \frac{\ep^2}{64}$.
Use Lemma~\ref{L:LargeDecNorm}
to find $c_1, c_2 \in A$ and $g \in B_{+}$
such that
\[
\| c_j \| \leq 1,
\,\,\,\,\,\,
\| c_j - a_j \| < \frac{\ep}{8},
\andeqn
(1 - g) c_j \in B
\]
for $j = 1, 2$,
and such that $\| g \| \leq 1$ and $g \precsim_B y$.
By Lemma~\ref{L:CzBasic}(\ref{L:CzBasic:LCzFCalc})
we have $g^2 \sim g \precsim_B y$.
Since $\| g^2 \| \leq 1$ and $\om |_{B} = \ta$
is a \tst,
it follows that
$\om (g^2) \leq d_{\ta} (y) < \frac{\ep^2}{64}$.

We claim that
\[
\big| \om ( (1 - g) c_1 (1 - g) c_2 ) - \om ( c_1 c_2 ) \big|
 < \frac{\ep}{4}.
\]
Using the Cauchy-Schwarz inequality
((\ref{Eq_4624_CauSch}) in the previous proof),
we get
\[
| \om (g c_1 c_2) |
 \leq \om (g^2)^{1/2} \om (c_2^* c_1^* c_1 c_2)^{1/2}
 \leq \om (g^2)^{1/2}
 < \frac{\ep}{8}.
\]
Similarly,
and also at the second step using $\| c_2 \| \leq 1$,
$(1 - g) c_1 g \in B$,
and the fact that $\om |_{B}$ is a \tst,
\begin{align*}
\big| \om ( (1 - g) c_1 g c_2) \big|
& \leq \om \big( (1 - g) c_1 g^2 c_1^* (1 - g) \big)^{1/2}
      \om (c_2^* c_2)^{1/2}
\\
& \leq \om \big( g c_1^* (1 - g)^2 c_1 g \big)^{1/2}
  \leq \om (g^2)^{1/2}
  < \frac{\ep}{8}.
\end{align*}
The claim now follows from the estimate
\[
\big| \om ( (1 - g) c_1 (1 - g) c_2 ) - \om ( c_1 c_2 ) \big|
 \leq \big| \om ( (1 - g) c_1 g c_2) \big| + | \om (g c_1 c_2) |.
\]

Similarly
\[
\big| \om ( (1 - g) c_2 (1 - g) c_1 ) - \om ( c_2 c_1 ) \big|
 < \frac{\ep}{4}.
\]

Since $(1 - g) c_1, \, (1 - g) c_2 \in B$
and  $\om |_{B}$ is a \tst,
we get
\[
\om ( (1 - g) c_1 (1 - g) c_2 ) = \om ( (1 - g) c_2 (1 - g) c_1 ).
\]
Therefore $| \om ( c_1 c_2 ) - \om ( c_2 c_1 ) | < \frac{\ep}{2}$.

Now,
using $\| c_2 \| \leq 1$ and $\| a_1 \| \leq 1$,
we have
\[
\| c_1 c_2 - a_1 a_2 \|
 \leq \| c_1 - a_1 \| \cdot \| c_2 \|
         + \| a_1 \| \cdot \| c_2 - a_2 \|
  < \frac{\ep}{8} + \frac{\ep}{8}
  = \frac{\ep}{4},
\]
and similarly
$\| c_2 c_1 - a_2 a_1 \| < \frac{\ep}{4}$.
It now follows that $| \om (a_1 a_2) - \om (a_2 a_1) | < \ep$.
\end{proof}

We now prove the two key lemmas relating the Cuntz semigroup
of a stably large subalgebra
to that of the containing algebra.
The first is that if we have two elements in the Cuntz semigroup
of the containing algebra with a gap between them,
then one can find (up to~$\ep$)
an element of the Cuntz semigroup of the subalgebra which lies
between them.

\begin{lem}\label{L-2720SDomToB}
Let $A$ be an infinite dimensional simple unital \ca,
and let $B \subset A$ be a stably large subalgebra.
Let $a, b, x \in (K \otimes A)_{+}$,
with $x \neq 0$,
and suppose that $a \oplus x \precsim_A b$.
Then for every $\ep > 0$ there are
$n \in \N$,
$c \in (M_n \otimes B)_{+}$,
and $\dt > 0$
such that $(a - \ep)_{+} \precsim_A c \precsim_A (b - \dt)_{+}$.
\end{lem}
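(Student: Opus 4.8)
The plan is to reduce $a$ and $b$ to elements of some matrix algebra $M_N \otimes A$ (using Lemma~\ref{L-2720KToMn}), apply the largeness of $M_N \otimes B$ in $M_N \otimes A$ --- available because $B$ is stably large --- in the form of Lemma~\ref{L:LargeStaysPositive} and Lemma~\ref{L:C2}, and then absorb the resulting error term into a positive element of a matrix algebra over $B$, using the slack furnished by the hypothesis $a \oplus x \precsim_A b$ together with Lemma~\ref{L-2720L1}.

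First I would handle the reductions. We may assume $(a - \ep)_{+} \neq 0$, the other case being trivial. Since $x \neq 0$, choose $\gamma \in (0, \ep/2]$ with $\bar{x} := (x - \gamma)_{+} \neq 0$. From $a \oplus x \precsim_A b$, Lemma~\ref{L:CzBasic}(\ref{L:CzBasic:LMinusEp}) provides $\dt_1 > 0$ with $(a - \gamma)_{+} \oplus \bar{x} \precsim_A (b - \dt_1)_{+}$; by Lemma~\ref{L-2720KToMn} there are $N$, $b_0 \in (M_N \otimes A)_{+}$ with $b_0 \sim_A (b - \dt_1 / 2)_{+}$ and $a_0 \in (M_N \otimes A)_{+}$ with $a_0 \sim_A (a - \ep/2)_{+}$ (enlarging $N$ so that both lie in $M_N \otimes A$). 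Then $a_0 \oplus \bar{x} \precsim_A (a - \gamma)_{+} \oplus \bar{x} \precsim_A b_0$, and, applying Lemma~\ref{L:CzBasic}(\ref{L:CzBasic:LMinusEp}) and Lemma~\ref{L:CzBasic}(\ref{L:CzBasic:MinIter}) to $(a - \ep/2)_{+} \precsim_A a_0$, there is $\rho > 0$ with $(a - \ep)_{+} \precsim_A (a_0 - \rho)_{+}$. Putting $\dt := \dt_1 / 2$, it now suffices to find $c$ in a matrix algebra over $B$ with $(a_0 - \rho)_{+} \precsim_A c \precsim_A b_0$.

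For the construction, first apply Lemma~\ref{L-2720L1} to the large subalgebra $M_N \otimes B \subset M_N \otimes A$ with $r = 1$ and $\bar{x}/\| \bar{x} \|$ in place of its ``$a$'': this produces a nonzero positive $y \in M_N \otimes B$ with $y \precsim_A \bar{x}$. Next, with $\ep' := \rho/2$, apply Lemma~\ref{L:LargeStaysPositive} to $M_N \otimes B \subset M_N \otimes A$ with the single positive element $a_0$, tolerance $\ep'$, an arbitrary norm-one positive element of $M_N \otimes A$, and this $y$; this yields $g \in M_N \otimes B$ with $0 \leq g \leq 1$ and $g \precsim_{M_N \otimes B} y$, and $d \in (M_N \otimes A)_{+}$ with $\| d - a_0 \| < \ep'$ and $(1 - g) d (1 - g) \in M_N \otimes B$. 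Set $c_1 = \big[ (1 - g) d (1 - g) - \ep' \big]_{+} \in M_N \otimes B$ and $c = c_1 \oplus y$. The lower bound $(a_0 - \rho)_{+} \precsim_A c$ follows by combining $(a_0 - 2 \ep')_{+} \precsim_A (d - \ep')_{+}$ (Corollary~\ref{C:MMvsM}), $(d - \ep')_{+} \precsim_A \big[ (1-g)d(1-g) - \ep' \big]_{+} \oplus g = c_1 \oplus g$ (Lemma~\ref{L:C2}), $g \precsim_A y$, Lemma~\ref{L:CzBasic}(\ref{L:CzBasic:CmpDSum}), and $2 \ep' \leq \rho$. For the upper bound, $\| (1 - g) d (1 - g) - (1 - g) a_0 (1 - g) \| < \ep'$ and Lemma~\ref{L:CzBasic}(\ref{L:CzBasic:LCzWithinEp}) give $c_1 \precsim_A (1 - g) a_0 (1 - g)$, which is $\precsim_A a_0$ since $(1 - g) a_0 (1 - g) = w^{*} w$ with $w = a_0^{1/2} (1 - g)$ and $w w^{*} = a_0^{1/2} (1 - g)^2 a_0^{1/2} \leq a_0$ (Lemma~\ref{L:CzBasic}(\ref{L:CzBasic:LCzComm}),(\ref{L:CzBasic:Her})); hence $c = c_1 \oplus y \precsim_A a_0 \oplus y \precsim_A a_0 \oplus \bar{x} \precsim_A b_0 \sim_A (b - \dt)_{+}$, using Lemma~\ref{L:CzBasic}(\ref{L:CzBasic:CmpDSum}) and $y \precsim_A \bar{x}$. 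This gives the required $c$ (in $M_{2N} \otimes B$).

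The main obstacle is the absorption of the error term: Lemma~\ref{L:C2} only yields $(a_0 - \rho)_{+} \precsim_A c_1 \oplus g$ rather than $(a_0 - \rho)_{+} \precsim_A c_1$, so one must enlarge $c_1$ by a genuine positive element $y$ of $M_N \otimes B$ while keeping $c_1 \oplus y$ below $b_0$. This is exactly where the hypothesis $a \oplus x \precsim_A b$ is used (the copy of $x$ supplies the room for $y$), together with the freedom in the definition of a large subalgebra to prescribe the target $y$ in advance, and with Lemma~\ref{L-2720L1} to ensure $y$ can be taken inside $B$ yet still Cuntz-dominated in $A$ by the small element $\bar{x}$. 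The surrounding manipulations of the various $(\cdot - \ep)_{+}$ cutdowns via Lemma~\ref{L:CzBasic}(\ref{L:CzBasic:LMinusEp}), Lemma~\ref{L:CzBasic}(\ref{L:CzBasic:MinIter}), and Lemma~\ref{L-2720KToMn} are routine bookkeeping.
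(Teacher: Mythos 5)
Your argument is essentially the paper's proof: the same three ingredients in the same roles (Lemma~\ref{L-2720L1} to produce $y \in B_{+} \setminus \{0\}$ dominated by a cut-down of $x$, Lemma~\ref{L:LargeStaysPositive} to get $g \precsim_B y$ with $(1-g)a_0(1-g) \in B$, Lemma~\ref{L:C2} for the lower bound and the $w^*w \sim w w^*$ argument for the upper bound), the only structural difference being that you perform the reduction to matrix algebras via Lemma~\ref{L-2720KToMn} up front instead of proving the case $a,b,x \in A$ first and reducing to it afterwards; your choice $c = c_1 \oplus y$ instead of the paper's $a_1 \oplus g$ is immaterial.

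One step needs repair as written: you invoke Lemma~\ref{L-2720L1} for $M_N \otimes B \subset M_N \otimes A$ with $r = 1$ and ${\bar x}/\|{\bar x}\|$ in place of its ``$a$'', but ${\bar x} = (x - \gamma)_{+}$ lies only in $K \otimes A$ and need not lie in $M_N \otimes A$, so the lemma does not apply to it directly. The fix is the one you already use for $a$ and $b$ (and which the paper applies to $x$ as well): by Lemma~\ref{L-2720KToMn} choose $x_0 \in (M_k \otimes A)_{+}$ with $x_0 \sim_A {\bar x}$, enlarge $N$ accordingly, and apply Lemma~\ref{L-2720L1} to $x_0/\|x_0\|$; the resulting $y$ satisfies $y \precsim_A x_0 \sim_A {\bar x}$, and the rest of your argument goes through unchanged.
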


\begin{proof}
We first assume that $a, b \in A$,
and that there is $x \in A_{+} \setminus \{ 0 \}$
such that $a \oplus x \precsim_A b$.

Choose $\ep_0 > 0$ such that
\[
\ep_0 < \min \left( \| x \|, \, \frac{\ep}{3} \right).
\]
In particular,
$(x - \ep_0)_{+} \neq 0$.
Use Lemma~\ref{L:CzBasic}(\ref{L:CzBasic:LMinusEp})
to choose $\dt > 0$ such that
\begin{equation}\label{Eq:2723axEbD}
(a - \ep_0)_{+} \oplus (x - \ep_0)_{+} \precsim_A (b - \dt)_{+}.
\end{equation}
Use Lemma~\ref{L-2720L1} (ignoring most of the parts of the conclusion)
to choose $y \in B_{+} \setminus \{ 0 \}$
such that $y \precsim_A (x - \ep_0)_{+}$.
Use Lemma~\ref{L:LargeStaysPositive}
to choose $g \in B_{+}$ and $a_0, b_0 \in A_{+}$
such that $g \precsim_B y$ and
\[
(1 - g) a_0 (1 - g) \in B,
\,\,\,\,\,\,
(1 - g) b_0 (1 - g) \in B,
\,\,\,\,\,\,
\| a_0 - a \| < \ep_0,
\andeqn
\| b_0 - b \| < \ep_0.
\]
Set
\[
a_1 = \big[ (1 - g) a_0 (1 - g) - 2 \ep_0 \big]_{+}
\andeqn
c = a_1 \oplus g.
\]
Lemma~\ref{L:C2} implies that
\begin{equation}\label{Eq:2723a0c}
(a_0 - 2 \ep_0)_{+} \precsim_A c.
\end{equation}

Using $\| a_0 - a \| < \ep_0$,
Corollary~\ref{C:MMvsM},
and $3 \ep_0 < \ep$,
we get
\begin{equation}\label{Eq:2723aa0}
(a - \ep)_{+} \precsim_A (a_0 - 2 \ep_0)_{+}.
\end{equation}
We also have
\begin{equation}\label{Eq:2723gxe0}
g \precsim_B y \precsim_A (x - \ep_0)_{+}.
\end{equation}

We next claim that
$a_1 \precsim_A (a - \ep_0)_{+}$.
To prove the claim,
use $\| a_0 - a \| < \ep_0$ to get
\[
\| (1 - g) a_0 (1 - g) - (1 - g) a (1 - g) \| < \ep_0.
\]
Therefore,
using Corollary~\ref{C:MMvsM} at the first step,
Lemma~\ref{L:CzBasic}(\ref{L:CzBasic:LCzCommEp}) at the second step,
and Lemma~\ref{L:CzCompIneq}
and $a^{1/2} (1 - g)^2 a^{1/2} \leq a$ at the third step,
we have
\[
a_1 \precsim_A \big[ (1 - g) a (1 - g) - \ep_0 \big]_{+}
    \sim_A \big[ a^{1/2} (1 - g)^2 a^{1/2} - \ep_0 \big]_{+}
    \precsim_A (a - \ep_0)_{+},
\]
as desired.

Combining this claim with the definition of~$c$
and~(\ref{Eq:2723gxe0}),
we get
\begin{equation}\label{Eq:2723a1ae0}
c \precsim_A (a - \ep_0)_{+} \oplus (x - \ep_0)_{+}.
\end{equation}
Using, in order, (\ref{Eq:2723aa0}),
(\ref{Eq:2723a0c}),
(\ref{Eq:2723a1ae0}),
and (\ref{Eq:2723axEbD}),
we now get
\[
(a - \ep)_{+}
 \precsim_A (a_0 - 2 \ep_0)_{+}
 \precsim_A c
 \precsim_A (a - \ep_0)_{+} \oplus (x - \ep_0)_{+}
 \precsim_A (b - \dt)_{+}.
\]
This completes the proof of the special case of the lemma.

We now consider the general case.
\Wolog{} $\ep < 1$ and $\| x \| = 1$.
Use Lemma~\ref{L:CzBasic}(\ref{L:CzBasic:LMinusEp})
to choose $\dt_0 > 0$ such that
\[
\big( a - \tfrac{\ep}{2} \big)_{+}
   \oplus \big( x - \tfrac{\ep}{2} \big)_{+}
 = \big[ (a \oplus x) - \tfrac{\ep}{2} \big]_{+}
 \precsim_A (b - \dt_0)_{+}.
\]
Use Lemma~\ref{L-2720KToMn}
to choose $n \in \N$ and $x_0 \in (M_n \otimes A)_{+}$
and $a_0, b_0 \in (M_n \otimes B)_{+}$
such that
\begin{equation}\label{Eq_4626_Str}
\big( x - \tfrac{\ep}{2} \big)_{+} \sim_A x_0,
\,\,\,\,\,\,
\big( a - \tfrac{\ep}{2} \big)_{+} \sim_B a_0,
\andeqn
\big( b - \tfrac{\ep}{2} \big)_{+} \sim_B b_0.
\end{equation}
Then $x_0 \neq 0$.
Since $M_n (A)$ is large in $M_n (B)$,
the case already done gives $r \in \N$,
$c \in (M_{r n} \otimes A)_{+}$,
and $\dt_1 > 0$ such that
\[
\big( a_0 - \tfrac{\ep}{2} \big)
 \precsim_A c
 \precsim_A (b_0 - \dt_1)_{+}.
\]
Substituting using~(\ref{Eq_4626_Str}),
setting $\dt = \dt_0 + \dt_1$,
and using Lemma~\ref{L:CzBasic}(\ref{L:CzBasic:MinIter}),
we get
$(a - \ep)_{+}
 \precsim_A c
 \precsim_A ( b - \dt )_{+}$.
This completes the proof.
\end{proof}

The second key lemma is that if two elements
in the Cuntz semigroup of a stably large subalgebra
are comparable in the containing algebra,
with a mild condition on the gap between them,
then they are comparable in the subalgebra.
For later use,
we divide the proof of this lemma in two steps.

\begin{lem}\label{L-2721AToB_Pre}
Let $A$ be an infinite dimensional simple unital \ca,
and let $B \subset A$ be a large subalgebra.
Let $a, b \in B_{+}$ and $c, x \in A_{+}$
satisfy $x \neq 0$,
$a \precsim_A c$,
$c x = 0$,
and $c + x \in {\overline{b A b}}$.
Then $a \precsim_B b$.
\end{lem}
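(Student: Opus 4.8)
The plan is to reduce, via Lemma~\ref{L:CzBasic}(\ref{L:CzBasic:LMinusEp}), to proving $(a-\ep)_+\precsim_B b$ for an arbitrary fixed $\ep>0$, and then to push a witness of $a\precsim_A c$ into $B$ by cutting it down with an element $1-g$ furnished by the definition of largeness, using $x$ as the ``spare room'' needed to absorb the resulting defect $g$. First I would record the routine preliminaries: since $0\le c\le c+x$ and $0\le x\le c+x$ with $c+x\in\overline{bAb}$, heredity gives $c,x\in\overline{bAb}$; since $cx=0$, Lemma~\ref{L:CzBasic}(\ref{L:CzBasic:Orth}) and~(\ref{L:CzBasic:Her}) give $c\oplus x\sim_A c+x\precsim_A b$, so that $a\oplus x\precsim_A b$; and we may assume $\|x\|=1$ (permissible as $x\in\overline{bAb}$).

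Second, I would build the two main ingredients. Apply Lemma~\ref{L:CzBasic}(\ref{L:CzBasic:N6}) to $a\precsim_A c$ to get $v\in A$ with $v^*v=(a-\ep/2)_+$ and $vv^*\in\overline{cAc}$; note that $\overline{cAc}$ is orthogonal to $x$ (because $cx=0$), so $vv^*x=0$, and $vv^*\in\overline{bAb}$. Apply Lemma~\ref{L-2720L1} with $r=b$ and the norm-one element $x\in\overline{bAb}$ to produce a positive $y\in\overline{bBb}$ with $\|y\|=1$, $y\precsim_A x$, and $\|xy-y\|$ as small as we like; then $y\in B_{+}\setminus\{0\}$ and $y\precsim_B b$ by Lemma~\ref{L:CzBasic}(\ref{L:CzBasic:Her}). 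Now apply Definition~\ref{D_Large} (largeness of $B$) to the single element $v^*$, with a small tolerance $\rho>0$, with this $x$ as the norm-one positive element, and with $y$ as the small positive element of $B$: this yields $g\in B$ with $0\le g\le1$, an element $c_0\in A$ with $\|c_0-v^*\|<\rho$ and $(1-g)c_0\in B$, and $g\precsim_B y$ together with $g\precsim_A x$.

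Third, I would run the computational core. Feeding $(a-\ep/2)_+=v^*v\in B_{+}$ and $g\in B_{+}$ into Lemma~\ref{L:C2} (applied inside $B$, which is legitimate since both arguments lie in $B$) and using Lemma~\ref{L:CzBasic}(\ref{L:CzBasic:MinIter}), I get
\[
(a-\ep)_{+}=\big((a-\ep/2)_{+}-\ep/2\big)_{+}\precsim_B\big[(1-g)(a-\ep/2)_{+}(1-g)-\ep/2\big]_{+}\oplus g .
\]
Since $(1-g)(a-\ep/2)_{+}(1-g)=(1-g)v^*v(1-g)$ and $(1-g)v^*$ is within $\rho$ of $(1-g)c_0\in B$ (so that $v(1-g)$ is within $\rho$ of $c_0^*(1-g)=[(1-g)c_0]^*\in B$), Corollary~\ref{C:MMvsM} lets me replace the first summand, for $\rho$ small enough, by $D:=[(1-g)c_0][(1-g)c_0]^*\in B_{+}$, whence $(a-\ep)_{+}\precsim_B D\oplus g$ by Lemma~\ref{L:CzBasic}(\ref{L:CzBasic:CmpDSum}). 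Also $g\precsim_B y\precsim_B b$, so everything reduces to showing $D\oplus g\precsim_B b$.

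The hard part is exactly this last step, and it is where the room really enters. In $A$ it is automatic: by Lemma~\ref{L:CzBasic}(\ref{L:CzBasic:LCzComm}), $D\sim_B[(1-g)c_0]^*[(1-g)c_0]=c_0^*(1-g)^2c_0$, which is within $\rho$-order of $v(1-g)^2v^*\le vv^*$ and lies (up to a harmless $(\cdot-\text{small})_{+}$) in $\overline{vv^*Avv^*}$, so $D\precsim_A vv^*$, while $g\precsim_A x$ with $vv^*\perp x$ and $vv^*+x\in\overline{bAb}$, giving $D\oplus g\precsim_A b$. What is needed, though, is this comparison \emph{inside $B$}. The plan is to exploit that $D$ lies in $B$ and is Cuntz equivalent over $B$ to an element which, up to a small perturbation, sits in $\overline{vv^*Avv^*}$ — genuinely orthogonal to $x$ — while $g\precsim_B y$ with $y\in\overline{bBb}$ essentially supported where $x$ acts as a unit; this should let me realize $D$ and $g$ as Cuntz classes over $B$ of orthogonal positive elements of $\overline{bBb}$ and then conclude by Lemma~\ref{L:CzBasic}(\ref{L:CzBasic:Orth}) and~(\ref{L:CzBasic:Her}). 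Producing that orthogonal pair inside $B$, rather than merely inside $A$ — in particular the element of $\overline{bBb}$ Cuntz dominating $D$ without destroying orthogonality to $y$ — is the delicate point; it should rely on the near-support estimate $\|xy-y\|\approx0$ from Lemma~\ref{L-2720L1}, and possibly a further application of largeness. Once $(a-\ep)_{+}\precsim_B b$ is established for every $\ep>0$, Lemma~\ref{L:CzBasic}(\ref{L:CzBasic:LMinusEp}) yields $a\precsim_B b$.
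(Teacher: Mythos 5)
Your setup follows the paper's strategy quite closely (apply largeness to a witness of $a \precsim_A c$, cut with $1-g$, use Lemma~\ref{L-2720L1} to get $y \in {\overline{b B b}}$ almost supported under $x$, and exploit $c x = 0$), but the proof is not complete: the step you yourself flag as the ``delicate point,'' namely $D \oplus g \precsim_B b$, is exactly the heart of the lemma, and you do not supply an argument for it. Two things are missing there. First, your element $D = [(1-g)c_0][(1-g)c_0]^*$ lies in $B$ but not (even approximately) in ${\overline{b B b}}$, and the final appeal to Lemma~\ref{L:CzBasic}(\ref{L:CzBasic:Her}) requires positive elements of ${\overline{b B b}}$; the paper handles this by inserting an approximate unit element $e = b^{1/n} \in B$ of ${\overline{b A b}}$ on both sides (with a quantitative estimate $\| e w - w \|$ small, valid because $c + x \in {\overline{b A b}}$), so that the relevant element $e w_0 (1-g)^2 w_0^* e$ genuinely lies in ${\overline{b B b}}$. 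Second, the orthogonality you need is only approximate: $c_0 \approx v^*$ gives $\| D\, y \|$ small (via $w^* x = 0$ and $\| x y - y \|$ small), not $D y = 0$, so Lemma~\ref{L:CzBasic}(\ref{L:CzBasic:Orth}) does not apply directly. The paper converts this near-orthogonality into exact orthogonality using Proposition~\ref{P_4619_ConeSj} (semiprojectivity of a direct sum of cones): for a suitable $\rh$ chosen in advance, the nearly orthogonal pair $\big( e w_0 (1-g)^2 w_0^* e, \, y \big)$ in ${\overline{b B b}}$ is replaced by an exactly orthogonal pair $r, z \in {\overline{b B b}}$ within $\ep/2$, after which
\[
\big( e w_0 (1-g)^2 w_0^* e - \tfrac{\ep}{2} \big)_{+} \oplus \big( y - \tfrac{\ep}{2} \big)_{+}
 \precsim_B r \oplus z \sim_B r + z \precsim_B b
\]
by Lemmas \ref{L:CzBasic}(\ref{L:CzBasic:LCzWithinEp}), (\ref{L:CzBasic:Orth}), and~(\ref{L:CzBasic:Her}).

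Note also that the two difficulties interact: the choice of $\rh$ (from the perturbation result) must be made before choosing the tolerance $\dt$ used in the approximations of $w$, $e$, $y$, and $g$, so the error bookkeeping has to be set up in that order, as in the paper's proof. A ``further application of largeness,'' which you suggest as a possible fix, is not what is needed and would not obviously produce the exactly orthogonal pair inside ${\overline{b B b}}$. As it stands, the proposal reduces the lemma to an unproved claim that is essentially equivalent to the hardest part of the paper's argument, so it has a genuine gap.
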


\begin{proof}
If $a = 0$, there is nothing to prove,
so assume $a \neq 0$.
Then $c$,  $x$, and $b$ are nonzero.
Thus \wolog{}
\[
\| a \| = \| c \| = \| x \| = \| b \| = 1.
\]
Let $\ep > 0$.
We prove that $(a - \ep)_{+} \precsim_B b$.
\Wolog{} $\ep < 1$.

Use Proposition~\ref{P_4619_ConeSj}
(and rescaling) to find $\rh > 0$
such that whenever $D$ is a \ca{}
and $r_0, s_0 \in D$
satisfy $0 \leq r_0, s_0 \leq 9$ and $\| r_0 s_0 \| < \rh$,
then there exist $r, s \in D$
such that
\[
0 \leq r, s \leq 9,
\,\,\,\,\,\,
r s = 0,
\,\,\,\,\,\,
\| r - r_0 \| < \frac{\ep}{2},
\andeqn
\| s - s_0 \| < \frac{\ep}{2}.
\]

Set
\[
\dt = \min \left( 1, \, \frac{\rh}{18}, \, \frac{\ep}{22} \right).
\]
Since $a \precsim_A c$,
there is $v \in A$
such that $\| v^* c v - a \| < \dt$.
Set $w = c^{1/2} v$.
Then $\| w^* w - a \| < \dt$,
so $\| w^* w \| < 1 + \dt$,
whence $\| w \| < (1 + \dt)^{1/2} < 1 + \dt$.

Since $(b^{1/n})_{n \in \N}$
is an approximate identity for ${\overline{b A b}}$,
there is $n$ such that the element $e = b^{1/n} \in B$
satisfies $\big\| e c^{1/2} - c^{1/2} \big\| < (1 + \| v \|)^{-1} \dt$.
Thus
\[
\| e w - w \|
 = \big\| e c^{1/2} v - c^{1/2} v \big\|
 < \dt.
\]
Also $\| e \| \leq 1$.

Use Lemma~\ref{L-2720L1}
to choose a positive element $y \in {\overline{b B b}}$ such that
$\| y \| = 1$, $y \precsim_A x$,
and $\| x y - y \| < \dt$.

Apply Definition~\ref{D_Large} to $B \subset A$
with $m = 1$,
with $a_1 = w^*$,
with $\big( y - \tfrac{\ep}{2} \big)_{+}$ in place of~$y$,
and with $\dt$ in place of~$\ep$.
We get $w_0 \in A$ and $g \in B_{+}$
such that
$\| w_0 - w \| < \dt$, $w_0 (1 - g) \in B$, $0 \leq g \leq 1$,
and $g \precsim_B \big( y - \tfrac{\ep}{2} \big)_{+}$.
Then $\| w_0 \| < \| w \| + \dt < 1 + 2 \dt$.
Since $\| e w - w \| < \dt$,
we have $\| e w_0 - w \| < 2 \dt$
and $\| e w_0 - w_0 \| < 3 \dt$.
Also $e w_0 (1 - g) \in B$.

Since $c^{1/2} x = 0$,
we have $w^* x = 0$,
whence
\[
\| w^* y \|
 \leq \| w^* \| \cdot \| y - x y \|
 < (1 + \dt) \dt
 < 2 \dt.
\]
So
\[
\| w_0^* y \|
 \leq \| w_0^* - w^* \| \cdot \| y \| + \| w^* y \|
 < \dt + 2 \dt
 = 3 \dt,
\]
and
\[
\| w_0^* e y \|
 \leq \| w_0^* e - w_0^* \| \cdot \| y \| + \| w_0^* y \|
 < 3 \dt + 3 \dt
 = 6 \dt.
\]
Therefore
\[
\big\| e w_0 (1 - g)^2 w_0^* e y \big\|
 \leq \| e \| \cdot \| w_0 \| \cdot \| 1 - g \|^2 \cdot \| w_0^* e y \|
 < (1 + 2 \dt) 6 \dt
 < 18 \dt
 \leq \rh.
\]
{}From $\dt < 1$ and $\| w_0 \| < 1 + 2 \dt$, we get
$\big\| e w_0 (1 - g)^2 w_0^* e \big\| < (1 + 2 \dt)^2 < 9$.
Since also $\| y \| \leq 9$,
and since $e w_0 (1 - g)^2 w_0^* e, \, y \in {\overline{b B b}}$,
the choice of~$\rh$ provides $r, z \in {\overline{b B b}}$
such that
\begin{equation}\label{Eq:2723rzNorm}
0 \leq r, z \leq 9,
\,\,\,\,\,\,
r z = 0,
\,\,\,\,\,\,
\big\| e w_0 (1 - g)^2 w_0^* e - r \big\| < \frac{\ep}{2},
\andeqn
\| y - z \| < \frac{\ep}{2}.
\end{equation}

We saw above that $\| e w_0 - w \| < 2 \dt$,
so
\begin{align*}
\| w_0^* e^2 w_0 - a \|
& \leq \| w_0^* e - w^* \| \cdot \| e \| \cdot \| w_0 \|
    + \| w^* \| \cdot \| e w_0 - w \| + \| w^* w - a \|
  \\
& < 2 \dt (1 + 2 \dt) + (1 + \dt) 2 \dt + \dt
  < 11 \dt
  \leq \frac{\ep}{2}.
\end{align*}
Therefore
\begin{equation}\label{Eq:2728Add1mg}
\big\| (1 - g) a (1 - g) - (1 - g) w_0^* e^2 w_0 (1 - g) \big\|
 < \frac{\ep}{2}.
\end{equation}
In~$B$,
we now have the following chain of subequivalences,
in which we use Lemma~\ref{L:C2} at the first step,
(\ref{Eq:2728Add1mg})~and
Corollary~\ref{C:MMvsM} on the first summand at the second step,
Lemma~\ref{L:CzBasic}(\ref{L:CzBasic:LCzCommEp})
at the third step,
the estimates in~(\ref{Eq:2723rzNorm})
and Lemma \ref{L:CzBasic}(\ref{L:CzBasic:LCzWithinEp})
at the fourth step,
Lemma~\ref{L:CzBasic}(\ref{L:CzBasic:Orth}) at the fifth step,
and $r + z \in {\overline{b B b}}$
and Lemma \ref{L:CzBasic}(\ref{L:CzBasic:Her}) at the sixth step:
\begin{align*}
(a - \ep)_{+}
& \precsim_B [ (1 - g) a (1 - g) - \ep]_{+} \oplus g
\\
& \precsim_B
     \big[ (1 - g) w_0^* e^2 w_0 (1 - g) - \tfrac{\ep}{2} \big]_{+}
        \oplus \big( y - \tfrac{\ep}{2} \big)_{+}
\\
& \sim_B
     \big( e w_0 (1 - g)^2 w_0^* e - \tfrac{\ep}{2} \big)_{+}
        \oplus \big( y - \tfrac{\ep}{2} \big)_{+}
\\
& \precsim_B r \oplus z
  \sim_B r + z
  \precsim_B b.
\end{align*}
Since $\ep > 0$ is arbitrary,
Lemma~\ref{L:CzBasic}(\ref{L:CzBasic:LMinusEp})
implies that $a \precsim_B b$.
\end{proof}

\begin{lem}\label{L-2721AToB}
Let $A$ be an infinite dimensional simple unital \ca,
and let $B \subset A$ be a stably large subalgebra.
Let $a, b \in (K \otimes B)_{+}$ and $c, x \in (K \otimes A)_{+}$
satisfy $x \neq 0$,
$a \precsim_A c$,
and $c \oplus x \precsim_A b$.
Then $a \precsim_B b$.
\end{lem}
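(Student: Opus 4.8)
The plan is to reduce the statement to Lemma~\ref{L-2721AToB_Pre} by cutting all four elements down into a single matrix algebra $M_N \otimes A$, inside which $M_N \otimes B$ is large because $B$ is stably large (Definition~\ref{D_4619_StLg}). By Lemma~\ref{L:CzBasic}(\ref{L:CzBasic:LMinusEp}) it suffices to prove $(a - \ep)_{+} \precsim_B b$ for an arbitrary fixed $\ep > 0$, so I work throughout with $(a - \ep)_{+}$. Using $a \precsim_A c$ and Lemma~\ref{L:CzBasic}(\ref{L:CzBasic:LMinusEp}) I would choose $\dt > 0$ with $(a - \ep)_{+} \precsim_A (c - \dt)_{+}$, shrinking $\dt$ so that $\dt < \| x \|$, and set $\dt_1 = \tfrac{\dt}{2}$, $c' = (c - \dt_1)_{+}$, $x' = (x - \dt_1)_{+}$, so that $x' \neq 0$. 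From $c \oplus x \precsim_A b$ and Lemma~\ref{L:CzBasic}(\ref{L:CzBasic:LMinusEp}) I would choose $\dt_2 > 0$ with $c' \oplus x' = \big( (c \oplus x) - \dt_1 \big)_{+} \precsim_A (b - \dt_2)_{+}$.

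Next, apply Lemma~\ref{L-2720KToMn} (with $B$ in place of $A$) to the elements $a$ and $b$ to obtain $N \in \N$ and positive $a_1, b_1 \in M_N \otimes B$ with $(a - \ep)_{+} \sim_B a_1$ and $(b - \dt_2)_{+} \sim_B b_1$; in particular $c' \oplus x' \precsim_A b_1$. Now I would produce the orthogonal pair needed by Lemma~\ref{L-2721AToB_Pre}: pick $\dt_3 \in (0, \dt_1]$ small enough that $(x' - \dt_3)_{+} \neq 0$ and apply Lemma~\ref{L:CzBasic}(\ref{L:CzBasic:N6}) to $c' \oplus x' \precsim_A b_1$ to get $v$ with $v^* v = (c' - \dt_3)_{+} \oplus (x' - \dt_3)_{+}$ and $v v^* \in {\overline{b_1 (M_N \otimes A) b_1}}$; since $A$ is unital this hereditary subalgebra, and hence ${\overline{v v^* (K \otimes A) v v^*}}$, sits inside $M_N \otimes A$. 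Letting $\psi$ be the isomorphism of Lemma~\ref{L:CzBasic}(\ref{L:CzBasic:N5}) for $v$ inside $K \otimes A$, set $c_0 = \psi\big( (c' - \dt_3)_{+} \big)$ and $x_0 = \psi\big( (x' - \dt_3)_{+} \big)$.

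Then $c_0, x_0$ and $c_0 + x_0$ lie in ${\overline{b_1 (M_N \otimes A) b_1}}$; $c_0 x_0 = \psi(0) = 0$ since the two summands of $v^* v$ are orthogonal; $x_0 \sim_A (x' - \dt_3)_{+} \neq 0$; and $a_1 \sim_A (a - \ep)_{+} \precsim_A (c - \dt)_{+} \precsim_A (c - \dt_1 - \dt_3)_{+} = (c' - \dt_3)_{+} \sim_A c_0$, using $\dt_1 + \dt_3 \leq \dt$ together with Lemma~\ref{L:CzBasic}(\ref{L:CzBasic:MinIter}). Applying Lemma~\ref{L-2721AToB_Pre} with $M_N \otimes A$, $M_N \otimes B$, and $a_1, b_1, c_0, x_0$ in the roles of $A, B, a, b, c, x$ then gives $a_1 \precsim_B b_1$, whence $(a - \ep)_{+} \sim_B a_1 \precsim_B b_1 \sim_B (b - \dt_2)_{+} \precsim_B b$; since $\ep$ was arbitrary, Lemma~\ref{L:CzBasic}(\ref{L:CzBasic:LMinusEp}) yields $a \precsim_B b$. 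The main obstacle is purely organizational: one must arrange the cascade of cut-downs (leaving the factor-$2$ room in $\dt$) so that all three hypotheses of Lemma~\ref{L-2721AToB_Pre}, namely $a_1 \precsim c_0$, $c_0 x_0 = 0$, and $c_0 + x_0 \in {\overline{b_1 \cdot b_1}}$, survive simultaneously after everything is replaced by elements of the single algebra $M_N \otimes A$; the orthogonality is recovered from the block form of $v^* v$ via Lemma~\ref{L:CzBasic}(\ref{L:CzBasic:N5}), and the containment in ${\overline{b_1 \cdot b_1}}$ from the hereditariness supplied by Lemma~\ref{L:CzBasic}(\ref{L:CzBasic:N6}).
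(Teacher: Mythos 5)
Your argument is correct, and it reaches the conclusion by a leaner route than the paper's, although both ultimately rest on Lemma~\ref{L-2721AToB_Pre} applied in a matrix amplification $M_N (B) \subset M_N (A)$ (which is where stable largeness enters), together with the device of parts (\ref{L:CzBasic:N6}) and~(\ref{L:CzBasic:N5}) of Lemma~\ref{L:CzBasic} for producing an exactly orthogonal pair inside a hereditary subalgebra. The paper proceeds in two stages: it first proves the special case in which $c x = 0$ and $c + x \in {\overline{b (K \otimes A) b}}$, by cutting $a$ into some $M_k (B)$, manufacturing $e \in M_l (B)_{+}$ with $e \precsim_B b$ from $b^{1/n}$, compressing $c$ and $x$ to $e c e$ and $e x e$, and then restoring exact orthogonality with the semiprojectivity statement Proposition~\ref{P_4619_ConeSj}; the general case is afterwards reduced to this special case by applying (\ref{L:CzBasic:N6}) and~(\ref{L:CzBasic:N5}) relative to $b$ itself. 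You collapse the two stages: by first replacing $(b - \dt_2)_{+}$ with a Cuntz-equivalent element $b_1 \in (M_N \otimes B)_{+}$ via Lemma~\ref{L-2720KToMn} over~$B$, and only then applying (\ref{L:CzBasic:N6}) and~(\ref{L:CzBasic:N5}) with target $b_1$, your pair $c_0, x_0$ is born exactly orthogonal and already inside ${\overline{b_1 (M_N \otimes A) b_1}} \subset M_N \otimes A$ (the containment holding because $M_N \otimes A$ is a corner of $K \otimes A$ containing $b_1$), so the compression step and the semiprojectivity perturbation are never needed. The care this requires is exactly what you supply: reserving room in the cut-downs so that $\dt_1 + \dt_3 \leq \dt$ gives $a_1 \precsim_A (c - \dt)_{+} \precsim_A (c' - \dt_3)_{+} \sim_A c_0$, so that the comparison hypothesis of Lemma~\ref{L-2721AToB_Pre} is met with $c_0$ alone rather than merely with $c_0 + x_0$ (a point on which the paper's own general-case reduction is terse); the identifications you leave implicit, namely that $\precsim_B$ implies $\precsim_A$ and that Cuntz comparison between elements of $M_N \otimes A$ can be read inside $M_N \otimes A$ as in Remark~\ref{R-2727MnI}, are the same ones the paper relies on.
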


\begin{proof}
We first suppose that $c x = 0$
and $c + x \in {\overline{b (K \otimes A) b}}$.
If $a = 0$, there is nothing to prove,
so assume $a \neq 0$.
Then $b$, $c$, and $x$ are nonzero.
Thus \wolog{}
\[
\| a \| = \| b \| = \| c \| = \| x \| = 1.
\]

Let $\ep > 0$.
Use Lemma \ref{L:CzBasic}(\ref{L:CzBasic:LMinusEp})
to choose $\dt > 0$ such that
\begin{equation}\label{Eq_4629_55S}
( a - \ep )_{+} \precsim_A (c - \dt)_{+}.
\end{equation}
%
Set $\dt_0 = \tfrac{1}{2} \min ( 1, \dt )$.

Use Lemma~\ref{L-2720KToMn}
to choose $k \in \N$ and $a_0 \in M_k (B)_{+}$
such that
\begin{equation}\label{Eq_4820_StSt}
( a - \ep )_{+} \sim_B a_0.
\end{equation}
Use Proposition~\ref{P_4619_ConeSj} to find $\rh > 0$
such that whenever $D$ is a \ca{}
and $r_0, s_0 \in D$
satisfy $0 \leq r_0, s_0 \leq 1$ and $\| r_0 s_0 \| < \rh$,
then there exist $r, s \in D$
such that
\[
0 \leq r, s \leq 1,
\,\,\,\,\,\,
r s = 0,
\,\,\,\,\,\,
\| r - r_0 \| < \dt_0,
\andeqn
\| s - s_0 \| < \dt_0.
\]
Set
$\ep_0 = \tfrac{1}{6} \min (\rh, \dt_0)$.
Thus
\begin{equation}\label{Eq_3817St}
6 \ep_0 + \dt_0 \leq \dt
\andeqn
6 \ep_0 + \dt_0 \leq 1.
\end{equation}
For sufficiently large $n \in \N$,
the element $e_0 = b^{1/n} \in K \otimes B$
satisfies
\[
\| e_0 c - c \| < \ep_0
\andeqn
\| e_0 x - x \| < \ep_0.
\]
Choose $l \in \N$ with $l \geq k$ and $e_1 \in M_l (B)_{+}$
such that $\| e_1 - e_0 \| < \ep_0$.
Then
\[
\| e_1 c - c \| < 2 \ep_0
\andeqn
\| e_1 x - x \| < 2 \ep_0.
\]
Set $e = (e_1 - \ep_0)_{+}$.
We have $\| e \| \leq1$ since $\| e_1 \| < 1 + \ep_0$.
Lemma~\ref{L:CzBasic}(\ref{L:CzBasic:LCzWithinEp})
and Lemma~\ref{L:CzBasic}(\ref{L:CzBasic:LCzFCalc})
imply that
\begin{equation}\label{Eq_3817StSt}
e \precsim_B e_0 \sim_B b.
\end{equation}
Also,
$\| e - e_1 \| \leq \ep_0$,
so
\[
\| e c - c \| < 3 \ep_0
\andeqn
\| e x - x \| < 3 \ep_0.
\]
Define $d_0, y_0 \in {\overline{e M_l (A) e}}$
by $d_0 = e c e$ and $y_0 = e x e$.
Then
\[
0 \leq d_0 \leq 1,
\,\,\,\,\,\,
0 \leq y_0 \leq 1,
\,\,\,\,\,\,
\| d_0 - c \| < 6 \ep_0,
\andeqn
\| y_0 - x \| < 6 \ep_0.
\]
Using $c x = 0$,
we get
\[
\| d_0 y_0 \|
 = \| (e c e) (e x e) - e c x e \|
 \leq \| e \| \cdot \| c e - c \| \cdot \| e x e \|
       + \| e c \| \cdot \| e x - x \| \cdot \| e \|
 < 6 \ep_0
 \leq \rh.
\]
Therefore there exist $d, y \in {\overline{e M_l (A) e}}$
such that
\[
0 \leq d, y \leq 1,
\,\,\,\,\,\,
d y = 0,
\,\,\,\,\,\,
\| d - d_0 \| < \dt_0,
\andeqn
\| y - y_0 \| < \dt_0.
\]
It follows that
\[
\| d - c \| < 6 \ep_0 + \dt_0
\andeqn
\| y - x \| < 6 \ep_0 + \dt_0.
\]

We are going to apply Lemma~\ref{L-2721AToB_Pre} with
\[
a = a_0,
\,\,\,\,\,\,
b = e,
\,\,\,\,\,\,
c = d,
\andeqn
x = y.
\]
We check its hypotheses.
Since $\| x \| = 1$ and $6 \ep_0 + \dt_0 \leq 1$
(by~\ref{Eq_3817St}),
we have $y \neq 0$.
Using~(\ref{Eq_4820_StSt}) at the first step,
(\ref{Eq_4629_55S}) at the second step,
and $6 \ep_0 + \dt_0 \leq \dt$ (from~(\ref{Eq_3817St}))
and Lemma \ref{L:CzBasic}(\ref{L:CzBasic:LCzWithinEp})
at the third step,
we have
\[
a_0
  \sim_B (a - \ep)_{+}
  \precsim_A (c - \dt)_{+}
  \precsim_A d.  
\]
We have $d y = 0$
and $d + y \in {\overline{e M_l (A) e}}$
by construction.
Since $M_l (B)$ is large in $M_l (A)$,
we have verified the hypotheses of Lemma~\ref{L-2721AToB_Pre}.
So
$a_0 \precsim_B e$.
Therefore, using~(\ref{Eq_3817StSt}) at the last step,
\[
(a - \ep)_{+}
 \sim_B a_0
 \precsim_B e
 \precsim_B b.
\]
Since $\ep > 0$ is arbitrary,
it follows from Lemma \ref{L:CzBasic}(\ref{L:CzBasic:LMinusEp})
that $a \precsim_B b$.

Now
we prove the lemma as stated.
Let $\ep > 0$.
Use Lemma \ref{L:CzBasic}(\ref{L:CzBasic:LMinusEp})
to choose $\dt > 0$ such that
$(a - \ep)_{+} \precsim_A [ (c \oplus x) - \dt]_{+}$.
We also require that $\dt < \| x \|$,
so that $(x - \dt)_{+} \neq 0$.
Lemma \ref{L:CzBasic}(\ref{L:CzBasic:N6}),
applied in $M_2 (K \otimes A)$,
provides $v \in M_2 (K \otimes A)$
such that $v^* v = [ (c \oplus x) - \dt]_{+}$
and
$v v^* \in {\overline{(b \oplus 0) M_2 (K \otimes A) (b \oplus 0)}}$.
Lemma \ref{L:CzBasic}(\ref{L:CzBasic:N5})
gives an isomorphism
\[
\ph \colon {\overline{v^* v M_2 (K \otimes A) v^* v}}
   \to {\overline{v v^* M_2 (K \otimes A) v v^*}}
\]
such that, for every positive
$z \in {\overline{v^* v M_2 (K \otimes A) v^* v}}$,
we have
$z \sim_A \ph (z)$.
Set
\[
c_0 = \ph ( [c - \dt]_{+} \oplus 0 )
\andeqn
x_0 = \ph ( 0 \oplus [x - \dt]_{+} ).
\]
Then $c_0$ and $x_0$
are orthogonal positive elements of
\[
{\overline{b (K \otimes A) b}}
 = {\overline{(b \oplus 0) M_2 (K \otimes A) (b \oplus 0)}}
\]
such that $x_0 \neq 0$,
and,
using Lemma \ref{L:CzBasic}(\ref{L:CzBasic:Orth})
at the second step,
\[
(a - \ep)_{+}
 \precsim_A (c - \dt)_{+} \oplus (x - \dt)_{+}
 \sim_A c_0 + x_0.
\]
Therefore the result obtained above implies that
$(a - \ep)_{+} \precsim_B b$.
Since $\ep > 0$ is arbitrary,
it follows from Lemma \ref{L:CzBasic}(\ref{L:CzBasic:LMinusEp})
that $a \precsim_B b$.
\end{proof}

\begin{thm}\label{T-2720CuSurj}
Let $A$ be an infinite dimensional simple unital \ca,
and let $B \subset A$ be a stably large subalgebra.
Let $\io \colon B \to A$ be the inclusion map.
For every $\et \in {\operatorname{Cu}} (A)$
which is not the class of a \pj,
there is
$\mu \in {\operatorname{Cu}} (B)$ such that $\io_* (\mu) = \et$.
\end{thm}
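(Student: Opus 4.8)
The plan is to realize $\et$ not as the class of a single element of $(K \otimes B)_{+}$ but as a supremum in $\Cu(B)$. Fix $a \in (K \otimes A)_{+}$ with $\langle a \rangle = \et$; since $\langle 0 \rangle$ is the class of a projection, $a \neq 0$. First I would observe that $0$ is not isolated in $\spec(a)$: otherwise, choosing a \cfn{} $f$ on $[0, \| a \|]$ that vanishes near $0$ and is positive on $\spec(a) \setminus \{ 0 \}$, Lemma~\ref{L:CzBasic}(\ref{L:CzBasic:LCzFCalc}) would give $a \sim_A f(a)$ with $f(a)$ a projection, contradicting the hypothesis on $\et$. (This is the only use of ``$\et$ is not a projection class'', and it needs neither simplicity nor finiteness.) Hence for every $\alpha > 0$ there is $\alpha' \in (0, \alpha)$ with $\spec(a) \cap (\alpha', \alpha) \neq \varnothing$; choosing a \cfn{} $g \geq 0$ supported in $(\alpha', \alpha)$ with $g (\ld) \leq \ld - \alpha'$ there and $g$ nonzero on $\spec(a)$, the element $y = g(a)$ is nonzero, is orthogonal to $(a - \alpha)_{+}$, and satisfies $(a - \alpha)_{+} + y \leq (a - \alpha')_{+}$; so by Lemma~\ref{L:CzBasic}(\ref{L:CzBasic:Orth}),
\[
(a - \alpha)_{+} \oplus y \precsim_A (a - \alpha')_{+} .
\]

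Using relations of this type, I would construct recursively a strictly decreasing sequence $\alpha_0 > \alpha_1 > \cdots$ with $\alpha_n \to 0$, numbers $\ep_n > 0$ with $\alpha_n + \ep_n \to 0$, nonzero $y_n \in (K \otimes A)_{+}$, and $b_n \in (M_{k_n} \otimes B)_{+} \subset (K \otimes B)_{+}$ such that
\[
(a - \alpha_n - \ep_n)_{+} \precsim_A b_n \precsim_A a
\qquad\text{and}\qquad
b_n \oplus y_n \precsim_A b_{n+1} .
\]
Inductively, once $\alpha_n$ and, just below it, an auxiliary level $\alpha_n' \in (0, \alpha_n)$ and a spectral piece $y_n'$ of $a$ with $(a - \alpha_n)_{+} \oplus y_n' \precsim_A (a - \alpha_n')_{+}$ have been fixed, I would apply Lemma~\ref{L-2720SDomToB} (available because $B$ is stably large) to this relation with a small prescribed parameter $\ep_n$, obtaining $b_n$ and some $\delta_n > 0$ with
\[
(a - \alpha_n - \ep_n)_{+} \precsim_A b_n \precsim_A (a - \alpha_n' - \delta_n)_{+}
\]
(rewriting the truncations by Lemma~\ref{L:CzBasic}(\ref{L:CzBasic:MinIter})). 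Only after $\delta_n$ has been produced would I choose the next cut-off $\alpha_{n+1} < \alpha_n'$ small enough that $\spec(a)$ meets an interval strictly between the level $\alpha_n' + \delta_n$ at which $b_n$ ``ends'' and the level $\alpha_{n+1} + \ep_{n+1}$ at which $b_{n+1}$ will ``begin''; a spectral piece of $a$ in that interval, orthogonal to $(a - \alpha_n' - \delta_n)_{+}$ and dominated by $(a - \alpha_{n+1} - \ep_{n+1})_{+}$, serves as $y_n$ and gives $b_n \oplus y_n \precsim_A b_{n+1}$ via Lemma~\ref{L:CzBasic}(\ref{L:CzBasic:CmpDSum}) and Lemma~\ref{L:CzBasic}(\ref{L:CzBasic:Orth}). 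I expect this recursive bookkeeping --- coping with the fact that the $\delta_n$ coming out of Lemma~\ref{L-2720SDomToB} are not under our control --- to be the main obstacle; the spectral room it requires is exactly what the non-isolation of $0$ in $\spec(a)$ supplies, and everything else is routine.

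Granted such a sequence, the conclusion is soft. From $b_n \oplus y_n \precsim_A b_{n+1}$, with $b_n, b_{n+1} \in (K \otimes B)_{+}$, $y_n \neq 0$ and $b_n \precsim_A b_n$, Lemma~\ref{L-2721AToB} gives $b_n \precsim_B b_{n+1}$. Thus $(\langle b_n \rangle)_{n \in \N}$ is an increasing, hence countable upwards directed, sequence in $\Cu(B)$, and $\mu := \sup_n \langle b_n \rangle$ exists in $\Cu(B)$ by Theorem~\ref{T_2Y25_CxSup}(\ref{T_2Y25_CxSup_Ex}). By Theorem~\ref{T_2Y25_CxSup}(\ref{T_2Y25_CxSup_Prv}) applied to $\io$,
\[
\io_{*}(\mu) = \sup_n \io_{*}(\langle b_n \rangle),
\]
the supremum now taken in $\Cu(A)$. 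Since $\io_{*}(\langle b_n \rangle)$ is the class of $b_n$ in $\Cu(A)$ and $\langle (a - \alpha_n - \ep_n)_{+} \rangle \leq \io_{*}(\langle b_n \rangle) \leq \langle a \rangle$ with $\alpha_n + \ep_n \to 0$, Lemma~\ref{L_3905_CuWay}(\ref{L_3905_CuWay_SupEp}) forces $\sup_n \io_{*}(\langle b_n \rangle) = \langle a \rangle = \et$. Hence $\io_{*}(\mu) = \et$, as required.
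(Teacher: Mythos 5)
Your proposal is correct and follows essentially the same route as the paper's proof: both use non-isolation of $0$ in the spectrum to build, via Lemma~\ref{L-2720SDomToB}, an increasing chain in $(K \otimes B)_{+}$ interleaved with cut-downs $(a - \ep)_{+}$ and with nonzero spectral pieces providing the needed "gap", upgrade the comparisons to $\precsim_B$ by Lemma~\ref{L-2721AToB}, and realize $\et$ as $\io_*$ of the supremum using Theorem~\ref{T_2Y25_CxSup} and Lemma~\ref{L_3905_CuWay}(\ref{L_3905_CuWay_SupEp}). The differences are only organizational (e.g.\ you apply Lemma~\ref{L-2721AToB} with $c = b_n$ itself rather than with a cut-down of $a$, and you track the levels $\alpha_n' + \delta_n$ explicitly), and your handling of the uncontrolled $\delta_n$ by choosing $\alpha_{n+1}, \ep_{n+1}$ afterwards is exactly the paper's device.
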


\begin{proof}
Choose $y \in (K \otimes A)_{+}$
such that $\et = \langle y \rangle$.
Since $\et$ is not the class of a \pj,
we have
\begin{equation}\label{Eq:2723SpecCl}
0 \in {\overline{\spec (y) \setminus \{ 0 \} }}.
\end{equation}

We construct sequences $(c_n)_{n \in \Nz}$ in $(K \otimes B)_{+}$
and $(\ep_n)_{n \in \Nz}$ and $(\rh_n)_{n \in \Nz}$
of positive numbers such that
\[
\ep_0 > \rh_0 > \ep_1 > \rh_1 > \cdots > 0,
\,\,\,\,\,\,
\limi{n} \ep_n = 0,
\]
\[
c_0 \precsim_A (y - \ep_0)_{+} \precsim_A (y - \rh_0)_{+}
 \precsim_A c_1 \precsim_A (y - \ep_1)_{+} \precsim_A (y - \rh_1)_{+}
 \precsim_A c_2
 \precsim_A \cdots \precsim_A y,
\]
and $\spec (y) \cap (\rh_n, \ep_n) \neq \varnothing$ for $n \in \Nz$.

The construction is by induction on~$n$.
To get the condition $\limi{n} \ep_n = 0$,
it suffices to require $\ep_{n + 1} \leq \tfrac{1}{2} \ep_n$
for $n \in \Nz$.
We take $c_0 = 0$ and $\ep_0 = 1$.
By~(\ref{Eq:2723SpecCl}), there is $\rh_0 \in (0, \ep_0)$
such that $\spec (y) \cap (\rh_0, \ep_0) \neq \varnothing$.

Suppose now that $c_n$, $\ep_n$, and $\rh_n$ are given.
By~(\ref{Eq:2723SpecCl}),
we have
$\spec (y) \cap ( 0, \rh_n ) \neq \varnothing$.
Therefore there is a \cfn{} $f \colon [0, \infty) \to [0, \infty)$
such that $\supp (f) \subset ( 0, \rh_n )$
and $f (y) \neq 0$.
We have
$( y - \rh_n )_{+} \oplus f (y) \precsim_A y$,
so Lemma~\ref{L-2720SDomToB}
provides $c_{n + 1} \in (K \otimes B)_{+}$
and $\dt > 0$
such that
$( y - \rh_n )_{+} \precsim_A c_{n + 1} \precsim_A (y - \dt)_{+}$.
Take
$\ep_{n + 1} = \min \big( \tfrac{1}{2} \rh_n, \dt \big)
 < \tfrac{1}{2} \ep_n$.
Then use~(\ref{Eq:2723SpecCl})
to choose $\rh_{n + 1} \in (0, \ep_{n + 1})$
such that $\spec (y) \cap (\rh_{n + 1}, \ep_{n + 1}) \neq \varnothing$.
This completes the construction.

For $n \in \Nz$,
choose a \cfn{} $f_n \colon [0, \infty) \to [0, \infty)$
such that $\supp (f_n) \subset (\rh_n, \ep_n)$
and $f_n (y) \neq 0$.
Apply Lemma~\ref{L-2721AToB}
with $a = c_n$,
$b = c_{n + 1}$,
$c = (y - \ep_n)_{+}$,
and $x = f_n (y)$,
to get $c_{n} \precsim_B c_{n + 1}$.
We can now take $\mu = \sup_{n \in \Nz} \langle c_n \rangle$,
which exists in ${\operatorname{Cu}} (B)$
by Theorem~\ref{T_2Y25_CxSup}(\ref{T_2Y25_CxSup_Ex}).
Since $\io_*$ preserves supremums
(by Theorem~\ref{T_2Y25_CxSup}(\ref{T_2Y25_CxSup_Prv}))
and
$\langle a \rangle = \sup_{n \in \Nz} \langle (y - \ep_n)_{+} \rangle$
(by Lemma~\ref{L_3905_CuWay}(\ref{L_3905_CuWay_SupEp})),
we get $\io_* (\mu) = \et$.
\end{proof}

\begin{thm}\label{T-2725Inj}
Let $A$ be an infinite dimensional simple unital \ca,
and let $B \subset A$ be a stably large subalgebra.
Let $\io \colon B \to A$ be the inclusion map.
Let $\mu, \et \in {\operatorname{Cu}} (B)$,
and suppose that $\et$ is not the class of a \pj.
Then:
\begin{enumerate}
\item\label{T-2725Inj-Ord}
If $\io_* (\mu) \leq \io_* (\et)$,
then $\mu \leq \et$.
\item\label{T-2725Inj-Inj}
If $\mu$ is also not the class of a \pj,
and $\io_* (\mu) = \io_* (\et)$,
then $\mu = \et$.
\end{enumerate}
\end{thm}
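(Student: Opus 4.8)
The plan is to derive both parts from Lemma~\ref{L-2721AToB}, the second key lemma. Part~(\ref{T-2725Inj-Inj}) will be immediate once part~(\ref{T-2725Inj-Ord}) is available: if $\mu$ and $\et$ are both not classes of projections and $\io_* (\mu) = \io_* (\et)$, then $\io_* (\mu) \leq \io_* (\et)$ and $\io_* (\et) \leq \io_* (\mu)$, so applying part~(\ref{T-2725Inj-Ord}) twice, once with $\mu$ and $\et$ interchanged (legitimate precisely because here \emph{both} are assumed not to be classes of projections), gives $\mu \leq \et$ and $\et \leq \mu$, hence $\mu = \et$. So the real content is part~(\ref{T-2725Inj-Ord}).

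For part~(\ref{T-2725Inj-Ord}), first I would choose $a, b \in (K \otimes B)_{+}$ with $\mu = \langle a \rangle$ and $\et = \langle b \rangle$, and record the spectral observation that, since $\et$ is not the class of a \pj, $0$ is not isolated in $\spec (b)$; indeed, if $0$ were isolated, then functional calculus together with Lemma~\ref{L:CzBasic}(\ref{L:CzBasic:LCzFCalc}) would show $b \sim_B p$ for a \pj~$p$, contradicting the hypothesis on~$\et$. By Lemma~\ref{L:CzBasic}(\ref{L:CzBasic:LMinusEp}) it suffices to prove $(a - \ep)_{+} \precsim_B b$ for every $\ep > 0$.

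Fix $\ep > 0$. Since $\io_* (\mu) \leq \io_* (\et)$ means $a \precsim_A b$, Lemma~\ref{L:CzBasic}(\ref{L:CzBasic:LMinusEp}) provides $\dt > 0$ with $(a - \ep)_{+} \precsim_A (b - \dt)_{+}$. Because $0$ is not isolated in $\spec (b)$ we have $\spec (b) \cap (0, \dt) \neq \varnothing$, so there is a \cfn{} $f \colon [0, \infty) \to [0, \infty)$ with $\supp (f) \subset (0, \dt)$ and $f (b) \neq 0$. Put $c = (b - \dt)_{+}$ and $x = f (b)$, both in $(K \otimes B)_{+} \subset (K \otimes A)_{+}$. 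Then $x \neq 0$; $c x = 0$ because the supports of the relevant functions are disjoint; and $c + x \in {\overline{b (K \otimes A) b}}$, so by Lemma~\ref{L:CzBasic}(\ref{L:CzBasic:Orth}) and Lemma~\ref{L:CzBasic}(\ref{L:CzBasic:Her}) we get $c \oplus x \sim_A c + x \precsim_A b$. Together with $(a - \ep)_{+} \precsim_A c$, these are exactly the hypotheses of Lemma~\ref{L-2721AToB} applied with $(a - \ep)_{+}$, $b$, $c$, $x$ in the roles of $a$, $b$, $c$, $x$ there, and it yields $(a - \ep)_{+} \precsim_B b$. Since $\ep > 0$ was arbitrary, Lemma~\ref{L:CzBasic}(\ref{L:CzBasic:LMinusEp}) gives $a \precsim_B b$, i.e.\ $\mu \leq \et$.

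I do not expect a genuine obstacle: the only delicate point is the production of the auxiliary element~$x$ sitting in a ``gap'' below $b$, which is where the hypothesis that $\et$ is not the class of a projection is used (otherwise an infinite projection could absorb such a gap, and the conclusion can fail); this is dispatched by the spectral observation above. Everything else is routine manipulation with the parts of Lemma~\ref{L:CzBasic}, and Lemma~\ref{L-2721AToB} does the essential work.
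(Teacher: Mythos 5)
Your proposal is correct and follows essentially the same route as the paper: reduce to showing $(a - \ep)_{+} \precsim_B b$, pick $\dt$ with $(a - \ep)_{+} \precsim_A (b - \dt)_{+}$, use the non-projection hypothesis to produce $f (b) \neq 0$ with $\supp (f) \subset (0, \dt)$, and feed $(a - \ep)_{+}$, $(b - \dt)_{+}$, $f (b)$, $b$ into Lemma~\ref{L-2721AToB}; part~(\ref{T-2725Inj-Inj}) then follows by symmetry exactly as you say. Your explicit verification that $(b - \dt)_{+} \oplus f (b) \precsim_A b$ is a detail the paper leaves implicit, but it is the same argument.
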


If $A$ is stably finite,
then in~(\ref{T-2725Inj-Inj})
it is automatic that $\mu$ is not the class of a \pj.
Using Proposition~\ref{P_4626_Fin} below,
this can be deduced from Lemma~\ref{L_2Y24_WhenP}.

\begin{proof}[Proof of Theorem~\ref{T-2725Inj}]
Choose $a, b \in (K \otimes B)_{+}$ such that
$\mu = \langle a \rangle$
and $\et = \langle b \rangle$.

For~(\ref{T-2725Inj-Ord}),
let $\ep > 0$ and
(using Lemma \ref{L:CzBasic}(\ref{L:CzBasic:LMinusEp}))
choose $\dt > 0$ such that
$(a - \ep)_{+} \precsim_A (b - \dt)_{+}$.
Since $\et$ is not the class of a \pj,
there is a \cfn{} $f \colon [0, \infty) \to [0, \infty)$
such that $\supp (f) \subset ( 0, \dt )$
and $f (b) \neq 0$.
Apply Lemma~\ref{L-2721AToB}
with $(a - \ep)_{+}$ in place of~$a$,
with $(b - \dt)_{+}$ in place of~$c$,
with $f (b)$ in place of~$x$,
and
with $b$ as given,
to get $(a - \ep)_{+} \precsim_B b$.
Since $\ep > 0$ is arbitrary,
it follows from Lemma \ref{L:CzBasic}(\ref{L:CzBasic:LMinusEp})
that $a \precsim_B b$.

Under the hypotheses of~(\ref{T-2725Inj-Inj}),
we can use~(\ref{T-2725Inj-Ord}) to get $\et \leq \mu$ as well.
Thus $\mu = \et$.
\end{proof}

\begin{thm}\label{C_3X08_IsoOnPure}
Let $A$ be a stably finite infinite dimensional simple unital \ca,
and let $B \subset A$ be a large subalgebra.
Let $\io \colon B \to A$ be the inclusion map.
Then $\io_*$ defines an order and semigroup isomorphism
from ${\operatorname{Cu}}_{+} (B) \cup \{ 0 \}$
(as in Definition~\ref{D_2Y24_Pure})
to ${\operatorname{Cu}}_{+} (A) \cup \{ 0 \}$.
\end{thm}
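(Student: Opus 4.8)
The plan is to extract this essentially for free from Theorem~\ref{T-2720CuSurj} and Theorem~\ref{T-2725Inj}, together with the observation that pure positivity of a positive element is an intrinsic spectral property. First I would record the standing facts. Since $A$ is stably finite, Corollary~\ref{C_4619_StFinStLg} shows that $B$ is stably large in~$A$; moreover, for each $n$ the algebra $M_n(B)$ is a unital subalgebra of the finite algebra $M_n(A)$, so $B$ is itself stably finite. By Proposition~\ref{P_2627_NoSmp}, $B$ is simple, and by hypothesis it is unital, so Lemma~\ref{L_2Y24_WhenP} is available for both $A$ and~$B$, and Theorem~\ref{T-2720CuSurj} and Theorem~\ref{T-2725Inj} apply to the inclusion $\io \colon B \to A$.

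Next I would check that $\io_*$ maps ${\operatorname{Cu}}_{+}(B) \cup \{0\}$ into ${\operatorname{Cu}}_{+}(A) \cup \{0\}$. Clearly $\io_*$ sends $0$ to~$0$. If $\et \in {\operatorname{Cu}}_{+}(B)$, write $\et = \langle a \rangle$ with $a \in (K \otimes B)_{+}$; the set $\spec(a)$ is the same whether computed in $K \otimes B$ or in $K \otimes A$, and $0$ is not isolated in $\spec(a)$ by Lemma~\ref{L_2Y24_WhenP} applied in~$B$, so the same lemma applied in~$A$ shows that $a$ is purely positive in~$A$, that is, $\io_*(\et) \in {\operatorname{Cu}}_{+}(A)$. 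Since ${\operatorname{Cu}}_{+}(B) \cup \{0\}$ and ${\operatorname{Cu}}_{+}(A) \cup \{0\}$ are subsemigroups of ${\operatorname{Cu}}(B)$ and ${\operatorname{Cu}}(A)$ by Corollary~\ref{C_2Y24_PureAbs}, the restriction of $\io_*$ is a well defined order-preserving semigroup homomorphism between them.

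It remains to prove bijectivity. For surjectivity, let $\et \in {\operatorname{Cu}}_{+}(A) \cup \{0\}$. The case $\et = 0$ is trivial, so suppose $\et \in {\operatorname{Cu}}_{+}(A)$; then $\et$ is not the class of a projection, and Theorem~\ref{T-2720CuSurj} provides $\mu \in {\operatorname{Cu}}(B)$ with $\io_*(\mu) = \et$. If $\mu$ were the class of a projection $q \in K \otimes B$, then $q$ is a projection in $K \otimes A$ and $\io_*(\mu) = \langle q \rangle$ would be a projection class in $K \otimes A$, contradicting $\et \in {\operatorname{Cu}}_{+}(A)$; hence $\mu \in {\operatorname{Cu}}_{+}(B)$. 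For injectivity it suffices to show $\io_*$ reflects the order: if $\mu, \et \in {\operatorname{Cu}}_{+}(B) \cup \{0\}$ satisfy $\io_*(\mu) \leq \io_*(\et)$, then $\mu \leq \et$. If $\et = 0$ then $\io_*(\mu) = 0$, which forces $\mu = 0$; otherwise $\et$ is not the class of a projection and Theorem~\ref{T-2725Inj}(\ref{T-2725Inj-Ord}) yields $\mu \leq \et$. Applying this with $\mu$ and $\et$ interchanged turns $\io_*(\mu) = \io_*(\et)$ into $\mu = \et$. Combining surjectivity with order-preservation and order-reflection gives that $\io_*$ restricts to an order and semigroup isomorphism from ${\operatorname{Cu}}_{+}(B) \cup \{0\}$ onto ${\operatorname{Cu}}_{+}(A) \cup \{0\}$.

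I do not expect a genuine obstacle, since the analytic content is already packaged in Theorem~\ref{T-2720CuSurj} and Theorem~\ref{T-2725Inj}. The only points that need a moment of care are verifying that the preimage produced by surjectivity automatically lies in ${\operatorname{Cu}}_{+}(B) \cup \{0\}$, handling the zero class separately when invoking the order comparison result, and recording that pure positivity is detected by the spectrum and hence independent of the ambient algebra.
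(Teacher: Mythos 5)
Your proposal is correct and follows essentially the same route as the paper: reduce to the stably large case via Corollary~\ref{C_4619_StFinStLg}, use the spectral characterization in Lemma~\ref{L_2Y24_WhenP} (together with Corollary~\ref{C_2Y24_PureAbs}) to see that $\io_*$ preserves pure positivity and that preimages of purely positive classes are purely positive, and then get surjectivity from Theorem~\ref{T-2720CuSurj} and injectivity/order reflection from Theorem~\ref{T-2725Inj}. The only cosmetic difference is that you handle the zero class and deduce injectivity by applying order reflection twice, where the paper cites Theorem~\ref{T-2725Inj}(\ref{T-2725Inj-Inj}) directly; the content is identical.
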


It is not true that $\io_*$ defines an isomorphism
from ${\operatorname{Cu}} (B)$ to ${\operatorname{Cu}} (A)$.
Example~\ref{E_4819_NoKIso}
shows that
$\io_* \colon {\operatorname{Cu}} (B) \to {\operatorname{Cu}} (A)$
need not be injective.

\begin{proof}[Proof of Theorem~\ref{C_3X08_IsoOnPure}]
It follows from Corollary~\ref{C_4619_StFinStLg}
that $B$ is stably large in~$A$.
Also, $B$ is stably finite because it is a subalgebra of~$A$.
So Corollary~\ref{C_2Y24_PureAbs} implies
that ${\operatorname{Cu}}_{+} (A) \cup \{ 0 \}$
and ${\operatorname{Cu}}_{+} (B) \cup \{ 0 \}$
are in fact ordered semigroups.
It is clear that $\io_*$ is order preserving and additive.
We must therefore prove the following four statements:
\begin{enumerate}
\item\label{C_3X08_IsoOnPure_Inside}
$\io_* \big( {\operatorname{Cu}}_{+} (B) \cup \{ 0 \} \big)
   \subset {\operatorname{Cu}}_{+} (A) \cup \{ 0 \}$.
\item\label{C_3X08_IsoOnPure_Cont}
$\io_* \big( {\operatorname{Cu}}_{+} (B) \cup \{ 0 \} \big)
   \supset {\operatorname{Cu}}_{+} (A) \cup \{ 0 \}$.
\item\label{C_3X08_IsoOnPure_Inj}
$\io_* |_{{\operatorname{Cu}}_{+} (B) \cup \{ 0 \}}$ is injective.
\item\label{C_3X08_IsoOnPure_IfOrder}
If $\mu, \et \in {\operatorname{Cu}}_{+} (B) \cup \{ 0 \}$
and $\io_* (\mu) \leq \io_* (\et)$,
then $\mu \leq \et$.
\end{enumerate}

Our first step is to prove that
\begin{equation}\label{Eq_4819_StStStA}
\io_* (0) = 0,
\,\,\,\,\,\,
\io_* ( {\operatorname{Cu}}_{+} (B) )
  \subset {\operatorname{Cu}}_{+} (A),
\end{equation}
and
\begin{equation}\label{Eq_4819_StStSt2nd}
\io_* \big( {\operatorname{Cu}} (B) \setminus
    [{\operatorname{Cu}}_{+} (B) \cup \{ 0 \}] \big)
  \subset {\operatorname{Cu}} (A) \setminus
    [{\operatorname{Cu}}_{+} (A) \cup \{ 0 \}].
\end{equation}
It is obvious that $\io_* (0) = 0$.
By Lemma~\ref{L_2Y24_WhenP}, for any stably finite simple \ca~$D$,
the set
${\operatorname{Cu}} (D) \setminus
    [{\operatorname{Cu}}_{+} (D) \cup \{ 0 \}]$
is the set of classes $\langle p \rangle$
of nonzero \pj{s} $p \in K \otimes D$.
So the relation~(\ref{Eq_4819_StStSt2nd})
is also clear.
To prove the second part of~(\ref{Eq_4819_StStStA}),
let $\et \in {\operatorname{Cu}}_{+} (B)$.
Choose $b \in (K \otimes B)_{+}$
such that $\langle b \rangle = \et$.
Lemma~\ref{L_2Y24_WhenP}
implies that $0$ is not isolated in $\spec_{K \otimes B} (b)$.
So $0$ is not isolated in $\spec_{K \otimes A} (b)$.
Therefore Lemma~\ref{L_2Y24_WhenP}
implies that $\langle \io (b) \rangle \in {\operatorname{Cu}} (A)$
is actually in ${\operatorname{Cu}}_{+} (A)$,
as desired.

The statement~(\ref{C_3X08_IsoOnPure_Inside})
is now immediate from~(\ref{Eq_4819_StStStA}).
For~(\ref{C_3X08_IsoOnPure_Cont}),
let $\et \in {\operatorname{Cu}}_{+} (A) \cup \{ 0 \}$.
If $\et = 0$,
clearly $\et \in \io_* ( {\operatorname{Cu}}_{+} (B) \cup \{ 0 \} )$.
Otherwise, Theorem~\ref{T-2720CuSurj} provides
$\mu \in {\operatorname{Cu}} (B)$
such that $\io_* (\mu) = \et$.
It follows from the first part of~(\ref{Eq_4819_StStStA})
that $\mu \neq 0$,
and (\ref{Eq_4819_StStSt2nd}) now implies that
$\mu \in {\operatorname{Cu}}_{+} (B)$.

For (\ref{C_3X08_IsoOnPure_Inj})
and~(\ref{C_3X08_IsoOnPure_IfOrder}),
by~(\ref{Eq_4819_StStStA})
it is enough to consider ${\operatorname{Cu}}_{+} (B)$
in place of ${\operatorname{Cu}}_{+} (B) \cup \{ 0 \}$.
Now (\ref{C_3X08_IsoOnPure_Inj})
follows from Theorem \ref{T-2725Inj}(\ref{T-2725Inj-Inj}),
and (\ref{C_3X08_IsoOnPure_IfOrder})
follows from Theorem \ref{T-2725Inj}(\ref{T-2725Inj-Ord}).
\end{proof}

\begin{prp}\label{P-2724QTBij}
Let $A$ be an infinite dimensional simple unital \ca,
and let $B \subset A$ be a stably large subalgebra.
Then the restriction map
${\operatorname{QT}} (A) \to {\operatorname{QT}} (B)$ is bijective.
\end{prp}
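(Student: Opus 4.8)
The plan is to deduce the statement from the isomorphism on purely positive parts of the Cuntz semigroup (Theorem~\ref{C_3X08_IsoOnPure}), via the identification of normalized $2$-quasitraces with normalized functionals on the Cuntz semigroup. First I would dispose of the case in which $A$ is not stably finite. A simple unital \ca{} admitting a normalized $2$-quasitrace is stably finite, so $\QT (A) = \varnothing$; and if $\QT (B)$ were nonempty then $B$---which is simple by Proposition~\ref{P_2627_NoSmp}, unital, and infinite dimensional by Proposition~\ref{P-2729LgInfD}---would be stably finite, forcing $A$ to be stably finite by~(\ref{4814_Large_Fin}), a contradiction; so $\QT (B) = \varnothing$ as well, and the restriction map is the trivial bijection between two empty sets. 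From now on I assume $A$ is stably finite. Then $B$ is stably finite, being a subalgebra of~$A$, and $A$ and $B$ are simple, unital and infinite dimensional, hence not of type~I, so all the results of Sections~\ref{Sec_CSimp}, \ref{Sec_PPE} and~\ref{Sec_CuLg} apply to them.

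Let $\io \colon B \to A$ be the inclusion and $\io_* = \Cu (\io)$. For $\ta \in \QT (A)$, the restriction $\ta|_B$ is again a normalized $2$-quasitrace, since the defining relations restrict to~$B$ and $1_B = 1_A$; and from Definition~\ref{D:dtau} one reads off $d_{\ta|_B} = d_{\ta} \circ \io_*$ on $\Cu (B)$. By Theorem~\ref{T_2Y25_DTau}(\ref{T_2Y25_DTau_Cu}), the assignment $\ta \mapsto d_{\ta}$ identifies $\QT (A)$, respectively $\QT (B)$, with the set of functionals $\om$ on $\Cu (A)$, respectively $\Cu (B)$, satisfying $\om ( \langle 1 \rangle ) = 1$; and under these identifications the restriction map $\QT (A) \to \QT (B)$ becomes $\om \mapsto \om \circ \io_*$. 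This is well defined, because $\io_*$ is additive, order preserving, sends $0$ to $0$, and (Theorem~\ref{T_2Y25_CxSup}(\ref{T_2Y25_CxSup_Prv})) preserves supremums of countable upwards directed sets, so $\om \circ \io_*$ is a functional on $\Cu (B)$, and it is normalized because $\io_* ( \langle 1_B \rangle ) = \langle 1_A \rangle$. So it suffices to prove that $\om \mapsto \om \circ \io_*$ is a bijection from the normalized functionals on $\Cu (A)$ to the normalized functionals on $\Cu (B)$.

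To prove this I would factor that map through the purely positive parts. For a stably finite simple unital \ca~$D$ not of type~I, let $\mathcal{F} (D)$ denote the set of functionals $\om_0$ on $\Cu_{+} (D) \cup \{ 0 \}$ with $\sup \{ \om_0 (\et) : \et \in \Cu_{+} (D) \cup \{ 0 \} \text{ and } \et \le \langle 1 \rangle \} = 1$. By Lemma~\ref{L_3904_FclBj}, for each of $D = A$ and $D = B$, restriction and extension are mutually inverse bijections between $\mathcal{F} (D)$ and the normalized functionals on $\Cu (D)$. By Theorem~\ref{C_3X08_IsoOnPure}, $\io_*$ restricts to an order and semigroup isomorphism from $\Cu_{+} (B) \cup \{ 0 \}$ onto $\Cu_{+} (A) \cup \{ 0 \}$ carrying $\langle 1_B \rangle$ to $\langle 1_A \rangle$, hence carrying $\{ \et : \et \le \langle 1_B \rangle \}$ onto $\{ \et : \et \le \langle 1_A \rangle \}$, so precomposition with it is a bijection $\mathcal{F} (A) \to \mathcal{F} (B)$. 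Composing restriction (on $\Cu (A)$), this precomposition, and extension (on $\Cu (B)$) gives a bijection between the normalized functionals on $\Cu (A)$ and those on $\Cu (B)$; and restricting functionals back to $\Cu_{+} ( \cdot ) \cup \{ 0 \}$, together with the injectivity clause of Lemma~\ref{L_3904_FclBj}, shows that this composite is exactly $\om \mapsto \om \circ \io_*$. This proves the proposition.

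The main thing to get right is the bookkeeping in the last paragraph: that the normalization conditions defining $\mathcal{F} (A)$ and $\mathcal{F} (B)$ correspond under the isomorphism of Theorem~\ref{C_3X08_IsoOnPure}, and that composing the restriction and extension bijections of Lemma~\ref{L_3904_FclBj} with that isomorphism really does reproduce $\om \mapsto \om \circ \io_*$ rather than some other map---which is precisely where the injectivity half of Lemma~\ref{L_3904_FclBj} enters. Everything else is routine.
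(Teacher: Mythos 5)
Your proposal is correct and follows essentially the same route as the paper's own proof: identify $\QT(A)$ and $\QT(B)$ with normalized functionals on $\Cu(A)$ and $\Cu(B)$ via Theorem~\ref{T_2Y25_DTau}(\ref{T_2Y25_DTau_Cu}), pass to the purely positive parts via Lemma~\ref{L_3904_FclBj}, and transport along the isomorphism of Theorem~\ref{C_3X08_IsoOnPure}; your explicit disposal of the non-stably-finite case (where both sets of quasitraces are empty) is a sensible addition, since the paper's proof silently invokes results that assume stable finiteness. The one point where you are no more detailed than the paper is the claim that $\io_*$ carries $\{\et \colon \et \leq \langle 1 \rangle \ \mbox{in } \Cu(B)\}$ onto $\{\et \colon \et \leq \langle 1 \rangle \ \mbox{in } \Cu(A)\}$: the backward direction does not follow formally from the purely positive isomorphism (Theorem~\ref{T-2725Inj}(\ref{T-2725Inj-Ord}) is unavailable because $\langle 1 \rangle$ is a projection class) and needs a short extra argument, for instance using pure positivity of $\mu$ and Lemma~\ref{L-2721AToB} applied to $(b-\ep)_{+}$ together with a spectral tail of $b$---but the paper's proof makes the same unargued assertion, so you are at parity with it there.
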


\begin{proof}
Let $\io \colon B \to A$ be the inclusion map.
Then
$\io_* \colon {\operatorname{Cu}}_{+} (B) \cup \{ 0 \}
   \to {\operatorname{Cu}}_{+} (A) \cup \{ 0 \}$
is a semigroup and order isomorphism
by Theorem~\ref{C_3X08_IsoOnPure}.
Therefore
$\om \mapsto \om \circ \io_*$
is a bijection from the functionals $\om$
(as in Definition~\ref{D_2Y25_CzFcnl})
on ${\operatorname{Cu}}_{+} (A) \cup \{ 0 \}$
such that
\[
\sup \big( \big\{ \om (\et) \colon
 {\mbox{$\et \in {\operatorname{Cu}}_{+} (A) \cup \{ 0 \}$
     and $\et \leq \langle 1 \rangle$ in
     ${\operatorname{Cu}} (A)$}} \big\} \big)
 = 1
\]
to the analogous set of functionals
on ${\operatorname{Cu}}_{+} (B) \cup \{ 0 \}$.
So Lemma~\ref{L_3904_FclBj}
implies that $\om \mapsto \om \circ \io_*$
is a bijection from the functionals $\om$ on ${\operatorname{Cu}} (A)$
such that $\om ( \langle 1 \rangle ) = 1$
to the analogous set of functionals on ${\operatorname{Cu}} (B)$.
The proof is completed by applying
Theorem \ref{T_2Y25_DTau}(\ref{T_2Y25_DTau_Cu}).
\end{proof}

We recall the following definition.
We are relying on Theorem~\ref{T_2Y25_DTau}(\ref{T_2Y25_DTau_W})
(equivalently,
the discussion before Definition~6.1 of \cite{Tm1})
for the equivalence of our formulation
with the original version.

\begin{dfn}[Definition~6.1 of \cite{Tm1}]\label{D_4814_rc}
Let $A$ be a simple \uca.
For $r \in [0, \infty)$,
we say that {\emph{$A$ has $r$-comparison}}
(or {\emph{$W (A)$ has $r$-comparison}})
if whenever $a, b \in M_{\infty} (A)_{+}$
satisfy $d_{\ta} (a) + r < d_{\ta} (b)$
for all $\ta \in \QT (A)$,
then $a \precsim_A b$.
We further define the {\emph{radius of comparison of $A$}}
to be
\[
{\mathrm{rc}} (A)
 = \inf \big( \big\{ r \in [0, \infty)
   \colon {\mbox{$A$ has $r$-comparison}} \big\} \big).
\]
\end{dfn}

We warn that $r$-comparison and ${\mathrm{rc}} (A)$
are sometimes defined using tracial states
rather than quasitraces.
We presume that analogs of the results below are true
for those versions as well,
but we have not checked this.

We can also define a version using ${\operatorname{Cu}} (A)$.
The number one gets turns out to be just ${\mathrm{rc}} (A)$
(see Proposition~\ref{P_4817_rCompCu} below),
and this definition is only intended for convenience
of exposition in this paper.
Again, we use quasitraces, not just tracial states.

\begin{dfn}\label{D_4814_rcCu}
Let $A$ be a simple \uca.
For $r \in [0, \infty)$,
we say that {\emph{${\operatorname{Cu}} (A)$ has $r$-comparison}}
if whenever $a, b \in (K \otimes A)_{+}$
satisfy $d_{\ta} (a) + r < d_{\ta} (b)$
for all $\ta \in \QT (A)$,
then $a \precsim_A b$.
\end{dfn}

\begin{prp}\label{P_4817_rCompCu}
Let $A$ be a simple \uca{} and let $r \in [0, \infty)$.
Then $W (A)$ has $r$-comparison
\ifo{} ${\operatorname{Cu}} (A)$ has $r$-comparison.
\end{prp}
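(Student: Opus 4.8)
The implication ``${\operatorname{Cu}}(A)$ has $r$-comparison $\Rightarrow$ $W(A)$ has $r$-comparison'' is immediate: $M_{\infty}(A)_{+}$ sits inside $(K \otimes A)_{+}$, neither $\precsim_{A}$ nor $d_{\ta}$ changes under this inclusion, and so the hypothesis and conclusion of Definition~\ref{D_4814_rcCu} restrict to those of Definition~\ref{D_4814_rc}. For the converse, the plan is: assume $W(A)$ has $r$-comparison, reduce the ${\operatorname{Cu}}(A)$-statement to comparing $(a'-\ep')_{+}\in M_{\infty}(A)_{+}$ with a large $M_{\infty}(A)_{+}$-truncation of $b$, and carry the strict comparison of dimension functions across that reduction by a compactness argument on $\QT(A)$. (We may assume $\QT(A)\neq\varnothing$: otherwise the hypotheses in both definitions are vacuous, ``$W(A)$ has $r$-comparison'' then forces $A=0$ by taking $a=1$, $b=0$, and the statement is trivial. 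With $\QT(A)\neq\varnothing$ the space $\QT(A)$ is weak-$*$ compact, and $A$, being simple and unital with a quasitrace, is stably finite, so $d_{\ta}$ is as in Definition~\ref{D:dtau}.) Let $a,b\in(K\otimes A)_{+}$ satisfy $d_{\ta}(a)+r<d_{\ta}(b)$ for all $\ta\in\QT(A)$. By Lemma~\ref{L:CzBasic}(\ref{L:CzBasic:LMinusEp}) it suffices to prove $(a-\ep)_{+}\precsim_{A}b$ for each $\ep>0$; fixing $\ep$, Lemma~\ref{L-2720KToMn} produces $a'\in M_{n}(A)_{+}$ with $a'\sim_{A}(a-\ep)_{+}$, so $d_{\ta}(a')\leq d_{\ta}(a)$; and by Lemma~\ref{L:CzBasic}(\ref{L:CzBasic:LMinusEp}) once more it suffices to prove $(a'-\ep')_{+}\precsim_{A}b$ for each $\ep'>0$.

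The heart of the matter is to produce an \emph{upper} semicontinuous majorant of $\ta\mapsto d_{\ta}\big((a'-\ep')_{+}\big)$. Having fixed $\ep'>0$, for $0<\eta<\ep'$ let $h_{\eta}\colon[0,\infty)\to[0,1]$ be continuous, equal to $0$ on $[0,\ep'-\eta]$ and to $1$ on $[\ep',\infty)$, and set $U(\ta)=\inf_{0<\eta<\ep'}\ta\big(h_{\eta}(a')\big)$. As an infimum of weak-$*$ continuous functions, $U$ is upper semicontinuous on $\QT(A)$. I will verify, for every $\ta$ and every $\delta\in(0,\ep')$, the chain
\[
d_{\ta}\big((a'-\ep')_{+}\big)\;\leq\;U(\ta)\;\leq\;d_{\ta}\big((a'-\delta)_{+}\big)\;\leq\;d_{\ta}(a')\;\leq\;d_{\ta}(a).
\]
The first inequality follows from $(a'-\ep')_{+}^{1/k}\leq\|(a'-\ep')_{+}\|^{1/k}\,h_{\eta}(a')$ (true for all $k$ by functional calculus) upon applying $\ta$ and letting $k\to\infty$. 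The second holds because, once $\delta<\ep'-\eta$, the element $h_{\eta}(a')$ lies in the hereditary subalgebra of $M_{n}(A)$ generated by $(a'-\delta)_{+}$, so Lemma~\ref{L:CzBasic}(\ref{L:CzBasic:Her}) together with $\ta\leq d_{\ta}$ on positive contractions gives $\ta\big(h_{\eta}(a')\big)\leq d_{\ta}\big(h_{\eta}(a')\big)\leq d_{\ta}\big((a'-\delta)_{+}\big)$. The third is Lemma~\ref{L:CzBasic}(\ref{L:CzBasic:LCzOneWay}). In particular $U(\ta)+r\leq d_{\ta}(a)+r<d_{\ta}(b)$ for all $\ta$.

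Now the compactness step. Since $\ta\mapsto d_{\ta}(b)$ is lower semicontinuous (Lemma~\ref{L_4815_lsc}) and $U$ is upper semicontinuous, $\ta\mapsto d_{\ta}(b)-U(\ta)$ is lower semicontinuous and everywhere strictly greater than $r$ on the compact space $\QT(A)$; hence there is $s>0$ with $d_{\ta}(b)\geq U(\ta)+r+s$ for all $\ta$. Using Lemma~\ref{L-2720KToMn} again, choose $c_{m}\in M_{\infty}(A)_{+}$ with $c_{m}\sim_{A}\big(b-\tfrac{1}{m}\big)_{+}$ for each $m\in\N$. Then $c_{m}\precsim_{A}b$, the classes $\langle c_{m}\rangle$ are nondecreasing, and $\sup_{m}\langle c_{m}\rangle=\langle b\rangle$ by Lemma~\ref{L_3905_CuWay}(\ref{L_3905_CuWay_SupEp}), so $d_{\ta}(c_{m})$ increases to $d_{\ta}(b)$ for each $\ta$ (as $d_{\ta}$ is a functional on ${\operatorname{Cu}}(A)$, Theorem~\ref{T_2Y25_DTau}(\ref{T_2Y25_DTau_Cu})). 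Consequently the sets $O_{m}=\{\ta\in\QT(A):d_{\ta}(c_{m})-U(\ta)>r+\tfrac{s}{2}\}$ are open, increase with $m$, and cover $\QT(A)$, so $O_{M}=\QT(A)$ for some $M$. For that $M$ we have $d_{\ta}\big((a'-\ep')_{+}\big)+r\leq U(\ta)+r<d_{\ta}(c_{M})$ for all $\ta$, with $(a'-\ep')_{+},c_{M}\in M_{\infty}(A)_{+}$, so the assumed $W(A)$-$r$-comparison yields $(a'-\ep')_{+}\precsim_{A}c_{M}\precsim_{A}b$. As $\ep'$ was arbitrary, $a'\precsim_{A}b$; hence $(a-\ep)_{+}\sim_{A}a'\precsim_{A}b$; and as $\ep$ was arbitrary, $a\precsim_{A}b$.

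The one genuinely delicate point --- and the reason the envelope $U$ is needed --- is that $\ta\mapsto d_{\ta}(c)$ is in general only lower semicontinuous, and is \emph{not} upper semicontinuous once $0$ fails to be isolated in $\spec(c)$. So the naive strategy of simply truncating $b$ to some $M_{m}(A)$ and carrying the strict inequality along cannot be run as a Dini argument: the ``smaller'' side $d_{\ta}\big((a'-\ep')_{+}\big)$ is lower, not upper, semicontinuous, and one can build explicit counterexamples to the abstract Dini statement with a lower semicontinuous minorand. Replacing that side by the upper semicontinuous $U$ --- squeezed between $d_{\ta}\big((a'-\ep')_{+}\big)$ and $d_{\ta}\big((a'-\delta)_{+}\big)$ for all small $\delta$, so that it still lies below $d_{\ta}(a)$ while majorizing $d_{\ta}\big((a'-\ep')_{+}\big)$ --- is precisely what makes the sets $O_{m}$ genuinely open and lets compactness of $\QT(A)$ finish the proof.
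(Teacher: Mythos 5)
Your proof is correct and follows essentially the same route as the paper: cut $a$ down to an element of $M_\infty(A)_+$ via Lemma~\ref{L-2720KToMn} and $(\,\cdot\,-\ep)_+$, replace the lower semicontinuous function $\ta \mapsto d_\ta\big((a'-\ep')_+\big)$ by a better-behaved interpolant sandwiched below $d_\ta(a)$, and use compactness of $\QT(A)$ against the increasing truncations $\ta \mapsto d_\ta\big(\big(b-\tfrac{1}{m}\big)_+\big)$ to land the comparison in $W(A)$. The only real difference is cosmetic: the paper uses a single \emph{continuous} interpolant $\ta \mapsto \ta(g(a))$ (with $g$ equal to $1$ on $[\ep,\infty)$ and linear on $[0,\ep]$) and packages the compactness step as the Dini-type Lemma~\ref{L_4817_Dini}, so your infimum over $\eta$ --- and the closing claim that an upper semicontinuous rather than continuous majorant is genuinely needed --- is an unnecessary (though harmless) elaboration, since a single fixed $\eta$ already yields a continuous function with the same sandwich property.
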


The comment after Definition~3.1 of~\cite{BRTTW}
claims that Proposition~\ref{P_4817_rCompCu} is true.
There seems to be a misprint,
since the reason given for this,
in Subsection~2.4 of~\cite{BRTTW},
does not address the following difficulty.
Suppose $a, b \in (K \otimes A)_{+}$
and $d_{\ta} (a) + r < d_{\ta} (b)$
for all $\ta \in \QT (A)$.
For $\ep > 0$ one needs to find $c \in M_{\infty} (A)_{+}$
such that $d_{\ta} ( (a - \ep)_{+}) + r < d_{\ta} (c)$
for all $\ta \in \QT (A)$.
The obvious approach only allows one to do this for
one choice of~$\ta$ at a time.

The following form of Dini's Theorem
solves this difficulty.
It is surely well known,
but we have not found a reference.

\begin{lem}\label{L_4817_Dini}
Let $X$ be a \chs,
let $(f_n)_{n \in \N}$
be a sequence of lower semi\cfn{s}
$f_n \colon X \to \R \cup \{ \infty \}$ such that
for all $x \in X$ we have
$f_1 (x) \leq f_2 (x) \leq \cdots$,
and let $g \colon X \to \R$ be a \cfn{}
such that $g (x) < \limi{n} f_n (x)$ for all $x \in X$.
Then
there is $n \in \N$ such that
for all $x \in X$ we have
$f_n (x) > g (x)$.
\end{lem}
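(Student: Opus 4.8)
The plan is to run the classical open-cover argument behind Dini's theorem, adapted to the lower semicontinuous setting. First I would, for each $n \in \N$, put
\[
U_n = \{ x \in X \colon f_n (x) > g (x) \}.
\]
Since $g$ is continuous and $f_n$ is lower semicontinuous (with values in $\R \cup \{ \infty \}$), the function $f_n - g \colon X \to \R \cup \{ \infty \}$ is lower semicontinuous, so $U_n = (f_n - g)^{-1} \big( (0, \infty] \big)$ is open. Because $f_1 (x) \leq f_2 (x) \leq \cdots$ for all $x$, the sets $U_n$ form an increasing sequence of open subsets of~$X$.

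Next I would check that $(U_n)_{n \in \N}$ covers~$X$. Let $x \in X$. Since the sequence $\big( f_n (x) \big)_{n \in \N}$ is nondecreasing, $\limi{n} f_n (x) = \sup_{n \in \N} f_n (x)$, and the hypothesis gives $g (x) < \sup_{n \in \N} f_n (x)$. Hence there is $n \in \N$ with $f_n (x) > g (x)$, that is, $x \in U_n$. Therefore $X = \bigcup_{n \in \N} U_n$.

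Finally, compactness of~$X$ provides finitely many indices $n_1 < n_2 < \cdots < n_k$ with $X = U_{n_1} \cup U_{n_2} \cup \cdots \cup U_{n_k}$, and since the $U_n$ are nested this forces $X = U_{n_k}$. Taking $n = n_k$ yields $f_n (x) > g (x)$ for all $x \in X$, which is the assertion.

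I do not expect any genuine obstacle here; the argument is essentially routine. The only points needing a moment's care are that $f_n$ may take the value $\infty$ (which causes no trouble either for the openness of $U_n$, since $(0, \infty]$ is open in $\R \cup \{ \infty \}$, or for the identity $\limi{n} f_n (x) = \sup_n f_n (x)$), and the elementary fact that a sum of a lower semicontinuous extended-real-valued function with a continuous real-valued function is again lower semicontinuous.
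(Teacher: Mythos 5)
Your proof is correct and follows essentially the same argument as the paper: define the open sets $U_n = \{ x \in X \colon f_n (x) > g (x) \}$, note they are increasing and cover $X$, and use compactness to conclude that some $U_n$ equals $X$. Your extra remarks about the value $\infty$ and the lower semicontinuity of $f_n - g$ are fine but not an essential difference.
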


\begin{proof}
For $n \in \N$ define
\[
U_n = \big\{ x \in X \colon f_n (x) - g (x) > 0 \big\}.
\]
Then $U_n$ is open because $f_n$ is lower semi\ct.
We have
\[
U_1 \subset U_2 \subset \cdots
\andeqn
\bigcup_{n = 1}^{\infty} U_n = X.
\]
Since $X$ is compact,
there is $n \in \N$ such that $U_n = X$.
\end{proof}

\begin{proof}[Proof of Proposition~\ref{P_4817_rCompCu}]
It is easy to see that if ${\operatorname{Cu}} (A)$ has $r$-comparison
then $W (A)$ has $r$-comparison.
So assume that $W (A)$ has $r$-comparison,
and let $a, b \in (K \otimes A)_{+}$
satisfy $d_{\ta} (a) + r < d_{\ta} (b)$
for all $\ta \in \QT (A)$.
Let $\ep > 0$.
We prove that $(a - \ep)_{+} \precsim_A b$.
By Lemma \ref{L:CzBasic}(\ref{L:CzBasic:LMinusEp}),
this suffices.

Define a \cfn{} $g \colon [0, \infty) \to [0, 1]$ by
\[
g (\ld) = \begin{cases}
   \ep^{-1} \ld & \hspace{3em} 0 \leq \ld \leq \ep
       \\
   1            & \hspace{3em} \ep \leq \ld.
\end{cases}
\]
For $\ta \in \QT (A)$ we have
$d_{\ta} ( (a - \ep)_{+} ) \leq \ta (g (a)) \leq d_{\ta} (a)$,
so $\ta (g (a)) + r < d_{\ta} (b)$.
Also,
$\ta \mapsto \ta (g (a))$ is \ct.
Define $f_n \colon \QT (A) \to [0, \infty]$
by
$f_n (\ta) = d_{\ta} \big( \big( b - \frac{1}{n} \big)_{+} \big)$
for $\ta \in \QT (A)$ and $n \in \N$.
Then for $\ta \in \QT (A)$
we have $f_1 (\ta) \leq f_2 (\ta) \leq \cdots$,
and it follows from Lemma~\ref{L_3905_CuWay}(\ref{L_3905_CuWay_SupEp})
and Theorem~\ref{T_2Y25_DTau}(\ref{T_2Y25_DTau_Cu})
that $\limi{n} f_n (\ta) = d_{\ta} (a)$.
Since $f_n$ is lower semi\ct{} for $n \in \N$
(by Lemma~\ref{L_4815_lsc}),
{}from Lemma~\ref{L_4817_Dini}
we get $n \in \N$
such that for all $\ta \in \QT (A)$
we have
$f_n (\ta) > \ta (g (a)) + r$,
whence
\[
d_{\ta} \big( \big( b - \tfrac{1}{n} \big)_{+} \big)
 \geq d_{\ta} ( (a - \ep)_{+} ) + r.
\]
Lemma~\ref{L-2720KToMn}
implies that $\big\langle \big( b - \frac{1}{n} \big)_{+} \big\rangle$
and $\langle (a - \ep)_{+} \rangle$ are in $W (A)$.
So the hypothesis gives the first step of the calculation
$(a - \ep)_{+} \precsim_A \big( b - \frac{1}{n} \big)_{+} \precsim_A b$.
\end{proof}

\begin{thm}\label{T_4814_RCEq}
Let $A$ be an infinite dimensional stably finite
simple separable unital \ca.
Let $B \subset A$ be large
in the sense of Definition \ref{D_Large},
and let ${\mathrm{rc}} (-)$ be as in Definition~\ref{D_4814_rc}.
Then ${\mathrm{rc}} (A) = {\mathrm{rc}} (B)$.
\end{thm}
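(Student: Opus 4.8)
The plan is to pass to the Cuntz semigroup, reduce $r$-comparison to a statement about purely positive elements, and then transfer that statement across the inclusion. Since $A$ is stably finite, Corollary~\ref{C_4619_StFinStLg} shows that $B$ is stably large; by Propositions~\ref{P_2627_NoSmp} and~\ref{P-2729LgInfD}, $B$ is simple and infinite dimensional, hence (being unital) not of type~I, and $B$ is stably finite as a subalgebra of~$A$. By Definition~\ref{D_4814_rc} together with Proposition~\ref{P_4817_rCompCu}, for $D \in \{ A, B \}$ we have ${\mathrm{rc}} (D) = \inf ( \{ r \in [0, \infty) \colon \Cu (D) \text{ has } r\text{-comparison} \} )$ in the sense of Definition~\ref{D_4814_rcCu}. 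It therefore suffices to prove, for each $r \geq 0$, that $\Cu (A)$ has $r$-comparison if and only if $\Cu (B)$ has $r$-comparison.

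The central step is the following reduction, valid for any stably finite simple unital \ca~$D$ which is not of type~I: $\Cu (D)$ has $r$-comparison if and only if, whenever $a, b \in (K \otimes D)_{+}$ are purely positive (Definition~\ref{D_2Y24_Pure}) and $d_{\ta} (a) + r < d_{\ta} (b)$ for all $\ta \in \QT (D)$, one has $a \precsim_D b$. Only the nontrivial direction needs proof. Let $a, b \in (K \otimes D)_{+}$ satisfy $d_{\ta} (a) + r < d_{\ta} (b)$ for all $\ta$; by Lemma~\ref{L:CzBasic}(\ref{L:CzBasic:LMinusEp}) it is enough to show $(a - \ep)_{+} \precsim_D b$ for each fixed $\ep > 0$, and by rescaling we may assume $\| a \| \leq 1$. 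As in the proof of Proposition~\ref{P_4817_rCompCu}, let $g \colon [0, \infty) \to [0, 1]$ be the continuous function with $g (\ld) = \ep^{-1} \ld$ for $\ld \leq \ep$ and $g (\ld) = 1$ for $\ld \geq \ep$, so that $d_{\ta} ( (a - \ep)_{+} ) \leq \ta (g (a)) \leq d_{\ta} (a)$ for all $\ta$. Then $\ta \mapsto d_{\ta} (b) - \ta (g (a)) - r$ is lower semicontinuous on the compact space $\QT (D)$ (by Lemma~\ref{L_4815_lsc} and continuity of $\ta \mapsto \ta (g (a))$) and strictly positive, hence bounded below by some $\dt_0 > 0$. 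Using Corollary~\ref{C-2718CuDiv} and Lemma~\ref{L-2817Sp01}, choose $z \in D_{+} \setminus \{ 0 \}$ with $\spec (z) = [0, 1]$ (so $z$ is purely positive by Lemma~\ref{L_2Y24_WhenP}) and $d_{\ta} (z) < \dt_0 / 3$ for all $\ta$; by Corollary~\ref{C_2Y24_PureAbs} the element $(a - \ep)_{+} \oplus z$ is then purely positive, and $d_{\ta} ( (a - \ep)_{+} \oplus z ) + r < d_{\ta} (b)$ for all $\ta$. If $b$ is purely positive, the hypothesis applies directly and gives $(a - \ep)_{+} \oplus z \precsim_D b$, hence $(a - \ep)_{+} \precsim_D b$. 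If instead $\langle b \rangle = \langle q \rangle$ for a projection $q \in K \otimes D$, apply Lemma~\ref{L_3905_InterpPj} to~$q$ with the integer~$1$ and $\xi = \langle z \rangle$ to get $\mu, \kp \in W_{+} (D)$ with $\mu \leq \langle q \rangle \leq \mu + \kp$ and $\kp \leq \langle z \rangle$; then $d_{\ta} (\mu) \geq d_{\ta} (b) - d_{\ta} (z)$, so a short computation using the lower bound $\dt_0$ shows $d_{\ta} ( (a - \ep)_{+} \oplus z ) + r < d_{\ta} (\mu)$ for all $\ta$, and applying the hypothesis to representatives of $\langle (a - \ep)_{+} \oplus z \rangle$ and of $\mu$ gives $(a - \ep)_{+} \precsim_D \mu \leq \langle b \rangle$. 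In both cases $(a - \ep)_{+} \precsim_D b$, which proves the reduction.

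For the transfer, write $\io \colon B \to A$ for the inclusion. For $c \in (K \otimes B)_{+}$ and $\sm \in \QT (A)$, the limit formula of Definition~\ref{D:dtau} gives $d_{\sm} (\io (c)) = d_{\sm |_B} (c)$, and $\sm \mapsto \sm |_B$ is a bijection $\QT (A) \to \QT (B)$ by Proposition~\ref{P-2724QTBij}; moreover $\io_{*}$ restricts to an order and semigroup isomorphism $\Cu_{+} (B) \cup \{ 0 \} \to \Cu_{+} (A) \cup \{ 0 \}$ by Theorem~\ref{C_3X08_IsoOnPure}, carrying purely positive classes bijectively to purely positive classes and reflecting the order. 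Together these show that the purely positive form of $r$-comparison from the reduction holds for~$A$ if and only if it holds for~$B$: given purely positive $a, b \in (K \otimes B)_{+}$ with $d_{\ta} (a) + r < d_{\ta} (b)$ for all $\ta \in \QT (B)$, the inequality becomes $d_{\sm} (\io a) + r < d_{\sm} (\io b)$ for all $\sm \in \QT (A)$, so the version for~$A$ gives $\io a \precsim_A \io b$ and hence $a \precsim_B b$ because $\io_{*}$ reflects the order on purely positive classes; the reverse implication is symmetric, using surjectivity of $\io_{*}$ onto the purely positive classes of~$A$. Combined with the reduction, $\Cu (A)$ has $r$-comparison if and only if $\Cu (B)$ does, and taking infima over~$r$ yields ${\mathrm{rc}} (A) = {\mathrm{rc}} (B)$.

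I expect the main obstacle to be the reduction in the second paragraph, and specifically the case in which the larger element~$b$ is Cuntz equivalent to a projection: the pointwise inequality $d_{\ta} (a) + r < d_{\ta} (b)$ need not hold with any uniform gap once one demands honest purely positive elements on both sides, so one must first replace $d_{\ta} ( (a - \ep)_{+} )$ by the continuous function $\ta \mapsto \ta (g (a))$ in order to extract the strictly positive constant $\dt_0$ from compactness of $\QT (D)$, and then shrink $\langle q \rangle$ to a purely positive class via Lemma~\ref{L_3905_InterpPj} without spoiling the inequality. The remaining steps are routine manipulations with the isomorphisms already established.
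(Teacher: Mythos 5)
Your argument is correct, and it reaches the conclusion by a genuinely different factorization than the paper, although the toolkit is the same. The paper proves the two implications ``$r$-comparison for ${\operatorname{Cu}}(B)$ $\Rightarrow$ for ${\operatorname{Cu}}(A)$'' and conversely separately, each by a three-case analysis according to whether $\langle a \rangle$ or $\langle b \rangle$ is the class of a projection, moving elements back and forth with Theorem~\ref{T-2720CuSurj}, Theorem~\ref{C_3X08_IsoOnPure}, and Proposition~\ref{P-2724QTBij} inside each case; the direction starting from $r$-comparison for ${\operatorname{Cu}}(A)$ needs an extra trick in its Case~2, splitting a small purely positive piece off $a$ itself (via two bump functions on $\spec(a)$) so that the order-reflection of Theorem~\ref{C_3X08_IsoOnPure} can be applied. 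You instead isolate an intrinsic lemma -- in any stably finite simple unital non-type-I algebra, $r$-comparison for ${\operatorname{Cu}}(D)$ is equivalent to its restriction to purely positive elements -- handling both projection obstructions at once by absorbing a small purely positive $z$ (from Corollary~\ref{C-2718CuDiv} together with Lemma~\ref{L-2817Sp01}) on the left and shrinking a projection class on the right via Lemma~\ref{L_3905_InterpPj}, with the uniform gap $\dt_0$ extracted from compactness of $\QT(D)$ and Lemma~\ref{L_4815_lsc} exactly as in the paper's Case~3; after that, the transfer across the inclusion is symmetric and immediate from Theorem~\ref{C_3X08_IsoOnPure} and Proposition~\ref{P-2724QTBij}, so no analogue of the ``extra trick'' is needed. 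Your computation in the projection case checks out ($d_\ta((a-\ep)_+ \oplus z) + r < d_\ta(b) - 2\dt_0/3 < d_\ta(\mu)$), and the two points you leave implicit -- continuity of $\ta \mapsto \ta(g(a))$ and the compatibility $d_\sm(\io(c)) = d_{\sm|_B}(c)$ -- are used at the same level of explicitness in the paper's own proofs of Proposition~\ref{P_4817_rCompCu} and of the theorem, so they are not new gaps. What your route buys is symmetry and a reusable statement (comparison is determined on ${\operatorname{Cu}}_{+}$); what the paper's route buys is that in the generic case (neither class a projection) no auxiliary element $z$ is introduced at all.
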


\begin{proof}
The subalgebra $B$ is stably large by Corollary~\ref{C_4619_StFinStLg},
and $B$ is stably finite because it is a subalgebra of~$A$.

We must show that $W (A)$ has $r$-comparison
\ifo{} $W (B)$ has $r$-comparison.
By Proposition~\ref{P_4817_rCompCu},
it suffices to show that ${\operatorname{Cu}} (A)$ has $r$-comparison
\ifo{} ${\operatorname{Cu}} (B)$ has $r$-comparison.
The two directions are similar,
so we omit some details
of the proof that $r$-comparison for ${\operatorname{Cu}} (A)$
implies $r$-comparison for ${\operatorname{Cu}} (B)$.

Let $r \in [0, \infty)$,
suppose that
${\operatorname{Cu}} (B)$ has $r$-comparison,
and let $a, b \in (K \otimes A)_{+}$
satisfy $d_{\ta} (a) + r < d_{\ta} (b)$
for all $\ta \in \QT (A)$.
We must show that $a \precsim_A b$.
There are three cases,
the last of which will be done by reduction to previous cases.

Case~1:
Neither $\langle a \rangle$ nor $\langle b \rangle$
is the class of a \pj.
Use Theorem~\ref{T-2720CuSurj}
to find $x, y \in (K \otimes B)_{+}$
such that $x \sim_A a$ and $y \sim_A b$.
Applying Proposition~\ref{P-2724QTBij},
we get $d_{\ta} (x) + r < d_{\ta} (y)$
for all $\ta \in \QT (B)$.
Since ${\operatorname{Cu}} (B)$ has $r$-comparison,
it follows that $x \precsim_B y$.
Thus $a \precsim_A b$.

Case~2:
$\langle b \rangle$ is the class of a \pj{}
but $\langle a \rangle$ is not.
Theorem~\ref{C_3X08_IsoOnPure}
provides $x \in (K \otimes B)_{+}$
such that $x \sim_A a$
and $\langle x \rangle \in {\operatorname{Cu}} (B)$
is not the class of a \pj.
It is enough to prove that $x \precsim_A b$.
By Lemma \ref{L:CzBasic}(\ref{L:CzBasic:LMinusEp}),
it is enough to let $\ep > 0$
and prove that $(x - \ep)_{+} \precsim_A b$.

Choose a \cfn{} $f \colon [0, \infty) \to [0, 1]$
such that $f (\ld) > 0$ for $\ld \in (0, \ep)$
and $f (\ld) = 0$ for $\ld \in \{ 0 \} \cup [\ep, \infty)$.
Then $f (x) \neq 0$ by Lemma~\ref{L_2Y24_WhenP}.
Therefore
$\rh = \inf_{\ta \in \QT (A)} \ta (f (x))$
satisfies $\rh > 0$.
For $\ta \in \QT (A)$,
we have $d_{\ta} (f (x)) \geq \rh$,
so
\[
d_{\ta} \big( (x - \ep)_{+} \big) + r + \rh < d_{\ta} (b).
\]
Choose $n \in \N$ such that $\frac{1}{n} < \rh$.
Use Lemma~\ref{L_3905_InterpPj}
to find $y_0 \in (K \otimes A)_{+}$
such that $\langle y_0 \rangle \in {\operatorname{Cu}}_{+} (A)$,
and $\kp \in {\operatorname{Cu}}_{+} (A)$,
such that
\[
\langle y_0 \rangle
  \leq \langle b \rangle
  \leq \langle y_0 \rangle + \kp
\andeqn
n \kp \leq \langle 1 \rangle.
\]
For $\ta \in \QT (A)$,
we have $d_{\ta} (\kp) < \rh$.
Since $d_{\ta} (b) < \infty$,
we get
$d_{\ta} (y_0) > d_{\ta} (b) - \rh$,
so $d_{\ta} \big( (x - \ep)_{+} \big) + r < d_{\ta} (y_0)$.
Theorem~\ref{T-2720CuSurj}
gives $y \in (K \otimes B)_{+}$
such that $y \sim_A y_0$.
Applying Proposition~\ref{P-2724QTBij},
we get $d_{\ta} \big( (x - \ep)_{+} \big) + r < d_{\ta} (y)$
for all $\ta \in \QT (B)$.
Using  $r$-comparison for ${\operatorname{Cu}} (B)$ at the first step,
we get $(x - \ep)_{+} \precsim_B y \sim_A y_0 \precsim_A b$.

Case~3:
$\langle a \rangle$ is the class of a \pj.
We can clearly assume $\langle a \rangle \neq 0$.
Then $\ta \mapsto d_{\ta} (a)$ is \ct{} on $\QT (A)$.
So Lemma~\ref{L_4815_lsc} implies that
$\ta \mapsto d_{\ta} (b) - r - d_{\ta} (a)$
is lower semi\ct{} on $\QT (A)$.
Since this function is strictly positive and $\QT (A)$ is compact,
it follows that
$\rh = \inf_{\ta \in \QT (A)}
   \big( d_{\ta} (b) - r - d_{\ta} (a) \big)$
satisfies $\rh > 0$.
Choose $n \in \N$ such that $\frac{1}{n} < \rh$.
Use Lemma~\ref{L_3905_InterpPj}
to find $\mu, \kp \in {\operatorname{Cu}}_{+} (A)$,
such that
\[
\mu
  \leq \langle a \rangle
  \leq \mu + \kp
\andeqn
n \kp \leq \langle 1 \rangle.
\]
For $\ta \in \QT (A)$,
we have $d_{\ta} (\kp) < \rh$,
so
\[
d_{\ta} (\mu + \kp) + r
 < d_{\ta} (a) + \rh + r
 \leq d_{\ta} (b).
\]
Corollary~\ref{C_2Y24_PureAbs}
implies that $\mu + \kp \in {\operatorname{Cu}}_{+} (A)$.
Now, depending on whether or not
$\langle b \rangle$ is the class of a \pj,
Case~1 or Case~2
implies that $\mu + \kp \leq \langle b \rangle$
in ${\operatorname{Cu}} (A)$.
Since $\langle a \rangle \leq \mu + \kp$,
we get $a \precsim_A b$,
as desired.

This completes the proof that if
${\operatorname{Cu}} (B)$ has $r$-comparison,
then so does ${\operatorname{Cu}} (A)$.

Now suppose that ${\operatorname{Cu}} (A)$ has $r$-comparison.
Let $a, b \in (K \otimes B)_{+}$
satisfy $d_{\ta} (a) + r < d_{\ta} (b)$
for all $\ta \in \QT (B)$.
We use the same case division as above.

In Case~1,
we get $d_{\ta} (a) + r < d_{\ta} (b)$
for all $\ta \in \QT (A)$
by Proposition~\ref{P-2724QTBij}.
So $a \precsim_A b$ by hypothesis,
and $a \precsim_B b$ by Theorem~\ref{C_3X08_IsoOnPure}.

Case~2 requires an extra trick.
Let $\ep > 0$ as before.
Applying Lemma~\ref{L_2Y24_WhenP} to~$a$,
choose $\ep_0 \in (0, \ep)$
such that $\spec (a) \cap (\ep_0, \ep) \neq \varnothing$.
Choose \cfn{s} $f, g \colon [0, \infty) \to [0, 1]$
such that $f (\ld) > 0$ for $\ld \in (0, \ep_0)$
and $f (\ld) = 0$ for $\ld \in \{ 0 \} \cup [\ep_0, \infty)$,
and such that $g (\ld) > 0$ for $\ld \in (\ep_0, \ep)$
and $g (\ld) = 0$ for $\ld \in [0, \ep_0] \cup [\ep, \infty)$.
Then $f (a)$ and $g (a)$ are both nonzero.
Therefore
$\rh = \inf_{\ta \in \QT (B)} \ta (f (a))$
satisfies $\rh > 0$.
For $\ta \in \QT (B)$,
we have
\[
d_{\ta} \big( (a - \ep_0)_{+} \big) + r + \rh
 \leq d_{\ta} \big( (a - \ep_0)_{+} \big) + d_{\ta} (f (a)) + r
 \leq d_{\ta} (a) + r
 < d_{\ta} (b).
\]
Choose $n \in \N$ such that $\frac{1}{n} < \rh$.
Use Lemma~\ref{L_3905_InterpPj}
to find $y \in (K \otimes B)_{+}$
such that $\langle y \rangle \in {\operatorname{Cu}}_{+} (B)$,
and $\kp \in {\operatorname{Cu}}_{+} (B)$,
satisfying
\begin{equation}\label{Eq_4819q}
\langle y \rangle
  \leq \langle b \rangle
  \leq \langle y \rangle + \kp
\andeqn
n \kp \leq \langle 1 \rangle.
\end{equation}
Use Lemma~\ref{L-2817Sp01}
to choose a positive element $z \in {\overline{g (a) B g (a)}}$
such that $\spec (z) = [0, 1]$.
Then
\[
z \oplus (a - \ep)_{+} \precsim (a - \ep_0)_{+}
\]
by Lemma \ref{L:CzBasic}(\ref{L:CzBasic:Her})
and Lemma \ref{L:CzBasic}(\ref{L:CzBasic:Orth}).
For $\ta \in \QT (B)$,
we therefore get
\[
d_{\ta} \big( z \oplus (a - \ep)_{+} \big) + r + \rh
 \leq d_{\ta} \big( (a - \ep_0)_{+} \big) + r + \rh
 < d_{\ta} (b).
\]
Since $d_{\ta} (b) < \infty$,
and $d_{\ta} (\kp) \leq \frac{1}{n} < \rh$ by
the second part of~(\ref{Eq_4819q}),
the first part of~(\ref{Eq_4819q})
gives
\[
d_{\ta} \big( z \oplus (a - \ep)_{+} \big) + r
 < d_{\ta} (y).
\]
Proposition~\ref{P-2724QTBij} implies that this inequality holds
for all $\ta \in \QT (A)$.
So $z \oplus (a - \ep)_{+} \precsim_A y$ by hypothesis.
Now $\langle y \rangle \in {\operatorname{Cu}}_{+} (B)$
by construction,
and $z \oplus (a - \ep)_{+} \in {\operatorname{Cu}}_{+} (B)$
by Lemma~\ref{L_2Y24_WhenP}
and Corollary~\ref{C_2Y24_PureAbs},
so Theorem~\ref{C_3X08_IsoOnPure}
implies $z \oplus (a - \ep)_{+} \precsim_B y$.
Therefore $(a - \ep)_{+} \precsim_B y \precsim_B b$.
This completes the proof of Case~2.

Case~3 is the same as before,
except with $B$ in place of~$A$ everywhere.
\end{proof}

We now show that if $B$ is large in~$A$,
then $A$ is finite or purely infinite
\ifo{} $B$ has the same property.
We don't directly use Theorem~\ref{C_3X08_IsoOnPure},
because we don't assume that $B$ is stably large.

\begin{prp}\label{P_4626_Fin}
Let $A$ be an infinite dimensional simple unital \ca,
and let $B \subset A$ be a large subalgebra.
Then $A$ is finite \ifo{} $B$ is finite.
\end{prp}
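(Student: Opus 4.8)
The forward implication is immediate: $B$ is a C*-subalgebra of~$A$ containing~$1_A$, so any isometry in~$B$ is an isometry in~$A$, hence a unitary (as $A$ is finite), hence a unitary in~$B$; thus $B$ is finite. For the converse I would argue by contraposition: assuming $A$ is not finite, produce a right invertible, non-invertible element of~$B$, which contradicts finiteness of~$B$. Since $A$ is simple and unital, $1_A$ is then an infinite, hence properly infinite, projection, so there is an isometry $s \in A$ with $p := 1_A - s s^* \neq 0$; note $p s = 0$ and $s s^* \sim_A 1_A$.

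The heart of the argument is to transport this infiniteness witness into~$B$. I would apply the definition of largeness (Definition~\ref{D_Large}, in the two-sided positive form of Lemma~\ref{L:LargeStaysPositive}) to the element $s^*$ and to a norm one positive element $z \in \big( \overline{p A p} \big)_{+}$, with $x = z$ and with a small $y \in B_{+} \setminus \{ 0 \}$, obtaining a cut $g \in B$ with $0 \leq g \leq 1$, an element $w \in A$ with $\| w - s \| < \ep$ and $v := w (1 - g) \in B$ (so that $v^* v$ is within $\ep$ of $(1 - g)^2$ and $v v^*$ is within $\ep$ of $s (1 - g)^2 s^*$), a positive element $e_1 \in B$ within $\ep$ of $(1 - g) z (1 - g)$ with $\| e_1 \|$ close to~$1$ (using condition~(\ref{D_Large:Cut5}) of Definition~\ref{D_Large}), and with $g \precsim_B y$ and $g \precsim_A z \precsim_A p$. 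From these data one must extract a positive element $e \in B$ and a nonzero positive element $d \in B$ with $e d = 0$ and $1_A \precsim_A e$. The element $e$ is built around $v^* v$ (equivalently $v v^*$): Lemma~\ref{L:C2} applied with $a = w^* w$ (which is within $3 \ep$ of~$1_A$) gives $1_A \precsim_A [v^* v - \ep']_{+} \oplus g$, and proper infiniteness of $1_A$ (absorbing the leftover term, which is controlled by~$g$ and hence by~$y$) is what should allow one to replace this by $1_A \precsim_A e$ for a single element $e \in B$; since $e \in A_{+}$ forces $\langle e \rangle \leq \langle 1_A \rangle$ in $\operatorname{Cu} (A)$, one in fact gets $\langle e \rangle = \langle 1_A \rangle$. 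The orthogonal element $d$ is produced from $e_1$ (which lies approximately in the $p$-corner, hence is approximately orthogonal to $s (1 - g)^2 s^*$), correcting the approximate orthogonality to an exact one by the semiprojectivity statement Proposition~\ref{P_4619_ConeSj}. Lemma~\ref{L-2721AToB_Pre}, applied with $a = 1_B$, $b = e$, and with $c, x \in A_{+}$ chosen inside $\overline{e A e}$ so that $1_A \precsim_A c$, $c x = 0$ and $x \neq 0$ (possible because $\langle e \rangle = \langle 1_A \rangle$ is the class of a properly infinite projection), then yields $1_B \precsim_B e$.

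The contradiction is then obtained exactly as at the end of the proof of Lemma~\ref{L-2726Big}: from $1_B \precsim_B e$ choose $v_1 \in B$ with $\| v_1 e v_1^* - 1_B \| < \tfrac{1}{2}$, so that $v_1 e^{1/2}$ has a right inverse in~$B$; but $e d = 0$ forces $e^{1/2} d = 0$, hence $v_1 e^{1/2} d = 0$ with $d \neq 0$, so $v_1 e^{1/2}$ is not invertible. A right invertible, non-invertible element of the finite algebra~$B$ is impossible, so $A$ must be finite. I expect the main obstacle to be the middle step: the cut $g$ produced by largeness is only ``small'' in the Cuntz sense ($g \precsim_B y$ and $g \precsim_A z$), not in norm, so that conjugation by $1 - g$ a priori could both shrink the $A$-Cuntz class of the candidate element~$e$ below $\langle 1_A \rangle$ and leave no room in~$B$ for a nonzero orthogonal~$d$; arranging that both requirements on $e$ and $d$ hold simultaneously — using proper infiniteness of $1_A$ for the first and the semiprojectivity of cones for the second — is the delicate point.
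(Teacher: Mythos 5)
Your global strategy is the same as the paper's: prove the nontrivial direction by contraposition, manufacture a positive $e \in B$ with $1 \precsim_B e$ by feeding Lemma~\ref{L-2721AToB_Pre} a pair of orthogonal elements of $\overline{e A e}$, and then get a right invertible, non-invertible element of~$B$ from a nonzero $d \in B_{+}$ with $e d = 0$; the data-gathering via Lemma~\ref{L:LargeStaysPositive}, the inequality $1 \precsim_A [v^* v - \ep']_{+} \oplus g$ from Lemma~\ref{L:C2}, the production of an orthogonal pair $c, x \in \overline{e A e}$ from $1 \oplus 1 \precsim_A 1 \precsim_A e$ via Lemma~\ref{L:CzBasic}(\ref{L:CzBasic:N6}) and~(\ref{L:CzBasic:N5}), and the endgame are all fine. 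The gap is exactly where you say it is, and it is the entire content of the proposition: you never construct the pair $(e, d)$. Two things go wrong. First, ``absorbing the leftover term'' runs in the wrong direction: proper infiniteness of $1$ gives $\langle 1 \rangle + \langle g \rangle \leq \langle 1 \rangle$, which absorbs $g$ into the left-hand side of $1 \precsim_A [v^* v - \ep']_{+} \oplus g$, not out of the right-hand side; there is no general implication from $\langle 1 \rangle \leq \langle a \rangle + \langle g \rangle$ to $\langle 1 \rangle \leq \langle a \rangle$, and the only one-element candidate available, $[v^* v - \ep']_{+} + g$, satisfies $a + b \precsim_A a \oplus b$ --- the wrong inequality. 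Second, your candidates for $e$ and $d$ are not even approximately orthogonal, so Proposition~\ref{P_4619_ConeSj} has nothing to correct: with $e$ built from $v v^* \approx s (1 - g)^2 s^*$ and $d$ built from $e_1 \approx (1 - g) z (1 - g)$, the product $e d$ reduces, up to small errors, to terms like $s (1 - g)^2 s^* g z (1 - g)$, and the only control on $g$ is $g \precsim_B y$ and $g \precsim_A z$; Cuntz subequivalence gives no norm bound at all on $g z$ or $s^* g$. (With $e$ built from $v^* v \approx (1 - g)^2$ instead, orthogonality fails even more blatantly, since condition~(\ref{D_Large:Cut5}) forces $\| (1 - g) z (1 - g) \|$ to be close to~$1$.)

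The paper avoids both problems by not using the cutting element $g$ at this stage at all. It chooses two orthogonal norm-one positive elements $x_1, x_2$ in the corner $(1 - s s^*) A (1 - s s^*)$ and invokes Lemma~\ref{L-2720L1}, whose conclusion $\| x_j d_j - d_j \| < \rho / 2$ is precisely the norm localization your sketch is missing: it places $d_1, d_2 \in B_{+}$ approximately inside that corner, hence approximately orthogonal to each other and to $q = s s^*$. Proposition~\ref{P_4619_ConeSj} then makes them exactly orthogonal, functional calculus yields $z_j, y_j \in B$, and the ``big'' element is taken to be $y = 1 - z_1 - z_2$; the comparison $1 \sim_A q \precsim_A y q y \leq y^2 \sim_A y$ comes from the norm estimate $\| y q y - q \| < 28 \ep < 1$, with no absorption argument needed. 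Lemma~\ref{L-2721AToB_Pre} is applied with $b = y + y_1$, $c = y$, $x = y_1$, and the second leftover $y_2$ plays the role of your $d$. If you want to repair your outline, replace the direct application of Definition~\ref{D_Large} to $s^*$ and $z$ by Lemma~\ref{L-2720L1} applied inside the corner $(1 - s s^*) A (1 - s s^*)$: the norm condition there is the missing ingredient, and with it your plan essentially becomes the paper's proof.
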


We do not need $B$ to be stably large.

\begin{proof}[Proof of Proposition~\ref{P_4626_Fin}]
If $A$ is finite, then obviously $B$ is finite.
So assume $A$ is infinite; we prove that $B$ is infinite.
Choose $s \in A$ such that $s^* s = 1$
and $s s^* \neq 1$.
Set $q = s s^*$.
With the help of Lemma~\ref{L:DivInSmp}, find
$x_1, x_2 \in \big( (1 - q) A (1- q) \big)_{+}$
such that $x_1x_2= 0$ and $\| x_1 \| = \| x_2 \| = 1$.

Choose $\ep > 0$ such that $28 \ep < 1$.
Choose $\rh > 0$ as in Proposition~\ref{P_4619_ConeSj}
with $n = 2$ and $\dt = \ep$.
We also require $\rh \leq \ep$.
Apply Lemma~\ref{L-2720L1},
getting $d_1, d_2 \in B_{+}$
such that for $j = 1, 2$ we have
\[
\| d_j \| = 1,
\,\,\,\,\,\,
d_j \precsim_A x_j,
\andeqn
\| x_j d_j - d_j \| < \frac{\rh}{2}.
\]
Since $x_1 x_2 = 0$,
we get
\begin{align*}
\| d_1 d_2 \|
& = \| d_1 d_2 - d_1 x_1 x_2 d_2 \|
\\
& \leq \| d_1 - d_1 x_1 \| \cdot \| d_2 \|
        + \| d_1 x_1 \| \cdot \| d_2 - x_2 d_2 \|
  < \frac{\rh}{2} + \frac{\rh}{2}
  = \rh.
\end{align*}
By the choice of~$\rh$,
there are $c_1, c_2 \in B_{+}$
such that $c_1 c_2 = 0$
and for $j = 1, 2$ we have
$0 \leq c_j \leq 1$
and $\| c_j - d_j \| < \ep$.
In particular,
$\| c_j \| > 1 - \ep$.

Define \cfn{s} $f_0, f_1 \colon [0, 1] \to [0, 1]$
by
\[
f_0 (\ld)
 = \begin{cases}
   (1 - 2 \ep)^{-1} \ld & \hspace{3em} 0 \leq \ld \leq 1 - 2 \ep
        \\
   1                    & \hspace{3em} 1 - 2 \ep \leq \ld \leq 1
\end{cases}
\]
and
\[
f_1 (\ld)
 = \begin{cases}
   0                           & \hspace{3em} 0 \leq \ld \leq 1 - 2 \ep
        \\
 \ep^{-1} [ \ld - (1 - 2 \ep)]
         & \hspace{3em} 1 - 2 \ep \leq \ld \leq 1 - \ep
       \\
   1                           & \hspace{3em} 1 - \ep \leq \ld \leq 1.
\end{cases}
\]
For $j = 1, 2$ set
$z_j = f_0 (c_j)$ and $y_j = f_1 (c_j)$.
Then $\| c_j - z_j \| \leq 2 \ep$,
so $\| d_j - z_j \| < 3 \ep$.
Also,
$\| y_j \| = 1$ and $z_j y_j = y_j$.
Furthermore, $z_1 z_2 = 0$,
so $y_1 y_2 = z_1 y_2 = y_1 z_2 = 0$.

Define $y = 1 - z_1 - z_2$.
Then $y y_1 = y y_2 = 0$.
We have
\[
\| x_j z_j - z_j \|
 \leq \| x_j \| \cdot \| z_j - d_j \|
        + \| x_j d_j - d_j \| + \| d_j - z_j \|
 < 3 \ep + \frac{\rh}{2} + 3 \ep
 < 7 \ep.
\]
Since $q x_j = 0$,
we therefore get
$\| q z_j \| = \| q z_j - q x_j z_j \| < 7 \ep$.
So
\[
\| q y - q \| \leq \| q z_1 \| +\| q z_2 \| < 14 \ep
\andeqn
\| y q y - q \| < 28 \ep.
\]
Now use the definition of $q$ at the first step,
$28 \ep < 1$ at the second step,
Lemma~\ref{L:CzBasic}(\ref{L:CzBasic:LCzWithinEp})
at the third step,
and Lemma~\ref{L:CzBasic}(\ref{L:CzBasic:LCzFCalc})
at the fifth step,
getting
\[
1 \sim_A q
  \sim_A (q - 28 \ep)_{+}
  \precsim_A y q y
  \leq y^2
  \sim_A y.
\]
Apply Lemma~\ref{L-2721AToB_Pre}
with $a = 1$,
with $b = y + y_1$,
with $c = y$,
and with $x = y_1$.
We get $1 \precsim_B y + y_1$.
Thus,
there is $v \in B$
such that $\| v (y + y_1) v^* - 1 \| < \tfrac{1}{2}$.
So $v (y + y_1)^{1/2}$ has a right inverse.
But $v (y + y_1)^{1/2} y_2 = 0$,
whence $v (y + y_1)^{1/2}$ is not invertible.
Thus $B$ is infinite.
\end{proof}

\begin{cor}\label{C_4626_StFin}
Let $A$ be an infinite dimensional simple unital \ca,
and let $B \subset A$ be a stably large subalgebra.
Then $A$ is stably finite \ifo{} $B$ is stably finite.
\end{cor}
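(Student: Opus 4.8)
The plan is to reduce the statement immediately to Proposition~\ref{P_4626_Fin}, applied separately at each matrix level. Recall that, by definition, a unital C*-algebra $D$ is stably finite exactly when every matrix amplification $M_n(D)$ is finite. So it will suffice to prove, for each fixed $n \in \N$, that $M_n(A)$ is finite if and only if $M_n(B)$ is finite; ranging over $n$ then gives the corollary.

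First I would check that the pair $M_n(B) \subset M_n(A)$ satisfies the hypotheses of Proposition~\ref{P_4626_Fin}. Since $A$ is simple and unital, so is $M_n(A)$, and $\dim M_n(A) = n^2 \dim(A) = \infty$, so $M_n(A)$ is an infinite dimensional simple unital C*-algebra. Because $B$ contains the identity of $A$, the subalgebra $M_n(B)$ contains the identity of $M_n(A)$; and because $B$ is stably large in $A$, Definition~\ref{D_4619_StLg} says precisely that $M_n(B)$ is a large subalgebra of $M_n(A)$. Thus Proposition~\ref{P_4626_Fin} applies to $M_n(B) \subset M_n(A)$ and yields that $M_n(A)$ is finite if and only if $M_n(B)$ is finite.

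Finally I would assemble the conclusion: $A$ is stably finite, i.e.\ $M_n(A)$ is finite for every $n \in \N$, if and only if $M_n(B)$ is finite for every $n \in \N$, i.e.\ if and only if $B$ is stably finite. There is no genuine obstacle here; the only point needing attention is the routine verification, carried out above, that stable largeness of $B$ in $A$ makes Proposition~\ref{P_4626_Fin} available at each matrix level.
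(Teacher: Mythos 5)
Your proposal is correct and is exactly the argument the paper intends: the paper's proof simply says the result is immediate from Proposition~\ref{P_4626_Fin}, and your application of that proposition to each pair $M_n(B) \subset M_n(A)$, using Definition~\ref{D_4619_StLg} to see that $M_n(B)$ is large in $M_n(A)$, spells out precisely that reduction.
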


\begin{proof}
The result is immediate from Proposition~\ref{P_4626_Fin}.
\end{proof}

\begin{prp}\label{P-2724PI}
Let $A$ be an infinite dimensional simple unital \ca,
and let $B \subset A$ be a large subalgebra.
Then $A$ is purely infinite \ifo{} $B$ is purely infinite.
\end{prp}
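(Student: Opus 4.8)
The plan is to use the standard characterization of pure infiniteness for simple unital \ca{s}: for a simple \uca~$D$ with $D \neq \C$, the algebra $D$ is purely infinite if and only if $1_D \precsim_D d$ for every $d \in D_{+} \setminus \{ 0 \}$. (Given this, any two nonzero positive elements $d, e \in D$ satisfy $e \precsim_D 1_D \precsim_D d$ by Lemma~\ref{L:CzBasic}(\ref{L:CzBasic:Her}), recovering Cuntz's definition.) Since $A$ is infinite dimensional we have $A \neq \C$, and by Proposition~\ref{P_2627_NoSmp} and Proposition~\ref{P-2729LgInfD} the algebra $B$ is simple and infinite dimensional, hence $B \neq \C$; so the criterion is available for both $A$ and~$B$. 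Recall also that since $B$ is a unital subalgebra of $A$ we have $1_B = 1_A$.

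First I would prove that $B$ purely infinite implies $A$ purely infinite. Fix $a \in A_{+}$; replacing $a$ by $a / \| a \|$ changes neither ${\overline{a A a}}$ nor $\langle a \rangle$ (use Lemma~\ref{L:CzBasic}(\ref{L:CzBasic:LCzFCalc})), so \wolog{} $\| a \| = 1$, and it suffices to show $1_A \precsim_A a$. Apply Lemma~\ref{L-2720L1} with $r = 1_A \in B_{+} \setminus \{ 0 \}$, for which ${\overline{r A r}} = A$ and ${\overline{r B r}} = B$; this produces $b \in B_{+}$ with $\| b \| = 1$ and $b \precsim_A a$. Since $b \neq 0$ and $B$ is purely infinite, $1_B \precsim_B b$, and as $\precsim_B$ implies $\precsim_A$ we obtain $1_A = 1_B \precsim_A b \precsim_A a$, as wanted.

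Next I would prove that $A$ purely infinite implies $B$ purely infinite. If $A$ is purely infinite then $A$ is not of type~I (a simple type~I \ca{} is stably finite, while a purely infinite \ca{} is not), so Lemma~\ref{L:DivInSmp} applies to~$A$. Given $b \in B_{+} \setminus \{ 0 \}$, use Lemma~\ref{L:DivInSmp} with $l = 2$ to obtain orthogonal $b_1, b_2 \in A_{+} \setminus \{ 0 \}$ with $b_1 + b_2 \in {\overline{b A b}}$. Since $A$ is purely infinite and $b_1 \neq 0$ we have $1_A \precsim_A b_1$. Now apply Lemma~\ref{L-2721AToB_Pre} with its ``$a$'' equal to $1_B = 1_A$, its ``$b$'' equal to the given~$b$, its ``$c$'' equal to $b_1$, and its ``$x$'' equal to $b_2$: the hypotheses $b_2 \neq 0$, $1_A \precsim_A b_1$, $b_1 b_2 = 0$, and $b_1 + b_2 \in {\overline{b A b}}$ all hold, and the conclusion gives $1_B \precsim_B b$. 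As $b \in B_{+} \setminus \{ 0 \}$ was arbitrary, $B$ is purely infinite.

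Both implications are short once the right tools are cited; the analytic substance sits inside Lemma~\ref{L-2720L1} and Lemma~\ref{L-2721AToB_Pre}, both already established, so this is one of the easier results of the section and I expect no serious obstacle. The one point that needs care is to route the converse direction through Lemma~\ref{L-2721AToB_Pre}, which assumes only that $B$ is \emph{large}, rather than through Lemma~\ref{L-2721AToB}, which would gratuitously require stable largeness; with Lemma~\ref{L-2721AToB_Pre} no matrix amplification is needed, which is why the statement can assume merely largeness.
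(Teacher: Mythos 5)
Your proof is correct, and it shares the paper's skeleton: the forward direction hinges on Lemma~\ref{L-2720L1} and the converse on Lemma~\ref{L-2721AToB_Pre}, exactly as in the paper, with neither direction needing stable largeness. The differences are in how you enter and exit the notion of pure infiniteness, and they are genuine simplifications. For ``$B$ purely infinite $\Rightarrow$ $A$ purely infinite,'' the paper works with the hereditary-subalgebra definition directly: it uses Lemma~\ref{L-2720L1} to get $b \in B_{+}$ with $\| a b - b \|$ small, takes an infinite projection $p$ in a hereditary subalgebra of~$B$, and perturbs $a b p b a$ to a projection in ${\overline{a A a}}$ Murray--von Neumann equivalent to~$p$; you instead deduce $1 \precsim_A b \precsim_A a$ and appeal to the standard Cuntz-comparison characterization of pure infiniteness for simple unital algebras, thereby outsourcing the projection construction to the cited equivalence (which you should attribute, e.g.\ to Theorem~2.2 of~\cite{LZ} or Proposition~5.4 of~\cite{KR} --- the same results the paper itself invokes). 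For the converse, the paper manufactures two orthogonal nonzero projections inside ${\overline{b A b}}$ by a matrix trick in $M_2 \otimes A$ before applying Lemma~\ref{L-2721AToB_Pre}, whereas you get orthogonal nonzero positive elements of ${\overline{b A b}}$ from Lemma~\ref{L:DivInSmp} (legitimate, since an infinite dimensional simple unital algebra is not of type~I) and take the lemma's ``$a$'' to be~$1$; this avoids the amplification and the need for projections, and then, like the paper, you pass from ``$a \precsim_B b$ for all nonzero positive $a, b$'' back to pure infiniteness of~$B$ via the same standard equivalence. So what your route buys is brevity and uniformity (both directions reduce to comparison with~$1$); what the paper's route buys is a more self-contained argument in the forward direction, constructing the infinite projection explicitly rather than quoting the characterization for~$A$.
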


Again,
we do not need to assume that $B$ is stably large in~$A$.
Combining this result with Proposition~\ref{P_4626_Fin},
we can deduce that
if $B$ is large in~$A$,
then $B$ is infinite but not purely infinite \ifo{} $A$ is.
Also,
if $B$ is large in~$A$ and $A$ is stably finite,
then $B$ is stably finite because it is a subalgebra of~$A$.
But, for now, we need $B$ to be stably large in~$A$
to deduce that if $A$ is finite but not stably finite,
then the same is true of~$B$.

\begin{proof}[Proof of Proposition~\ref{P-2724PI}]
Assume first that $B$ is purely infinite.
Let $a \in A_{+} \setminus \{ 0 \}$.
We must show that ${\overline{a A a}}$ contains a \pj{}
which is infinite in~$A$.
\Wolog{} $\| a \| = 1$.

Choose $\ep \in \big( 0, \tfrac{1}{8} \big)$
and so small that whenever $D$ is a \ca{} and $x \in D_{+}$
satisfies $\| x^2 - x \| < 12 \ep$,
then there is a \pj{} $q \in D$ such that $\| q - x \| < \tfrac{1}{2}$.
Lemma~\ref{L-2720L1} provides $b \in B_{+}$
such that $\| b \| = 1$ and $\| a b - b \| < \ep$.
Define \cfn{s} $f_0, f_1 \colon [0, \infty) \to [0, 1]$
by
\[
f_0 (\ld)
 = \begin{cases}
   (1 - \ep)^{-1} \ld & \hspace{3em} 0 \leq \ld \leq 1 - \ep
        \\
   1                  & \hspace{3em} 1 - \ep \leq \ld
\end{cases}
\]
and
\[
f_1 (\ld)
 = \begin{cases}
   0                           & \hspace{3em} 0 \leq \ld \leq 1 - \ep
        \\
   \ep^{-1} [ \ld - (1 - \ep)] & \hspace{3em} 1 - \ep \leq \ld \leq 1
       \\
   1                           & \hspace{3em} 1 \leq \ld.
\end{cases}
\]
Since $f_1 (b) \neq 0$ and  $B$ is purely infinite,
there is an infinite \pj{}
$p \in {\overline{f_1 (b) B f_1 (B)}}$.
Then $f_0 (b) p = p$.
Since $\| b - f_0 (b) \| \leq \ep$,
we get
$\| b p b - p \| \leq 2 \ep$,
so $\| a b p b a - p \| < 4 \ep$,
and thus $\| (a b p b a)^2 - a b p b a \| < 12 \ep$.
Therefore there is a \pj{} $q \in {\overline{a A a}}$
such that $\| q - a b p b a \| < \tfrac{1}{2}$.
Then
\[
\| q - p \|
 \leq \| q - a b p b a \| + \| a b p b a - p \|
 < \tfrac{1}{2} + 4 \ep
 < 1.
\]
Thus $q$ is \mvnt{} to~$p$ by Proposition 4.6.6 of~\cite{Bl3},
and is hence also infinite.

Now assume that $A$ is purely infinite.
We will prove that if $a, b \in B_{+} \setminus \{ 0 \}$,
than $a \precsim_B b$.
This shows that $B$ is purely infinite
in the sense of Definition~4.1 of~\cite{KR},
and pure infiniteness in the usual sense
now follows from Proposition~5.4 of~\cite{KR}.

Let $(e_{j, k})_{j, k \in \{ 1, 2 \}}$ be the standard system
of matrix units for~$M_2$.
Since $M_2 \otimes A$ is purely infinite,
there are a \nzp{} $p \in {\overline{b A b}}$
and $s \in M_2 \otimes A$ such that $s^* s = 1 \otimes 1$
and $s s^* = e_{1, 1} \otimes p$.
Then there are \nzp{s} $c, x \in p A p$
such that
\[
s (e_{1, 1} \otimes 1) s^*
 = \left( \begin{matrix}
  c     &  0        \\
  0     &  0
\end{matrix} \right)
\andeqn
s (e_{2, 2} \otimes 1) s^*
 = \left( \begin{matrix}
  x     &  0        \\
  0     &  0
\end{matrix} \right).
\]
We want to apply Lemma~\ref{L-2721AToB_Pre}
with $a$, $b$, $c$, and $x$ as given.
We have $a \precsim_A c$
since $A$ is purely infinite.
(See Theorem~2.2 of~\cite{LZ},
in particular condition~(vi).)
The remaining hypotheses of Lemma~\ref{L-2721AToB_Pre}
are easily checked.
So $a \precsim_B b$.
\end{proof}

\section{The orbit breaking subalgebra for an infinite set meeting each
  orbit at most once}\label{Sec_VK}

\indent
In this section,
we let $h \colon X \to X$ be a \hme{}
of a \chs~$X$.
Following Putnam~\cite{Pt1}, for $Y \subset X$ closed we define
the $Y$-orbit breaking subalgebra
$C^* (\Z, X, h)_Y \subset C^* (\Z, X, h)$.
We prove that if $X$ is infinite,
$h$~is minimal,
and $Y$ intersects each orbit at most once,
then $C^* (\Z, X, h)_Y$ is a large subalgebra of $C^* (\Z, X, h)$
of crossed product type,
in the sense of Definition~\ref{D-2717CPType}.

\begin{ntn}\label{N-Old32}
Let $G$ be a discrete group,
let $A$ be a \ca,
and let $\af \colon G \to \Aut (A)$ be an action of $G$ on~$A$.
We identify $A$ with a subalgebra of $C^*_{\mathrm{r}} (G, A, \af)$
in the standard way.
We let $u_g \in M (C^*_{\mathrm{r}} (G, A, \af))$
be the standard unitary
corresponding to $g \in G$.
When $G = \Z$, we write just $u$ for the unitary~$u_1$
corresponding to the generator $1 \in \Z$.
We let $A [G]$ denote the dense *-subalgebra
of $C^*_{\mathrm{r}} (G, A, \af)$
consisting of sums $\sum_{g \in S} a_g u_g$
with $S \subset G$ finite and $a_g \in A$ for $g \in S$.
We may always assume $1 \in S$.
We let $E_{\af} \colon C^*_{\mathrm{r}} (G, A, \af) \to A$
denote the standard conditional expectation,
defined on $A [G]$ by
$E_{\af} \left( \sum_{g \in S} a_g u_g \right) = a_1$.
When $\af$ is understood, we just write~$E$.

When $G$ acts on a \chs~$X$,
we use obvious analogs of this notation for $C^*_{\mathrm{r}} (G, X)$,
with the action of $G$ on $C (X)$
being given by
$\af_g (f) (x) = f (g^{-1} x)$ for $f \in C (X)$,
$g \in G$, and $x \in X$.
For a \hme{} $h \colon X \to X$,
this means that the action is generated by
the automorphism $\af (f) = f \circ h^{-1}$ for $f \in C_0 (X)$.
In particular,
we have $u f u^* = f \circ h^{-1}$.
\end{ntn}

\begin{ntn}\label{N-2818C0U}
For a locally \chs~$X$
and an open subset $U \subset X$,
we use the abbreviation
\[
C_0 (U)
 = \big\{ f \in C_0 (X) \colon
     {\mbox{$f (x) = 0$ for all $x \in X \setminus U$}} \big\}
 \subset C_0 (X).
\]
This subalgebra is of course canonically isomorphic to
the usual algebra $C_0 (U)$ when $U$ is considered
as a locally \chs{} in its own right.
\end{ntn}

In particular,
if $Y \subset X$ is closed, then
\begin{equation}\label{Eq:2818C0XY}
C_0 (X \setminus Y)
 = \big\{ f \in C_0 (X) \colon
  {\mbox{$f (x) = 0$ for all $x \in Y$}} \big\}.
\end{equation}

\begin{dfn}\label{D-2623VSubalg}
Let $X$ be a locally \chs{} and let $h \colon X \to X$ be a \hme.
Let $Y \subset X$ be a nonempty closed subset,
and, following~(\ref{Eq:2818C0XY}), define
\[
C^* (\Z, X, h)_Y = C^* \big( C (X), \, C_0 (X \setminus Y) u \big)
  \subset C^* (\Z, X, h).
\]
We call it the {\emph{$Y$-orbit breaking subalgebra}}
of $C^* (\Z, X, h)$.
\end{dfn}

The idea of using subalgebras of this type is due to Putnam~\cite{Pt1}.

We have used a different convention from that used elsewhere,
where one usually takes
\begin{equation}\label{Eq:OldOB}
C^* (\Z, X, h)_Y = C^* \big( C (X), \, u C_0 (X \setminus Y) \big).
\end{equation}
The choice of convention in Definition~\ref{D-2623VSubalg}
has the advantage that,
when used in connection with Rokhlin towers,
the bases of the towers are subsets of~$Y$
rather than of $h (Y)$.

Orbit breaking subalgebras
(without the name,
and using the convention~(\ref{Eq:OldOB})),
have a long history.
For example:
\begin{itemize}
\item
The version with $Y$ taken to consist of one point
has been used many places.
It was introduced when $X$ is the Cantor set by Putnam~\cite{Pt1},
along with the version in which $Y$ is a nonempty compact
open set.
An early application of the one point version
when $X$ is not the Cantor set
is in~\cite{LqP} and Section~4 of~\cite{Ph7}.
\item
The one point version plays a key role in~\cite{LhP}.
\item
The version with two points on different orbits
has been used by Toms and Winter~\cite{TW}.
\item
Let $X$ be the Cantor set
and let $h \colon X \times S^1 \to X \times S^1$
be a minimal \hme.
For any $x \in X$, the set $Y = \{ x \} \times S^1$
intersects each orbit at most once.
The algebra $C^* (\Z, \, X \times S^1, \, h)_Y$
is introduced before Proposition~3.3 of~\cite{LM1},
where it is called~$A_x$.
\item
A similar construction,
with $X \times S^1 \times S^1$ in place of $X \times S^1$
and with $Y = \{ x \} \times S^1 \times S^1$,
appears in Section~1 of~\cite{Sn}.
\item
A six term exact sequence
for the K-theory of some orbit breaking subalgebras
is given in Example~2.6 of~\cite{Pt3}.
\item
Orbit breaking subalgebras of irrational rotation algebras
are among the examples studied in their own right in~\cite{FJLX},
and certain orbit breaking subalgebras of some
higher dimensional noncommutative tori
are among the examples studied in~\cite{Sn2}.
\item
The algebras $C^* (\Z, X, h)_Z$,
for $Z \subset X$ closed and with nonempty interior,
are used to obtain information about the orbit breaking subalgebras
mentioned above.
For every nonempty~$Y$,
the algebra $C^* (\Z, X, h)_Y$
is a direct limit of algebras $C^* (\Z, X, h)_Z$
for $Z \subset X$ with $\sint (Z) \neq \varnothing$,
and $\sint (Z) \neq \varnothing$ implies that $C^* (\Z, X, h)_Z$
is a recursive subhomogeneous algebra
in the sense of Definition~1.1 of~\cite{Ph6}.
\end{itemize}

We show that if $Y$ intersects each orbit of~$h$ at most once,
then $C^* (\Z, X, h)_Y$ is a large subalgebra of $C^* (\Z, X, h)$
of crossed product type.

\begin{lem}\label{L-2623ZPart}
Let $X$ be a \chs{}
and let $h \colon X \to X$ be a minimal homeomorphism.
Let $K \subset X$ be a compact set
such that $h^n (K) \cap K = \varnothing$
for all $n \in \Z \setminus \{ 0 \}$.
Let $U \subset X$ be a nonempty open subset.
Then there exist $l \in \Nz$, compact sets
$K_1, K_2, \ldots, K_l \subset X$,
and $n_1, n_2, \ldots, n_l \in \N$,
such that
$K \subset \bigcup_{j = 1}^l K_j$
and such that
$h^{n_1} (K_1), \, h^{n_2} (K_2), \, \ldots, \, h^{n_l} (K_l)$
are disjoint subsets of~$U$.
\end{lem}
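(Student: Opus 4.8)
The plan is to use a standard Rokhlin-tower / Baire-category argument over the minimal system $(X,h)$, exploiting the hypothesis $h^n(K)\cap K=\varnothing$ for $n\neq 0$ to produce a ``first-return'' decomposition of a neighborhood of $K$. First I would observe that, by minimality and the disjointness hypothesis, every point $x\in K$ has a forward return into any prescribed open set: more precisely, since the orbit of each $x\in X$ is dense, for each $x\in K$ there is a least $n_x\in\N$ with $h^{n_x}(x)\in U$. The key point is that $n_x$ is \emph{locally bounded} on $K$. Indeed, suppose not; then there is a sequence $x_i\in K$ with $n_{x_i}\to\infty$, and by compactness we may assume $x_i\to x_\infty\in K$. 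Since $h^{n_{x_\infty}}(x_\infty)\in U$ and $U$ is open, $h^{n_{x_\infty}}(x_i)\in U$ for large~$i$; combined with $n_{x_\infty}$ being the minimal return time of $x_\infty$, one gets that for large~$i$ the point $x_i$ returns to $U$ at some time $\le n_{x_\infty}$, contradicting $n_{x_i}\to\infty$. (Here one must be a little careful: the contradiction really uses that $x_i$ returns at a time $\le n_{x_\infty}$, not exactly $n_{x_\infty}$, so I would phrase the return-time bound as ``there is $N$ with $\{h(x),h^2(x),\dots,h^N(x)\}\cap U\neq\varnothing$ for all $x\in K$,'' which is exactly what a compactness argument on $K$ delivers once we know each orbit meets $U$.)

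Granting such an $N\in\N$, set, for $n=1,2,\dots,N$,
\[
V_n=\{x\in X \colon h^n(x)\in U\}=h^{-n}(U),
\]
an open set, and note $K\subset\bigcup_{n=1}^N V_n$. Now I would build the $K_j$ by a partition-of-the-cover argument adapted to keep the images disjoint. Shrink: choose for each $x\in K$ a minimal $n(x)\in\{1,\dots,N\}$ with $x\in V_{n(x)}$ and a compact neighborhood $L_x$ of $x$ inside $V_{n(x)}$, small enough (using $h^m(K)\cap K=\varnothing$ for $m\neq0$ and continuity of the finitely many homeomorphisms $h^1,\dots,h^N$) that the sets $h^{n(x)}(L_x)$ are mutually ``separable'' from the others; extract a finite subcover $K\subset\bigcup_{j=1}^{l_0}\sint(L_{x_j})$. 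This gives compact sets $L_1,\dots,L_{l_0}$ with return exponents $m_1,\dots,m_{l_0}\in\{1,\dots,N\}$ and $h^{m_j}(L_j)\subset U$. The remaining task is to replace the $L_j$ by compact sets $K_j\subset L_j$ with $K\subset\bigcup K_j$ but with the images $h^{m_j}(K_j)$ \emph{pairwise disjoint}. This is a purely topological step: since the open sets $U_j:=\sint(L_j)$ cover the compact set $K$, choose a compact ``shrinking'' $K_j\subset U_j$ with $K\subset\bigcup_j K_j$, and then disjointify the images by the usual trick of intersecting with complements — replace $h^{m_j}(K_j)$ by $h^{m_j}(K_j)\setminus\bigcup_{i<j}h^{m_i}(K_i)$, which need not be closed, so instead one first perturbs the $K_j$ slightly inward so that the compact images $h^{m_j}(K_j)$ become pairwise disjoint while still covering. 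This last perturbation is where a little care is needed; I would handle it by choosing the $K_j$ inductively, at stage $j$ taking $K_j$ to be a compact neighborhood-shrinking of $U_j\cap$ (part of $K$ not yet covered) small enough that $h^{m_j}(K_j)$ misses the already-chosen compact sets $h^{m_1}(K_1),\dots,h^{m_{j-1}}(K_{j-1})$; this is possible because $h^{m_j}$ is a homeomorphism and the finitely many sets to avoid are compact, so their preimage under $h^{m_j}$ is a compact set whose intersection with $K$ must already be covered (otherwise a point of $K$ would have two distinct forward return times both landing in the same orbit point, contradicting injectivity of $h$ — this is exactly where $h^n(K)\cap K=\varnothing$, $n\neq0$, enters).

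Finally I would relabel: set $l$ to be the number of nonempty $K_j$ produced, discard the empty ones, and take $n_j=m_j$. Then $K\subset\bigcup_{j=1}^l K_j$, each $K_j$ is compact, each $n_j\in\N$, each $h^{n_j}(K_j)\subset U$, and the $h^{n_j}(K_j)$ are pairwise disjoint, which is the assertion. The main obstacle is the disjointification step: the naive image-disjointification produces non-closed sets, so one must instead interleave the choice of the compact pieces $K_j$ with the disjointness requirement, and verify that at each stage the piece of $K$ one still needs to cover stays away from the ``bad'' compact set one must avoid — this verification is precisely where the free/aperiodic hypothesis $h^n(K)\cap K=\varnothing$ is used, since it is what guarantees that no point of $K$ needs to be covered by two different tower-column indices with colliding tops.
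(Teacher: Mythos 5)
Your first step (each point of $K$ enters $U$ under some forward iterate, with a uniform bound $N$ by compactness, so $K\subset\bigcup_{n=1}^N h^{-n}(U)$) is fine and is essentially the paper's use of minimality plus compactness. The genuine gap is in your disjointification step, and it hides the one observation that makes the lemma trivial. First, your pieces are ``compact neighborhood-shrinkings,'' hence not subsets of $K$, so the hypothesis $h^n(K)\cap K=\varnothing$ cannot be invoked as you do: if $K_i\not\subset K$, a collision $h^{m_j}(x)=h^{m_i}(y)$ with $x\in K$, $y\in K_i$, $m_i\neq m_j$ is not excluded. Second, even after restricting all pieces to lie in $K$, your inductive scheme does not close up: when $m_i=m_j$, the forbidden set $B_j=h^{-m_j}\bigl(\bigcup_{i<j}h^{m_i}(K_i)\bigr)$ meets $K$ exactly in $\bigcup_{i<j,\,m_i=m_j}K_i$, and the points of $K$ not yet covered can accumulate on the boundary of such a $K_i$, hence on $B_j$. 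A compact $K_j$ disjoint from the compact set $B_j$ stays at positive distance from it, so it cannot contain all of those leftover points, and nothing in your scheme guarantees they are picked up at a later stage (they need not lie in any other $U_{j'}$). Your parenthetical justification (``two return times landing in the same orbit point, contradicting injectivity of $h$'') is also not an argument: injectivity alone gives no contradiction; what is needed is precisely $h^{m_j-m_i}(K)\cap K=\varnothing$ applied to points of $K$, which requires the pieces to sit inside $K$ and the exponents to be distinct.

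The missing idea, which is the paper's entire proof, is that no disjointification is needed at all if you take one piece per exponent and keep everything inside $K$: shrink $U$ to a nonempty open $V$ with $\overline{V}\subset U$ compact, use minimality to write $K\subset\bigcup_{j=1}^l h^{-n_j}(V)$ with \emph{distinct} $n_1,\dots,n_l\in{\mathbb{Z}}_{>0}$, and set $K_j=h^{-n_j}\bigl(\overline{V}\bigr)\cap K$. Then $K_j$ is compact, $K\subset\bigcup_j K_j$, $h^{n_j}(K_j)\subset\overline{V}\subset U$, and for $i\neq j$,
\[
h^{n_i}(K_i)\cap h^{n_j}(K_j)\subset h^{n_i}\bigl(K\cap h^{n_j-n_i}(K)\bigr)=\varnothing ,
\]
directly from the hypothesis. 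Equivalently, within your setup: intersect each piece with $K$ and merge all pieces sharing the same exponent into a single compact set; only distinct exponents remain and disjointness of the images is automatic, with no induction or perturbation.
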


\begin{proof}
Choose a nonempty open subset $V \subset X$
such that ${\overline{V}}$ is compact and contained in~$U$.
Minimality of the action implies that
$\bigcup_{n = 1}^{\infty} h^{- n} (V) = X$.
Therefore there are distinct $n_1, n_2, \ldots, n_l \in \N$
such that
$K \subset \bigcup_{j = 1}^l h^{- n_j} (V)$.
For $j = 1, 2, \ldots, l$,
define $K_j = h^{- n_j} \big( {\overline{V}} \big) \cap K$,
which is a compact subset of~$X$.
Clearly $K \subset \bigcup_{j = 1}^l K_j$.
For $j = 1, 2, \ldots, l$,
we have $h^{n_j} (K_j) \subset {\overline{V}} \subset U$.
Finally,
for distinct $i, j \in \{ 1, 2, \ldots, l \}$,
we have
\[
h^{n_i} (K_i) \cap h^{n_j} (K_j)
  \subset h^{n_i} \big( K \cap h^{n_j - n_i} (K_j) \big)
  = \varnothing.
\]
This completes the proof.
\end{proof}

\begin{prp}\label{P-2816CharOB}
Let $X$ be a \chs{} and let $h \colon X \to X$ be a \hme.
Let $u \in C^* (\Z, X, h)$
and $E \colon C^* (\Z, X, h) \to C (X)$
be as in Notation~\ref{N-Old32}.
Let $Y \subset X$ be a nonempty closed subset.
For $n \in \Z$, set
\[
Y_n = \begin{cases}
   \bigcup_{j = 0}^{n - 1} h^j (Y)    & \hspace{3em} n > 0
        \\
   \varnothing                        & \hspace{3em} n = 0
       \\
   \bigcup_{j = 1}^{- n} h^{-j} (Y)   & \hspace{3em} n < 0.
\end{cases}
\]
Then
\begin{equation}\label{Eq:2816CharOB}
C^* (\Z, X, h)_Y
 = \big\{ a \in C^* (\Z, X, h) \colon
    {\mbox{$E (a u^{-n}) \in C_0 (X \setminus Y_n)$
          for all $n \in \Z$}} \big\}
\end{equation}
and
\begin{equation}\label{Eq:2816Dense}
{\overline{C^* (\Z, X, h)_Y \cap C (X) [\Z]}} = C^* (\Z, X, h)_Y.
\end{equation}
\end{prp}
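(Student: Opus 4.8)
The plan is to handle the two displayed equalities separately, the first being the substantive one. For~(\ref{Eq:2816CharOB}), denote the right-hand side by~$D$. First I would check that $D$ is a C*-subalgebra of $C^*(\Z,X,h)$: it is visibly a closed linear subspace (each map $a \mapsto E(au^{-n})$ is continuous, and $C_0(X\setminus Y_n)$ is closed), it is closed under adjoints because $E(a^*u^{-n}) = E((au^n)^*) = \overline{E(u^{-n}a^*\cdot 1)}$ and one relates $Y_{-n}$ to $h^{\text{-shifts}}$ of $Y_n$ via the covariance relation $uf u^* = f\circ h^{-1}$, and it is closed under multiplication by a direct computation with the Fourier-coefficient convolution formula together with the fact that $Y_{m+n} \subset Y_m \cup h^m(Y_n)$. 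Next I would show the generators of $C^*(\Z,X,h)_Y$ lie in $D$: for $f\in C(X)$ we have $E(fu^{-n}) = 0$ for $n\neq 0$ and $E(f) = f \in C(X) = C_0(X\setminus Y_0)$; for $fu$ with $f\in C_0(X\setminus Y)$ we have $E(fu\cdot u^{-n}) = 0$ unless $n=1$, and $E(fu\cdot u^{-1}) = f \in C_0(X\setminus Y) = C_0(X\setminus Y_1)$. Since $D$ is a C*-algebra containing the generators, $C^*(\Z,X,h)_Y \subseteq D$.

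The reverse inclusion $D \subseteq C^*(\Z,X,h)_Y$ is the heart of the matter. Here I would argue that it suffices to treat $a \in D \cap C(X)[\Z]$, i.e. a finite sum $a = \sum_{n\in S} f_n u^n$ with each $f_n \in C_0(X\setminus Y_n)$, and then take closures; the point is that the conditional expectations $a\mapsto E(au^{-n})$ together with a Cesàro/Fejér-type averaging (as in the standard proof that $C(X)[\Z]$ is dense in the crossed product) let one approximate a general $a\in D$ by such finite sums $a'$ which still satisfy $E(a'u^{-n})\in C_0(X\setminus Y_n)$ for all $n$ — one must be careful that the Fejér means of an element of $D$ stay in $D$, which holds because averaging over $n$ preserves each coefficient condition, each $f_n$ merely getting multiplied by a scalar in $[0,1]$. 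For such a finite sum, it remains to see $f_n u^n \in C^*(\Z,X,h)_Y$ whenever $f_n \in C_0(X\setminus Y_n)$ and $n\geq 1$ (the case $n<0$ by adjoints, $n=0$ trivial). Writing $f_n$, up to $\ep$, as a finite sum of products $g_0 (g_1\circ h^{-1})\cdots$ is awkward; instead I would use the factorization $f_n u^n = f_n \cdot u \cdot u \cdots u$ and insert cutoff functions: since $f_n$ vanishes on $Y_n = Y \cup h(Y)\cup\cdots\cup h^{n-1}(Y)$, one can write $f_n = f_n^{(0)}\cdot (f_n^{(1)}\circ h^{-1})\cdots (f_n^{(n-1)}\circ h^{-(n-1)})$ (approximately, or exactly via an $n$-th root type trick using that each $h^{-j}$ applied to the relevant factor vanishes on $Y$) with each factor in $C_0(X\setminus Y)$, so that $f_n u^n = f_n^{(0)} u \cdot f_n^{(1)} u \cdots f_n^{(n-1)} u$ exhibits $f_n u^n$ as a product of elements $C_0(X\setminus Y)u \subset C^*(\Z,X,h)_Y$.

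Finally,~(\ref{Eq:2816Dense}) is then almost immediate: $C^*(\Z,X,h)_Y$ is by definition a C*-algebra, hence closed, and it is generated by $C(X)$ and $C_0(X\setminus Y)u$, all of which lie in $C(X)[\Z]$; thus $C^*(\Z,X,h)_Y \cap C(X)[\Z]$ contains a $*$-algebra generating $C^*(\Z,X,h)_Y$, and taking closures gives the equality. The step I expect to be the main obstacle is the reverse inclusion in~(\ref{Eq:2816CharOB}), specifically justifying that the Fejér averaging keeps an arbitrary element of $D$ inside $D$ while converging to it, and then the clean factorization of $f_n u^n$ through the generators; the coefficient bookkeeping $Y_{m+n}\subseteq Y_m\cup h^m(Y_n)$ and its use in the multiplicativity of $D$ will require a little care but is routine.
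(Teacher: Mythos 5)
Your proposal is correct and follows essentially the same route as the paper: the inclusions $Y_{m+n}\subseteq Y_m\cup h^m(Y_n)$ and $h^{-n}(Y_n)=Y_{-n}$, Ces\`aro/Fej\'er averaging to reduce to elements of $C(X)[\Z]$, and the exact $n$-th root factorization of $f u^n$ (for $f\in C_0(X\setminus Y_n)$) into a product of elements of $C_0(X\setminus Y)u$, with the $n<0$ case handled by adjoints. The only difference is organizational: the paper verifies the $*$-algebra structure on the polynomial part $B\cap C(X)[\Z]$ and then takes closures, which avoids the convergence subtlety in your appeal to the Fourier-coefficient convolution formula for arbitrary elements of the right-hand side, but your own Fej\'er reduction makes that direct multiplicativity claim unnecessary in any case (and your short direct argument for the density statement is a harmless simplification of the paper's derivation of it from the characterization).
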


\begin{proof}
Define
\[
B = \big\{ a \in C^* (\Z, X, h) \colon
    {\mbox{$E (a u^{-n}) \in C_0 (X \setminus Y_n)$
          for all $n \in \Z$}} \big\}
\]
and
\[
B_0 = B \cap C (X) [\Z].
\]

We claim that $B_0$ is dense in~$B$.
To see this,
let $b \in B$ and for $k \in \Z$ define
$b_k = E (b u^{-k}) \in C_0 (X \setminus Y_k)$.
Then for $n \in \N$, the element
\[
a_n = \sum_{k = - n + 1}^{n - 1}
     \left( 1 - \frac{|k|}{n} \right) b_k u^k.
\]
is clearly in~$B_0$,
and Theorem VIII.2.2 of~\cite{Dv}
implies that $\limi{n} a_n = b$.
The claim follows.
In particular,
(\ref{Eq:2816Dense})~will now follow from~(\ref{Eq:2816CharOB}),
so we need only prove~(\ref{Eq:2816CharOB}).

For $0 \leq m \leq n$ and $0 \geq m \geq n$,
we clearly have $Y_m \subset Y_n$.

We claim that for all $n \in \Z$,
we have
\begin{equation}\label{Eq:2818hnyn}
h^{-n} (Y_n) = Y_{-n}.
\end{equation}
The case $n = 0$ is trivial,
the case $n > 0$ is easy,
and the case $n < 0$ follows from the case $n > 0$.

We next claim that for all $m, n \in \Z$,
we have
\[
Y_{m + n} \subset Y_m \cup h^m (Y_n).
\]
The case $m = 0$ or $n = 0$ is trivial.
For $m, n > 0$ and also for $m, n < 0$, it is easy to check that
$Y_{m + n} = Y_m \cup h^m (Y_n)$.

Now suppose $m > 0$ and $- m \leq n < 0$.
Then $0 \leq m + n \leq m$,
so
\[
Y_{m + n} \subset Y_m \subset Y_m \cup h^m (Y_n).
\]
If $m > 0$ and $n < - m$,
then $m + n < 0$,
so
\[
Y_{m + n}
  = \bigcup_{j = m + n}^{- 1} h^{j} (Y)
  \subset \bigcup_{j = m + n}^{m - 1} h^{j} (Y)
  = \bigcup_{j = 0}^{m - 1} h^j (Y)
        \cup \bigcup_{j = m + n}^{m - 1} h^{j} (Y)
  = Y_m \cup h^m (Y_n).
\]

Finally, suppose $m < 0$ and $n > 0$.
Then, using~(\ref{Eq:2818hnyn}) at the first and third steps,
and the already done case $m > 0$ and $n < 0$ at the second step,
we get
\[
Y_{m + n}
  = h^{m + n} (Y_{- m - n})
  \subset h^{m + n} \big( Y_{- m} \cup h^{- m} (Y_{- n}) \big)
  = h^n (Y_m) \cup Y_n.
\]
This completes the proof of the claim.

We now claim that $B_0$ is a *-algebra.
It is enough to prove that if $f \in C_0 (X \setminus Y_m)$
and $g \in C_0 (X \setminus Y_n)$,
then $(f u^m) (g u^n) \in B_0$
and $(f u^m)^* \in B_0$.
For the first,
we have $(f u^m) (g u^n) = f \cdot (g \circ h^{- m}) \cdot u^{m + n}$.
Now $f \cdot (g \circ h^{- m})$ vanishes on $Y_m \cup h^m (Y_n)$,
so the previous claim implies that
$f \cdot (g \circ h^{- m}) \in C_0 (X \setminus Y_{m + n})$.
Also,
\[
(f u^m)^* = u^{- m} {\overline{f}}
        = \big( {\overline{f \circ h^m}} \big) u^{- m},
\]
and, using~(\ref{Eq:2818hnyn}),
the function $f \circ h^m$ vanishes on $h^{- m} (Y_m) = Y_{- m}$,
so $(f u^m)^* \in B_0$.
This proves the claim.

Since $C (X) \subset B_0$ and $C_0 (X \setminus Y) u \subset B_0$,
it follows that $C^* (\Z, X, h)_Y \subset {\overline{B_0}} = B$.

We next claim that for all $n \in \Z$,
we have
$C_0 (X \setminus Y_n) \subset C^* (\Z, X, h)_Y$.
For $n = 0$ this is trivial.
Let $n > 0$,
and let $f \in C_0 (X \setminus Y_n)$.
Define $f_0 = (\sgn \circ f) | f |^{1/n}$
and for $j = 1, 2, \ldots, n - 1$
define
$f_j = | f \circ h^{j} |^{1/n}$.
The definition of $Y_n$
implies that $f_0, f_1, \ldots, f_{n - 1} \in C_0 (X \setminus Y)$.
Therefore the element
\[
a = (f_0 u) (f_1 u) \cdots (f_{n - 1} u)
\]
is in $C^* (\Z, X, h)_Y$.
Moreover,
we can write
\begin{align*}
a
& = f_0 (u f_1 u^{-1}) (u^2 f_2 u^{-2})
    \cdots \big( u^{n - 1} f_{n - 1} u^{- (n - 1)} \big) u^n
  \\
& = f_0 (f_1 \circ h^{-1}) (f_2 \circ h^{-2})
    \cdots \big( f_{n - 1} \circ h^{- (n - 1)} \big) u^n
  = (\sgn \circ f) \big( | f |^{1/n} \big)^n u^n
  = f u^n.
\end{align*}
Finally,
suppose $n < 0$,
and let $f \in C_0 (X \setminus Y_n)$.
It follows from~(\ref{Eq:2818hnyn})
that $f \circ h^n \in C_0 (X \setminus Y_{- n})$,
whence also ${\overline{f \circ h^n}} \in C_0 (X \setminus Y_{- n})$.
Since $- n > 0$,
we therefore get
\[
f u^n = \big( u^{-n} {\overline{f}} \big)^*
      = \big( \big( {\overline{f \circ h^n}} \big) u^{- n} \big)^*
      \in C^* (\Z, X, h)_Y.
\]
The claim is proved.

It now follows that $B_0 \subset C^* (\Z, X, h)_Y$.
Combining this result with ${\overline{B_0}} = B$
and $C^* (\Z, X, h)_Y \subset B$,
we get $C^* (\Z, X, h)_Y = B$.
\end{proof}

\begin{cor}\label{C-2816AYOpp}
Let $X$ be a \chs{} and let $h \colon X \to X$ be a \hme.
Let $Y \subset X$ be a nonempty closed subset.
Let $u \in C^* (\Z, X, h)$ be the standard unitary,
as in Notation~\ref{N-Old32},
and let $v \in C^* (\Z, X, h^{-1})$ be the analogous standard unitary
in $C^* (\Z, X, h^{-1})$.
Then there exists
a unique \hm{} $\ph \colon C^* (\Z, X, h^{-1}) \to C^* (\Z, X, h)$
such that $\ph (f) = f$ for $f \in C (X)$ and $\ph (v) = u^*$,
the map $\ph$ is an isomorphism,
and
\[
\ph \big( C^* (\Z, X, h^{-1})_{h^{-1} (Y)} \big) = C^* (\Z, X, h)_Y.
\]
\end{cor}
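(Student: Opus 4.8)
For the existence and bijectivity of $\ph$, the plan is to use the universal property of the crossed product. The automorphism of $C(X)$ generating the $\Z$-action attached to $h^{-1}$ is $f \mapsto f \circ h$, and in $C^* (\Z, X, h)$ one has $u^* f u = f \circ h$; thus $(C(X), u^*)$ is a covariant representation of that system. Since $\Z$ is amenable, the reduced crossed product equals the full one, so the universal property produces a unique $*$-\hm{} $\ph \colon C^* (\Z, X, h^{-1}) \to C^* (\Z, X, h)$ with $\ph|_{C(X)} = \id_{C(X)}$ and $\ph (v) = u^*$. Symmetrically there is a unique $*$-\hm{} $\ps \colon C^* (\Z, X, h) \to C^* (\Z, X, h^{-1})$ fixing $C(X)$ and sending $u$ to $v^*$. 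Then $\ps \circ \ph$ fixes $C(X)$ and sends $v$ to $\ps (u^*) = v$, while $\ph \circ \ps$ fixes $C(X)$ and fixes $u$; since in each case $C(X)$ together with the relevant unitary generates the algebra, both compositions are the identity, so $\ph$ is an isomorphism with $\ph^{-1} = \ps$.

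For the orbit breaking subalgebra I would use the description in Proposition~\ref{P-2816CharOB}. Write $Z = h^{-1}(Y)$ and let $(\widetilde{Z}_n)_{n \in \Z}$ denote the sets associated to the pair $(h^{-1}, Z)$ exactly as the $Y_n$ are associated to $(h, Y)$ in Proposition~\ref{P-2816CharOB}. A short reindexing of the unions (treating $n > 0$, $n = 0$, $n < 0$ separately) gives the bookkeeping identity $\widetilde{Z}_n = Y_{-n}$ for every $n \in \Z$. Next, since $\ph$ fixes $C(X)$ and sends $v$ to $u^*$, it carries $C(X)[\Z]$ into $C(X)[\Z]$ via $\sum_n f_n v^n \mapsto \sum_n f_n u^{-n}$; hence $E \circ \ph$ and the conditional expectation $E'$ of $C^*(\Z, X, h^{-1})$ onto $C(X)$ agree on this dense subalgebra, and therefore everywhere by continuity. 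Consequently, for any $b \in C^* (\Z, X, h^{-1})$ and any $n \in \Z$, using $\ph (b v^n) = \ph (b) u^{-n}$ we get $E \big( \ph (b) u^{-n} \big) = E' (b v^n)$.

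Combining these ingredients: by Proposition~\ref{P-2816CharOB}, $\ph (b) \in C^* (\Z, X, h)_Y$ iff $E (\ph (b) u^{-n}) \in C_0 (X \setminus Y_n)$ for all $n$, i.e.\ iff $E' (b v^n) \in C_0 (X \setminus Y_n)$ for all $n$; replacing $n$ by $-n$ and using $Y_{-n} = \widetilde{Z}_n$, this reads $E' (b v^{-n}) \in C_0 (X \setminus \widetilde{Z}_n)$ for all $n$, which by Proposition~\ref{P-2816CharOB} applied to $(X, h^{-1}, Z)$ is precisely the statement $b \in C^* (\Z, X, h^{-1})_{h^{-1}(Y)}$. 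Since $\ph$ is a bijection, this gives $\ph \big( C^* (\Z, X, h^{-1})_{h^{-1}(Y)} \big) = C^* (\Z, X, h)_Y$. I expect the only non-formal point to be the index identity $\widetilde{Z}_n = Y_{-n}$; once that and Proposition~\ref{P-2816CharOB} are available, the rest is routine.
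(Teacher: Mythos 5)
Your proposal is correct and follows essentially the same route as the paper: the isomorphism $\ph$ comes from standard crossed product theory (the paper simply cites this as immediate, while you spell out the universal-property and amenability argument), and the subalgebra identity is obtained from Proposition~\ref{P-2816CharOB} together with the index identity $\widetilde{Z}_n = Y_{-n}$ (the paper's $Y_n = Z_{-n}$), which you verify correctly. The only difference is expository: you make explicit the intertwining $E \circ \ph = E'$ of conditional expectations, which the paper leaves implicit in its appeal to Proposition~\ref{P-2816CharOB}.
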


\begin{proof}
Existence and uniqueness of~$\ph$,
as well as the fact that $\ph$ is an isomorphism,
are all immediate from standard results about crossed products.

Set $Z = h^{-1} (Y)$.
For $n \in \Z$,
let $Y_n \subset X$ be as in the statement
of Proposition~\ref{P-2816CharOB},
and let $Z_n \subset X$ be the set analogous to $Y_n$
but using $Z$ in place of~$Y$ and $h^{-1}$ in place of~$h$.
Since $\ph (f v^n) = f u^{-n}$
for all $f \in C (X)$ and $n \in \Z$,
by Proposition~\ref{P-2816CharOB}
the formula
$\ph \big( C^* (\Z, X, h^{-1})_{h^{-1} (Y)} \big) = C^* (\Z, X, h)_Y$
is equivalent to $Y_n = Z_{- n}$ for all $n \in \Z$.
This equality is immediate from the definitions.
\end{proof}

\begin{lem}\label{L-2623CuSub}
Let $X$ be a \chs{}
and let $h \colon X \to X$ be a minimal homeomorphism.
Let $Y \subset X$ be a compact subset
such that $h^n (Y) \cap Y = \varnothing$
for all $n \in \Z \setminus \{ 0 \}$.
Let $U \subset X$ be a nonempty open subset
and let $n \in \Z$.
Then there exist $f, g \in C (X)_{+}$
such that
\[
f |_{h^n (Y)} = 1,
\,\,\,\,\,\,
0 \leq f \leq 1,
\,\,\,\,\,\,
\supp (g) \subset U,
\andeqn
f \precsim_{C^* (\Z, X, h)_Y} g.
\]
\end{lem}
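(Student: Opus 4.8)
The plan is to cover $h^n(Y)$ by finitely many compact pieces, translate each piece by a suitable power of $h$ into $U$ using Lemma~\ref{L-2623ZPart}, and implement each translation by an element of $C^*(\Z,X,h)_Y$ of the form $\phi u^e$. Write $B=C^*(\Z,X,h)_Y$; we may assume $Y\neq\varnothing$ (otherwise $B=C^*(\Z,X,h)$ and $f=g=0$ works), and note that $h^n(Y)$, like $Y$, is disjoint from all of its nonzero $h$-translates. I would split into the cases $n\geq 0$ and $n<0$ and apply Lemma~\ref{L-2623ZPart} to $h$ in the first case and to $h^{-1}$ in the second, always with the set $Y$ and the open set $U$. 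Either way one obtains $l\in\N$, compact sets $L_1,\dots,L_l\subset Y$ with $Y\subset\bigcup_j L_j$, and integers $m_1,\dots,m_l$ (namely $m_j=n_j\geq 1$ when $n\geq 0$, and $m_j=-q_j\leq -1$ when $n<0$, where $n_j,q_j\in\N$ come from the lemma) such that $h^{m_1}(L_1),\dots,h^{m_l}(L_l)$ are pairwise disjoint subsets of $U$. Set $K_j=h^n(L_j)$ and $e_j=m_j-n$, so $h^n(Y)\subset\bigcup_j K_j$ and $h^{e_j}(K_j)=h^{m_j}(L_j)$.

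For each $j$, using normality of $X$, I would choose an open set $V_j$ with $h^{m_j}(L_j)\subset V_j$, with $\overline{V_j}\subset U$, with the $V_j$ pairwise disjoint, and with $\overline{V_j}$ disjoint from the appropriate finite union of translates of $Y$ (namely $Y,h(Y),\dots,h^{e_j-1}(Y)$ when $e_j>0$, and $h^{-1}(Y),\dots,h^{e_j}(Y)$ when $e_j<0$); such $V_j$ exists because $h^{m_j}(L_j)\subset h^{m_j}(Y)$ is disjoint from $h^i(Y)$ for $i\neq m_j$, and a short computation in each of the four sub-cases (sign of $e_j$ against sign of $n$) shows the relevant index does not lie in the relevant block $\{0,\dots,|e_j|-1\}$ — this is exactly where the split $n\geq0$ vs.\ $n<0$ is forced. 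Then pick $\phi_j\in C(X)$ with $0\leq\phi_j\leq 1$, $\phi_j=1$ on $h^{m_j}(L_j)$, and $\supp(\phi_j)\subset V_j$, and set $a_j=\phi_j u^{e_j}$. By the choice of $V_j$ and the description of $B$ in the proof of Proposition~\ref{P-2816CharOB}, $a_j\in B$ (trivially when $e_j=0$, since then $a_j\in C(X)$). One computes $a_ja_j^*=\phi_j^2$, supported in $V_j\subset U$, and $a_j^*a_j=\phi_j^2\circ h^{e_j}=:\psi_j$, which satisfies $0\leq\psi_j\leq 1$ and $\psi_j=1$ on $K_j$ (since $\phi_j=1$ on $h^{m_j}(L_j)=h^{e_j}(K_j)$).

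Now put $f=\max_{1\leq j\leq l}\psi_j\in C(X)_+$ and $g=\sum_{j=1}^l\phi_j^2\in C(X)_+$. Then $0\leq f\leq 1$; $f|_{h^n(Y)}=1$ because each point of $h^n(Y)\subset\bigcup_j K_j$ lies in some $K_j$, where $\psi_j=1$; and $\supp(g)\subset\bigcup_j\overline{V_j}\subset U$. To see $f\precsim_B g$: the $\phi_j$ have pairwise disjoint supports, so by Lemma~\ref{L:CzBasic}(\ref{L:CzBasic:Orth}), $g=\sum_j\phi_j^2\sim_B\bigoplus_j\phi_j^2$. For each $j$, Lemma~\ref{L:CzBasic}(\ref{L:CzBasic:LCzComm}) applied in $B$ with $c=a_j$ gives $\psi_j=a_j^*a_j\sim_B a_ja_j^*=\phi_j^2$, so by Lemma~\ref{L:CzBasic}(\ref{L:CzBasic:CmpDSum}), $\bigoplus_j\psi_j\sim_B\bigoplus_j\phi_j^2\sim_B g$. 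Finally $0\leq f\leq\sum_j\psi_j$ forces $f$ into the hereditary subalgebra of $C(X)$ generated by $\sum_j\psi_j$, so Lemma~\ref{L:CzBasic}(\ref{L:CzBasic:Her}) and Lemma~\ref{L:CzBasic}(\ref{L:CzBasic:LCzCmpSum}) give $f\precsim_B\sum_j\psi_j\precsim_B\bigoplus_j\psi_j\sim_B g$, as required.

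The main obstacle is the bookkeeping in the second paragraph: one must ensure that each ``transport'' element $\phi_j u^{e_j}$ genuinely lies in the orbit-breaking subalgebra, which fails if the journey from $h^n(Y)$ into $U$ is forced to ``cross'' $Y$ (impossible inside $B$). Checking that the translations produced by Lemma~\ref{L-2623ZPart} — applied to $h$ when $n\geq 0$ and to $h^{-1}$ when $n<0$ — avoid this, i.e.\ that $h^n(Y)$ is moved only within the ``half'' of each orbit on which it already sits, is the crux of the argument.
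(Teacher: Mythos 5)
Your argument is correct, and while it rests on the same two pillars as the paper's proof --- Lemma~\ref{L-2623ZPart} to move compact pieces into $U$, and the characterization of $C^* (\Z, X, h)_Y$ in Proposition~\ref{P-2816CharOB} to certify that the transporting elements $\phi_j u^{e_j}$ lie in the subalgebra --- it is organized genuinely differently. The paper first proves the case $n = 0$ (covering $Y$ itself and transporting with elements $f_j^{1/2} u^{-n_j}$ whose supports avoid $h^{-1}(Y), \ldots, h^{-N}(Y)$), then reduces $n > 0$ to $n = 0$ by one additional element $f_1^{1/2} u^{-n}$ implementing the move $h^n (Y) \to Y$, and finally handles $n < 0$ via the reversal isomorphism of Corollary~\ref{C-2816AYOpp}. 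You instead transport $h^n (Y)$ into $U$ in a single step with exponents $e_j = m_j - n$, replacing both reductions by the index computation showing that $m_j$ never lies in the forbidden block $\{0, \ldots, e_j - 1\}$ (resp.\ $\{e_j, \ldots, -1\}$); your four-sub-case check is right, and so is your observation that applying Lemma~\ref{L-2623ZPart} to $h$ when $n \geq 0$ and to $h^{-1}$ when $n < 0$ is exactly what makes the block avoidance work (the opposite choice puts $m_j$ inside the block). This buys a uniform treatment of all $n$ and dispenses with Corollary~\ref{C-2816AYOpp}, at the cost of the bookkeeping. Two minor points: Lemma~\ref{L-2623ZPart} as stated does not give $L_j \subset Y$, so replace $L_j$ by $L_j \cap Y$ (harmless, and the lemma's proof produces this anyway); and your closing Cuntz chain $f \leq \sum_j \psi_j \precsim_B \bigoplus_j \psi_j \sim_B \bigoplus_j \phi_j^2 \sim_B g$ is the same one the paper uses, with $\max_j \psi_j$ in place of the paper's partition-style sum for~$f$.
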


\begin{proof}
We first prove this when $n = 0$.

Apply Lemma~\ref{L-2623ZPart} with $Y$ in place of~$K$,
obtaining $l \in \Nz$, compact sets
$Y_1, Y_2, \ldots, Y_l \subset X$,
and $n_1, n_2, \ldots, n_l \in \N$.
Set $N = \max (n_1, n_2, \ldots, n_l)$.
Choose disjoint open sets $V_1, V_2, \ldots, V_l \subset U$
such that $h^{n_j} (Y_j) \subset V_j$ for $j = 1, 2, \ldots, l$.
Then $Y_j \subset h^{- n_j} (V_j)$,
so the sets
$h^{- n_1} (V_1), \, h^{- n_2} (V_2), \, \ldots, \, h^{- n_l} (V_l)$
cover~$Y$.
For $j = 1, 2, \ldots, l$,
define
\[
W_j
 = h^{- n_j} (V_j)
   \cap \left( X \setminus \bigcup_{n = 1}^N h^{-n} (Y) \right).
\]
Then $W_1, W_2, \ldots, W_l$ form an open cover of~$Y$.
Therefore there are $f_1, f_2, \ldots, f_l \in C (X)_{+}$
such that for $j = 1, 2, \ldots, l$
we have $\supp (f_j) \subset W_j$ and $0 \leq f_j \leq 1$,
and such that the function $f = \sum_{j = 1}^l f_j$
satisfies $f (x) = 1$ for all $x \in Y$ and $0 \leq f \leq 1$.
Further define $g = \sum_{j = 1}^l f_j \circ h^{- n_j}$.
Then $\supp (g) \subset U$.

Let $u \in C^* (\Z, X, h)$
be as in Notation~\ref{N-Old32}.
For $j = 1, 2, \ldots, l$,
set $a_j = f_j^{1/2} u^{-n_j}$.
Since $f_j$ vanishes on $\bigcup_{n = 1}^{n_j} h^{-n} (Y)$,
Proposition~\ref{P-2816CharOB}
implies that $a_j \in C^* (\Z, X, h)_Y$.
Therefore, in $C^* (\Z, X, h)_Y$ we have
\[
f_j \circ h^{- n_j} = a_j^* a_j \sim_{C^* (\Z, X, h)_Y} a_j a_j^* = f_j.
\]
Consequently,
using Lemma~\ref{L:CzBasic}(\ref{L:CzBasic:LCzCmpSum})
at the second step
and Lemma~\ref{L:CzBasic}(\ref{L:CzBasic:Orth})
and disjointness of the supports of the
functions $f_j \circ h^{- n_j}$ at the last step,
we have
\[
f = \sum_{j = 1}^l f_j
  \precsim_{C^* (\Z, X, h)_Y} \bigoplus_{j = 1}^l f_j
  \sim_{C^* (\Z, X, h)_Y} \bigoplus_{j = 1}^l f_j \circ h^{- n_j}
  \sim_{C^* (\Z, X, h)_Y} g.
\]
This completes the proof for $n = 0$.

Now suppose that $n > 0$.
Choose functions $f$ and $g$ for the case $n = 0$,
and call them $f_0$ and~$g$.
Since $f_0 (x) = 1$ for all $x \in Y$,
and since $Y \cap \bigcup_{l = 1}^n h^{- l} (Y) = \varnothing$,
there is $f_1 \in C (X)$ with $0 \leq f_1 \leq f_0$,
$f_1 (x) = 1$ for all $x \in Y$,
and $f_1 (x) = 0$
for $x \in \bigcup_{l = 1}^n h^{- l} (Y)$.
Set $v = f_1^{1/2} u^{- n}$
and $f = f_1 \circ h^{-n}$.
Then $f (x) = 1$ for all $x \in h^n (Y)$ and $0 \leq f \leq 1$.
Proposition~\ref{P-2816CharOB} implies that $v \in C^* (\Z, X, h)_Y$.
We have
\[
v^* v = u^n f_1 u^{- n} = f_1 \circ h^{- n} = f
\andeqn
v v^* = f_1.
\]
Using Lemma~\ref{L:CzBasic}(\ref{L:CzBasic:LCzComm}),
we thus get
\[
f \sim_{C^* (\Z, X, h)_Y} f_1
  \leq f_0
  \precsim_{C^* (\Z, X, h)_Y} g.
\]
This completes the proof for the case $n > 0$.

Finally,
we consider the case $n < 0$.
In this case, we have $- n - 1 \geq 0$.
Apply the cases already done
with $h^{-1}$ in place of~$h$.
We get $f, g \in C^* (\Z, X, h^{-1})_{h^{-1} (Y)}$
such that $f (x) = 1$
for all $x \in (h^{-1})^{- n - 1} (h^{-1} (Y)) = h^n (Y)$,
such that $0 \leq f \leq 1$,
such that $\supp (g) \subset U$,
and such that $f \precsim_{C^* (\Z, X, h^{-1})_{h^{-1} (Y)}} g$.
Let $\ph \colon C^* (\Z, X, h^{-1}) \to C^* (\Z, X, h)$
be the isomorphism of Corollary~\ref{C-2816AYOpp}.
Then
\[
\ph (f) = f,
\,\,\,\,\,\,
\ph (g) = g,
\andeqn
\ph \big( C^* (\Z, X, h^{-1})_{h^{-1} (Y)} \big) = C^* (\Z, X, h)_Y.
\]
Therefore $f \precsim_{C^* (\Z, X, h)_Y} g$.
\end{proof}

\begin{lem}\label{L:FSubEq}
Let $G$ be a discrete group,
let $X$ be a compact space,
and suppose $G$ acts on $X$ in such a way that
for every finite set $S \subset G$,
the set
\[
\{ x \in X \colon {\mbox{$g x \neq x$ for all $g \in S$}} \}
\]
is dense in~$X$.
Following Notation~\ref{N-Old32},
let $a \in C (X) [G] \subset C^*_{\mathrm{r}} (G, X)$ and let $\ep > 0$.
Then there exists $f \in C (X)$
such that
\[
0 \leq f \leq 1,
\,\,\,\,\,\,
f a^* a f \in C (X),
\andeqn
\| f a^* a f \| \geq \| E_{\af} (a^* a) \| - \ep.
\]
\end{lem}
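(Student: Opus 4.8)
The plan is to use topological freeness of the action to shrink the support of $f$ enough that, in the expansion of $f a^* a f$, every off-diagonal term drops out. First I would write $a = \sum_{g \in S} a_g u_g$ with $S \subset G$ finite and $a_g \in C (X)$, and set $T = \{ g^{-1} h : g, h \in S, \ g \neq h \}$, a finite subset of $G$ not containing the identity~$e$. Using $u_g^* c\, u_g = \af_{g^{-1}} (c)$ for $c \in C (X)$ and $u_g^* u_h = u_{g^{-1} h}$, one computes $a^* a = \sum_{g, h \in S} \af_{g^{-1}} (a_g^* a_h) u_{g^{-1} h}$, whence $E_{\af} (a^* a) = \sum_{g \in S} \af_{g^{-1}} (| a_g |^2) \in C (X)_{+}$. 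For any $f \in C (X)$, commuting $f$ past the unitaries via $u_k f = \af_k (f) u_k$ shows that the $(g, h)$ term of $f a^* a f$ is $f \, \af_{g^{-1}} (a_g^* a_h) \, \af_{g^{-1} h} (f) \, u_{g^{-1} h}$, which for $g = h$ equals $f^2 \af_{g^{-1}} (| a_g |^2)$ and for $g \neq h$ is a scalar multiple of $f \af_k (f)$ with $k = g^{-1} h \in T$. So the key point is to arrange $f \af_k (f) = 0$ for every $k \in T$; since $C (X)$ is commutative this makes every off-diagonal term vanish, giving $f a^* a f = f^2 E_{\af} (a^* a) \in C (X)$.

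To get such an $f$ with $\| f^2 E_{\af} (a^* a) \|$ nearly equal to $M := \| E_{\af} (a^* a) \|$, I would proceed as follows. Since $X$ is compact the open set $U_0 = \{ x \in X : E_{\af} (a^* a) (x) > M - \ep \}$ is nonempty, and by the hypothesis applied to the finite set~$T$ the set of $x$ with $k x \neq x$ for all $k \in T$ is dense, so there is $x_0 \in U_0$ with $k x_0 \neq x_0$ for all $k \in T$. For each $k \in T$, using the Hausdorff property, choose disjoint open sets $A_k \ni x_0$ and $B_k \ni k x_0$ and put $V_k = A_k \cap (k^{-1} B_k) \cap U_0$; this is an open neighborhood of $x_0$ with $k V_k \subset B_k$ and $V_k \subset A_k$, hence $V_k \cap k V_k = \varnothing$. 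Set $V = \bigcap_{k \in T} V_k$ and pick, by Urysohn's lemma, $f \in C (X)$ with $0 \leq f \leq 1$, $f (x_0) = 1$, and $\supp (f) \subset V$. Then $\supp (\af_k (f)) = k\, \supp (f) \subset k V$ is disjoint from $V \supset \supp (f)$, so $f \af_k (f) = 0$ for all $k \in T$; therefore $f a^* a f = f^2 E_{\af} (a^* a) \in C (X)$ and $\| f a^* a f \| \geq f (x_0)^2 E_{\af} (a^* a) (x_0) > M - \ep$, which is the desired conclusion.

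I do not expect a real obstacle here; the one step that needs some care is the simultaneous construction of a single neighborhood $V$ on which $V \cap k V = \varnothing$ holds for all of the finitely many $k \in T$ at once, arranged moreover to sit inside $U_0$ so that the final norm estimate survives. This is the routine combination of topological freeness, the Hausdorff property, and Urysohn's lemma.
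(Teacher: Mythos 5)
Your proof is correct and follows essentially the same route as the paper: locate a point where $E_{\af}(a^*a)$ is within $\ep$ of its norm and is moved by all the relevant group elements (your difference set $T$, the paper's support set of $a^*a$ minus the identity), then take a bump function whose support is disjoint from its translates so that all off-diagonal terms of $f a^* a f$ vanish. Expanding $a$ rather than $a^*a$ and building the neighborhood $V$ by hand via Hausdorff separation are only cosmetic differences from the paper's argument.
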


\begin{proof}
Set $b = a^* a$.
If $E_{\af} (b) \leq \ep$, we can take $f = 0$.
So assume $E_{\af} (b) > \ep$.
Then there are a finite set $T \subset G$
and $b_g \in C (X)$ for $g \in T$
such that $b = \sum_{g \in T} b_g u_g$.
Necessarily $1 \in T$
and $b_1 = E_{\af} (b)$ is a nonzero positive element.
Define
\[
U = \big\{ x \in X \colon
   b_1 (x) > \| E (a^* a) \| - \ep \big\},
\]
which is a nonempty open subset of~$X$.
Since
\[
V = \{ x \in X \colon {\mbox{$g x \neq x$ for all $g \in T$}} \}
\]
is dense in~$X$,
we have $U \cap V \neq \varnothing$,
and there is a nonempty open set $W \subset U \cap V$
such that the sets $g W$, for $g \in T$, are pairwise disjoint.
Fix $x_0 \in W$.
Let $f \in C (X)$ satisfy
\[
0 \leq f \leq 1,
\,\,\,\,\,\,
\supp (f) \subset W,
\andeqn
f (x_0) = 1.
\]
Let $\af \colon G \to \Aut (C (X))$ be as in Notation~\ref{N-Old32}.
Then
\[
f b f
  = f b_1 f + \sum_{g \in T \setminus \{ 1 \} } f b_g u_g f
  = f b_1 f + \sum_{g \in T \setminus \{ 1 \} } f b_g \af_g (f) u_g.
\]
For $g \in T \setminus \{ 1 \}$ we have $\supp (f) \subset W$ and
$\supp (\af_g (f)) \subset g W$,
so $f b_g \af_g (f) = b_g f \af_g (f) = 0$.
Thus $f b f = f b_1 f \in C (X)$,
and
\[
\| f b_1 f \|
 \geq f (x_0) b_1 (x_0) f (x_0)
 = b_1 (x_0)
 > \| E_{\af} (a^* a) \| - \ep.
\]
This completes the proof.
\end{proof}

\begin{lem}\label{L:CXSubEq}
Let $G$ be a discrete group,
let $X$ be a compact space,
and suppose $G$ acts on~$X$ in such a way that
for every finite set $S \subset G$,
the set
\[
\{ x \in X \colon {\mbox{$g x \neq x$ for all $g \in S$}} \}
\]
is dense in~$X$.
Let $B \subset C^*_{\mathrm{r}} (G, X)$ be a unital
subalgebra such that,
following Notation~\ref{N-Old32}:
\begin{enumerate}
\item\label{L:CXSubEq:CXI}
$C (X) \subset B$.
\item\label{L:CXSubEq:FSD}
$B \cap C (X)[G]$ is dense in~$B$.
\end{enumerate}
Let $a \in B_{+} \setminus \{ 0 \}$.
Then there exists $b \in C (X)_{+} \setminus \{ 0 \}$
such that $b \precsim_{B} a$.
\end{lem}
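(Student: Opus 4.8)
The plan is to follow the obvious route through Lemma~\ref{L:FSubEq}, with the only real care going into the $\ep$-perturbations forced by the fact that~(\ref{L:CXSubEq:FSD}) gives only \emph{approximation} by elements of $C(X)[G]$. First I would reduce to $\|a\|=1$. Since $a\geq 0$ and $a\neq 0$ we have $a^{1/2}\neq 0$, so faithfulness of the canonical conditional expectation $E$ on the reduced crossed product $C^*_{\mathrm{r}}(G,X)$ gives $\eta:=\|E(a)\|=\big\|E\big((a^{1/2})^*a^{1/2}\big)\big\|>0$. Fix $\dt\in\big(0,\,\min(1,\tfrac{\eta}{12})\big)$ and $\ep\in\big(0,\tfrac{\eta}{4}\big)$. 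Using~(\ref{L:CXSubEq:FSD}) and $a^{1/2}\in B$, choose $c_0\in B\cap C(X)[G]$ with $\|c_0-a^{1/2}\|<\tfrac{\dt}{2}$ and put $c=\tfrac12(c_0+c_0^*)\in B\cap C(X)[G]$; then $c=c^*$ and $\|c-a^{1/2}\|<\dt$. Writing $c=a^{1/2}+e$ with $\|e\|<\dt$, and using $\|a^{1/2}\|\leq 1$, one gets $d:=c^*c=c^2\in B_{+}\cap C(X)[G]$ with $\|d-a\|\leq 2\dt+\dt^2<3\dt$, hence $\|E(c^*c)\|=\|E(d)\|>\eta-3\dt$.

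Next I would apply Lemma~\ref{L:FSubEq} to $c\in C(X)[G]$ with tolerance $\ep$ (its hypothesis on the action is exactly the one assumed here), obtaining $f\in C(X)$ with $0\leq f\leq 1$, with $fc^*cf\in C(X)$, and with
\[
\|fc^*cf\|\ \geq\ \|E(c^*c)\|-\ep\ >\ \eta-3\dt-\ep\ >\ \tfrac{\eta}{2}\ >\ 3\dt .
\]
Define $b=(fc^*cf-3\dt)_{+}$. Because $fc^*cf\in C(X)$ and $C(X)$ is a C*-subalgebra, continuous functional calculus keeps $b$ in $C(X)_{+}$, and $\|fc^*cf\|>3\dt$ forces $b\neq 0$. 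So $b$ is a legitimate candidate, and it remains to show $b\precsim_B a$.

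For that I would run a short chain of Cuntz comparisons, all inside $B$ (which contains $c$, $f$, and $a$). Since $fc^*cf=(cf)^*(cf)$ and $cf^2c^*=(cf)(cf)^*$, Lemma~\ref{L:CzBasic}(\ref{L:CzBasic:LCzCommEp}) gives $b\sim_B(cf^2c^*-3\dt)_{+}$. Expanding $c=a^{1/2}+e$ and using $0\leq f^2\leq 1$, $\|a^{1/2}\|\leq 1$, $\|e\|<\dt$, one obtains $\|cf^2c^*-a^{1/2}f^2a^{1/2}\|\leq 2\dt+\dt^2<3\dt$, so Lemma~\ref{L:CzBasic}(\ref{L:CzBasic:LCzWithinEp}) gives $(cf^2c^*-3\dt)_{+}\precsim_B a^{1/2}f^2a^{1/2}$. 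Finally $f^2\in C(X)\subset B$, so $a^{1/2}f^2a^{1/2}\in a^{1/2}Ba^{1/2}\subset\overline{aBa}$, whence $a^{1/2}f^2a^{1/2}\precsim_B a$ by Lemma~\ref{L:CzBasic}(\ref{L:CzBasic:Her}). Chaining these gives $b\precsim_B a$, completing the proof.

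The only delicate point is that density yields an element $c$ merely \emph{close} to $a^{1/2}$, not one whose square is dominated by $a$; this is handled by cutting down via $(\,\cdot\,-3\dt)_{+}$ and transporting the cut across the equivalence $(cf)^*(cf)\sim_B(cf)(cf)^*$ with Lemma~\ref{L:CzBasic}(\ref{L:CzBasic:LCzCommEp}) before landing the result inside the hereditary subalgebra of $a$. Secondarily, one must invoke Lemma~\ref{L:FSubEq} precisely so that $fc^*cf$, and hence $b$, lies in $C(X)$, and use faithfulness of $E$ to guarantee that $b$ is nonzero.
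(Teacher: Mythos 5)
Your proposal is correct and follows essentially the same route as the paper's proof: approximate $a^{1/2}$ by $c \in B \cap C(X)[G]$, use faithfulness of $E$ and Lemma~\ref{L:FSubEq} to produce $f$ with $f c^* c f \in C(X)$ of norm bounded below, cut down by a small constant, and pass via $(f c^* c f - \cdot)_{+} \sim_B (c f^2 c^* - \cdot)_{+}$ back into the hereditary subalgebra generated by~$a$. The only cosmetic difference is that you symmetrize $c$ and estimate $\| c f^2 c^* - a^{1/2} f^2 a^{1/2} \|$ directly (then use $a^{1/2} f^2 a^{1/2} \leq a$), whereas the paper uses $c f^2 c^* \leq c c^*$ together with Lemma~\ref{L:CzCompIneq} and Lemma~\ref{L:CzBasic}(\ref{L:CzBasic:LCzWithinEp}) applied to $\| c c^* - a \| < 2\ep$; both work equally well.
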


\begin{proof}
We continue to follow Notation~\ref{N-Old32}.
\Wolog{} $\| a \| \leq 1$.
The conditional expectation
$E_{\af} \colon C^*_{\mathrm{r}} (G, X) \to C (X)$
is faithful.
Therefore $E_{\af} (a) \in C (X)$
is a nonzero positive element.
Set $\ep = \tfrac{1}{6} \| E_{\af} (a) \|$.
Choose $c \in B \cap C (X) [G]$ such that
$\| c - a^{1/2} \| < \ep$ and $\| c \| \leq 1$.
Then
\[
\| c c^* - a \| < 2 \ep
\andeqn
\| c^* c - a \| < 2 \ep.
\]
Apply Lemma~\ref{L:FSubEq} with $c$ in place of~$a$
and with $\ep$ as given,
obtaining $f \in C (X)$ as there.
We have
\[
\| f c^* c f \|
 > \| E_{\af} (c^* c) \| - \ep
 > \| E_{\af} (a) \| - 3 \ep
 = 3 \ep.
\]
Therefore
$(f c^* c f - 2 \ep)_{+}$ is a nonzero element of $C (X)$.
Using Lemma~\ref{L:CzBasic}(\ref{L:CzBasic:LCzCommEp})
at the first step,
Lemma~\ref{L:CzCompIneq} and $c f^2 c^* \leq c c^*$ at the second step,
and Lemma~\ref{L:CzBasic}(\ref{L:CzBasic:LCzWithinEp})
and $\| c c^* - a \| < 2 \ep$
at the last step,
we then have
\[
(f c^* c f - 2 \ep)_{+}
 \sim_{B} (c f^2 c^* - 2 \ep)_{+}
 \precsim_{B} (c c^* - 2 \ep)_{+}
 \precsim_{B} a.
\]
This completes the proof.
\end{proof}

\begin{thm}\label{T-2819AYLg}
Let $X$ be a \chs{}
and let $h \colon X \to X$ be a minimal homeomorphism.
Let $Y \subset X$ be a compact subset
such that $h^n (Y) \cap Y = \varnothing$
for all $n \in \Z \setminus \{ 0 \}$.
Then $C^* (\Z, X, h)_Y$ is a large subalgebra of $C^* (\Z, X, h)$
of crossed product type
in the sense of Definition~\ref{D-2717CPType}.
\end{thm}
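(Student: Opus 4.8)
The plan is to apply Proposition~\ref{P-2729AltCPT} with $A = C^* (\Z, X, h)$, $B = C^* (\Z, X, h)_Y$, $C = C (X)$, and $G = \{ u^n : n \in \Z \}$, where $u$ is the standard unitary of Notation~\ref{N-Old32}. The subalgebra conditions of Proposition~\ref{P-2729AltCPT} are immediate: $1 \in C (X)$; $C (X)$ and $u$ generate $C^* (\Z, X, h)$; and $u C (X) u^* = C (X)$ because $u f u^* = f \circ h^{-1}$. For finiteness of~$A$, amenability of~$\Z$ gives an $h$-invariant Borel probability measure $\mu$ on~$X$ (Markov--Kakutani on the state space of $C (X)$), which has full support by minimality, so $\tau = \mu \circ E$ is a faithful tracial state on $C^* (\Z, X, h)$ (faithfulness of the conditional expectation~$E$), and a unital \ca{} with a faithful trace is stably finite. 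Minimality together with $h (Y) \cap Y = \varnothing$ also forces $X$ infinite, so the action is free, and hence $A$ is simple and infinite dimensional, as needed for Definition~\ref{D-2717CPType}.

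Next I would verify condition~(\ref{P-2729AltCPT-Sb4}). The action being free, Lemma~\ref{L:CXSubEq} applies both to the subalgebra $C^* (\Z, X, h)$ of itself and to $B$ (for the latter, the density hypothesis is~\eqref{Eq:2816Dense} of Proposition~\ref{P-2816CharOB}). Given $x \in A_{+} \setminus \{ 0 \}$ and $y \in B_{+} \setminus \{ 0 \}$, these two applications produce $b, b' \in C (X)_{+} \setminus \{ 0 \}$ with $b \precsim_A x$ and $b' \precsim_B y$. Minimality then gives $n \in \Z$ and a nonempty open set $V \subset \{ b' \neq 0 \}$ with $h^n (V) \subset \{ b \neq 0 \}$, and I would pick $z \in C (X)_{+} \setminus \{ 0 \}$ with $\supp (z) \subset V$. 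Then $z \precsim_B b' \precsim_B y$ since $z \in \overline{b' C (X) b'}$, while $z \sim_A u^n z u^{-n} = z \circ h^{-n}$ by Lemma~\ref{L:CzBasic}(\ref{L:CzBasic:3904_UE}) and $z \circ h^{-n} \precsim_A b \precsim_A x$ since $\supp (z \circ h^{-n}) \subset \{ b \neq 0 \}$. So $z$ is the required common lower bound in $B_{+} \setminus \{ 0 \}$.

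The remaining condition~(\ref{P-2729AltCPT-AltCut}) is where the real work lies. By Lemma~\ref{L_4627_Dense} it is enough to treat $a_j = f_j u^{n_j}$ with $f_j \in C (X)$; put $M = 1 + \max_j |n_j|$, so that $\bigcup_j Y_{n_j} \subset \bigcup_{l = -(M - 1)}^{M - 2} h^l (Y)$ in the notation of Proposition~\ref{P-2816CharOB}, the sets $h^l (Y)$ being pairwise disjoint and compact. Apply Lemma~\ref{L:CXSubEq} to~$B$ and~$y$ to get $y_0 \in C (X)_{+} \setminus \{ 0 \}$ with $y_0 \precsim_B y$, and (since $X$ has no isolated points) choose pairwise disjoint nonempty open sets $V_l \subset \{ y_0 \neq 0 \}$ for $l = -(M - 1), \ldots, M - 2$. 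For each~$l$, Lemma~\ref{L-2623CuSub} with $n = l$ and $U = V_l$ gives $f_l, g_l \in C (X)_{+}$ with $f_l |_{h^l (Y)} = 1$, $0 \leq f_l \leq 1$, $\supp (g_l) \subset V_l$, and $f_l \precsim_B g_l$. Set $c_j = a_j$ and $g = 1 - \prod_l (1 - f_l) \in C (X)$. Then $0 \leq g \leq 1$; since $g = 1$ on $\bigcup_l h^l (Y) \supset Y_{n_j}$, the function $(1 - g) f_j$ vanishes on $Y_{n_j}$, so $(1 - g) c_j = \big( (1 - g) f_j \big) u^{n_j} \in B$ by Proposition~\ref{P-2816CharOB}, and trivially $\| c_j - a_j \| = 0 < \ep$. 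Finally $0 \leq g \leq \sum_l f_l$, so by Lemma~\ref{L:CzBasic}, parts (\ref{L:CzBasic:Her}), (\ref{L:CzBasic:LCzCmpSum}), (\ref{L:CzBasic:CmpDSum}), and (\ref{L:CzBasic:Orth}) (the last using disjointness of the supports of the $g_l$),
\[
g \precsim_B \sum_l f_l \precsim_B \bigoplus_l f_l \precsim_B \bigoplus_l g_l \sim_B \sum_l g_l \precsim_B y_0 \precsim_B y ,
\]
where $\sum_l g_l \in \overline{y_0 C (X) y_0}$ because $\supp \big( \sum_l g_l \big) \subset \bigcup_l V_l \subset \{ y_0 \neq 0 \}$. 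This gives condition~(\ref{P-2729AltCPT-AltCut}), and Proposition~\ref{P-2729AltCPT} then yields the theorem.

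I do not expect a genuinely hard analytic step: all of the dynamics is already encapsulated in Lemma~\ref{L-2623CuSub} and Proposition~\ref{P-2816CharOB}. The one place needing attention is the bookkeeping in condition~(\ref{P-2729AltCPT-AltCut}) — one must enlarge the cutting function~$g$ so that it is~$1$ on \emph{all} of the sets $h^l (Y)$ that can occur in the $Y_{n_j}$, while keeping $g$ Cuntz-small in~$B$, and this works precisely because $Y$ meets each orbit at most once (so the $h^l (Y)$ are disjoint and each can be folded forward disjointly into a prescribed open set). A secondary point, easy but worth stating carefully, is that we are using the convention $f u$ rather than $u f$ of Definition~\ref{D-2623VSubalg}, which is why forward iterates $h^j (Y)$ appear in the sets $Y_n$.
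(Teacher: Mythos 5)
Your proof is correct, and at the top level it is the same as the paper's: both verify Proposition~\ref{P-2729AltCPT} with $C = C(X)$, using Proposition~\ref{P-2816CharOB}, Lemma~\ref{L-2623CuSub}, and Lemma~\ref{L:CXSubEq}. The differences are in the two verifications, and both of your variants are legitimate and somewhat leaner. For condition~(\ref{P-2729AltCPT-Sb4}), the paper also produces $a_0, b_0 \in C(X)_+ \setminus \{0\}$ via Lemma~\ref{L:CXSubEq}, but then invokes Lemma~\ref{L-2623CuSub} again (pushing a point of $Y$ into the support of $a_0$ by minimality and multiplying two bump functions); you instead move a small function supported in $\{b' \neq 0\}$ into $\{b \neq 0\}$ by conjugating with $u^n$, which is allowed because only the comparison in $A$ needs the translation --- this avoids Lemma~\ref{L-2623CuSub} in that step entirely. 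For condition~(\ref{P-2729AltCPT-AltCut}), the paper works with arbitrary $a_j$, approximates them by elements of $C(X)[\Z]$ of degree at most $N$, and builds $g$ as a sum of orthogonal functions $g_l = g_0 \circ h^{-l}$, which requires choosing an open $W \supset Y$ with $h^{-N}(W), \ldots, h^N(W)$ disjoint so that $0 \leq g \leq 1$; you instead reduce to monomials $f_j u^{n_j}$ (sanctioned by Remark~\ref{R_4628_OtherDense}), take $c_j = a_j$, and set $g = 1 - \prod_l (1 - f_l)$, getting $0 \leq g \leq 1$ and $g = 1$ on the relevant sets $h^l(Y)$ for free, with the Cuntz estimate supplied by the pointwise union bound $g \leq \sum_l f_l$ followed by the same chain $\sum_l f_l \precsim_B \bigoplus_l f_l \precsim_B \bigoplus_l g_l \sim_B \sum_l g_l \precsim_B y_0 \precsim_B y$. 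What the paper's route buys is that no density reduction is needed and the cutting function is literally a sum of orthogonal bumps (a form that is convenient elsewhere, e.g.\ for Rokhlin-tower arguments); what yours buys is the elimination of the auxiliary neighborhood $W$ and one application of Lemma~\ref{L-2623CuSub}. Your side remarks (that $X$ is infinite and the action free, hence Lemma~\ref{L:CXSubEq} applies, with the density hypothesis for $B$ coming from~(\ref{Eq:2816Dense})) match what the paper uses implicitly.
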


\begin{proof}
We verify the hypotheses of Proposition~\ref{P-2729AltCPT}.
We follow Notation~\ref{N-Old32}.
Set
\[
A = C^* (\Z, X, h),
\,\,\,\,\,\,
B = C^* (\Z, X, h)_Y,
\,\,\,\,\,\,
C = C (X),
\andeqn
G = \{ u \}.
\]

Since $h$ is minimal,
it is well known that $A$ is simple and finite.
In particular,
condition~(\ref{P-2729AltCPT-Fin})
of Proposition~\ref{P-2729AltCPT} holds.

We next verify condition~(\ref{P-2729AltCPT-Sb})
of Proposition~\ref{P-2729AltCPT}.
All parts are obvious except~(\ref{P-2729AltCPT-Sb4}).
So let $a \in A_{+} \setminus \{ 0 \}$
and $b \in B_{+} \setminus \{ 0 \}$.
Apply Lemma~\ref{L:CXSubEq} with $G = \Z$ twice,
the first time with $A$ in place of $B$ and $a$ as given
and the second time with $B$ as given
(this is justified by Proposition~\ref{P-2816CharOB})
and with $b$ in place of~$a$.
We get $a_0, b_0 \in C (X)_{+} \setminus \{ 0 \}$
such that $a_0 \precsim_{A} a$ and $b_0 \precsim_{B} b$.
Set
\[
U = \{ x \in X \colon a_0 (x) \neq 0 \}
\andeqn
V = \{ x \in X \colon b_0 (x) \neq 0 \}.
\]
Choose a point $z \in Y$.
By minimality, there is $n \in \Z$ such that
$h^n (z) \in U$.
By Lemma~\ref{L-2623CuSub}, there exist $f_0, g \in C (X)_{+}$
such that $f_0 (x) = 1$ for all $x \in h^n (Y)$,
such that $0 \leq f_0 \leq 1$,
such that $\supp (g) \subset V$,
and such that $f_0 \precsim_{B} g$.
Choose $f_1 \in C (X)$ such that
\[
0 \leq f_1 \leq 1,
\,\,\,\,\,\,
f_1 (h^n (z)) = 1,
\andeqn
\supp (f_1) \subset U.
\]
Set $f = f_0 f_1$.
Then $f (h^n (z)) = 1$, so $f \neq 0$, and
\[
f \leq f_1 \precsim_{C (X)} a_0 \precsim_{A} a
\andeqn
f \leq f_0 \precsim_{B} g \precsim_{C (X)} b_0 \precsim_{B} b.
\]
This completes the proof of condition~(\ref{P-2729AltCPT-Sb4}).

We now prove condition~(\ref{P-2729AltCPT-AltCut}).
Let $m \in \N$,
let $a_1, a_2, \ldots, a_m \in A$,
let $\ep > 0$,
and let $b \in B_{+} \setminus \{ 0 \}$.

Choose $c_1, c_2, \ldots, c_m \in C (X) [\Z]$
such that $\| c_j - a_j \| < \ep$
for $j = 1, 2, \ldots, m$.
(This estimate is condition~(\ref{P-2729AltCPT-AltCut2}).)
Choose $N \in \N$
such that for $j = 1, 2, \ldots, m$
there are $c_{j, l} \in C (X)$
for $l = -N, \, - N + 1, \, \ldots, \, N - 1, \, N$
with
\[
c_j = \sum_{l = -N}^N c_{j, l} u^l.
\]

Apply Lemma~\ref{L:CXSubEq} to $B$ in the same way as in the
verification of condition~(\ref{P-2729AltCPT-Sb})
to find $f \in C (X)_{+} \setminus \{ 0 \}$
such that $f \precsim_{B} b$.
Set $U = \{ x \in X \colon f (x) \neq 0 \}$,
and choose nonempty disjoint open sets
$U_l \subset U$ for $l = -N, \, - N + 1, \, \ldots, \, N - 1, \, N$.
For each such~$l$,
use Lemma~\ref{L-2623CuSub}
to choose $f_l, r_l \in C (X)_{+}$
such that $r_l (x) = 1$ for all $x \in h^l (Y)$,
such that $0 \leq r_l \leq 1$,
such that $\supp (f_l) \subset U_l$,
and such that $r_l \precsim_{B} f_l$.

Choose an open set $W$ containing~$Y$
such that
\[
h^{-N} (W), \, h^{- N + 1} (W),
 \, \ldots, \, h^{N - 1} (W), \, h^{N} (W)
\]
are disjoint,
and choose $r \in C (X)$ such that $0 \leq r \leq 1$,
$r (x) = 1$ for all $x \in Y$,
and $\supp (r) \subset W$.
Set
\[
g_0 = r \cdot \prod_{l = - N}^{N} r_l \circ h^{l}.
\]
Set $g_l = g_0 \circ h^{- l}$
for $l = -N, \, - N + 1, \, \ldots, \, N - 1, \, N$.
Then $0 \leq g_l \leq r_l \leq 1$.
Set $g = \sum_{l = - N}^{N} g_l$.
The supports of the functions $g_l$ are disjoint,
so $0 \leq g \leq 1$.
This is condition~(\ref{P-2729AltCPT-AltCut1}).
Using Lemma~\ref{L:CzBasic}(\ref{L:CzBasic:Orth})
at the first and fourth steps
and Lemma~\ref{L:CzBasic}(\ref{L:CzBasic:CmpDSum})
at the third step,
we get
\[
g \sim_B \bigoplus_{l = - N}^{N} g_l
  \leq \bigoplus_{l = - N}^{N} r_l
  \precsim_B \bigoplus_{l = - N}^{N} f_l
  \sim_{C (X)} \sum_{l = - N}^{N} f_l
  \precsim_{C (X)} f
  \precsim_B b.
\]
This is condition~(\ref{P-2729AltCPT-AltCut4}).

It remains to verify condition~(\ref{P-2729AltCPT-AltCut3}).
Since $1 - g$ vanishes on the sets
\[
h^{-N} (Y), \, h^{- N + 1} (Y),
 \, \ldots, \, h^{N - 2} (Y), \, h^{N - 1} (Y),
\]
Proposition~\ref{P-2816CharOB}
implies that $(1 - g) u^l \in B$
for $l = -N, \, - N + 1, \, \ldots, \, N - 1, \, N$.
For $j = 1, 2, \ldots, m$,
since $c_{j, l} \in C (X) \subset B$
for $l = -N, \, - N + 1, \, \ldots, \, N - 1, \, N$,
we get
\[
(1 - g) c_j = \sum_{l = -N}^N c_{j, l} \cdot (1 - g) u^l \in B.
\]
This completes
the verification of condition~(\ref{P-2729AltCPT-AltCut3}),
and the proof of the theorem.
\end{proof}

In the proof,
it is also true that $c_j (1 - g) \in B$.

\begin{cor}\label{C_4819_AYStabLg}
Let $X$ be a \chs{}
and let $h \colon X \to X$ be a minimal homeomorphism.
Let $Y \subset X$ be a compact subset
such that $h^n (Y) \cap Y = \varnothing$
for all $n \in \Z \setminus \{ 0 \}$.
Then $C^* (\Z, X, h)_Y$ is a stably large subalgebra of $C^* (\Z, X, h)$
in the sense of Definition~\ref{D_4619_StLg}.
\end{cor}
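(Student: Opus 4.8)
The plan is to obtain this as a quick consequence of three earlier results: Theorem~\ref{T-2819AYLg}, Proposition~\ref{P_4725_CPT}, and Corollary~\ref{C_4619_StFinStLg}. Write $A = C^* (\Z, X, h)$ and $B = C^* (\Z, X, h)_Y$. First I would record that $A$ is an infinite dimensional stably finite simple unital \ca. Simplicity and unitality of $A$ are the well known consequences of minimality already invoked in the proof of Theorem~\ref{T-2819AYLg}. For infinite dimensionality, I would note that the hypotheses force $X$ to be infinite: a point $y \in Y$ with $h^k (y) = y$ for some $k \in \N$ would put $h^k (y)$ in $h^k (Y) \cap Y$, contradicting the disjointness hypothesis; hence $C (X) \subset A$, and so $A$ itself, is infinite dimensional.

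The one point requiring a genuine (though standard) argument is stable finiteness of~$A$. I would produce a faithful tracial state: by amenability of $\Z$ there is an $h$-invariant Borel probability measure $\mu$ on $X$, and $\supp (\mu) = X$ because $h$ is minimal; composing the canonical conditional expectation $E \colon A \to C (X)$ of Notation~\ref{N-Old32} with integration against $\mu$ gives a tracial state $\ta$ on $A$, and $\ta$ is faithful since $E$ is faithful and $\mu$ has full support. A unital \ca{} carrying a faithful tracial state is stably finite, since the induced normalized tracial state on each $M_n (A)$ is again faithful; hence $A$ is stably finite. (Alternatively one may simply cite that such crossed products are stably finite, e.g.\ that they are quasidiagonal.)

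With these facts in hand the corollary is immediate: Theorem~\ref{T-2819AYLg} gives that $B$ is a large subalgebra of $A$ of crossed product type, Proposition~\ref{P_4725_CPT} then gives that $B$ is large in $A$ in the sense of Definition~\ref{D_Large}, and finally Corollary~\ref{C_4619_StFinStLg}---applicable because $A$ is an infinite dimensional stably finite simple unital \ca{} and $B \subset A$ is large---yields that $B$ is stably large in~$A$. I expect no real obstacle beyond the bookkeeping of checking that the hypotheses of the cited results are met; the only non-formal ingredient is the faithful-trace argument establishing stable finiteness of the crossed product.
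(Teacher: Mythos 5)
Your proof is correct and follows essentially the same route as the paper: the paper's proof is exactly the one-line combination of Theorem~\ref{T-2819AYLg}, Proposition~\ref{P_4725_CPT}, and Corollary~\ref{C_4619_StFinStLg}, asserting stable finiteness of $C^*(\Z, X, h)$ as known. Your additional verification of the hypotheses (infinite dimensionality via the aperiodicity of points of $Y$, and stable finiteness via the faithful trace coming from an invariant measure of full support) is sound but only spells out what the paper takes as standard.
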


\begin{proof}
Since $C^* (\Z, X, h)$ is stably finite,
we can combine Theorem~\ref{T-2819AYLg},
Proposition~\ref{P_4725_CPT},
and Corollary~\ref{C_4619_StFinStLg}.
\end{proof}

In Theorem~\ref{T-2819AYLg},
the condition
$h^n (Y) \cap Y = \varnothing$
for $n \in \Z \setminus \{ 0 \}$
is necessary.
If it fails,
then $C^* (\Z, X, h)_Y$ is not even simple.
Presumably this can be gotten
fairly easily by examining the corresponding groupoid,
but we can give an easy direct proof.

\begin{prp}\label{P_4811_NotSmp}
Let $X$ be an infinite \chs{}
and let $h \colon X \to X$ be a minimal homeomorphism.
Let $Y \subset X$ be a compact subset.
Suppose there is $n \in \Z$ such that $h^n (Y) \cap Y \neq \varnothing$.
Then $C^* (\Z, X, h)_Y$ has a nontrivial finite dimensional quotient.
\end{prp}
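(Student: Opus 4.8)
The plan is to build an explicit nonzero finite-dimensional representation of $B := C^*(\Z, X, h)_Y$ by compressing an orbit representation of the crossed product to a finite window whose length is the gap supplied by the hypothesis.

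Since $h^0(Y) \cap Y = Y \neq \varnothing$, only the case $n \neq 0$ carries content, so fix $n \neq 0$ with $h^n(Y) \cap Y \neq \varnothing$. As any $z \in h^n(Y) \cap Y$ gives $h^{-n}(z) \in h^{-n}(Y) \cap Y$, we may assume $n = N \in \N$, and we fix $x \in Y$ with $h^N(x) \in Y$. Following Notation~\ref{N-Old32}, let $\pi = \pi_x$ be the orbit representation of $A := C^*(\Z, X, h)$ on $\ell^2(\Z)$, with orthonormal basis $(e_k)_{k \in \Z}$, determined by $\pi(f) e_k = f(h^k(x)) e_k$ for $f \in C(X)$ and $\pi(u) e_k = e_{k+1}$; the covariance relation $\pi(u) \pi(f) \pi(u)^* = \pi(f \circ h^{-1})$ holds, and it is standard that this prescription defines a $*$-representation of $A$ (e.g., because $\Z$ is amenable it suffices to bound $\pi$ on the reduced crossed product, where $\pi_x$ is induced from the character $\ev_x$ of $C(X)$).

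The key step is to verify that $H := \spn\{e_0, e_1, \dots, e_{N-1}\}$ is invariant under $\pi(B)$. For $f \in C(X)$ this is clear since $\pi(f)$ is diagonal. For $f \in C_0(X \setminus Y)$ one computes $\pi(fu) e_k = f(h^{k+1}(x)) e_{k+1}$ and $\pi((fu)^*) e_k = \overline{f(h^k(x))}\, e_{k-1}$: for $0 \le k \le N - 2$ the first vector lies in $H$, while for $k = N-1$ it vanishes because $h^N(x) \in Y$ and $f|_Y = 0$; and for $1 \le k \le N - 1$ the second lies in $H$, while for $k = 0$ it vanishes because $x \in Y$ and $f|_Y = 0$. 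Thus $H$ is invariant under $C(X)$, under $C_0(X \setminus Y) u$ and its adjoints, hence under the $*$-subalgebra they generate, hence under $\pi(B)$ by continuity. Since $\pi(B)$ is $*$-closed, $H^\perp$ is invariant too, so $H$ reduces $\pi|_B$ and $\rho(b) := P_H\, \pi(b)|_H$ is a unital $*$-homomorphism $\rho \colon B \to M_N(\C)$.

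Finally, $\rho(B)$ is a nonzero finite-dimensional $C^*$-algebra, so $B / \ker \rho \cong \rho(B)$ is a finite-dimensional quotient of $B$; and since $B \supseteq C(X)$ is infinite-dimensional ($X$ being infinite), $\ker \rho \neq \{0\}$, so the quotient is proper, hence nontrivial (and $B$ is in particular not simple). I do not expect a genuine obstacle here: the only points needing care are the index bookkeeping in the two formulas for $\pi(fu) e_k$ and $\pi((fu)^*) e_k$, keeping straight that both $x$ and $h^N(x)$ sit at the two ends of the window, and the (routine) fact that the orbit representation $\pi_x$ is bounded on all of $A$.
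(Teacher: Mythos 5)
Your proof is correct and is essentially the paper's own argument: the same orbit representation of $C^*(\Z, X, h)$ on $\ell^2(\Z)$ induced from evaluation at a point $x \in Y$ with $h^N(x) \in Y$, the same invariant window $\spn \{ e_0, \dots, e_{N-1} \}$ (invariance checked on $C(X)$ and on $C_0(X \setminus Y)u$ and its adjoints), and the same compression yielding a unital homomorphism into $M_N(\C)$. The only cosmetic difference is at the outset: you reduce $n < 0$ to $n > 0$ directly from the symmetry $h^n(Y) \cap Y \neq \varnothing \Leftrightarrow h^{-n}(Y) \cap Y \neq \varnothing$, whereas the paper handles $n < 0$ by conjugating the orbit breaking subalgebra by $u^{-n}$.
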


\begin{proof}
We first assume that there are $y \in Y$ and $n \in \N$
such that $h^n (y) \in Y$.
Let $\pi \colon C^* (\Z, X, h) \to l^2 (\Z)$
be the regular representation of $C^* (\Z, X, h)$ gotten
from the one dimensional representation $f \mapsto f (y)$
of $C (X)$.
Explicitly,
letting $\dt_m \in l^2 (\Z)$ be the standard basis vector
at $m \in \Z$,
this representation is determined by
$\pi (u) \dt_m = \dt_{m + 1}$
for $m \in \Z$
and $\pi (f) \dt_m = f (h^m (y)) \dt_m$
for $m \in \Z$ and $f \in C(Y)$.

Set $H_0 = l^2 \big( \{ 0, 1, \ldots, n - 1 \} \big) \subset l^2 (\Z)$.
We claim that if $a \in C^* (\Z, X, h)_Y$
then $\pi (a) H_0 \subset H_0$.
It suffices to show that if $f \in C (X)$
and $m \in \{ 0, 1, \ldots, n - 1 \}$,
then
\begin{equation}\label{Eq_4811_f}
\pi (f) \dt_m \in H_0,
\end{equation}
and that if, in addition,
$f |_Y = 0$,
then
\begin{equation}\label{Eq_4811_uffu}
\pi (f u) \dt_m \in H_0
\andeqn
\pi (f u)^* \dt_m \in H_0.
\end{equation}
The relation~(\ref{Eq_4811_f})
is immediate.
For the first part of~(\ref{Eq_4811_uffu}),
assuming $f |_Y = 0$,
we observe that $\pi (f u) \dt_m = f (h^{m + 1} (y)) \dt_{m + 1}$.
If $m \in \{ 0, 1, \ldots, n - 2 \}$,
this expression is clearly in~$H_0$.
If $m = n - 1$,
it is in $H_0$ because $f (h^{m + 1} (y)) = 0$.
The second part of~(\ref{Eq_4811_uffu})
is similar:
$\pi (f u)^* \dt_m = f (h^{m} (y)) \dt_{m - 1}$,
which is clearly in $H_0$ if  $m \in \{ 1, 2, \ldots, n - 1 \}$,
and is zero if $m = 0$.
The claim is proved.

Now let $p \in L (l^2 (\Z))$ be the projection on~$H_0$.
Then $a \mapsto p \pi (a) p$
is a unital \hm{}
from $C^* (\Z, X, h)_Y$ to $L (H_0) \cong M_n$.
This completes the proof under the assumption
that there is $n > 0$ such that $h^n (Y) \cap Y = \varnothing$.

To finish the proof, assume that there is $n \in \N$
such that $h^{-n} (Y) \cap Y = \varnothing$.
Set $Z = h^{-n} (Y)$.
Then $u^{-n} C^* (\Z, X, h)_Y u^n = C^* (\Z, X, h)_Z$,
and $C^* (\Z, X, h)_Z$ has a nontrivial finite dimensional quotient
by the case already done,
so $C^* (\Z, X, h)_Y$ has a nontrivial finite dimensional quotient.
\end{proof}

\begin{exa}\label{E_4819_NoKIso}
We show that the incusion of a large subalgebra
need not be an isomorphism on the Cuntz semigroups.
In particular, Theorem~\ref{C_3X08_IsoOnPure}
fails if one does not delete the classes of
projections in the Cuntz semigroups.

Let $X$ be the Cantor set,
let $h \colon X \to X$ be a \mh,
let $y_1, y_2 \in X$ be points on distinct orbits of~$h$,
and set $Y = \{ y_1, y_2 \}$.
Set $A = C^* (\Z, X, h)$
and $B = C^* (\Z, X, h)_Y$.
Let $\io \colon B \to A$
be the inclusion.
It follows from Theorem~4.1 of~\cite{Pt1}
that
$\io_* \colon K_0 ( B ) \to K_0 ( A )$
is not injective.
Therefore there are two \pj{s}
$p_1, p_2 \in M_{\infty} ( B )$
which are not \mvnt{} in $M_{\infty} (B)$
but are \mvnt{} in $M_{\infty} (A)$.
Since $B$ and $A$ are stably finite,
the maps from the sets of \mvnc{} classes of \pj{s}
over these algebras
to their Cuntz semigroups
(both $W (-)$ and ${\operatorname{Cu}} (-)$)
are injective.
Therefore
$\io_* \colon W ( B ) \to W ( A )$
and
$\io_* \colon {\operatorname{Cu}} ( B )
  \to {\operatorname{Cu}} ( A )$
are not injective.
However, $B$ is stably large in $A$
by Corollary~\ref{C_4819_AYStabLg}.
\end{exa}

We presume that much more complicated things can go wrong
with the map
${\operatorname{Cu}} ( C^* (\Z, X, h)_Y )
  \to {\operatorname{Cu}} ( C^* (\Z, X, h) )$.
In some cases, the map
$K_0 ( C^* (\Z, X, h)_Y ) \to K_0 ( C^* (\Z, X, h) )$
can be computed using Example~2.6 of~\cite{Pt3}.

\end{document}